\documentclass{amsart}
\usepackage{amssymb,amsmath,amsthm,graphics}
\usepackage{texdraw}
\newtheorem{theorem}{Theorem}
\newtheorem{definition}{Definition}
\newtheorem{proposition}{Proposition}
\newtheorem{lemma}{Lemma}
\newtheorem{corollary}{Corollary}

\newtheorem{rmk}{Remark}
\newenvironment{remark}{\begin{rmk}\rm}{\end{rmk}}

\renewcommand{\ne}{\not =}
\newcommand{\obra}[3]{{#1}: {\emph{#2}.\/}  {#3}.}
%%%%%%%%%%%%%%%%%%%%%%%
\title{Reduction of Singularities of Three-Dimensional Line Foliations}
\author{F. Cano; C. Roche; M. Spivakovsky }
\date{2010 april, the 30th}
\begin{document}
\maketitle
\begin{flushright}
\em Dedicated to Heisuke Hironaka on the occasion of his 80th
birthday.
\end{flushright}

\setcounter{section}{-1} \tableofcontents
\section{Introduction}
We give a birational reduction of singularities for one dimensional
foliations in ambient spaces of dimension three. To do this, we
first prove the existence of a Local Uniformization in the sense of
Zariski \cite{Zar}. The reduction of singularities is then obtained
by a gluing procedure for Local Uniformization similar to Zariski's
one in {\cite{Zar2}}.

Let $K$ be the field of rational functions of a projective algebraic
variety $M_0$ of dimension $n$ over an algebraically closed field
$k$ of characteristic zero. We prove the following theorem

\begin{theorem}[Local Uniformization] \label{teouno} Assume that $n=3$. Consider  a
$k$-valuation $\nu$ of $K$ and a foliation by lines ${\mathcal
L}\subset \mbox{Der}_kK$. There is a composition of a finite
sequence of blow-ups with non singular centers $M\rightarrow M_0$
such that $\mathcal L$ is log-elementary at the center $Y\subset M$
of $\nu$.
\end{theorem}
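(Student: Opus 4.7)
The plan is to work locally at the center of $\nu$ on a chosen birational model and to proceed by induction on a numerical invariant measuring how far the pair $(\mathcal L, E)$ is from being log-elementary, where $E$ denotes the normal crossings divisor of exceptional components accumulated along the way. After replacing $M_{0}$ by a local chart and completing at the center, the foliation is generated by a germ of vector field $X=\sum_{i=1}^{3}a_{i}\,\partial/\partial x_{i}$ in a regular system of parameters $(x_{1},x_{2},x_{3})$ adapted to $E$, and the log-elementary condition translates into a very specific shape for the initial monomials of the coefficients $a_{i}$.

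First I would attach to $X$ an adapted multiplicity invariant refined by the Newton polyhedron of $X$ with respect to a generating sequence of $\nu$ in the sense of Spivakovsky. The property required of this invariant is that for any admissible smooth center $Z$ through the center of $\nu$ (normal crossings with $E$, appropriately contained in the adapted singular locus of $\mathcal L$), the transform under the blow-up along $Z$ either yields log-elementarity or strictly decreases the invariant in a well-ordered lexicographic sense. The search for such a center would be guided by the vertices and edges of the Newton polyhedron, following the classical strategy of Hironaka and the foliated adaptations due to Cano and Seidenberg.

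The argument then splits according to the Abhyankar data $(\mathrm{rk}\,\nu,\mathrm{rat.rk}\,\nu,\mathrm{tr.deg}_{k}k_{\nu})$ of the valuation. In the purely combinatorial cases (rational rank three, or rational rank two with rank two), finitely many blow-ups of points and curves reduce the coefficients $a_{i}$ to monomials times units, from which log-elementarity is read off directly from the exponent matrix. In the cases with positive residue transcendence degree over $k$, the center of $\nu$ traces a curve on which one inductively applies the two-dimensional reduction of singularities for line foliations \`a la Seidenberg to the foliation induced on a transverse surface, translated along the parameter of that curve.

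The main obstacle is the archimedean case $\mathrm{rk}\,\nu=\mathrm{rat.rk}\,\nu=1$ with value group an irrational subgroup of $\mathbb R$: there is no a~priori algebraic termination, and a naive sequence of blow-ups at closed points need not terminate. To handle it I would use the key polynomials of $\nu$, in the sense of MacLane--Vaqui\'e and Spivakovsky, to exhibit at each non-log-elementary stage an explicit admissible center along which the blow-up strictly drops the adapted Newton-polyhedron invariant. Termination then follows from a well-foundedness argument on the lex-ordering of the refined invariant. It is here that the hypothesis $n=3$ is essential: the combinatorics of the Newton polyhedron in three variables is still controllable, whereas $n\geq 4$ introduces new phenomena. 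This archimedean step is where the technical heart of the proof lies.
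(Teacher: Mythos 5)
Your high-level skeleton matches the paper's: split by the Abhyankar data of $\nu$, treat the cases of high rational rank combinatorially via the Newton polyhedron (this is essentially Proposition~\ref{pro:cuatro}), handle positive residue transcendence degree by a Seidenberg-type two-dimensional equireduction along the positive-dimensional center, and isolate the archimedean rational rank one case as the technical heart. So far so good.

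The gap is in the archimedean rational rank one case, and it is not a cosmetic one. You propose to produce, at each non-log-elementary stage, an admissible center whose blow-up \emph{strictly} decreases a Newton-polyhedron invariant ordered lexicographically, and to conclude by well-foundedness. But this is precisely what fails: the paper's analysis of Puiseux packages (Lemmas~\ref{lema:dos}, \ref{lema:cinco} in rational co-rank one, and the whole apparatus of etale Puiseux packages and critical segments in rational rank one) shows that the main/critical heights can remain constant indefinitely, e.g.\ when the ramification index equals one and the critical polynomial is not Tchirnhausen. The paper does not prove termination directly; it proves a \emph{dichotomy}: after finitely many transformations one either reaches a log-elementary foliation, or one extracts a Krull-convergent sequence of parameter changes $y^{(j)}=y^{(j-1)}-c_j\mathbf{x}^{\mathbf{p}(j)}$ whose limit is a formal series $\hat f$ having \emph{transversal maximal contact} with $\nu$ (Theorem~\ref{teo:tres}, Proposition~\ref{pro:tres}). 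That formal hypersurface is then the input to a separate argument (Theorem~\ref{teo:cuatro}), which works with the Hironaka characteristic polygon and an adapted logarithmic order $\mbox{LogOrd}$ relative to $x_1x_2\hat f$, proving stability under permissible blow-ups and eventual drop. Your proposal has no analogue of this second phase: without the notion of transversal maximal contact and the subsequent reduction via characteristic polygons, the well-foundedness claim is unsubstantiated, and it is exactly in the regime you flag as the technical heart that the naive decrease breaks down. In short, replace ``exhibit a strictly decreasing invariant'' by ``either decrease the invariant or produce a formal maximal-contact hypersurface, and then resolve along it''; that missing branch is the core of the paper's contribution.
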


 A {\em foliation by lines\/} (or simply a {\em foliation\/})  is any $1$-dimensional
$K$-vector subspace ${\mathcal L}\subset\mbox{Der}_kK$. Recall that
 space of $k$-derivations $ \mbox{Der}_kK $ is a $n$-dimensional $K$-vector space.
The notion of ``log-elementary'' comes from results in \cite{Can}.
Let us explain it. Take a regular point $P$ in a projective model
$M$. We know that $\mbox{Der}_k{\mathcal O}_{M,P}\subset
\mbox{Der}_kK$ is a free ${\mathcal O}_{M,P}$-module of rank $n$
generated by the partial derivatives $\partial/\partial x_i$,
$i=1,2,\ldots,n$, for any regular system of parameters
  $x_1,x_2,\ldots,x_n$  of the
local ring ${\mathcal O}_{M,P}$.
 Moreover $${\mathcal L}_{M,P}={\mathcal
L}\cap \mbox{Der}_k{\mathcal O}_{M,P}$$ is a free rank one
sub-module of $\mbox{Der}_k{\mathcal O}_{M,P}$ that we call the {\em
local foliation induced by } $\mathcal L$ at $M,P$. We say that
${\mathcal L}$ is {\em non-singular\/} at $P$ if  $ {\mathcal
L}_{M,P}\not\subset{\mathcal M}_{M,P}\mbox{Der}_k{\mathcal
O}_{M,P}$, where ${\mathcal M}_{M,P}\subset{\mathcal O}_{M,P}$ is
the maximal ideal. We say that ${\mathcal L}$ is {\em log-elementary
\/} at $P$ if there is a regular system of parameters
$z_1,z_2,\ldots,z_n$, an integer $0\leq e\leq n$ and $\xi\in
{\mathcal L}_{M,P}$ of the form
$$
\xi=\sum_{i=1}^e a_iz_i\frac{\partial}{\partial z_i}+\sum_{i=e+1}^n
a_i\frac{\partial}{\partial z_i},\; (a_i\in{\mathcal O}_{M,P},
i=1,2,\ldots,n)
$$
with $a_j\notin{\mathcal M}^2_{M,P}$ for at least one index $j$. If
$Y\subset M$ is an irreducible subvariety, we say that $\mathcal L$
is {\em non-singular at $Y$,} respectively, {\em log-elementary at
$Y$,} if it is so at a generic point of $Y$. Note in particular that
$M$ must be non-singular at a generic point of $Y$.

Theorem \ref{teouno} may be globalized as a consequence of a
patching procedure developed by O. Piltant \cite{Pil}, which is an
axiomatic adaptation of the one given by Zariski in the case of
varieties \cite{Zar}. We obtain the following birational  result of
reduction of singularities of foliations in an ambient space of
dimension three
\begin{theorem}
\label{teodos} Assume that $n=3$ and let ${\mathcal L}\subset
\mbox{Der}_kK$ be a foliation. Consider a birational model $M_0$ of
$K$. There is a birational morphism $M\rightarrow M_0$ such that
$\mathcal L$ is log-elementary at all the points of $M$.
\end{theorem}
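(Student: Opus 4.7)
The plan is to globalize Theorem~\ref{teouno} through the axiomatic patching procedure of Piltant~\cite{Pil}, as already flagged in the text preceding the statement. The strategy parallels Zariski's deduction of global reduction of singularities of surfaces from local uniformization in \cite{Zar2}: one covers the Zariski--Riemann space of $K$ by finitely many ``partial solutions'' coming from Theorem~\ref{teouno} and glues them.

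The first ingredient I would establish is that the log-elementary condition is Zariski-open on the regular locus of any projective model $M$. Indeed, if $\xi\in\mathcal{L}_{M,P}$ has a log-elementary expression with respect to a regular system of parameters $z_1,\dots,z_n$, then the $z_i$ remain regular parameters on a Zariski-open neighborhood of $P$, and the open condition $a_j\notin\mathcal{M}_{M,P}^{2}$ for some $j$ persists on a (possibly smaller) open neighborhood; hence the locus $U(M)\subset M$ of log-elementary points is open.

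Next, for each $k$-valuation $\nu$ of $K$, Theorem~\ref{teouno} produces a birational model $\pi_\nu:M_\nu\to M_0$, obtained by a finite sequence of blow-ups with non-singular centers, such that $\mathcal{L}$ is log-elementary at the center of $\nu$. By openness, $U(M_\nu)$ contains a Zariski-open neighborhood of (the generic point of) this center. Pulling back to the Zariski--Riemann space $\mathrm{ZR}(K/k)$, the family $\{U(M_\nu)\}_\nu$ yields an open cover of $\mathrm{ZR}(K/k)$. Since $\mathrm{ZR}(K/k)$ is quasi-compact, finitely many models $M_{\nu_{1}},\dots,M_{\nu_{r}}$ already cover all valuations of $K$.

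The final step is to invoke Piltant's patching to amalgamate $M_{\nu_{1}},\dots,M_{\nu_{r}}$ into a single birational morphism $M\to M_0$ on which $\mathcal{L}$ is log-elementary everywhere. What must be checked in order to apply Piltant's framework is: (i) openness of the property (verified above); (ii) existence of local uniformization (Theorem~\ref{teouno}); and (iii) a compatibility condition asserting that when two of the $M_{\nu_{i}}$ are dominated by a common birational refinement, the log-elementary condition is preserved at each point where it had been achieved on either side. The main obstacle I anticipate is (iii): one must organise the auxiliary blow-ups producing the common refinement so that they are isomorphisms over the log-elementary loci already attained, so that log-elementarity, being a pointwise local condition on the induced foliation, transfers unchanged through the refinement. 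Once (iii) is set up in conformity with Piltant's axiomatics, his theorem delivers the desired global model $M$, and Theorem~\ref{teodos} follows.
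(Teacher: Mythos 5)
Your high-level strategy matches the paper's: openness of the log-elementary locus, local uniformization (Theorem~\ref{teouno}) producing one model per valuation, and then Piltant's axiomatic version of Zariski's patching to glue. But there is a genuine gap in how you describe the axioms Piltant's theorem requires, and in particular your condition~(iii) is not the correct condition and would not suffice as stated.

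Piltant's framework does not merely ask for a ``compatibility'' condition stating that a common refinement of two partial solutions can be built by blow-ups that are \emph{isomorphisms} over the already log-elementary loci. In general one cannot arrange this: the gluing forces you to blow up centers that \emph{do} meet the already-good locus, and the whole point of the axiomatics is to control what happens there. What the paper actually verifies (Proposition~\ref{pro:axiomas}) are, besides openness (Axiom~I) and local uniformization (Axiom~V), three substantive conditions that have no analogue in your proposal: Axiom~II (for any irreducible $Y$ meeting $\mbox{RegP}(M)$, there is a nonempty open $V_Y\subset Y\cap\mbox{RegP}(M)$ over which the blow-up preserves $\mbox{RegP}$, i.e.\ the existence of permissible centers), Axiom~III (any curve meeting $\mbox{RegP}(M)$ can be made \emph{strongly} permissible by finitely many point blow-ups), and Axiom~IV (principalization of ideal sheaves over $\mbox{RegP}(M_0)$ without leaving $\mbox{RegP}$). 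Verifying II and III for $\mbox{RegLog}_{\mathcal L}$ is where the real work is: the paper appeals to the stability of the adapted (logarithmic) order under blow-up with permissible centers (Theorem~3.1.4 and Section~3.3 of~\cite{Can}), and Axiom~IV is handled by Piltant's Proposition~4.2 in~\cite{Pil}. Without these, your quasi-compactness argument produces finitely many partial uniformizations, but you have no mechanism to dominate them by a single model on which the property holds everywhere. So the missing idea is precisely the permissibility/principalization theory that makes log-elementarity persist under the blow-ups required for gluing; ``arrange the refinement to avoid the good locus'' is not available.
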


The reduction of singularities of foliations in an ambient space of
dimension two is proved in the classical Seidenberg's paper
\cite{Sei}. In dimension three or higher one would like to be able
to obtain {\em elementary singularities}, that is singularities with
a non-nilpotent linear part. This is not possible in a birational
way as an example of F. Sanz and F. Sancho shows (see for instance
the introduction of \cite{Pan}). There is no general result in
dimension $n\geq 4$, except for the case of absolutely isolated
singularities \cite{Cam-C-S}. In dimension three Panazzolo
\cite{Pan} gives a global but non-birational result over the real
numbers, getting elementary singularities after doing ramifications
and blow-ups. There is also a preprint of Panazzolo and McQuillan,
where they announce and adaptation to the results in \cite{Pan} to
the language of stacks. In \cite{Can-M-R} there is a local result,
along a trajectory of a real vector field, obtained also by the use
of ramifications and blow-ups. Finally, in \cite{Can} there is a
strategy to solve by means of blow-ups a ``formal version'' of the
local uniformization problem, where formal non-algebraic centers of
blow-up are allowed.

Let us give an outline of the proof of Theorem \ref{teouno}. We
organize the proof by taking account of the ranks and dimension of
the valuation and of the existence of ``maximal contact'' with a
formal series.

In Part I, we consider the case of a real valuation $
\nu:K\setminus\{0\}\rightarrow {\mathbb R}$ with residual field
$\kappa_\nu=k$. In the classical situations of Zariski's Local
Uniformization \cite{Zar} this one is considered to be the most
difficult case.  Note that since $\kappa_\nu=k$ the center of $\nu$
at any projective model is a closed point. Our first result is
\begin{theorem}
\label{teo:tres} Assume that $n=3$ and  $\nu$ is a real
$k$-valuation of $K$ with resi\-dual field $\kappa_\nu=k$. There is
a finite composition of blow-ups with non-singular centers
$M\rightarrow M_0$ such that $M$ is non-singular at the center $P$
of $\nu$ at $M$ and and one of the following properties holds
\begin{enumerate}\item
 $\mathcal L$ is log-elementary at $P$.
 \item
There is $\hat f\in \widehat {\mathcal O}_{M,P}$
 having transversal maximal contact with $\nu$.
 \end{enumerate}
\end{theorem}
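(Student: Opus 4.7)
The plan is to run a Hironaka-style induction on a numerical invariant that measures the failure of log-elementarity, and, when that invariant refuses to decrease, to extract a formal hypersurface of maximal contact transverse to $\nu$.

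First, I would apply Zariski's local uniformization for three-dimensional varieties to reduce to the situation where the given model is already non-singular at the center $P$ of $\nu$; because $\kappa_\nu = k$, this center is a closed point, so I may fix a regular system of parameters $x_1,x_2,x_3$ of $\mathcal{O}_{M,P}$ and a local generator $\xi = \sum_{i=1}^3 a_i \,\partial/\partial x_i$ of $\mathcal{L}_{M,P}$, and work in the completion $\widehat{\mathcal{O}}_{M,P} \simeq k[[x_1,x_2,x_3]]$. At this point the only data left to control are $\xi$ (modulo units in $\mathcal{O}_{M,P}$) and the valuation $\nu$.

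Next, I would attach to $(\mathcal{L},P,\nu)$ a principal invariant in a well-ordered set, inspired by the formal theory developed in \cite{Can} and by the two-dimensional reduction in \cite{Sei}. A natural candidate is a lexicographic tuple whose leading entry is an ``adapted multiplicity'' of $\xi$ — the minimum over permissible coordinate adjustments of $\min_i \mathrm{ord}_P(a_i)$ after dividing $\xi$ by the largest monomial in the already-built divisor — and whose secondary entries track the distribution of $\nu$-values of the $a_i$. One must check that this invariant is independent of the chosen local generator and regular system of parameters, and that it cannot increase under a blow-up with non-singular center contained in the center of $\nu$. If the invariant reaches its minimum admissible value, $\xi$ will by construction already have the shape of a log-elementary vector field at $P$, placing us in case~(1).

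I would then iterate blow-ups with center equal to the (necessarily non-singular) center of $\nu$. Either the invariant drops finitely many times and we land in case~(1), or the sequence of invariants stabilizes after finitely many blow-ups. In the stable regime the leading form of $\xi$, read in the graded ring of $\nu$, becomes rigid: its vanishing locus is preserved by every subsequent permissible blow-up. Taking a Cauchy limit in $\widehat{\mathcal{O}}_{M,P}$ of strict transforms of a generator of that vanishing locus, I would produce a formal series $\hat f$ whose successive strict transforms keep cutting out the ``bad'' direction and whose $\nu$-values increase in a controlled fashion at each stage, giving transversal maximal contact with $\nu$ and placing us in case~(2).

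The main obstacle is the second half of this program: correctly defining the invariant so that it is simultaneously sensitive enough to detect non-log-elementarity and stable enough to be monotone under all permissible blow-ups centered at the center of $\nu$, and then, in the stabilization regime, extracting the formal maximal contact series $\hat f$. Unlike the classical Hironaka setup for ideals, the relevant ``tangent object'' here is a differential operator rather than an ideal, so the usual functoriality arguments must be replaced by an analysis of the symbol of $\xi$ in $\mathrm{gr}_\nu(\widehat{\mathcal{O}}_{M,P})$, together with a careful passage to the limit that uses both the finiteness of $\mathrm{rk}\,\nu$ and the assumption $\kappa_\nu = k$ to ensure that the resulting $\hat f$ is genuinely transverse to $\nu$ and not merely $\nu$-compatible in a trivial way.
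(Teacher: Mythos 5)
Your plan captures the right broad intuition — track a numerical invariant, drop it, and, when it stalls, harvest a formal maximal-contact series — but as written it would not go through, for two structural reasons.

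First, you propose to iterate blow-ups ``with center equal to the (necessarily non-singular) center of $\nu$,'' which, since $\kappa_\nu=k$, means only point blow-ups. That is not enough. The paper's argument hinges on blow-ups with codimension-two centers (the $(i,j)$-blow-ups of curves $\{x_i=z_j=0\}$), organized into Puiseux packages, and on working inside \emph{parameterized regular local models} in which the regular system of parameters is split into independent variables $\mathbf{x}$ (with $\mathbb{Z}$-linearly independent $\nu$-values) and dependent variables $\mathbf{y}$. The combinatorial monomialization in Proposition \ref{pro:cuatro}, the Newton--Puiseux polygon control in Sections 3--4, and the entire reduction of the ramification index $d$ require curve blow-ups; point blow-ups alone cannot even reproduce the two-dimensional Seidenberg argument, let alone the $r=1$ case. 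Your Hironaka-style setup omits the independent/dependent split entirely, so there is no $\Phi=y^d/\mathbf{x}^{\mathbf p}$, no Puiseux package, and no handle on the ramification.

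Second, your invariant is not concrete enough to bear the weight of the induction, and it ignores the stratification by the rational rank $r\in\{1,2,3\}$, which is the paper's main organizing device. The actual invariants used are the main height $\hbar$ and critical height $\chi$ of a carefully prepared $\mathbf{x}$-logarithmic generator, and their monotonicity is proved \emph{only} after heavy preparation: $x$-preparation, strong main-vertex preparation, the dichotomy between dominant and recessive critical segments, Tchirnhausen normalization, and, for $r=1$, passage to \emph{etale} Puiseux packages via the extension $t^d=x$ to force $d=1$. Without these one cannot show $\hbar$ decreases, nor can one make the ``stable regime'' of your sketch precise. In particular the phrase ``Cauchy limit of strict transforms of a generator of that vanishing locus'' skips the crux: the paper constructs $\hat f$ by an explicit recursion $y^{(j)}=y^{(j-1)}-c_j\mathbf{x}^{\mathbf{p}(j)}$ with strictly increasing $\nu$-values, and that recursion is only available \emph{after} one has proved (Lemma \ref{lema:diez} and the Tchirnhausen steps) that the critical polynomial forces $\mathbf{p}\in\mathbb{Z}_{\geq 0}^{n-1}$ and that the ramification index is identically $1$. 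These are the genuine difficulties, and your proposal does not address them.
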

%%%%
A formal series $\hat f\in \widehat {\mathcal O}_{M,P}$
 has {\em transversal maximal contact with $\nu$} if it is
 the Krull-limit of a sequence
  $f_i\in {\mathcal O}_{M,P}$ with strictly
increasing values and moreover we have the following property of
transversality: there is a part of a regular system of parameters
$x_1,x_2,\ldots,x_r$ of ${\mathcal O}_{M,P}$ such that the values
$\nu(x_1),\nu(x_2),\ldots,\nu(x_r)$ are ${\mathbb Z}$-independent,
where $r$ is the rational rank of $\nu$, and
$x_1,x_2,\ldots,x_r,\hat f$ is a part of a regular system of
parameters of the complete local ring $\widehat{\mathcal O}_{M,P}$.

In order to prove Theorem \ref{teo:tres}, we work over the rational
rank $r$ of $\nu$ and we study the three following cases in an
ordered way:\begin{enumerate}
\item $r=n$. Here we get $\mathcal L$   elementary
for any ambient dimension $n$. This is a { combinatorial case} with
few differences with respect to the classical situations of
varieties.

\item $r=n-1$. The statement of  Theorem \ref{teo:tres}
is valid for any $n$. We use Newton Polygon technics to give the
proof. If $n=2$ the result is slightly stronger: we get either
maximal contact of a non-singular foliation. This will be useful in
the next case.
\item $r=1, n=3$. This is the hardest situation. We have
important difficulties due to the fact that $\nu$ is not a discrete
valuation.
\end{enumerate}
We end Part I by giving a proof of
\begin{theorem}
\label{teo:cuatro} Assume that $n=3$. Let $\nu$ be a real
$k$-valuation of $K$ with residual field $\kappa_\nu=k$ and suppose
that $\hat f\in \widehat {\mathcal O}_{M,P}$ has transversal maximal
contact with $\nu$. There is a finite composition of blow-ups with
non-singular centers $M\rightarrow M_0$ such that $\mathcal L$ is
log-elementary at the center $P$ of $\nu$.
\end{theorem}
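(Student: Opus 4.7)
The plan is to use the formal hypersurface $\hat f=0$ as a guide for a sequence of algebraic blow-ups. Complete $x_1,\dots,x_r,\hat f$ to a regular system of parameters of $\widehat{\mathcal O}_{M,P}$ by adjoining variables $y_1,\dots,y_s$ (with $r+1+s=3$; necessarily $r\le 2$). Fix a generator $\xi$ of ${\mathcal L}_{M,P}$ and consider its $\hat f$-adic expansion
\[
 \xi \;=\; \sum_{k\ge 0}\hat f^{\,k}\,\xi_k,
\]
where each $\xi_k$ is a formal derivation transverse to $\hat f$. Attach to this data an invariant $\mu=\mu({\mathcal L},\hat f,P)$ that combines the Newton characteristic polygon of the leading derivation $\xi_0$ in the variables $(x_i)$ and $(y_j)$, weighted by the values $\nu(x_i)$, with the $\hat f$-adic order at which $\xi$ first fails to have a log-elementary shape. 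By design, the minimum of $\mu$ corresponds exactly to log-elementarity of ${\mathcal L}$ at $P$.

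The argument is an induction on $\mu$. Since $\hat f$ is the Krull limit of $f_i\in{\mathcal O}_{M,P}$ with $\nu(f_i)\to\infty$, for any prescribed bound $N$ one may choose $i$ such that $f_i$ and $\hat f$ agree modulo formal terms of $\nu$-value larger than $N$. The admissible blow-ups, namely those along smooth algebraic centers of the form $\{x_{j_1}=\cdots=x_{j_p}=f_i=0\}$ or along coordinate subvarieties through $P$, then act on the leading term $\xi_0$ exactly as the corresponding formal blow-ups of $\{x_{j_1}=\cdots=x_{j_p}=\hat f=0\}$ would. The Newton polygon technique already developed for the case $r=n-1$ of Theorem~\ref{teo:tres} can now be applied to $\xi_0$, viewed as a derivation of the $2$-dimensional formal quotient $\widehat{\mathcal O}_{M,P}/(\hat f)$, to force a strict drop of the characteristic polygon. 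Transferring this drop back to $\mu$ and iterating yields a log-elementary generator in finitely many steps.

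The main obstacle is the interplay between the purely formal nature of $\hat f$ and the possible non-discreteness of $\nu$ when $r=1$: no single algebraic smooth subvariety can persistently track $\hat f=0$ under repeated blow-ups. Two technical points must therefore be established. First, $\mu$ should be independent of the particular approximation $f_i$ once $i$ is sufficiently large, so that updating $f_i$ to $f_{i'}$ in the middle of the procedure is harmless. Second, after each blow-up the center of $\nu$ should still lie on the strict transform of every $f_{i'}$ with $i'>i$, which ensures that the trajectory of $\nu$ really does follow the formal hypersurface. Once these are checked, the formally computed drop of $\mu$ is genuinely realized by the algebraic blow-up, and termination follows from the well-ordering of the value set of $\mu$, obtained by lexicographically combining integer data with the $\mathbb Z$-independent reals $\nu(x_1),\dots,\nu(x_r)$.
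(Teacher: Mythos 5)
Your plan is correct in spirit and follows the same strategy as the paper: treat $\hat f=0$ as a maximal contact hypersurface, expand $\xi$ adically in $\hat f$, reduce the leading behaviour to two-dimensional Newton polygon considerations in the remaining variables, and drop a numerical invariant along blow-ups whose centers track $\hat f=0$. You have also correctly identified the two technical issues (approximation of $\hat f$ by algebraic $f_i$, and persistence of the center of $\nu$ on the strict transform). However the proposal stops at the level of a plan, and two points in it contain genuine gaps.

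First, the invariant $\mu$ is never defined, and the termination argument you propose is suspect. You say the value set of $\mu$ is well-ordered because it is a lexicographic combination of integer data with the reals $\nu(x_1),\dots,\nu(x_r)$. But for $r=1$ the value group of $\nu$ is a non-discrete subgroup of $\mathbb R$, and a lexicographic combination in which the real part can decrease by arbitrarily small amounts is not well-ordered. The paper circumvents this precisely by choosing a non-negative integer as the primary invariant, namely the logarithmic order $\mbox{LogOrd}(\mathcal L;\,x\hat f)$ (resp.\ $x_1x_2\hat f$), proving it never increases (Lemma~\ref{lemaquince}, Lemma~\ref{dieciocho}), and for fixed $\mbox{LogOrd}$ engineering a secondary invariant that decreases by a \emph{fixed} positive quantity: for $r=2$ the Hironaka characteristic polygon shrinks to a single vertex and then a permissible curve is blown up; for $r=1$ the quantity $\delta=\min\{m_s/(\varrho-s)\}$ decreases by exactly $1$ after each blow-up of $(x,\hat f)$. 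Your $\mu$ needs to be pinned down to something with this kind of discrete decrease or the induction does not close.

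Second, you silently assume $\hat f=0$ is $\mathcal L$-invariant when you view $\xi_0$ as a derivation of $\widehat{\mathcal O}_{M,P}/(\hat f)$, which is false in general. The paper deals with this by replacing $\xi_0$ with $\hat\xi=\xi_0$ when $\hat f$ divides $\xi_0(\hat f)$ and $\hat\xi=\hat f\xi_0$ otherwise, and by proving the logarithmic order of this adapted object is the right thing to track. Without this dichotomy the log-elementary criterion after the reduction is not correctly captured. Finally, the core claim that ``the formally computed drop of $\mu$ is genuinely realized by the algebraic blow-up'' is exactly what Lemmas~\ref{lema:catorce}, \ref{lema:dieciseis}, \ref{lema:diecisiete} and Proposition~\ref{pro:formalstrongpreparation} establish: they verify that the center of $\nu$ stays on the strict transform of $\hat f=0$ and that the characteristic polygon (resp.\ the strong preparation of each level $\hat\eta_s$) transforms as in the classical surface case. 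You have named this requirement but not supplied an argument for it, and it is precisely the content of the section.
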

Part II is devoted to the remaining cases. We obtain many of the
results by an inductive use of the technics in Part I. In Part III
we prove the validity of Piltant's patching axioms and hence we
obtain the proof of Theorem \ref{teodos}.

\part{Zero dimensional arquimedean valuations}
In all this part $\nu:K\setminus\{0\}\rightarrow \Gamma$ denotes a
valuation such that $\Gamma\subset({\mathbb R},+)$ and
$\kappa_\nu=k$. In other words, the (arquimedean) rank of $\nu$ is
one and it is a zero-dimensional $k$-valuation of $K$. We denote by
$r$ the {\em rational rank\/} of $\nu$, that is, the maximum number
of $\mathbb Z$-linearly independent elements in the value group of
$\nu$. We know that $1\leq r \leq n$ by Abhyankar's inequality. In
particular, for the case $n=3$ we have the possibilities $r=3,r=2$
and $r=1$.
\section{Parameterized regular local models}
\label{seccion:uno} A {\em para\-meterized regular local model\/}
${\mathcal A}=({\mathcal O},{\mathbf z}=({\mathbf x},{\mathbf y}))$
for $K,\nu$ is a pair with ${\mathcal O}={\mathcal O}_{M,P}$, where
$M$ is a projective model of $K$, the point $P\in M$ is the center
of $\nu$ in $M$ and the sequence
$$(z_1,z_2,\ldots,z_n)={\mathbf z}=({\mathbf x},{\mathbf y})=(x_1,x_2,\ldots,x_r,y_{r+1},y_{r+2},\ldots,y_n)$$ is a regular
system of parameters of $\mathcal O$ such that
$\nu(x_1),\nu(x_2),\ldots,\nu(x_r)$ are $\mathbb Z$-linearly
independent values. We call ${\mathbf x}=(x_1,x_2,\ldots,x_r)$ the
{\em independent variables} and ${\mathbf
y}=(y_{r+1},y_{r+2},\ldots,y_n)$ the {\em dependent variables}. The
existence of parameterized regular local models is a consequence of
Hironaka's reduction of singularities \cite{Hir}. More precisely, we
have
\begin{proposition} \label{prop:uno} Given a projective model $M_0$
of $K$, there is a composition of a finite sequence of blow-ups with
non-singular centers $M\rightarrow M_0$ such that the center $P$ of
$\nu$ at $M$ provides a local ring ${\mathcal O}={\mathcal O}_{M,P}$
for a parameterized regular local model ${\mathcal A}=({\mathcal
O},{\mathbf z}=({\mathbf x},{\mathbf y}))$.
\end{proposition}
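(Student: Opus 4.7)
The plan is first to desingularize $M_0$, then to pick a well-chosen $r$-tuple of elements in the local ring at the center of $\nu$ whose values are $\mathbb{Z}$-independent, then to log-resolve them so that each becomes a monomial in a regular system of parameters, and finally to extract from that parameter system the required $\mathbb{Z}$-independent subset of $r$ elements.

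By Hironaka's theorem \cite{Hir}, there is a composition of blow-ups with non-singular centers $M_1\to M_0$ rendering $M_1$ non-singular; let $P_1$ be the center of $\nu$ on $M_1$, so that $\mathcal{O}_{M_1,P_1}$ is a regular local ring. Since the rational rank of $\nu$ equals $r$, choose $a_1,\ldots,a_r\in K^*$ with $\nu(a_1),\ldots,\nu(a_r)$ $\mathbb{Z}$-linearly independent, and write $a_i=f_i/g_i$ with $f_i,g_i\in\mathcal{O}_{M_1,P_1}\setminus\{0\}$. Since $\nu(a_i)=\nu(f_i)-\nu(g_i)$, the subgroup of $\Gamma$ generated by $\{\nu(f_i),\nu(g_i)\}_{i=1}^r$ has $\mathbb{Q}$-rank at least $r$, so we can select $b_1,\ldots,b_r\in\mathcal{O}_{M_1,P_1}$ among these elements whose values are $\mathbb{Z}$-linearly independent.

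Now apply the logarithmic resolution (embedded desingularization) of the divisor $\sum_{i=1}^r\mathrm{div}(b_i)$ on $M_1$: by Hironaka's theorem in characteristic zero, there is a further composition of blow-ups with non-singular centers $M\to M_1$ such that, at the center $P$ of $\nu$ on $M$, the local ring $\mathcal{O}:=\mathcal{O}_{M,P}$ is regular and each $b_i$ has the form $b_i=u_i\,z_1^{\alpha_{i1}}\cdots z_n^{\alpha_{in}}$ with $u_i\in\mathcal{O}^*$ a unit, where $(z_1,\ldots,z_n)$ is a regular system of parameters of $\mathcal{O}$. Then $\nu(b_i)=\sum_j\alpha_{ij}\nu(z_j)$, and since the left-hand sides are $\mathbb{Z}$-linearly independent, the values $\nu(z_1),\ldots,\nu(z_n)$ generate a subgroup of $\Gamma$ of $\mathbb{Q}$-rank at least $r$; after reordering, $\nu(z_1),\ldots,\nu(z_r)$ are $\mathbb{Z}$-linearly independent, and setting $\mathbf{x}=(z_1,\ldots,z_r)$ and $\mathbf{y}=(z_{r+1},\ldots,z_n)$ produces a parameterized regular local model $(\mathcal{O},\mathbf{z}=(\mathbf{x},\mathbf{y}))$ for $K,\nu$.

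The main step is the log-resolution of the $r$-component divisor $\sum_i\mathrm{div}(b_i)$: it packages the work of simultaneously monomializing $r$ chosen principal ideals at the center of $\nu$, and relies essentially on characteristic zero through Hironaka's theorem. The preliminary desingularization and the linear-algebraic extraction of $\mathbb{Z}$-independent parameters are, by comparison, formal.
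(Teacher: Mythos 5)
Your proof is correct and follows essentially the same route as the paper's: desingularize $M_0$, choose $r$ elements in the local ring at the center of $\nu$ with $\mathbb{Z}$-independent values, monomialize them by a further application of Hironaka (your ``log-resolution of the divisor $\sum\mathrm{div}(b_i)$'' is the same as the paper's ``make $\prod f_i$ a monomial''), and then read off $r$ parameters with independent values from $\nu(b_i)=\sum_j\alpha_{ij}\nu(z_j)$. The only difference is that you explicitly justify, via the $a_i=f_i/g_i$ decomposition and the rank argument, why one can choose the $b_i$ inside $\mathcal{O}_{M_1,P_1}$ rather than merely in $K^*$ — a minor point the paper leaves implicit.
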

\begin{proof}
 By Hironaka's reduction of the singularities (see \cite{Hir}) of $M_0$, we get a
nonsingular projective model $M'$ of $K$ jointly with a birational
morphism $M'\rightarrow M_0$ that is the composition of a finite
sequence of blow-ups with non-singular centers. Consider the local
ring ${\mathcal O}_{M',P'}$ of $M'$ at the center $P'$ of $\nu$ and
chose elements $f_1,f_2,\ldots,f_r\in{\mathcal O}_{M,P}$ such that
$\nu(f_1),\nu(f_2),\ldots,\nu(f_r)$ are $\mathbb Z$-linearly
independent.  Another application of Hironaka's theorem gives a
birational morphism $M\rightarrow M'$, that is also a  composition
of a finite sequence of blow-ups with non-singular centers, such
that$f=\prod_{i=1}^rf_i$,  is a monomial (times a unit) in a
suitable regular system of parameters at any point of $M$ and hence
each of the $f_i$, $i=1,2,\ldots,r$ is also a monomial (times a
unit) in that regular system of parameters. In particular, if $P$ is
the center of $\nu$ at $M$ there is a regular system of parameters
${\mathbf z}=(z_1,z_2,\ldots,z_n)$ of ${\mathcal O}_{M,P}$ such that
$$ f_i=U_i{\mathbf z}^{{\mathbf m}_i}, U_i\in {\mathcal O}_{M,P}\setminus{\mathcal
M}_{M,P},\mbox{ for  } i=1,2,\ldots, n,
$$ where
${\mathbf m}_i=(m_{i,1}, m_{i,2}, \ldots, m_{i,n})\in{\mathbb
Z}_{\geq 0}^n$ and ${\mathbf z}^{{\mathbf
m}_i}=z_1^{m_{i,1}}z_2^{m_{i,2}}\cdots z_n^{m_{i,n}}$. In terms of
values, we have $ \nu(f_i)=\sum_{j=1}^nm_{ij}\nu(z_j)$. This implies
that there are $r$ variables among the  $z_j$ whose values are
$\mathbb Z$-linearly independent.
\end{proof}
\subsection{Coordinate changes and blow-ups}
Take a  parameterized regular local model ${\mathcal A}=({\mathcal
O},{\mathbf z})$. We will do ``atomic'' transformations of
${\mathcal A}$ of two types: {\em coordinate changes in the
dependent variables} and {\em coordinate blow-ups with codimension
two centers}. Our ``basic'' transformations, called {\em Puiseux
packages\/} will be
 certain sequences of coordinate changes and blow-ups.

  Let us describe the
 two types of transformations. Each one produces a parameterized
 local model ${\mathcal A}'=({\mathcal O}',{\mathbf z}')$.
 \par
 {\em Coordinate changes in the dependent variables.\/} Consider $j$ with
 $r+1\leq j\leq n$. A {\em $j$-coordinate change} is
  such that  $z'_i=z_i$ for $i\ne j$ and
$y'_j$ is  one of the following
\begin{enumerate}\item[a)]
$ y'_j=y_j-c{\mathbf x}^{\mathbf a}, \; \nu(y'_j)\geq \nu(y_j),\,
c\in k,\, {\mathbf a}\in{\mathbb Z}_{\geq 0}^r$.
\item[b)] $y'_j=y_j+y_s$,
 for another $s\ne j$ with $r+1\leq s\leq n$.
\end{enumerate}
If $r=n$ we do not do coordinate changes.

 {\em Coordinate
blow-ups with codimension two centers.\/} Take a pair $i,j$ of
distinct indices with $1\leq i\leq r$ and $1\leq j\leq n$. We say
that ${\mathcal A}'=({\mathcal O},{\mathbf z}')$ is obtained from
${\mathcal A}$ by an $(i,j)$-{\em blow-up} if the following holds.
First $z'_s=z_s$ for any $s\notin\{i,j\}$. In order two determine
$z'_i,z'_j$ we have three cases
\begin{enumerate}
\item $\nu(x_i)<\nu(z_j)$. We put $x'_i=x_i$ and $z'_j=z_j/x_i$.
\item $\nu(x_i)>\nu(z_j)$. We put $x'_i=x_i/z_j$ and $z'_j=z_j$.
\item $\nu(x_i)=\nu(z_j)$. Note that in this case we necessarily
have that $j\geq r+1$ and hence $z_j=y_j$. Since $\kappa_\nu=k$,
there is $c\in k$ with $\nu(y_j/x_i-c)>0$. We put $x'_i=x_i$ and
$y'_j=y_j/x_i-c$.
\end{enumerate}
The first two cases above are called {\em combinatorial\/} and the
third one corresponds to a blow-up {\em with translation}. If
$x_i,x_j$ are independent variables, we have always a combinatorial
case, since $\nu(x_i)\ne \nu(x_j)$.

The local ring ${\mathcal O}'$ is the (algebraic) localization of
${\mathcal O}[{\mathbf z}']$ at the ideal $({\mathbf z}')$.

In the case that $j\geq r+1$ the above blow-up will also be referred
as a $j$-{\em blow-up}.
\begin{remark}
\label{observacionuno} Let  $M$ be a projective model for $K$ such
that ${\mathcal O}={\mathcal O}_{M,P}$, where $P$ is the center of
$\nu$ at $M$. There is a closed irreducible algebraic subvariety
$Y\subset M$ of codimension two defined by the equations $x_i=z_j=0$
that is non singular at $P$. Let $\pi:M'\rightarrow M$ be the
blow-up of $M$ with center $Y$ and let $P'$ be the center of $\nu$
at $M'$. Then ${\mathcal O}'={\mathcal O}_{M',P'}$.
\end{remark}

\subsection{Puiseux packages of blow-ups} Let ${\mathcal
A}=({\mathcal O},{\mathbf z}=({\mathbf x},{\mathbf y}))$ be a
para\-meterized regular local model. Consider a dependent variable
$y_j$. Then $\nu(y_j)$ can be expressed uniquely as a $\mathbb
Q$-linear combination of  $\nu(x_1),\nu(x_2),\ldots,\nu(x_r)$. More
precisely, there are unique integer numbers $d>0$ and
$p_1,p_2,\ldots,p_r$ such that
$$
d\nu(y_j)=p_1\nu(x_1)+p_2\nu(x_2)+\cdots+p_r\nu(x_r)
$$
and $gcd(d;p_1,p_2,\ldots,p_r)=1$. In particular, the rational
function
$$
\Phi=y_j^d/{\mathbf x}^{\mathbf p} ,\quad {\mathbf x}^{\mathbf
p}=x_1^{p_1}x_2^{p_2}\cdots x_r^{p_r},
$$
has value equal to zero. We call this function {\em the $j$-contact
rational function\/} and $d$ is the {\em $j$-ramification index for
$\mathcal A$}. Note that there is a unique scalar $c\in k$ such that
$\nu(\Phi-c)>0$, since $\kappa_\nu=k$.

A coordinate $(i,s)$-blow-up is said to be  $j$-{\em admissible\/}
if either $1\leq s\leq r$ with $p_i\ne 0\ne p_s$ or $p_i\ne 0$ and
$s=j$.
\begin{remark}
Assume that ${\mathcal A}'$ has been obtained from ${\mathcal A}$ by
a  $j$-admissible coordinate $(i,s)$-blow-up. There are two
possibilities:
\begin{enumerate}
\item[A)] The blow-up is  combinatorial. In this case
$\Phi$ is also the $j$-contact rational function for ${\mathcal
A}'$.
\item[B)] The blow-up has a translation. Then $\Phi=y_j/x_i$ and $s=j$.
Moreover, we have $y'_j=\Phi-c$.
\end{enumerate}
\end{remark}
\begin{definition} A $j$-{\em Puiseux package\/} starting at ${\mathcal A}$ is a finite
sequence
$$
{\mathcal A}={\mathcal A}_0\rightarrow {\mathcal
A}_1\rightarrow\cdots\rightarrow {\mathcal A}_N={\mathcal A}'
$$
where  ${\mathcal A}_{t-1}\rightarrow {\mathcal A}_{t}$ is a
combinatorial $j$-admissible blow-up for $t=1,2,\ldots,N-1$ and
${\mathcal A}_{N-1}\rightarrow {\mathcal A}_{N}$ is a $j$-admissible
blow-up with translation. In this situation, we say that ${\mathcal
A}'$ has been obtained from ${\mathcal A}$ by a $j$-Puiseux package.
 \end{definition}
 Note that $y'_j=\Phi-c$, in view of the above Remark.
 \begin{proposition}
 \label{prop:dos}
  Given ${\mathcal A}$ and $j$,
 with $r<j\leq n$, there is at least one $j$-Puiseux package
 starting at ${\mathcal A}$.
 \end{proposition}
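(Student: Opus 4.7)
The strategy is to construct the $j$-Puiseux package by a generalized Euclidean algorithm on the integer tuple $(d;p_1,\ldots,p_r)$, normalized so that $\gcd(d;p_1,\ldots,p_r)=1$, appearing in the contact relation $d\nu(y_j)=\sum_{k=1}^r p_k\nu(x_k)$. The aim of the combinatorial phase is to reach the state $d=1$ and $\mathbf{p}=\mathbf{e}_i$ for some $i$; at that state $\nu(x_i)=\nu(y_j)$, so the $(i,j)$-blow-up falls into case~3 (translation), produces $y'_j=y_j/x_i-c=\Phi-c$ in accordance with part~B of the Remark preceding the statement, and closes off the package.

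First I would compute the effect of each admissible combinatorial blow-up on $(d;\mathbf{p})$ and on the values, by substituting the change of variables into the contact relation. An $(i,s)$-blow-up with $1\le i,s\le r$ in the case $\nu(x_i)<\nu(x_s)$ sends $p_i$ to $p_i+p_s$ and $\nu(x_s)$ to $\nu(x_s)-\nu(x_i)$, leaving $d$ and the other exponents unchanged. An $(i,j)$-blow-up in the case $\nu(x_i)<\nu(y_j)$ sends $p_i$ to $p_i-d$ and $\nu(y_j)$ to $\nu(y_j)-\nu(x_i)$. An $(i,j)$-blow-up in the case $\nu(x_i)>\nu(y_j)$ sends $d$ to $d-p_i$ and $\nu(x_i)$ to $\nu(x_i)-\nu(y_j)$. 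In every case the contact relation and the gcd normalization are preserved and the new $\nu(x_k)$'s remain $\mathbb{Z}$-linearly independent, so one has again a parameterized regular local model.

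For $r=1$ the Euclidean descent is transparent: the comparison of $\nu(x_1)$ and $\nu(y_j)$ is equivalent to that of $p_1$ and $d$, the two $(1,j)$-blow-ups implement the usual subtraction step, and the gcd condition forces termination at $d=p_1=1$, after which the translation blow-up is admissible. For $r\ge 2$ one interleaves $(i,s)$-blow-ups among the independent variables---which combine exponents via $p_i\mapsto p_i+p_s$, in particular cancelling exponents of opposite sign---with $(i,j)$-blow-ups that reduce $d$ and the $p_i$ modulo one another. A well-ordered measure such as the lexicographic $(d,\sum_k|p_k|)$ can be made to decrease strictly at each step by a careful choice of blow-up, which guarantees termination at the target state $(1;\mathbf{e}_i)$.

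The main obstacle is exhibiting, at every non-terminal state, a combinatorial $j$-admissible blow-up that strictly decreases the chosen measure while avoiding degenerate transitions (such as $d'\le 0$), especially when several $p_k$ are non-zero with mixed signs. This is handled by a case analysis exploiting both the choice of the pair $(i,s)$---possibly between two independent variables, to cancel opposite-sign exponents before touching $d$---and the two sub-cases of each $(i,j)$-blow-up; it is essentially the standard Perron--Hironaka reduction of a $\mathbb{Z}$-relation among values to monomial form.
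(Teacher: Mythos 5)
Your plan follows the same route as the paper's: reduce $(d;\mathbf p)$ by a Perron/Euclidean sequence of combinatorial blow-ups to the state $\Phi=y_j/x_i$, then close with the translation blow-up. The paper organizes the combinatorial phase by writing $\Phi=y_j^d\mathbf x^{\mathbf q}/\mathbf x^{\mathbf r}$ and proceeding in two stages---$(i,s)$-blow-ups between independent variables until $\mathbf q=0$ (all $p_k\ge 0$), then $(i,j)$-blow-ups---which is precisely what rules out the $d'\le 0$ degeneracy you flag (once all $p_k\ge 0$ and $\nu(x_i)>\nu(y_j)$, one gets $d\nu(y_j)\ge p_i\nu(x_i)>p_i\nu(y_j)$, hence $d>p_i$); your version interleaves the two kinds of steps instead and must handle that case by hand. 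Both write-ups sketch rather than prove the termination (neither the paper's $|p_i|+|p_s|$ nor your lexicographic $(d,\sum_k|p_k|)$ is in fact monotone under every admissible step), and both appropriately defer to the standard algorithms the paper cites; your explicit tabulation of how each blow-up transforms $(d;\mathbf p)$ is a useful gloss on what the paper leaves implicit.
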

 \begin{proof}  There are many known algorithms for doing this
 (see \cite{Hir,Tei,Spi,Vil,Bie-M}). We include a proof for the sake
 of completeness. Let us write
 $$
\Phi=\frac{y_j^d{\mathbf x}^{\mathbf q}}{{\mathbf x}^{\mathbf r}},
 $$
 where  $q_i=-p_i$ if $p_i<0$ and $q_i=0$, otherwise and, in the same
 way, we put  $r_i=p_i$ if $p_i>0$ and $r_i=0$ otherwise.
 There are two possibilities: ${\mathbf q}\ne 0$ or ${\mathbf q}=0$.
 Note that we always have that ${\mathbf r}\ne 0$, since
 $\nu(z_s)>0$ for all $s$. Assume first that ${\mathbf q}\ne 0$. Let
 us choose indices $1\leq i,s\leq r$ such that $p_ip_s<0$. We do the
 $(i,s)$-blow-up. The sum $\vert p_i\vert +\vert p_s\vert $
 decreases. We continue and one of the independent variables $x_i$
 or  $x_s$ disappears. In this way we get that ${\mathbf q}=0$. Now,
 we consider an index $i$ with $p_i\ne 0$ and we do the
 $(i,j)$-blow-up. This blow-up is combinatorial except in the case that $\Phi=y_j/x_i$.
 If we are not in this case, then $d+p_i$ decreases and finally the variable $x_i$ disappears.
We obtain that $\Phi=y_j/x_i$. The only possible $j$-admissible
coordinate
 blow-up is the $(i,j)$-blow-up. Moreover, $\nu(y_j)=\nu(x_i)$ and
 hence it is a coordinate blow-up with translation.
 \end{proof}

\begin{remark}
\label{obsecuaciones} We are interested in the following features of
Puiseux packages. Let us start with ${\mathcal A}=({\mathcal
O},{\mathbf z}=({\mathbf x},{\mathbf y}))$ and assume that
${\mathcal A}'=({\mathcal O}',{\mathbf z}'=({\mathbf x}',{\mathbf
y}'))$ has been obtained from $\mathcal A$ by a $j$-Puiseux package.
Let  $\Phi=y_j^d/{\mathbf x}^{\mathbf p}$ be the $j$-contact
function and suppose that $\nu(\Phi-c)>0$. For $s\notin\{i;p_i\ne
0\}\cup\{j\}$ we have that $z_s=z'_s$. Moreover $y'_j=\Phi-c$ and
there are monomial expressions
$$
 z_s= \left(\prod_{i=1}^r{{ x}'_i}^{{
 b}^{s}_i}\right)\Phi^{{b}^{s}_j};\; s\in\{i;p_i\ne 0\}\cup\{j\}.
$$
  This is proved by
  induction on the number of
 $j$-admissible coordinate blow-ups of the $j$-Puiseux package.
\end{remark}

\subsection{Statements in terms of parameterized regular local
models} Consider a foliation by lines
 ${\mathcal L}\subset\mbox{Der}_kK$ and
 a parameterized regular local model ${\mathcal A}=({\mathcal
O}, {\mathbf z})$. The {\em local foliation induced by $\mathcal L$
at\/} $\mathcal A$ is defined by
$$
{\mathcal L}_{\mathcal A}={\mathcal L}\cap\mbox{Der}_k{\mathcal O}.
$$
Obviously ${\mathcal L}_{\mathcal A}={\mathcal L}_{M,P}$ for any
projective model $M$ for $K$ such that ${\mathcal O}={\mathcal
O}_{M,P}$. In the next sections we shall prove the following
proposition
\begin{proposition}\label{pro:tres}  Assume that $n=3$. Let $\nu$ be a real $k$-valuation of $K$ with
$\kappa_\nu=k$ and take a foliation ${\mathcal
L}\subset\mbox{Der}_kK$. Consider a parameterized regular local
model
 ${\mathcal A}=({\mathcal O}, {\mathbf z})$ for $K,\nu$.  There is a finite
sequence of coordinate changes and blow-ups such that the
parameterized regular local model ${\mathcal A}'=({\mathcal O}',
{\mathbf z}')$ obtained from $\mathcal A$ satisfies one of the
following properties:
\begin{enumerate}
\item The foliation ${\mathcal L}_{{\mathcal A}'}$
is log-elementary.
\item There is  $\hat f\in
\widehat {\mathcal O}'$ having transversal maximal contact with
$\nu$.
\end{enumerate}
\end{proposition}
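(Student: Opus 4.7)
The plan is to prove Proposition \ref{pro:tres} by induction on the rational rank $r\in\{1,2,3\}$ of $\nu$, exactly following the three-case outline described in the introduction. In each case I would attach to $\mathcal{L}_{\mathcal{A}}$ a numerical invariant (a monomial-exponent datum, or a Newton polygon associated to a generator $\xi$ of $\mathcal{L}_{\mathcal{A}}$), and show that a well-chosen sequence of coordinate changes and $j$-Puiseux packages, as described in Section \ref{seccion:uno}, either makes this invariant drop strictly, or generates a Cauchy sequence $f_i\in \mathcal{O}$ with strictly increasing values $\nu(f_i)$ whose Krull-limit $\hat f\in\widehat{\mathcal{O}}'$ automatically has transversal maximal contact with $\nu$, the independence of values being guaranteed by the parameterized structure. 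Thus in either case we land in alternative (1) or (2) of the statement.

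For $r=3$ (the combinatorial case), there are no dependent variables and no coordinate changes are used. A generator $\xi$ of $\mathcal{L}_{\mathcal{A}}$ can be written uniquely as $\xi=\sum_{i=1}^{3} b_i \,\partial/\partial x_i$ with $b_i\in\mathcal{O}$. The $\mathbb{Z}$-linear independence of $\nu(x_1),\nu(x_2),\nu(x_3)$ lets us imitate the classical combinatorial monomialization for varieties: a finite sequence of combinatorial $(i,s)$-blow-ups with $1\le i,s\le 3$ reduces each $b_i$ to the form $U_i\mathbf{x}^{\mathbf{m}_i}$ with $U_i$ a unit (or zero). The resulting $\xi$ is visibly log-elementary, so alternative (2) is not needed here. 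This case works in any dimension $n$.

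For $r=2$, there is one dependent variable $y=y_3$. The idea is to study a Newton polygon associated to $\xi$ in the variable $y$ over the monomial-valued ring generated by $x_1,x_2$. A combinatorial $(i,s)$-blow-up acts on the polygon by a controlled affine change, while a $3$-Puiseux package (Proposition \ref{prop:dos}) strictly lowers a suitable characteristic vertex of the polygon via the translation step $y'_3=\Phi-c$. One shows that either after finitely many such packages the polygon has fallen to a position forcing $\mathcal{L}_{\mathcal{A}'}$ to be log-elementary, or the translation constants $c$ keep producing new $f_i=\Phi-c$ of strictly increasing value; their Krull-limit is then the desired $\hat f$ of alternative (2), with transversality provided by construction since $y$ is independent from $x_1,x_2$ modulo $\mathcal{M}^{2}$. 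The Newton-polygon argument is, up to notation, the standard one of Hironaka/Teissier/Spivakovsky invoked in the proof of Proposition \ref{prop:dos}.

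The case $r=1$, $n=3$ is where the real work lies. Only $x=x_1$ is independent; both dependent variables $y_2,y_3$ now contribute to $\mathcal{L}_{\mathcal{A}}$, and since $\nu$ is not discrete one cannot use either the combinatorial argument of $r=3$ or the single-variable Newton polygon of $r=2$ directly. The plan is to attach to $\xi$ a bivariate Newton-polygon/contact datum simultaneously recording the behavior in $y_2$ and in $y_3$, and to reduce it by alternating $2$-Puiseux and $3$-Puiseux packages, using the slightly stronger $n=2$, $r=n-1$ form of the previous case as an induction tool applied to the formal surfaces transverse to $x$. The main obstacle — and the heart of the argument — is precisely to control how a $j$-Puiseux package performed to improve one dependent variable affects the contact data of the other: one must define the invariant finely enough that alternating reductions cannot indefinitely resurrect each other, and that failure of termination after finitely many Puiseux packages forces a genuine Cauchy sequence of contact functions $\Phi_i-c_i$ whose Krull-limit $\hat f$ supplies alternative (2). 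Handling this interaction despite the non-discreteness of $\Gamma$ is exactly the difficulty signalled in the introduction for this case.
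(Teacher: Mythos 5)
Your overall decomposition by rational rank $r\in\{1,2,3\}$ matches the paper's organization, and the $r=3$ and $r=2$ sketches are close in spirit to Propositions \ref{pro:cuatro} and \ref{pro:cinco}. But two points deserve correction, and there is one genuine gap.

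For $r=3$ you propose to monomialize each coefficient $b_i$ of $\xi=\sum b_i\,\partial/\partial x_i$ separately into $U_i\mathbf{x}^{\mathbf{m}_i}$. That is not quite what is needed and does not by itself give log-elementarity (e.g.\ if every $\mathbf{m}_i$ has $|\mathbf{m}_i|\geq 2$). The paper instead writes $\xi=\sum f_i\,x_i\,\partial/\partial x_i$ with the $f_i$ coprime, and plays the combinatorial game on the Newton polyhedron of the tuple $(f_1,f_2,f_3)$ until it has a single vertex; coprimality forces that vertex to be the origin, so some $f_i$ is a unit and $\xi$ has non-nilpotent linear part. For $r=2$, your description of $\hat f$ as a Krull-limit of the translated contact functions $\Phi_i-c_i$ misidentifies the construction: the paper's maximal-contact element arises (Lemma \ref{lema:cinco} and Lemma \ref{lema:diez}) from iterated coordinate changes $y^{(j+1)}=y^{(j)}-c_j\mathbf{x}^{\mathbf{p}(j)}$ in the dependent variable, performed when the main height stabilizes at $\geq 1$ and the ramification index remains $1$; the Puiseux packages themselves are what drive the main height down when that stabilization fails.

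The real gap is in $r=1$, $n=3$. Your plan is to attach a bivariate Newton/contact datum to $\xi$ and alternate $2$-Puiseux and $3$-Puiseux packages, controlling their interaction; but you do not specify the invariant that makes this terminate, and indeed you concede that this control is ``exactly the difficulty.'' What makes the paper's argument go through is a new technical tool you do not mention: \emph{etale Puiseux packages} via the extension $\mathcal{O}^{\natural}=\mathcal{O}[T]/(T^{d}-x)$. Passing to this local-etale extension forces the ramification index to be $1$, which is what permits the $x$-preparation of $\xi(x)/x$ (Proposition \ref{pro:nueve}), the strong $(x,w)$-preparation and completely-prepared form, and then the control of the main height $\hbar$ through the critical polynomial and a Tchirnhausen preparation in both the recessive and dominant cases. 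Propositions \ref{prop:seis} and \ref{prop:siete} are then needed to descend from the etale extension back to $\mathcal{O}$. Without the etale step your alternating $j$-Puiseux plan has no concrete mechanism to prevent the two dependent variables from indefinitely interfering with each other, and without the critical-polynomial/Tchirnhausen machinery there is no monotone invariant to guarantee termination or to certify the construction of the maximal-contact element $\hat f$. So as written the $r=1$ case is not a proof but a restatement of the problem.
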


This result implies Theorem \ref{teo:tres}. Indeed, we already know
that there is a birational morphism $M\rightarrow M_0$, composition
of blow-ups with nonsingular centers, such that $M$ is non-singular
and the local ring ${\mathcal O}_{M,P}$ of $M$ at the center $P$ of
$\nu$ supports a parameterized regular local model ${\mathcal A}$.
The sequence of blow-ups that gives ${\mathcal A}'$ may be
substituted, by Hironaka's reduction of singularities, by another
sequence of blow-ups with non-singular centers, since the original
blow-ups are non-singular (in fact they are non-singular and two
dimensional) at the corresponding centers of the valuation at each
projective model.

Next sections are devoted to proving Proposition \ref{pro:tres}.

\section{The combinatorial case $(r=n)$}
\label{secciondos} The following Proposition \ref{pro:cuatro}
implies Proposition \ref{pro:tres} for the case of maximal rational
rank. Let us note that in Proposition \ref{pro:cuatro} there is no
assumption about the (arquimedean) rank of the valuation nor on the
fact that $\kappa_\nu=k$. Indeed if $n=r$ we know that $\kappa_\nu$
is an algebraic extension of $k$ and thus $\kappa_\nu=k$ since we
assume the base field $k$ to be algebraically closed.
\begin{proposition}\label{pro:cuatro}
Let $\nu$ be a  $k$-valuation of $K$ with maximal rational rank
$r=n$. Take a foliation ${\mathcal L}\subset\mbox{Der}_kK$ and a
parameterized regular local model
 ${\mathcal A}$ for $K,\nu$.  There is a
 parameterized regular local model
 ${\mathcal A}'$ obtained from $\mathcal
 A$ by a finite sequence of coordinate blow-ups
such that ${\mathcal L}_{{\mathcal A}'}$ is elementary.
\end{proposition}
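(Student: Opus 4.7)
The plan is to carry out a combinatorial monomialization of a generator of $\mathcal{L}_{\mathcal{A}}$ using only the admissible coordinate $(i,j)$-blow-ups — with $r=n$ there are no dependent variables, hence no coordinate changes are available. First I would select a generator $\xi=\sum_{s=1}^n\beta_s\partial/\partial x_s$ of $\mathcal{L}_{\mathcal{A}}$ and re-express it in the logarithmic basis $\delta_s=x_s\partial/\partial x_s$ of $\mathrm{Der}_kK$, writing $\xi=\sum_s h_s\delta_s$ with $h_s=\beta_s/x_s\in K$. Each $h_s$ admits a generalized series expansion in monomials $\mathbf{x}^{\mathbf{m}}$, and because $\nu(x_1),\dots,\nu(x_n)$ are $\mathbb{Z}$-linearly independent and positive, distinct exponents produce distinct valuations; the union of the supports of the $h_s$ is therefore well-ordered by $\nu$. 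This furnishes two natural invariants: the leading exponent $\mathbf{m}_0\in\mathbb{Z}^n$ of minimum valuation $\mu_0$, and the leading coefficient vector $L(\xi)\in k^n$ whose $s$-th component is the coefficient of $\mathbf{x}^{\mathbf{m}_0}$ in $h_s$.

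Next I would establish the transformation laws under a combinatorial $(i,j)$-blow-up, say with $\nu(x_i)<\nu(x_j)$: the valuation-group basis transforms unimodularly via $\nu(x_j)\mapsto\nu(x_j)-\nu(x_i)$; monomials transform by $m_i'=m_i+m_j$, $m_j'=m_j$, other coordinates unchanged; and the logarithmic basis by $\delta_i=\delta_i'-\delta_j'$, $\delta_j=\delta_j'$. A direct computation shows that the leading value $\mu_0$ is preserved, $L(\xi)$ transforms by an invertible $k$-linear map (so stays nonzero), and $\mathbf{m}_0$ evolves by an elementary unimodular transformation of $\mathbb{Z}^n$. I would then iterate such blow-ups in a Euclidean-algorithm-style reduction so as to bring $\mathbf{m}_0$, together with the exceptional monomial factor that separates $\xi$ from the generator $\xi_0$ of $\mathcal{L}_{\mathcal{A}'}$, into the position where $\xi_0$ has minimum-valuation exponent $\mathbf{0}$ in the new lattice with nonzero leading coefficient. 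The resulting generator then has the form $\xi_0=\sum_s c_s'\, x_s'\, \partial/\partial x_s' + (\text{higher-value terms})$ with $(c_s')\in k^n$ nonzero, so its linear part at $P'$ is the diagonal matrix $\mathrm{diag}(c_1',\dots,c_n')$ with at least one nonzero eigenvalue; hence non-nilpotent, and $\mathcal{L}_{\mathcal{A}'}$ is elementary.

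The main obstacle is this Euclidean reduction step: one must verify that the admissible $(i,j)$-blow-ups collectively realize every elementary row operation needed on $\mathbb{Z}^n$, and that the accumulated exceptional-divisor monomial factor is exactly the one required to pass from $\xi$ to the generator $\xi_0$ with leading exponent at the origin. This is standard toric/Hironaka monomialization adapted to the vector-field setting — the paper itself notes the combinatorial $r=n$ case has ``few differences with respect to the classical situations of varieties'' — and termination follows from positivity and $\mathbb{Z}$-independence of the $\nu(x_s)$. The invertibility of all basis-change matrices throughout the process guarantees that $L(\xi)$ never vanishes, so the final generator genuinely carries the required nonzero leading $\delta$-term.
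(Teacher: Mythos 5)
Your framework is aligned with the paper's: express a generator in the logarithmic basis, observe that the $\mathbb{Z}$-independence of the $\nu(x_s)$ gives a unique minimum-value exponent $\mathbf{m}_0$, and that the leading-coefficient vector $L(\xi)$ is carried unimodularly through admissible blow-ups and so never vanishes. These are sound observations and correspond to the paper's use of the Newton polyhedron $\mathcal{N}(\mathcal{L};\mathbf{x})$ and its remark that a single vertex (necessarily at $\mathbf{0}$, since the coefficients have no common factor) forces a non-nilpotent linear part.

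The genuine gap is the termination argument, which you explicitly mark as the ``main obstacle'' and then defer to ``standard toric/Hironaka monomialization.'' That deferral skips the actual content of the paper's proof. Your chosen invariant --- the exponent $\mathbf{m}_0$, or its value $\mu_0=\nu(\mathbf{x}^{\mathbf{m}_0})$ --- does not by itself furnish a descent: an admissible $(i,s)$-blow-up sends $\mathbf{m}_0$ to $\sigma(\mathbf{m}_0)-\mathbf{v}$, where $\mathbf{v}$ is the exponent of the new common monomial factor, and whenever $\mathbf{v}=0$ (i.e.\ the transformed coefficients are already coprime, which can and does happen) the value $\mu_0$ does not drop at all; moreover, since the value group is a dense subgroup of $\mathbb{R}$ rather than a well-ordered set, a weakly decreasing real-valued quantity cannot yield termination on its own. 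The paper runs a purely combinatorial descent instead: the number $N$ of vertices of the Newton polyhedron is non-increasing under the admissible blow-ups, and while $N$ stays constant one fixes two distinct vertices $\mathbf{a},\mathbf{b}$, subtracts their monomial gcd to get $\tilde{\mathbf{a}},\tilde{\mathbf{b}}$ with disjoint supports, chooses $(i,s)$ with $\tilde a_i\tilde b_s\ne 0$, and shows that the pair consisting of the index set $\{t:\tilde a_t\ne 0 \mbox{ or } \tilde b_t\ne 0\}$ and the integer $\tilde a_i+\tilde b_s$ strictly improves. This is an instance of the Hironaka polyhedra game and is precisely what you would need to supply in place of ``termination follows from positivity and $\mathbb{Z}$-independence.''
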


Let us recall that  ${\mathcal L}_{\mathcal A}$ is {\em elementary}
if there is a vector field $\xi\in{\mathcal L}_{\mathcal A}$ having
a non-nilpotent linear part. If $\xi\in\mbox{Der}_k{\mathcal O}$ is
singular, that is $\xi({\mathcal O})\subset{\mathcal M}$, the {\em
linear part} $L\xi$ is intrinsically defined as the $k={\mathcal
O}/{\mathcal M}$-linear map
$$
L\xi:{\mathcal M}/{\mathcal M}^2\rightarrow {\mathcal M}/{\mathcal
M}^2
$$
given by $f+{\mathcal M}^2 \mapsto \xi f+{\mathcal M}^2$. Note that
``elementary'' implies ``log-elementary''. Note also that a vector
field $\xi\in\mbox{Der}_k{\mathcal O}$ of the form
\begin{equation}
\label{formulauno} \xi=\sum_{i=1}^nf_ix_i\frac{\partial}{\partial
x_i}, \quad b_i\in {\mathcal O},
\end{equation}
has a non-nilpotent linear part if and only if one of the
 $f_i$ is a unit in $\mathcal O$.

\subsection{Newton polyhedron}
Note that ${\mathbf z}={\mathbf x}$, since all the variables have
$\mathbb Q$-linearly independent values. Any element $f\in{\mathcal
O}$ can be expanded in a formal series
$$
f=\sum f_{{\mathbf a}}{\mathbf x}^{\mathbf a}; \quad f_{\mathbf
a}\in k.
$$
The {\em support of $f$} is defined by $\mbox{Supp}(f;{\mathbf
x})=\{{\mathbf a};f_{\mathbf a}\ne 0\}\subset{\mathbb Z}_{\geq
0}^n$. For a vector field $\xi\in\mbox{Der}_k{\mathcal O}$ written
as in formula (\ref{formulauno}), the support is $$
 \mbox{Supp}(\xi;{\mathbf
x})=\cup_{i=1}^n\mbox{Supp}(f_i;{\mathbf x}).$$ The {\em Newton
polyhedron\/} ${\mathcal N}(\xi;{\mathbf x})$ is the convex hull in
${\mathbb R}^n$ of the set $\mbox{Supp}(\xi;{\mathbf x})+{\mathbb
R}^n_{\geq 0}$.

The local foliation ${\mathcal L}_{\mathcal A}$ contains a
 vector field $\xi$ of the form (\ref{formulauno}) such that the coefficients
$f_i\in{\mathcal O}$ have no common factor in $\mathcal O$, that we
call an {\em ${\mathbf x}$-generator} of ${\mathcal L}_{\mathcal
A}$. To see this, take any $\eta\in{\mathcal L}_{\mathcal A}$, then
$(\prod_{i=1}^nx_i)\eta$ is of the form (\ref{formulauno}) and now
it is enough to divide by the $\gcd$ of the $f_i$.

 We
define the {\em Newton polyhedron \/} $ {\mathcal N}({\mathcal
L};{\mathbf x})$ by  $ {\mathcal N}({\mathcal L};{\mathbf
x})={\mathcal N}(\xi;{\mathbf x})$, where $\xi$ is an $\mathbf
x$-generator of ${\mathcal L}_{\mathcal A}$.
\begin{remark}
The Newton polyhedron ${\mathcal N}({\mathcal
L};{\mathbf x})$ has vertices in ${\mathbb Z}^n_{\geq 0}$. Since the
coefficients $f_i$ have no common factor (and ``a fortiori'' they
are free of a monomial common factor) the only ${\mathbf v}\in
{\mathbb R}^n_{\geq 0}$ such that
$$
{\mathcal N}({\mathcal L};{\mathbf x})\subset {\mathbf v}+{\mathbb
R}^n_{\geq 0}
$$
is ${\mathbf v}=0$. Thus, if ${\mathcal N}({\mathcal L};{\mathbf
x})$ has only one vertex $\mathbf v$, then ${\mathbf v}=0$ and the
vector field $\xi$ has a non-nilpotent linear part. This implies
that ${\mathcal L}_ {\mathcal A}$ is elementary.
\end{remark}
\subsection{The effect of a blow-up} Let ${\mathcal
A}'=({\mathcal O},{\mathbf z}')$ be obtained from ${\mathcal A}$ by
an $(i,s)$-{\em blow-up}. Recall that there are no dependent
variables and hence it is a combinatorial blow-up. If
$\nu(x_i)<\nu(x_s)$, we have $x'_s=x_s/x_i$ and $x'_s=x_s$, for
$s\ne j$. Consider the affine function $ \sigma_{is}^i:{\mathbb
R}^n\rightarrow {\mathbb R}^n $ defined by
$$
\sigma_{is}^i({\mathbf a})_t=\left\{
\begin{array}{lr}
a_i+a_j,&\mbox{ if } t= s\\
a_s,&\mbox{ if } t\ne s
\end{array}
\right.
$$
Take ${\mathbf v}\in {\mathbb R}^n_{\geq 0}$ such that
$\sigma_{ij}^i\left({\mathcal N}({\mathcal L};{\mathbf x})\right)$
is inscribed in the orthant ${\mathbf v}+{\mathbb R}^n_{\geq 0}$.
Then the Newton polyhedron ${\mathcal N}({\mathcal L};{\mathbf x}')$
is obtained as
$$
{\mathcal N}({\mathcal L};{\mathbf
x}')=\left(\sigma_{is}^i\left({\mathcal N}({\mathcal L};{\mathbf
x})\right)-{\mathbf v}\right)+{\mathbb R}_{\geq 0}^n.
$$
In fact, the behavior of the Newton polyhedron is the same one as
the behavior of the Newton polyhedron of the ideal generated by the
coefficients $f_i$. In the case $\nu(x_i)>\nu(x_s)$, we do the same
argument with the corresponding affine map $\sigma_{is}^s$.
\subsection{End of the proof of Proposition \ref{pro:cuatro}.} We
can use the same idea as in the proof of Proposition \ref{prop:dos}.
Let $N$ be the number of vertices of ${\mathcal N}({\mathcal
L};{\mathbf x})$. After doing an $(i,j)$-blow-up, we obtain that
$N'\leq N$. If $N=1$, we are done. Assume that $N\geq 2$. Take two
distinct vertices ${\mathbf a}$ and ${\mathbf b}$ of ${\mathcal
N}({\mathcal L};{\mathbf x})$ and let $\mathbf v$ be the element in
${\mathbb Z}_{\geq 0}^n$ such that the set $\{{\mathbf a},{\mathbf
b}\}$ is inscribed in ${{\mathbf v}}+{\mathbb R}_{\geq 0}^n$. In
other terms, the monomial ${\mathbf x}^{\mathbf v}$ is the $\gcd$ of
${\mathbf x}^{\mathbf a}$ and ${\mathbf x}^{\mathbf b}$. Put
$\tilde{\mathbf a}={\mathbf a}-{\mathbf v}$ and $\tilde{\mathbf
b}={\mathbf b}-{\mathbf v}$. Note that for any index $t$ we have
$\tilde a_t\tilde b_t=0$ and also $\tilde {\mathbf a}\ne 0\ne \tilde
{\mathbf b}$. Choose indices $i,s$ with $\tilde a_i\tilde b_s\ne0$.
Do the $(i,s)$-blow-up. Assuming that $N'=N$, the set of indices
$$
\{t;\; \tilde a_t\ne 0 \mbox{ or } \tilde b_t\ne 0\}
$$
is contained in the corresponding one after blow-up. If the two sets
coincide, the amount $\tilde a_i+\tilde b_s$ decreases strictly.
This ends the proof.
\begin{remark}
The same kind of combinatorial game, but using centers of any
codimension, with a ``permissibility'' additional condition, is
called the Weak Hironaka's Game \cite{Spi}.
\end{remark}

\section{The Newton-Puiseux Polygon}
\label{secciontres}
 Let us assume in this section that
 $r=n-1$, $\kappa_\nu=k$ and take a
 parameterized regular local model ${\mathcal A}=({\mathcal
O},{\mathbf z}=({\mathbf x},{ y}))$. Note that since $r=n-1$, there
is only one dependent variable $y$.

Consider an element $f \in y^{-1}{\mathcal O}$, that we write
$f=\sum_{s=-1}^\infty h_{s}({\mathbf x})y^s$, where $h_{s}({\mathbf
x})$ is a formal series $h_{s}({\mathbf x})\in k[[{\mathbf x}]]\cap
{\mathcal O}$. The {\em Newton-Puiseux support of $f$} is the set
$$
\mbox{NPSup}({f};{\mathbf x},y)=\{(\nu(h_{s}),s); h_{s}\ne 0
\}\subset {\Gamma}\times{\mathbb Z}_{\geq -1}.$$ We denote by
$\alpha(f;{\mathbf x},y)$ the minimum abscissa of the Newton Puiseux
support, that is $\alpha(f;{\mathbf x},y)=\min\{(\nu(h_s))\}$. The
{\em main height } $\hbar(f;{\mathbf x},y)$ is the minimum of the
$s$ such that $\nu(h_s)=\alpha(f;{\mathbf x},y)$. Let
$\delta(f;{\mathbf x},y)$ be the minimum of the values
$\nu(h_s)+s\nu(y)$. The {\em critical segment} ${\mathcal
C}(f;{\mathbf x},y)$ is the set of  the $s$ such that
$$\nu(h_s)+s\nu(y)=\delta(f;{\mathbf x},y).$$ The {\em main height}
$\chi(f;{\mathbf x},y)$ is the highest $s$ in the critical segment.
Let us note that $\chi(f;{\mathbf x},y)\leq \hbar (f;{\mathbf
x},y)$.

Consider a finite list ${\mathbf f}=(f_1,f_2,\ldots,f_t)$ of
elements
 $f_j\in y^{-1}{\mathcal O}$. The Newton-Puiseux support
 $\mbox{NPSup}({\mathbf f};{\mathbf x},y)$ is the set of $(u,s)$,
  where $u$ is the minimum of the $u_j$ such
  that $(u_j,s)\in \mbox{NPSup}(f_j;{\mathbf x},y)$, for
  $j=1,2,\ldots,t$. We obtain in this way a definition for
  $\alpha({\mathbf f};{\mathbf x},y)$,
  $\hbar({\mathbf f};{\mathbf x},y)$
  ,$\delta({\mathbf f};{\mathbf x},y)$ and
  $\chi({\mathbf f};{\mathbf x},y)$ since these invariants depend
  only on the Newton-Puiseux support.

\subsection{Newton-Puiseux Polygon of a foliation} Consider the free
${\mathcal O}$-module $\mbox{Der}_k{\mathcal O}[\log {\mathbf x}]$
whose elements are the vector fields of the form
\begin{equation}
\label{camponmenosuno} \xi=\sum_{i=1}^{n-1}f_i({\mathbf
x},y)x_i\frac{\partial}{\partial x_i}+g({\mathbf
x},y)\frac{\partial}{\partial y}
\end{equation}
where $g\in{\mathcal O}$, $f_i\in{\mathcal O}$, $i=1,2,\ldots,n-1$.
Such vector fields will be called {\em $\mathbf x$-logarithmic
vector fields}, or simply {\em $\mathbf x$-vector fields}. Let us
denote $f_n=g/y$ and ${\mathbf f}=(f_1,f_2,\ldots,f_n)$. We define
$\mbox{NPSup}(\xi;{\mathbf x},y)=\mbox{NPSup}({\mathbf f};{\mathbf
x},y)$ and
\begin{eqnarray*}
  \alpha(\xi;{\mathbf x},y)=\alpha({\mathbf f};{\mathbf
  x},y);&\quad
  \hbar(\xi;{\mathbf x},y)= \hbar({\mathbf f};{\mathbf x},y)\\
  \delta(\xi;{\mathbf x},y)= \delta({\mathbf f};{\mathbf x},y);&\quad
  \chi(\xi;{\mathbf x},y)=\chi({\mathbf f};{\mathbf x},y).
\end{eqnarray*}
Given a foliation ${\mathcal L}\subset \mbox{Der}_kK$, we consider
the local {\em $\mathbf x$-logarithmic foliation ${\mathcal
L}_{\mathcal A}[\log{\mathbf x}]$ at $\mathcal A$} defined by
\begin{equation}
\label{eq:log} {\mathcal L}_{\mathcal A}[\log{\mathbf x}]={\mathcal
L}\cap \mbox{Der}_k{\mathcal O}[\log{\mathbf x}].
\end{equation}
We define the {\em main height} $\hbar({\mathcal L};{\mathcal A})$,
respectively the {\em critical height} $\chi({\mathcal L};{\mathcal
A})$, to be the minimum of the $\hbar({\xi};{\mathbf x}, y)$,
respectively $\chi({\xi};{\mathbf x}, y)$, where $\xi\in {\mathcal
L}_{\mathcal A}[\log{\mathbf x}]$. Note that $$\hbar({\mathcal
L};{\mathcal A})\geq\chi({\mathcal L};{\mathcal A})\geq -1.$$ These
ones are the main invariants we shall use to control the singularity
of ${\mathcal L}$ after performing a Puiseux package.

\subsection{The initial parts}
\label{subsectioninitialparts}
 Consider an element $h=\sum_{\mathbf m}\lambda_{\mathbf m}{\mathbf x}^{\mathbf m}\in {\mathcal O}\cap k[[{\mathbf
 x}]]$. Since
the values $\nu(x_i)$, $i=1,2,\ldots, n-1$ are $\mathbb Q$-linearly
independent, there is exactly one exponent ${\mathbf m}_0$ such that
$\nu(\lambda_{{\mathbf m}_0}{\mathbf x}^{{\mathbf m}_0})=\nu(h)$.
Moreover, if $\tilde h=h-\lambda_{{\mathbf m}_0}{\mathbf
x}^{{\mathbf m}_0}$ then $\nu(\tilde h)>\nu(h)$. Take an element
$\gamma\in\Gamma$ with $\gamma\leq\nu(h)$. We define the
$\gamma$-{\em initial form\/} $\mbox{In}_{\gamma}(h)$ by
$\mbox{In}_{\gamma}(h)=0$ if $\gamma<\nu(h)$ and $
\mbox{In}_{\nu(h)}(h)=\lambda_{{\mathbf m}_0}{\mathbf x}^{{\mathbf
m}_0}$ if $\gamma=\nu(h)$. Given a list ${\mathbf
h}=(h_1,h_2,\ldots,h_n)$ of elements $h_j=h_j({\mathbf x})\in
k[[x]]\cap {\mathcal O}$, and $\gamma\in\Gamma$ with
$\gamma\leq\min\{\nu(h_j(\mathbf x));j=1,2,\ldots,n\}$ we put
$$
\mbox{In}_\gamma({\mathbf h};{\mathbf x})=(\mbox{In}_\gamma({
h_1};{\mathbf x}),\mbox{In}_\gamma({ h_2};{\mathbf
x}),\ldots,\mbox{In}_\gamma({ h_n};{\mathbf x})).
$$
If we have a vector field of the form
$$
\eta= \sum_{j=1}^{n-1}h_{j}({\mathbf x})x_j\frac{\partial}{\partial
x_j} +h_{n}({\mathbf x})y\frac{\partial}{\partial y}
$$
and  $\gamma\leq\min\{\nu(h_j(\mathbf x));j=1,2,\ldots,n\}$ we put
$$\mbox{In}_\gamma(\eta;{\mathbf x})=
\sum_{j=1}^{n-1}\mbox{In}_\gamma(h_{j};{\mathbf
x})x_j\frac{\partial}{\partial x_j} +\mbox{In}_\gamma(h_{n};{\mathbf
x})y\frac{\partial}{\partial y}.
$$

 Take an $\mathbf x$-vector field $\xi\in \mbox{Der}{\mathcal O}[\log
{\mathbf x}]$  that we write as in equation (\ref{camponmenosuno}).
Put  $ f_j=\sum_{s=-1}^{\infty}h_{js}({\mathbf x})y^{s}$,
$j=1,2,\ldots,n$. We have $\xi=\sum_{s=-1}^ny^s\eta_s$, where
\begin{equation}
\label{eq:descomposicion} \eta_s=\sum_{j=1}^{n-1}h_{js}({\mathbf
x})x_j\frac{\partial}{\partial x_j} +h_{ns}({\mathbf
x})y\frac{\partial}{\partial y}; \quad s=-1,0,1,\ldots .
\end{equation}
 Put
$\delta=\delta(\xi;{\mathbf x},y)=\min_{j,s}\{\nu(y^sh_{js}({\mathbf
x}))\}$. We define the initial form $\mbox{In}(\xi;{\mathbf x},y)$
as
$$
\mbox{In}(\xi;{\mathbf x},y)=\sum_{s=-1}^\infty y^s
\mbox{In}_{\delta-s\nu(y)}(\eta_s;{\mathbf x}).
$$
Let us note that if $\tilde\xi=\xi-\mbox{In}(\xi;{\mathbf x},y)$,
then $\delta(\tilde\xi;{\mathbf x},y)>\delta(\xi;{\mathbf x},y)$.
Note also that if $\chi=\chi(\xi;{\mathbf x},y)$ is the critical
height, then $\mbox{In}_{\delta-s\nu(y)}(\eta_s;{\mathbf x})=0$ for
$s>\chi$ and $\mbox{In}_{\delta-\chi\nu(y)}(\eta_\chi;{\mathbf
x})\ne0$.
 In particular $\mbox{In}(\xi;{\mathbf x},y)$ is a finite sum
$$
\mbox{In}(\xi;{\mathbf x},y)=\sum_{s=-1}^\chi y^s
\mbox{In}_{\delta-s\nu(y)}(\eta_s;{\mathbf x}).
$$
Now we are going to give a particular expression of
$\mbox{In}(\xi;{\mathbf x},y)$ in terms of the contact rational
function $\Phi=y^d/{\mathbf x}^{\mathbf p}$.

Let us take an index $s$ such that
$\mbox{In}_{\delta-s\nu(y)}(\eta_s;{\mathbf x})\ne 0$ and in
particular $s\leq \chi$. Write $
\mbox{In}_{\delta-s\nu(y)}(\eta_s;{\mathbf x})= {\mathbf
x}^{{\mathbf q}(s)}\Lambda_s$, where $\Lambda_s$ is the linear
vector field
$$
\Lambda_s= \sum_{j=1}^{n-1}\lambda_{js}x_j\frac{\partial}{\partial
x_j}+\lambda_{ns}y\frac{\partial}{\partial y}
 $$
and  ${\mathbf q}(s)\in{\mathbb Z}_{\geq 0}^{n-1}$. Put ${\mathbf
r}(s)={\mathbf q}(s) - {\mathbf q}(\chi)$. We have
$$
\nu({\mathbf x}^{{\mathbf
r}(s)})=(\chi-s)\nu(y)=\frac{\chi-s}{d}\nu({\mathbf x}^{\mathbf
p}),$$ this implies that $ ((\chi-s)/{d}){\mathbf p}={\mathbf r}(s)
$ and thus $ ((\chi-s)/{d}){\mathbf p}\in {\mathbb Z}^{n-1}$. Since
the coefficients $p_1,p_2,\ldots,p_{n-1}$ have no common factor, we
have that $(\chi-s)/{d}\in {\mathbb Z}$. Put $t=(\chi-s)/d\in
{\mathbb Z}_{\geq 0}$; note that $t\leq\varrho$, where
$\varrho\in{\mathbb Z}_{\geq 0}$ is the biggest integer bounded
above by $(\chi+1)/d$.

  We may
write $\mbox{In}(\xi;{\mathbf x},y)$ as follows:
\begin{equation}
\label{eq:formainicial} \mbox{In}(\xi;{\mathbf x},y)={\mathbf
x}^{{\mathbf q}(\chi)}y^\chi
\sum_{s=-1}^{\chi}{\frac{1}{\Phi^{(\chi-s)/d}}}\Lambda_s= {\mathbf
x}^{{\mathbf q}(\chi)}y^\chi
\sum_{t=0}^{\varrho}{\frac{1}{\Phi^{t}}}\Lambda_{\chi-dt}.
\end{equation}
In order to simplify the notation, let us rename
$\Delta_{t}=\Lambda_{\chi-dt}$. Then
\begin{equation}
\label{eqtres} {\mathbf x}^{-{\mathbf
q}(\chi)}y^{-\chi}\Phi^{\varrho}\mbox{In}(\xi;{\mathbf x},y) =
\sum_{t=0}^{\varrho}{{\Phi}}^{\varrho-t}\Delta_t.
\end{equation}
We recall that $\Delta_0\ne 0$. \subsection{The expression of the
derivatives after a Puiseux package} Assume that ${\mathcal
A}'=({\mathcal O}',{\mathbf z}'=({\mathbf x}',{y}'))$ has been
obtained from ${\mathcal A}=({\mathcal O},{\mathbf z}=({\mathbf
x},{y}))$ by a Puiseux package. Let $\Phi=y^d/{\mathbf x}^{\mathbf
p}$ be the contact rational function.
 By remark \ref{obsecuaciones} we
have that $y'=\Phi-c$  and there is a matrix $B=(b_i^s)$ with
determinant $1$ or $-1$ and positive integer coefficients such that
$$
 z_s=\left(
 \prod_{i=1}^{n-1}{{x}_i'}^{{b}^{s}_i}\right)\Phi^{b^{s}_n};\quad
 s=1,2,\ldots,n.
$$
Moreover if $p_s=0$ we know that $x_s=x'_s$, that is $b^s_i=0$ if
$i\ne s$  and $b^s_s=1$. This implies that
\begin{eqnarray}
x'_i\frac{\partial}{\partial x'_i}&=&\sum_{s=1}^{n-1}{
b}_i^{s}x_s\frac{\partial}{\partial x_s}+
{b}_i^{n}y\frac{\partial}{\partial y};\quad i=1,2,\ldots,n-1,
\\
\Phi\frac{\partial}{\partial y'}&=&
\sum_{s=1}^{n-1}b^s_nx_s\frac{\partial}{\partial x_s}+
b^n_ny\frac{\partial}{\partial y}
\end{eqnarray}
Let $B^{-1}=(\tilde b^i_s)$ be the inverse matrix of $B=( b^s_i)$.
We obtain
\begin{eqnarray}
x_s\frac{\partial}{\partial x_j}&=&\sum_{i=1}^{n-1}{\tilde
b}_s^{i}x'_i\frac{\partial}{\partial x'_i}+
{\tilde b}_s^{n}\Phi\frac{\partial}{\partial y'};\quad s=1,2,\ldots,n-1,\\
y\frac{\partial}{\partial y}&=& \sum_{i=1}^{n-1}\tilde
b^i_nx'_i\frac{\partial}{\partial x'_i}+\tilde
b^n_n\Phi\frac{\partial}{\partial y'}
\end{eqnarray}
Note that the $\tilde b^s_i$ are integer (may be negative) numbers.
Moreover, we have
\begin{equation}
\label{eq:coeficiented} \tilde
b^n_n=\frac{1}{\Phi}y\frac{\partial}{\partial y}(\Phi)=d\ne 0.
\end{equation}
Finally, a given linear vector field
$\Delta=\sum_{i=1}^n\mu_iz_i\partial/\partial z_i$, we have
\begin{equation}
\label{eq:tildemu} \Delta=
\left\{\sum_{i=1}^{n-1}\tilde\mu_{i}x'_j\frac{\partial}{\partial
x'_j} +\tilde\mu_{n}y'\frac{\partial}{\partial y'}\right\}+
c\tilde\mu_{n}\frac{\partial}{\partial y'}.
\end{equation}
where $(\tilde
\mu_{1},\tilde\mu_{2},\ldots,\tilde\mu_{n})=(\mu_{1},\mu_{2},\ldots,\mu_{n})B^{-1}$.
\section{Rational co-rank one}
In this section we also assume that
 $r=n-1$, $\kappa_\nu=k$. We take a
 parameterized regular local model ${\mathcal A}=({\mathcal
O},{\mathbf z}=({\mathbf x},{ y}))$ and a foliation ${\mathcal
L}\subset\mbox{Der}_kK$.  We will prove the following result
\begin{proposition}
\label{pro:cinco} There is a a parameterized regular local model
${\mathcal A}'$ obtained from $\mathcal A$ by a  finite sequence of
coordinate changes in the dependent variable and coordinate blow-ups
with codimension two centers, such that one of the following
properties holds:
\begin{enumerate}
\item  There is $\hat f\in
\widehat {\mathcal O}'$ having transversal maximal contact with
$\nu$.\item The local foliation ${\mathcal L}_{{\mathcal A}'}$ is
non-singular if $n=2$ and elementary if $n\geq 3$.
\end{enumerate}
\end{proposition}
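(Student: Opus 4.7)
The plan is to induct on the lexicographically ordered pair of invariants $(\chi(\mathcal{L}; \mathcal{A}), \hbar(\mathcal{L}; \mathcal{A}))$ introduced in Section~\ref{secciontres}. At each stage, choose an $\mathbf{x}$-logarithmic vector field $\xi \in \mathcal{L}_{\mathcal{A}}[\log \mathbf{x}]$ realizing these minima and attempt to reduce them either by a coordinate change of type (a) in the dependent variable, or by a $y$-Puiseux package provided by Proposition~\ref{prop:dos}. The base case is when the invariants reach a terminal configuration with $\chi \in \{-1, 0\}$: the initial form (\ref{eq:formainicial}) then reduces essentially to a single linear vector field $\mathbf{x}^{\mathbf{q}(\chi)} y^{\chi} \Lambda_{\chi}$, and using that $h_{i,-1} = 0$ for $i < n$ (since the coefficients $f_i$ of the $x_i \partial/\partial x_i$-components are regular in $y$), a direct inspection of $\Lambda_{\chi}$ shows that $\mathcal{L}_{\mathcal{A}}$ is non-singular when $n = 2$ or elementary when $n \geq 3$, once one passes from the $\mathbf{x}$-logarithmic generator to an ordinary $\mathcal{O}$-generator by clearing the monomial prefactor.

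For the inductive step, assume $\chi \geq 1$ and perform a $y$-Puiseux package. Using (\ref{eq:formainicial}) and (\ref{eqtres}), the initial form rewrites as $\mathbf{x}^{\mathbf{q}(\chi)} y^{\chi} \Phi^{-\varrho} Q(\Phi)$, where $Q(\Phi) = \sum_{t=0}^{\varrho} \Phi^{\varrho - t} \Delta_t$ is a polynomial in the contact rational function $\Phi$ with linear vector-field coefficients $\Delta_t$ and $\Delta_0 \neq 0$. After the package $\Phi = y' + c$, so the behaviour of $Q$ at $\Phi = c$ controls the new invariants. Translating to the primed coordinates via (\ref{eq:coeficiented})--(\ref{eq:tildemu}), if $Q(c)$ contributes a genuine $(y')^{0}$ logarithmic piece with non-degenerate diagonal part, then $\chi(\mathcal{L}; \mathcal{A}')$ drops strictly. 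Otherwise $\Phi - c$ divides $Q$ with some multiplicity $m \geq 1$, and we apply a type (a) coordinate change $y'' = y' - \lambda (\mathbf{x}')^{\mathbf{a}}$ killing the newly exposed lowest-abscissa term at height $\chi$; this strictly increases the $\nu$-value of the dependent variable and is aimed at decreasing $\hbar$ or at triggering a later Puiseux package that falls into the good case.

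The main obstacle is ruling out an infinite alternation in which neither $\chi$ nor $\hbar$ ever strictly decreases. In that hypothetical scenario, the accumulated coordinate changes yield a Cauchy sequence $y = y_0, y_1, y_2, \ldots$ in the Krull topology of $\widehat{\mathcal{O}}'$ with $\nu(y_{k+1} - y_k)$ strictly increasing, and at each finite stage $x_1, x_2, \ldots, x_{n-1}, y_k$ remains a regular system of parameters. Its Krull limit $\hat f \in \widehat{\mathcal{O}}'$ therefore has the property that $x_1, x_2, \ldots, x_{n-1}, \hat f$ is a regular system of parameters of the complete local ring, and since the $\nu(x_i)$ are $\mathbb{Z}$-independent by definition of a parameterized regular local model, $\hat f$ has transversal maximal contact with $\nu$, yielding conclusion~(1). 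In every case one of the two required conclusions is thus attained; the hardest technical point is the combinatorial analysis of how the polynomial $Q(\Phi)$ and the matrix transformation (\ref{eq:tildemu}) interact to force exactly one of three outcomes at each step: a drop of $\chi$, a drop of $\hbar$, or a new strictly larger-$\nu$ contribution to the limit $\hat f$.
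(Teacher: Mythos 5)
Your proposal follows broadly the same Newton--Puiseux strategy as the paper: alternate between Puiseux packages and coordinate changes in the dependent variable, use the pair of heights $(\chi,\hbar)$ as the control, and recognize that the stalling scenario produces a Krull limit $\hat f$ with transversal maximal contact. That much is sound. However there are genuine gaps in the way you fill in the three pivots.

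First, your base case $\chi\in\{-1,0\}$ is not self-closing for $n=2$. If $\chi=0$, a Puiseux package only guarantees $\hbar'\in\{-1,0\}$ (by what is Lemma~\ref{lema:dos} in the paper); $\hbar'=0$ gives an \emph{elementary} singularity, which in dimension two is generically genuinely singular. Proposition~\ref{pro:cinco} demands \emph{non-singular} for $n=2$, and the combinatorics of the Newton polygon alone does not exclude that you cycle with $\hbar=0$ forever. The paper deals with this separately in Lemma~\ref{lema:cuatro}, where Seidenberg's two-dimensional reduction of singularities is invoked: the $n=2$ branch is not a corollary of the polygon mechanics. Your proposal silently claims non-singularity at the base case and never brings in this external ingredient. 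Also, when $d=1$ the initial form at $\chi=0$ still has two terms ($\Lambda_0$ and $\Lambda_{-1}$), not a single $\Lambda_\chi$ as you state.

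Second, the inductive step is where the real technical content lives and your description of it is too loose to carry the argument. You write that $\chi$ drops if ``$Q(c)$ contributes a genuine $(y')^0$ logarithmic piece with non-degenerate diagonal part,'' and otherwise you perform a type~(a) coordinate change. But the condition for $\hbar'<\chi$ after a Puiseux package is precisely the content of the paper's Lemma~\ref{lema:siete}, whose proof is a delicate case analysis ($\chi=-1$; $d=1$, $\chi\geq 0$; $d=2$, $\chi=1$) using the matrix identity \eqref{eq:tildemu} and the fact $\tilde b^n_n=d\ne 0$. Your ``non-degenerate diagonal'' criterion does not match that analysis; in particular, the paper's strict drop is obtained when $d\geq 2$ and $\chi\geq 1$, whereas stalling requires $d=1$ \emph{for all} subsequent packages, which is what triggers the maximal-contact branch. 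You also need the paper's Lemma~\ref{lema:diez} to know that when the process stalls, the contact function satisfies $\mathbf p\in\mathbb Z_{\geq 0}^{n-1}$, so that the type~(a) coordinate change $y'=y-c\mathbf x^{\mathbf p}$ is actually admissible (exponents non-negative) and genuinely raises $\nu(y)$. Without this, your Cauchy-sequence conclusion is not justified. Finally, you would need to verify that a coordinate change in the dependent variable does not increase the pair $(\chi,\hbar)$; the paper only records the behavior of $\hbar$ (Lemma~\ref{lema:uno}), so your choice of lexicographic invariant needs an extra check that the paper avoids by organizing the induction on $\hbar$ alone via the chain $\hbar'\leq\chi\leq\hbar$.
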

Now, Proposition \ref{pro:cinco} is a consequence of the following
five lemmas.
\begin{lemma}
\label{lema:uno}
 Assume that  ${\mathcal A}'$ has been obtained from $\mathcal A$ by
 a coordinate change in the dependent variable.
 Then $\hbar({\mathcal L};{\mathcal A}')=\hbar({\mathcal L};{\mathcal A})$.
\end{lemma}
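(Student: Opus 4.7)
The plan is to reduce the lemma to a statement about a single fixed vector field and then carry out a binomial-expansion argument on the Newton--Puiseux support. First I would observe that since $r=n-1$ there is a single dependent variable $y$, so the coordinate change must be of type (a) from Section \ref{seccion:uno}, that is $y'=y-c\mathbf{x}^\mathbf{a}$ with $\nu(y')\geq\nu(y)$; case (b) would need a second dependent variable. Assuming $c\neq 0$ (otherwise the change is trivial), the inequality $\nu(y')\geq\nu(y)>0$ forces $\mathbf{a}\neq 0$ and hence $\omega:=\nu(\mathbf{x}^\mathbf{a})>0$. Since the independent variables and the local ring are unchanged, the two spaces $\mbox{Der}_k\mathcal{O}[\log\mathbf{x}]$ and $\mbox{Der}_k\mathcal{O}'[\log\mathbf{x}']$ agree inside $\mbox{Der}_k K$, so $\mathcal{L}_\mathcal{A}[\log\mathbf{x}]=\mathcal{L}_{\mathcal{A}'}[\log\mathbf{x}']$. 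It therefore suffices to show that $\hbar(\xi;\mathbf{x},y)=\hbar(\xi;\mathbf{x},y')$ for every fixed $\xi$ in this common set.

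Next I would rewrite $\xi=\sum_{i<n}f_i\,x_i\partial/\partial x_i+g\,\partial/\partial y$ in the new frame using $\partial/\partial y'=\partial/\partial y$ and $x'_i\partial/\partial x'_i=x_i\partial/\partial x_i+ca_i\mathbf{x}^\mathbf{a}\,\partial/\partial y$. This gives $f'_i(\mathbf{x},y')=f_i(\mathbf{x},y'+c\mathbf{x}^\mathbf{a})$ for $i<n$ and $g'(\mathbf{x},y')=g(\mathbf{x},y'+c\mathbf{x}^\mathbf{a})-c\mathbf{x}^\mathbf{a}\sum_{i<n}a_i\,f'_i$. Binomial expansion in $y'$ then yields, for $j<n$,
\[
h'_{j,k}(\mathbf{x})=\sum_{s\geq k}\binom{s}{k}c^{s-k}h_{j,s}(\mathbf{x})\,\mathbf{x}^{(s-k)\mathbf{a}},
\]
and an analogous formula for $h'_{n,k}$ after substituting $h_{n,s}=g_{s+1}$, together with an extra correction from the cross term that is always multiplied by $\mathbf{x}^\mathbf{a}$.

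The key valuation estimate is that every summand contributing to $h'_{j,k}$ has value $\nu(h_{j,s})+(s-k)\omega\geq\alpha$, where $\alpha=\alpha(\xi;\mathbf{x},y)$; since $\omega>0$, equality with $\alpha$ forces $\nu(h_{j,s})=\alpha$ and $s=k$. The only summand that can sit on the vertical line of abscissa $\alpha$ is thus the $s=k$ term, which reproduces $h_{j,k}$ itself up to a nonzero scalar, so no cancellation on this line is possible. Hence the points of the Newton--Puiseux support of $\xi$ on the line $u=\alpha$ agree in the $(\mathbf{x},y)$ and $(\mathbf{x},y')$ expansions; this gives $\alpha(\xi;\mathbf{x},y)=\alpha(\xi;\mathbf{x},y')$ and, by picking the lowest ordinate on that line, $\hbar(\xi;\mathbf{x},y)=\hbar(\xi;\mathbf{x},y')$.

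The one delicate point in the calculation will be the careful handling of $f_n=g/y$ and of the cross term $-c\mathbf{x}^\mathbf{a}\sum_{i<n}a_i f'_i$ produced when rewriting the logarithmic frame; fortunately the prefactor $\mathbf{x}^\mathbf{a}$ shifts every such contribution to abscissa at least $\alpha+\omega>\alpha$, so this term cannot interfere on the minimum-abscissa line and the analysis reduces to the transparent case $j<n$.
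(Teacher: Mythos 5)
Your argument is correct and is the direct computation that the paper has in mind when it leaves the proof to the reader: the change $y'=y-c\mathbf{x}^{\mathbf a}$ (with $\mathbf{a}\neq 0$, hence $\omega=\nu(\mathbf{x}^{\mathbf a})>0$) leaves $\mathcal{O}$, the independent variables, and hence the module $\mathcal{L}_{\mathcal A}[\log\mathbf{x}]$ unchanged, while the binomial expansion shows each new coefficient $h'_{j,k}$ differs from $h_{j,k}$ only by terms of value $\geq\alpha+\omega>\alpha$, so the support on the minimal-abscissa line $u=\alpha$ is preserved term by term. The only point to keep straight logically is that the bound $\nu(h'_{i,k+1})\geq\alpha$ for $i<n$ must be established first so that the cross term $-c\mathbf{x}^{\mathbf a}\sum_{i<n}a_i h'_{i,k+1}$ in $h'_{n,k}$ can be seen to live strictly above $\alpha$; your final paragraph handles exactly this.
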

\begin{proof} Left to the reader.
\end{proof}
\begin{lemma}
\label{lema:dos} Assume that ${\mathcal A}'$ has been obtained from
${\mathcal A}$ by a Puiseux package. Then $\hbar({\mathcal
L};{\mathcal A}')\leq \chi({\mathcal L};{\mathcal A})$. Moreover, we
have  $$ \hbar({\mathcal L};{{\mathcal A}'})<\chi({\mathcal
L};{{\mathcal A}})
$$
if $\chi({\mathcal L};{\mathcal A})\geq 1$ and $d({\mathcal A})\geq
2$, where $d({\mathcal A})$ is the
 the ramification index of $\mathcal A$.
\end{lemma}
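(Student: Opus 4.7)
The plan is to pick a representative $\xi\in\mathcal{L}_\mathcal{A}[\log\mathbf{x}]$ realizing the critical height, $\chi(\xi;\mathbf{x},y) = \chi := \chi(\mathcal{L};\mathcal{A})$, split $\xi = \mbox{In}(\xi;\mathbf{x},y) + \tilde\xi$ with $\delta(\tilde\xi) > \delta(\xi)$, and transport the initial form through the Puiseux package. Formula (\ref{eqtres}) expresses
$$\mbox{In}(\xi) = \mathbf{x}^{\mathbf{q}(\chi)}y^\chi\Phi^{-\varrho}\sum_{t=0}^\varrho \Phi^{\varrho-t}\Delta_t,\quad \Delta_0 = \Lambda_\chi\neq 0.$$
Using Remark \ref{obsecuaciones} to rewrite $\mathbf{x}^{\mathbf{q}(\chi)}y^\chi$ as $(\mathbf{x}')^{\mathbf{A}'}\Phi^{A_n}$, and formula (\ref{eq:tildemu}) to transform each linear vector field $\Delta_t$ (absorbing the affine correction $c\tilde\lambda_{t,n}\partial/\partial y'$ together with $\tilde\lambda_{t,n}y'\partial/\partial y'$ into the single term $\tilde\lambda_{t,n}\Phi\partial/\partial y'$), I arrive at
$$\mbox{In}(\xi) = (\mathbf{x}')^{\mathbf{A}'}\Phi^{A_n-\varrho}\,\omega,\qquad \omega = \sum_{j<n}P_j(\Phi)x'_j\frac{\partial}{\partial x'_j} + \Phi P_n(\Phi)\frac{\partial}{\partial y'},$$
where $P_j(\Phi)=\sum_{t=0}^\varrho\tilde\lambda_{t,j}\Phi^{\varrho-t}$ is a polynomial of degree $\leq\varrho$ in $\Phi=y'+c$.

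Next I divide $\xi$ by $(\mathbf{x}')^{\mathbf{A}'}$; the factor $\Phi^{A_n-\varrho}\in\mathcal{O}'$ is a unit since $c\neq 0$, so it may be absorbed. A routine check---the tail $\tilde\xi$ has strictly higher $\nu$-value of its coefficient functions, so after the same division its contribution to the Newton--Puiseux support lies strictly above the line $\nu=0$, while $\omega$'s coefficients are polynomials in $y'$ with constant scalars in $k$ and therefore lie precisely on $\nu=0$---yields a vector field $\xi'\in\mathcal{L}_{\mathcal{A}'}[\log\mathbf{x}']$ with $\hbar(\xi';\mathbf{x}',y') = \hbar(\omega;\mathbf{x}',y')$. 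Since the support of $\omega$ is a subset of $\{0\}\times\{-1,0,\ldots,\varrho\}$, I conclude $\hbar(\omega)\leq\varrho$.

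Because $\varrho = \lfloor(\chi+1)/d\rfloor$, this bound already yields the strict inequality $\hbar(\mathcal{L};\mathcal{A}') < \chi$ whenever $\varrho<\chi$: namely, when $d\geq 2$ and $\chi\geq 2$ (where $\varrho\leq(\chi+1)/2<\chi$), and when $d\geq 3$ with $\chi=1$ (where $\varrho=0$). The weak inequality in the remaining regime $d=1$ (with $\varrho=\chi+1$), and the strict inequality in the remaining strict regime $d=2,\chi=1$ (with $\varrho=\chi$), both reduce to showing the support of $\omega$ does not concentrate at $s=\varrho$.

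This last step is the main obstacle. A direct expansion of the $(y')^s$-coefficients in $\omega$ shows that $\hbar(\omega)=\varrho$ forces $\tilde\lambda_{t,\cdot}=\binom{\varrho}{t}(-c)^t\tilde\lambda_{0,\cdot}$ for all $t$ together with $\tilde\lambda_{0,n}=0$. In both corner regimes one computes $\chi - d\varrho = -1$, so the term $\Delta_\varrho = \Lambda_{-1}$ actually enters the critical sum; and the hypothesis $\xi\in\mbox{Der}_k\mathcal{O}[\log\mathbf{x}]$ requires $f_j\in\mathcal{O}$ for $j<n$, forbidding any $y^{-1}$ term in $f_j$ and thereby forcing $\Lambda_{-1}=\lambda_{-1,n}\,y\,\partial/\partial y$. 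By the proportionality above, $\Lambda_\chi=\lambda_{\chi,n}\,y\,\partial/\partial y$ as well. Combining this with the condition $\tilde\lambda_{0,n}=0$---which, since $(B^{-1})_{n,n}=d$ by (\ref{eq:coeficiented}), reads $d\lambda_{\chi,n}=0$---forces $\lambda_{\chi,n}=0$, hence $\Lambda_\chi=0$, contradicting $\Delta_0\neq 0$. This contradiction closes the argument.
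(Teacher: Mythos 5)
You take essentially the same route as the paper, which proves this lemma through its Lemmas \ref{lema:seis}, \ref{lema:siete} and \ref{lema:ocho}: split $\xi$ into its initial part and a remainder whose $\alpha$-value in the new coordinates exceeds $\delta$, use the monomial substitutions of Remark \ref{obsecuaciones} to transport the initial part into $\mathbf{z}'$-coordinates, and read off the new main height from the polynomial vector field (your $\omega$, the paper's $\zeta$). The paper first sharpens the bound to $\beta'\leq\varsigma=\max\{t:\Delta_t\neq0\}\leq\varrho$ before splitting into easy and borderline cases, whereas you go directly through $\varrho$; and in the borderline cases the paper examines only the single coordinate function $\zeta(y')$, while you carry the full vector-valued proportionality $\tilde\lambda_{t,\cdot}=\binom{\varrho}{t}(-c)^t\tilde\lambda_{0,\cdot}$ before projecting to the $n$-th slot. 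Both versions hinge on the same two structural facts: $\Lambda_{-1}$ is a multiple of $y\,\partial/\partial y$ because the $f_j$, $j<n$, are in $\mathcal{O}$ and carry no $y^{-1}$-term, and $(B^{-1})_{nn}=d\neq0$ by equation (\ref{eq:coeficiented}). These are modest, legitimate rephrasings of the same argument.

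There is, however, a small gap in your case enumeration. The a priori bound $\hbar(\omega)\leq\varrho$ yields the non-strict inequality $\hbar(\mathcal{L};\mathcal{A}')\leq\chi$ only when $\varrho\leq\chi$, and $\varrho>\chi$ happens not only in the regime $d=1$ (where $\varrho=\chi+1$) but also when $d\geq2$ and $\chi=-1$ (where $\varrho=0>-1$). You list $d=1$ and $d=2,\chi=1$ as the only two corner regimes requiring the support-concentration argument, omitting $d\geq2,\chi=-1$. The omitted regime does satisfy $\chi-d\varrho=-1$, so the very contradiction you run --- $\Delta_\varrho=\Lambda_{-1}$, the proportionality forcing $\Delta_0$ to be a multiple of $y\,\partial/\partial y$, and $\tilde\lambda_{0,n}=d\lambda_{\chi,n}=0$ annihilating $\Delta_0$ --- applies to it unchanged (here $\varrho=0$ and $\Delta_0=\Delta_\varrho=\Lambda_{-1}$, so the proportionality is trivial and the contradiction is immediate); the paper treats this case $\chi=-1$ by a separate one-line computation. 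So the gap is one of bookkeeping rather than of substance and is easily closed, but the case analysis as written does not cover all the cases needed for the non-strict inequality.
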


\begin{lemma}
\label{lema:tres} Assume that $\hbar({\mathcal L};{\mathcal
A})\in\{-1,0\}$. We have the following properties
\begin{enumerate}
\item
If $\hbar({\mathcal L};{\mathcal A})=-1$, after performing a finite
sequence of coordinate blow-ups in the independent variables, we
obtain ${\mathcal A}'$ such that ${\mathcal L}_{{\mathcal A}'}$ is
non-singular.
\item If $\hbar({\mathcal L};{\mathcal
A})=0$, after performing a finite sequence of coordinate blow-ups in
the independent variables, we obtain ${\mathcal A}'$ such that
${\mathcal L}_{{\mathcal A}'}$ is elementary.
\end{enumerate}
\end{lemma}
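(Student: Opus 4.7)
Both parts follow from a uniform \emph{monomialization} argument in the independent variables. Pick a generator $\xi\in\mathcal L_{\mathcal A}[\log\mathbf x]$ realizing $\hbar(\mathcal L;\mathcal A)$ and write
\[
\xi=\sum_{i=1}^{n-1}f_i(\mathbf x,y)\,x_i\frac{\partial}{\partial x_i}+g(\mathbf x,y)\frac{\partial}{\partial y},\qquad f_i=\sum_{s\geq 0}h_{i,s}(\mathbf x)y^s,\ \frac{g}{y}=\sum_{s\geq -1}h_{n,s}(\mathbf x)y^s.
\]
Let $J\subset\mathcal O\cap k[[\mathbf x]]$ be the ideal generated by the nonzero coefficients $h_{j,s}$. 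The first (and main) step is to run the combinatorial game of Proposition \ref{pro:cuatro} restricted to the $n-1$ independent variables: a finite sequence of coordinate $(i,s)$-blow-ups with $1\leq i,s\leq n-1$ that simultaneously monomializes each $h_{j,s}=U_{j,s}\,{\mathbf x'}^{\mathbf m_{j,s}}$ ($U_{j,s}$ a unit) and reduces the Newton polyhedron of $J$ to a single vertex $\mathbf m_0$. These blow-ups preserve the $\mathbf x$-logarithmic form: under an $(i,s)$-blow-up with $\nu(x_i)<\nu(x_s)$ one checks that $x_s\partial/\partial x_s\mapsto x_s'\partial/\partial x_s'$ and $x_i\partial/\partial x_i\mapsto x_i'\partial/\partial x_i'-x_s'\partial/\partial x_s'$.

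Since the $\nu(x_i')$ are $\mathbb Q$-linearly independent and positive, the value $\nu({\mathbf x'}^{\mathbf m_0})=\alpha(\xi;\mathbf x',y)$ is realized by a unique exponent, so there exists an index $(j_0,s_0)$ with $\mathbf m_{j_0,s_0}=\mathbf m_0$ and $\mathbf m_0\leq\mathbf m_{j,s}$ componentwise on the support. Dividing $\xi$ by ${\mathbf x'}^{\mathbf m_0}$ yields $\tilde\xi:={\mathbf x'}^{-\mathbf m_0}\xi\in\mathcal L_{\mathcal A'}[\log\mathbf x']$ with coefficients $\tilde h_{j,s}=U_{j,s}{\mathbf x'}^{\mathbf m_{j,s}-\mathbf m_0}\in\mathcal O'$; the coefficient $\tilde h_{j_0,s_0}=U_{j_0,s_0}$ is a unit, and $s_0=\hbar(\mathcal L;\mathcal A)$ by construction.

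The position of $s_0$ now dictates the conclusion. For part (1), $\hbar=-1$ forces $s_0=-1$, hence $j_0=n$ (only the $\partial/\partial y$-coefficient contributes at $s=-1$), so $\tilde g(\mathbf x',0)=\tilde h_{n,-1}$ is a unit and $\tilde\xi\notin\mathcal M_{\mathcal A'}\mathrm{Der}_k\mathcal O'$, proving $\mathcal L_{\mathcal A'}$ non-singular. For part (2), $\hbar=0$ gives $s_0=0$; the hypothesis that no $\xi$ achieves $\hbar=-1$ forces $\tilde h_{n,-1}$ to be zero or to have positive value, so $\tilde g(0,0)=0$ and $\tilde\xi$ is singular. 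The matrix of $L\tilde\xi$ in the basis $x_1',\ldots,x_{n-1}',y$ of $\mathcal M_{\mathcal A'}/\mathcal M_{\mathcal A'}^2$ is upper triangular with diagonal entries $\tilde f_1(0,0),\ldots,\tilde f_{n-1}(0,0),\tilde h_{n,0}(0)$, and the diagonal entry at index $j_0$ equals $U_{j_0,0}(0)\neq 0$, so $L\tilde\xi$ is non-nilpotent and $\mathcal L_{\mathcal A'}$ is elementary.

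The main difficulty I expect is the first (combinatorial) step: running blow-ups of codimension-two centers lying in the independent coordinate hyperplanes only, while simultaneously monomializing every $h_{j,s}$ and making $J$ principal, all along the center of $\nu$. This should be a routine $(n-1)$-dimensional extension of the monomialization in Proposition \ref{pro:cuatro}, guided by the restriction of $\nu$ to the ring $k[[\mathbf x]]\cap\mathcal O$.
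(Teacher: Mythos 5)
Your argument is essentially the paper's: you reconstruct the monomialization step that the paper isolates as Lemma~\ref{lema:nueve} (combinatorial blow-ups in the independent variables bringing $\alpha$ to $0$ while preserving $\hbar$) and then spell out the ``it is evident'' conclusion by reading the level-$\hbar$ unit coefficient off the triangular linear part. The one inaccuracy is your claim that finitely many combinatorial blow-ups ``simultaneously monomialize each $h_{j,s}$'': this is neither achievable nor needed. There are infinitely many nonzero $h_{j,s}$ in general, and even for a finite set, principalizing the ideal $J$ does not force each generator to become a monomial times a unit (e.g.\ $x_1(x_2+x_3)$ lies in the monomial ideal $(x_1)$ but is not a unit times a monomial). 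What you actually need, and what Lemma~\ref{lema:nueve} gives, is that $J$ becomes principal, generated by a single monomial ${\mathbf x'}^{\mathbf m_0}$. Then every $h'_{j,s}$ is divisible by ${\mathbf x'}^{\mathbf m_0}$, and since the $\nu(x'_i)$ are $\mathbb Q$-linearly independent, at least one $h'_{j,s}$ with $\nu(h'_{j,s})=\nu({\mathbf x'}^{\mathbf m_0})$ becomes a unit after dividing by ${\mathbf x'}^{\mathbf m_0}$. Because $\alpha(\eta_s;\mathbf x)=\alpha(\eta_s;\mathbf x')$ is preserved level by level under these blow-ups (the paper verifies $I'_s=I_sk[[\mathbf x']]$), that unit lands at level $s=\hbar(\mathcal L;\mathcal A)$, which is exactly what your case analysis requires. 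With that phrasing corrected, your proof is sound and takes the same route as the paper's.
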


\begin{lemma}
\label{lema:cuatro}
 If $n=2$, after performing a finite sequence of Puiseux packages we obtain
${\mathcal A}'$ such that either ${\mathcal L}_{{\mathcal A}'}$ is
non-singular or there is $\hat f\in \widehat {\mathcal O}'$  having
transversal maximal contact with $\nu$.
\end{lemma}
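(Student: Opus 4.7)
The plan is to iterate Puiseux packages, producing a chain $\mathcal A=\mathcal A_0\to \mathcal A_1\to\cdots$. By Lemma \ref{lema:dos}, $\hbar_t := \hbar(\mathcal L;\mathcal A_t)$ is a non-increasing integer sequence bounded below by $-1$, so it stabilizes at some $\hbar_\infty\ge -1$. If $\hbar_\infty=-1$, I observe that in dimension $n=2$ with $r=1$ there are no non-trivial coordinate blow-ups in the independent variables alone (any $(i,s)$-blow-up with $1\le i,s\le r=1$ would require $i\ne s$, which is impossible), so Lemma \ref{lema:tres}(1) asserts directly that $\mathcal L_{\mathcal A_t}$ is non-singular once $\hbar_t=-1$, giving the non-singular alternative of the lemma.

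If $\hbar_\infty \geq 1$, once $\hbar_t=\hbar_\infty$ is attained, Lemma \ref{lema:dos} forces $\chi(\mathcal L;\mathcal A_t)=\hbar_\infty$ (else $\hbar$ would drop further), and its strict clause forces the ramification index $d(\mathcal A_t)=1$ throughout the stable regime. Each Puiseux package then preserves $x$ and executes a single translation
$$
y_{(t-1)}=c_t\,x^{p_t}+x^{p_t}\,y_{(t)},
$$
with $c_t\in k^\times$ equal to the residue in $\kappa_\nu=k$ of the unit $\Phi_t = y_{(t-1)}/x^{p_t}$ (nonzero because $\Phi_t$ has valuation $0$ and $\kappa_\nu=k$). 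Iterating yields $y=\sum_{k=1}^N c_k x^{P_k}+x^{P_N}y_{(N)}$ with $P_k:=p_1+\cdots+p_k\to\infty$. Fixing any $N$ in the stable regime, I define
$$
\hat g_N(x):=\sum_{k>N}c_k x^{P_k-P_N}\in k[[x]],\qquad \hat f := y_{(N)}-\hat g_N(x)\in \widehat{\mathcal O_N},
$$
noting that the $c_k,p_k$ for $k>N$ exist as intrinsic data of $\nu$ even without actually performing the further packages. The truncations $f_i := y_{(N)}-\sum_{N<k\le i}c_k x^{P_k-P_N} = x^{P_i-P_N}\,y_{(i)}$ lie in $\mathcal O_N$ and satisfy $\nu(f_i)\ge (P_i-P_N)\nu(x)\to\infty$, so $\hat f$ is the Krull limit in $\widehat{\mathcal O_N}$ of a sequence of elements of $\mathcal O_N$ with strictly increasing values. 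Since $(x,\hat f)=(x,y_{(N)})$ is a regular system of parameters of $\widehat{\mathcal O_N}$, $\hat f$ has transversal maximal contact with $\nu$.

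The main obstacle is the remaining case $\hbar_\infty=0$. Here the strict-inequality clause of Lemma \ref{lema:dos} does not apply (it requires $\chi\ge 1$), so the ramification index $d(\mathcal A_t)$ may remain $\ge 2$, and the clean power-series peeling of the preceding paragraph breaks down. To resolve it, I would combine Lemma \ref{lema:tres}(2), which asserts that $\mathcal L_{\mathcal A_t}$ is elementary throughout the stable regime, with the classical fact that in dimension two an elementary singularity admits a formal invariant curve (the Briot--Bouquet / formal separatrix argument, obtained by solving the characteristic equation of the linear part and iterating); if the resulting separatrix is transverse to the invariant divisor $\{x=0\}$, its equation supplies the $\hat f$ with transversal maximal contact, and otherwise one first performs a coordinate change in $y$ (permitted by Lemma \ref{lema:uno}) to restore transversality before reading off $\hat f$.
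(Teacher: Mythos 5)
Your approach is genuinely different from the paper's. The paper invokes Seidenberg's reduction of singularities in dimension two to reach a simple singularity in finitely many Puiseux packages, then tracks the second formal separatrix $\hat f=y-\hat\phi(x)$: rational rank one forces $\nu$ to stay off corners, so the center of $\nu$ always lands on the strict transform of $\hat f=0$, which yields both the existence of $\hat f$ and the strictly increasing $\nu$-values of its truncations. You instead organize the argument around the stabilization of the main height $\hbar$. Your treatment of $\hbar_\infty=-1$ (Lemma~\ref{lema:tres}(1) is vacuous for $r=1$, so already non-singular) and of $\hbar_\infty\ge 1$ (Lemma~\ref{lema:dos} forces $d=1$ stably, and the telescoping identity $f_i=x^{P_i-P_N}y_{(i)}$ gives strictly increasing values, hence the Krull limit $\hat f$) are both correct, and they provide a more elementary route for those cases that avoids Seidenberg.

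The remaining case $\hbar_\infty=0$, which you rightly flag, is where the real content of the lemma sits, and your proposed remedy does not close it. Two points. First, ``elementary admits a formal separatrix transverse to $\{x=0\}$'' is false in general: the resonant node $\xi=x\partial/\partial x+(qy+ax^q)\partial/\partial y$ with $q\in\mathbb Z_{>0}$ and $a\ne 0$ is elementary, but $\{x=0\}$ is its only formal invariant curve, so Lemma~\ref{lema:tres}(2) alone does not hand you a transverse $\hat f$. Seidenberg's theorem is exactly what lets the paper blow past such resonances to a simple singularity, for which the transverse separatrix does exist. Second, and more seriously, even when the transverse separatrix $\hat f=y-\hat\phi(x)$ exists, its existence does not give transversal maximal contact with $\nu$; one must still show that the truncations $y-\phi_n(x)$ have strictly increasing $\nu$-value. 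This is precisely what the paper's corner argument establishes: after each Puiseux package the center of $\nu$ cannot be at a corner, the only other singular point on the exceptional divisor lies on the strict transform of $\hat f=0$, so the infinitely near points of $\nu$ coincide with those of $\hat f=0$. Your final sentence (a coordinate change in $y$ ``to restore transversality'') addresses neither issue, since transversality of a separatrix to $\{x=0\}$ is an intrinsic geometric property and in any case says nothing about how $\nu$ behaves along it.
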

\begin{lemma}
\label{lema:cinco}
 Assume that  $\hbar({\mathcal L}_{\mathcal A})\geq
1$ and that the following property holds:
\begin{quote}
``After any finite sequence of coordinate blow-ups in the
independent variables, Puiseux packages and coordinate changes in
the dependent variable we have that $d({\mathcal A})=1$ and
$\hbar({\mathcal L}_{{\mathcal A}'})=\hbar({\mathcal L}_{{\mathcal
A}})$''.
\end{quote}
Then there is  $\hat f\in \widehat {\mathcal O}$ having transversal
maximal contact with $\nu$.
\end{lemma}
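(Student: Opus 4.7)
The plan is to iterate Puiseux packages indefinitely and extract $\hat f$ as a Krull limit in $\widehat{\mathcal{O}}$. Starting from $\mathcal{A}$, Proposition \ref{prop:dos} furnishes a Puiseux package $\mathcal{A}=\mathcal{A}_0\to\mathcal{A}_1$; the hypothesis of the lemma applies again to $\mathcal{A}_1$ (it holds after \emph{any} finite sequence of allowed operations), so $d(\mathcal{A}_1)=1$ and $\hbar(\mathcal{L};\mathcal{A}_1)=\hbar(\mathcal{L};\mathcal{A})$. Iterating, one obtains an infinite sequence $\mathcal{A}_0\to\mathcal{A}_1\to\mathcal{A}_2\to\cdots$ with local rings $\mathcal{O}=\mathcal{O}_0\subseteq\mathcal{O}_1\subseteq\cdots$ and parameters $(\mathbf{x}_i,y_i)$. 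At each stage one records the contact monomial $N_i$ (a monomial in the independent variables of $\mathcal{A}_i$ after the combinatorial part of the $i$-th package) and the translation constant $c_i\in k$. Since $d=1$, the last blow-up of each package reads $y_{i+1}=\Phi_i-c_i$ with $\Phi_i=y_i/N_i$, equivalently
\[
y_i = c_i N_i + N_i y_{i+1}.
\]

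Setting $\tilde N_i := N_0 N_1 \cdots N_i$ and back-substituting the above identity for $i=0,\dots,N-1$ yields, for every $N\geq 1$,
\[
y = \sum_{i=0}^{N-1} c_i\, \tilde N_i + \tilde N_{N-1}\, y_N.
\]
Put $f_N := y - \sum_{i<N} c_i \tilde N_i = \tilde N_{N-1} y_N$; then $\nu(f_N) = \nu(y_0)+\nu(y_1)+\cdots+\nu(y_N)$, which is strictly increasing in $N$ since each $\nu(y_j)>0$. Granting the realisability step discussed below, $f_N\in\mathcal{O}$ and $(f_N)$ is Cauchy in the Krull topology of $\mathcal{O}$, so it admits a limit $\hat f \in \widehat{\mathcal{O}}$, equal to $y - \hat P$ where $\hat P := \lim_N \sum_{i<N} c_i \tilde N_i \in \widehat{\mathcal{O}}$.

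Transversality of $\hat f$ is then immediate. Once each $\tilde N_i$ is realised in $\mathcal{O}$ as a monomial in $\mathbf{x}$ with non-negative exponents, the condition $\nu(\tilde N_i)>0$ forces at least one of its exponents to be strictly positive, so $\tilde N_i\in(\mathbf{x})\mathcal{O}$; hence $\hat P\in(\mathbf{x})\widehat{\mathcal{O}}$ and $\hat f\equiv y\pmod{(\mathbf{x})\widehat{\mathcal{O}}}$. Since $\{x_1,\dots,x_{n-1},y\}$ is a regular system of parameters of $\mathcal{O}$, this congruence shows that $\{x_1,\dots,x_{n-1},\hat f\}$ is a regular system of parameters of $\widehat{\mathcal{O}}$; the $\mathbb{Z}$-independence of $\nu(x_1),\dots,\nu(x_{n-1})$ is built into the definition of the parameterized regular local model. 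Hence $\hat f$ has transversal maximal contact with $\nu$.

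The principal obstacle — and the technical heart of the lemma — is the realisability step: showing that $f_N$ lies in $\mathcal{O}$ and that the partial sums $\sum_{i<N} c_i\tilde N_i$ are Cauchy in the Krull topology of $\mathcal{O}$. A priori $N_i$ is a monomial in the independent variables of $\mathcal{A}_i$, not of $\mathcal{A}$, so after the intervening combinatorial blow-ups $N_i$ may carry negative exponents of $\mathbf{x}$ and need not lie in $\mathcal{O}$. One must show, using the explicit monomial matrices of Remark \ref{obsecuaciones} together with the hypothesis $d(\mathcal{A}')=1$ at every stage, that the cumulative monomial $\tilde N_i$, rewritten in the parameters of $\mathcal{A}$, has non-negative exponents, and that its $\mathfrak{m}$-adic order in $\mathcal{O}$ tends to infinity with $i$. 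The hypothesis $d=1$ is essential here: it excludes the ramified denominators of a genuine Puiseux expansion and keeps the analysis within the monomial structure of $k[[\mathbf{x}]]\subset\widehat{\mathcal{O}}$, so that the formal series $\hat P=\sum_i c_i\tilde N_i$ genuinely assembles as an element of $\widehat{\mathcal{O}}$.
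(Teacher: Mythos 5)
There is a genuine gap, and it is exactly in the ``realisability step'' that you flag as the technical heart of the lemma — but the route you sketch for closing it (explicit monomial matrices from Remark \ref{obsecuaciones} plus $d=1$) would not succeed, because the non-negativity you need is not a combinatorial consequence of the Puiseux-package bookkeeping: it is a consequence of the foliation. Concretely, $d=1$ only says $\nu(y)=\sum_i p_i\nu(x_i)$ with $p_i\in\mathbb{Z}$; nothing in the definition of the Puiseux package prevents some $p_i<0$, and if $\mathbf{p}$ has a negative component the package begins with combinatorial blow-ups in the independent variables that destroy the identification of ``the'' monomial $N_i$ with a monomial of nonnegative degree in the original $\mathbf{x}$. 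The paper supplies the missing input as Lemma \ref{lema:diez}: after normalizing via Lemma \ref{lema:nueve} so that $\alpha(\xi;\mathbf{x},y)=0$ and (using stability and Lemma \ref{lema:dos}) $\chi(\xi;\mathbf{x},y)=\hbar(\xi;\mathbf{x},y)$, the initial part of a generator $\xi\in\mathcal{L}_{\mathcal{A}}[\log\mathbf{x}]$ must have a nonzero coefficient $\Lambda_s$ at some level $s<\chi$, and the relation $(\chi-s)\mathbf{p}=\mathbf{q}(s)\in\mathbb{Z}_{\geq0}^{n-1}$ then forces $\mathbf{p}\in\mathbb{Z}_{\geq 0}^{n-1}$. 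This is a genuinely foliation-theoretic argument (it even invokes a one-step Puiseux package for the contradiction), and it cannot be replaced by the valuative/monomial bookkeeping you propose.

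Once $\mathbf{p}\geq 0$ is known, the paper also takes a structurally simpler path than yours: instead of actually performing the Puiseux package (which passes to a new local ring $\mathcal{O}'$) and then trying to pull the data back, it performs the \emph{coordinate change} $y'=y-c\,\mathbf{x}^{\mathbf{p}}$, which is a legal type~(a) change precisely because $\mathbf{p}\in\mathbb{Z}_{\geq 0}^{n-1}$, and which fixes the ring $\mathcal{O}$ and the independent variables. The hypothesis of the lemma is stable under this change, so the construction iterates entirely inside $\mathcal{O}$, producing a strictly value-increasing sequence $y^{(j)}=y^{(j-1)}-c_j\mathbf{x}^{\mathbf{p}(j)}$ whose Krull limit is $\hat f\in\widehat{\mathcal{O}}$. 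This makes the ``realisability'' and ``Cauchy'' issues disappear automatically: every $y^{(j)}$ is already in $\mathcal{M}\setminus\mathcal{M}^2$ and the tail of the series lies in higher and higher powers of $(\mathbf{x})$. Your back-substitution identity and your final transversality argument ($\hat f\equiv y$ modulo $(\mathbf{x})\widehat{\mathcal{O}}$) are correct, but the plan of iterating genuine Puiseux packages over a tower of local rings $\mathcal{O}_0\subseteq\mathcal{O}_1\subseteq\cdots$ and then rewriting in the base ring is the harder road, and without Lemma \ref{lema:diez} it cannot be completed.
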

In order to show that Lemmas \ref{lema:uno}, \ref{lema:dos},
\ref{lema:tres}, \ref{lema:cuatro} and \ref{lema:cinco}  imply
Proposition \ref{pro:cinco}, let us only recall that $\chi({\mathcal
L};{\mathcal A})<\hbar({\mathcal L};{\mathcal A})$. So, unless we
have a transversal maximal contact, we arrive to the situation of
Lemma \ref{lema:tres} by a repeated application of Lemma
\ref{lema:dos} and we are done.

 Let us prove the above lemmas.
\subsection{The effect of a Puiseux package}
\label{subsectionpuiseuxeffect}
 Let us consider
${\mathcal A}'=({\mathcal O}',{\mathbf z}'=({\mathbf x}',y'))$
obtained from ${\mathcal A}$ by a Puiseux package. Take an $\mathbf
x$-vector field $\xi\in {\mathcal L}_{\mathcal A}[\log {\mathbf x}]$
such that $\chi(\xi;{\mathbf x},y)=\chi({\mathcal L};{\mathcal A})$
and let us write $\xi=\sum_{s=-1}^ny^s\eta_s$ as in  equations
(\ref{camponmenosuno}) and (\ref{eq:descomposicion}). In order to
simplify the notation, put $\chi=\chi(\xi;{\mathbf x},y)$ and
$\delta=\delta(\xi;{\mathbf x},y)$. Moreover, we denote
$d=d({\mathcal A})$ the ramification index associated to $\mathcal
A$. Let us write
 $\tilde\xi=\xi-\mbox{In}(\xi;{\mathbf x},y)$. We recall that
$\delta(\tilde\xi;{\mathbf x},y)>\delta$.

 Next we express
$\mbox{In}(\xi;{\mathbf x},y)$ and $\tilde \xi$ in terms of the
coordinates ${\mathbf z}'=({\mathbf x}',{ y}')$.
\begin{lemma}
\label{lema:seis} $\alpha(\tilde \xi;{\mathbf x}',y')>\delta$.
\end{lemma}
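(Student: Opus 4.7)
My plan is to chase a single monomial $\lambda \mathbf{x}^{\mathbf{a}} y^s$ from $\tilde\xi$ through the coordinate change and verify that every coefficient it produces in the $(\mathbf{x}', y')$-expansion has $\nu$-value strictly greater than $\delta$. Since $\tilde\xi = \xi - \mbox{In}(\xi;\mathbf{x},y)$, we have $\delta(\tilde\xi;\mathbf{x},y) =: \delta' > \delta$, so every monomial $\lambda\mathbf{x}^{\mathbf{a}} y^s$ appearing in (the $\mathbf{x}$-logarithmic expansion of) $\tilde\xi$ satisfies $\nu(\mathbf{x}^{\mathbf{a}} y^s) \geq \delta'$.

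The first key step invokes Remark \ref{obsecuaciones}: for each $s = 1,\dots,n$ we have $z_s = \left(\prod_i (x'_i)^{b^s_i}\right)\Phi^{b^s_n}$, so any old monomial $\mathbf{x}^{\mathbf{a}} y^s$ becomes $M(\mathbf{x}')\Phi^e$ for some integer $e$ and monomial $M(\mathbf{x}')$ with $\nu(M(\mathbf{x}')) = \nu(\mathbf{x}^{\mathbf{a}} y^s) \geq \delta'$, because $\nu(\Phi)=0$. Now substitute $\Phi = y' + c$; crucially $c\neq 0$, since otherwise $\nu(\Phi)= \nu(y')>0$, contradicting $\nu(\Phi)=0$. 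Hence $(y'+c)^e = \sum_{k}\gamma_k {y'}^k$ is a power series (finite if $e\geq 0$, otherwise a convergent binomial series) with all $\gamma_k \in k$. Consequently
\[
\mathbf{x}^{\mathbf{a}} y^s \;=\; \sum_{k} \gamma_k\, M(\mathbf{x}')\, {y'}^k,
\]
and each coefficient $\gamma_k M(\mathbf{x}')$ still has $\nu$-value $\nu(M(\mathbf{x}')) \geq \delta'$.

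Next, applying formulas (6)--(9) from the preceding subsection, each $z_j \partial/\partial z_j$ rewrites as a $k$-linear combination of $x'_i \partial/\partial x'_i$, $y' \partial/\partial y'$, and $\partial/\partial y'$. Multiplying by $\lambda \mathbf{x}^{\mathbf{a}} y^s$ and expanding as above, each monomial contribution to the coefficients $f'_i$ (of $x'_i \partial/\partial x'_i$) or $g'$ (of $\partial/\partial y'$) is of the form (scalar)$\cdot M(\mathbf{x}')\cdot {y'}^k$ with value $\geq\delta'$. The only subtle point is the ``free'' $\partial/\partial y'$-piece coming from $c\tilde\mu_n\, \partial/\partial y'$ in formula (9): this contributes a ${y'}^0$ term to $g'$, which upon passing to $f'_n = g'/y'$ becomes the ${y'}^{-1}$ entry of the Newton--Puiseux support, but its coefficient still has the same value $\geq\delta'$.

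Summing over all monomials of $\tilde\xi$, every coefficient $h'_{js}(\mathbf{x}')$ in the decomposition $\tilde\xi = \sum_s {y'}^s \eta'_s$ is a $k$-linear combination of terms each of $\nu$-value $\geq\delta'$; in the completed ring $\widehat{\mathcal O}'$ the valuation of the sum is bounded below by the infimum of the summands, so $\nu(h'_{js}(\mathbf{x}')) \geq \delta' > \delta$. Therefore $\alpha(\tilde\xi;\mathbf{x}',y') \geq \delta' > \delta$, which is precisely the claim. The main obstacle I anticipate is purely bookkeeping: tracking the mixing of the three linear pieces in (9) and ensuring that the potentially infinite tail of $(y'+c)^e$ when $e<0$ does not lower the value. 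Both issues are resolved by the fact that $c$ is a nonzero scalar and $\nu(\Phi)=0$, so multiplication by $\Phi^e$ and expansion in ${y'}^k$ preserve $\nu$-values term by term.
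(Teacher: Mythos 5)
Your argument is correct. The paper leaves this lemma to the reader, and your monomial-chasing strategy is the natural one; it is a complete proof. The essential observations are all there: the unimodular substitution of Remark~\ref{obsecuaciones} sends each monomial $\mathbf{x}^{\mathbf{a}}y^s$ of $\tilde\xi$ to $M(\mathbf{x}')\Phi^e$ with $\nu(M(\mathbf{x}'))=\nu(\mathbf{x}^{\mathbf{a}}y^s)$ since $\nu(\Phi)=0$; the substitution $\Phi=y'+c$ with $c\neq 0$ expands into a power series in $y'$ with coefficients in $k$ (a polynomial when $e\geq 0$); the rewriting of $z_j\,\partial/\partial z_j$ through the matrix $B^{-1}$ produces only $k$-linear combinations of $x'_i\partial/\partial x'_i$, $y'\partial/\partial y'$ and $\partial/\partial y'$; and the stray $c\tilde\mu_n\,\partial/\partial y'$ piece lands in $f'_n=g'/y'$ at level $-1$ without changing the abscissa. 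Since $\delta(\tilde\xi;\mathbf{x},y)>\delta$ forces $\nu(\mathbf{x}^{\mathbf{a}}y^s)>\delta$ for every monomial, every abscissa in the new Newton--Puiseux support strictly exceeds $\delta$, which is the claim.

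One point you leave implicit and that deserves a sentence: when you pass to the full sum in $\widehat{\mathcal O}'$ and invoke ``the valuation of the sum is bounded below by the infimum of the summands,'' you are using that each new monomial $\mathbf{x}'^{\mathbf{b}}y'^k$ in the coefficients $h'_{js}$ is hit by only finitely many old monomials, so that the termwise estimate actually controls the result. This holds because the Puiseux package is a composition of local blow-ups, hence a local ring morphism ${\mathcal O}\to{\mathcal O}'$: each $x_i$ and $y$ maps into the maximal ideal of ${\mathcal O}'$, so $\vert\mathbf{b}\vert\geq\vert\mathbf{a}\vert+s$, and together with $\mathbf{a}\geq 0$, $s\geq -1$ this leaves only finitely many $(\mathbf{a},s)$ for a fixed $\mathbf{b}$. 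Without that remark the monomial-by-monomial argument would not, strictly speaking, justify the conclusion about the limit.
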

\begin{proof} Left to the reader.
\end{proof}
Let us consider now  $\mbox{In}(\xi;{\mathbf x},y)$ and let us
express it in the coordinates ${\mathbf z}'$. Let us recall equation
\ref{eqtres}, where $ {\mathbf x}^{-{\mathbf
q}(\chi)}y^{-\chi}\Phi^{\varrho}\mbox{In}(\xi;{\mathbf x},y) =
\sum_{t=0}^{\varrho}{{\Phi}}^{\varrho-t}\Delta_t$ and
$$\Delta_{t}=\Lambda_{\chi-dt}=\sum_{i=1}^n\lambda_{i,\chi-dt}z_i\partial/\partial z_i=\sum_{i=1}^n\mu_{it}z_i\partial/\partial z_i,$$
with $\Delta_0\ne 0$. Let us put $
\zeta=\sum_{t=0}^{\varrho}{{\Phi}}^{\varrho-t}\Delta_t$. We can
write $\zeta=\sum_{s\geq \beta'}y'^s\vartheta_s$, where
$\vartheta_{\beta'}\ne 0$ and all the $\vartheta_s$ are ${\mathbf
z}'$-linear vector fields $
\vartheta_s=\sum_{j=1}^n\alpha_{js}z'_j{\partial}/\partial z'_j$.

\begin{lemma}
\label{lema:siete}We have $\beta'\leq \chi$. If $\chi\geq
 1$ and $d\geq 2$, then $\beta'<\chi$.
\end{lemma}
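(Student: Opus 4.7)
My plan is to compare two expansions of $\zeta$ as a polynomial in $y'$. The tautological one, obtained by setting $\Phi=y'+c$, reads
\[
\zeta=\sum_{k=0}^{\varrho}y'^k\Gamma_k,\qquad \Gamma_k=\sum_{t=0}^{\varrho-k}\binom{\varrho-t}{k}c^{\varrho-t-k}\Delta_t,
\]
with each $\Gamma_k=\sum_j\gamma_{jk}z_j\partial/\partial z_j$ a $\mathbf z$-linear vector field, while the second is the $\mathbf z'$-linear expansion $\zeta=\sum_s y'^s\vartheta_s$ from the statement. I will argue by contradiction, using a telescoping induction in the last coordinate plus the special form of $\Lambda_{-1}$.

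Observe first that $\zeta\ne 0$: if $\sum_t\Phi^{\varrho-t}\Delta_t=0$, then after multiplication by $\mathbf x^{\mathbf p\varrho}$ the $z_i\partial/\partial z_i$-component reads $\sum_t\mu_{it}y^{d(\varrho-t)}\mathbf x^{\mathbf pt}=0$; the monomials on the left having pairwise distinct multidegrees, each $\mu_{it}$, hence each $\Delta_t$, vanishes, contradicting $\Delta_0\ne 0$. Next, applying~(\ref{eq:tildemu}) to each $\Gamma_k$ yields $\Gamma_k=\sum_j\tilde\gamma_{jk}z'_j\partial/\partial z'_j+c\tilde\gamma_{nk}\partial/\partial y'$ with $(\tilde\gamma_{1k},\ldots,\tilde\gamma_{nk})=(\gamma_{1k},\ldots,\gamma_{nk})B^{-1}$, and comparing coefficients of $y'^s x'_j\partial/\partial x'_j$ and $y'^s\partial/\partial y'$ with $\zeta=\sum_s y'^s\vartheta_s$ gives
\[
\alpha_{is}=\tilde\gamma_{is}\;(i<n),\qquad \alpha_{ns}=\tilde\gamma_{ns}+c\,\tilde\gamma_{n,s+1}\qquad(\gamma_{j,-1}:=0);
\]
the constant $\partial/\partial y'$-piece of~(\ref{eq:tildemu}) is exactly what couples levels $s$ and $s+1$ at the $n$-th coordinate.

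Assume now $\vartheta_s=0$ for every $s\le\chi$. From $s=-1$, the equation $c\tilde\gamma_{n,0}=0$ and $c\ne 0$ force $\tilde\gamma_{n,0}=0$, and inductively $\tilde\gamma_{n,s+1}=-c^{-1}\tilde\gamma_{n,s}=0$ for $0\le s\le\chi$, while $\tilde\gamma_{is}=0$ for $i<n$ and $0\le s\le\chi$. If $\chi\ge 0$ these force $\Gamma_s=0$ for $s=0,\ldots,\chi$, so $\zeta=y'^{\chi+1}R(y')$ with $\deg_{y'}R\le\varrho-\chi-1$; if $\chi=-1$ the single constraint is $\tilde\gamma_{n,0}=0$ and one sets $R:=\zeta=\Gamma_0$. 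If $\varrho\le\chi$ (only possible for $\chi\ge 0$), then $R=0$, so $\zeta=0$, a contradiction. In the remaining cases $\varrho=\chi+1$ (namely $\chi=-1$ for any $d$, or $d=1$ for any $\chi$), $R=\Gamma_{\chi+1}$ is a constant vector field and in fact $\Gamma_{\chi+1}=\Delta_0$, while $\Delta_{\chi+1}=\Lambda_{\chi-d(\chi+1)}=\Lambda_{-1}$. The polynomial identity $\sum_t\Phi^{\chi+1-t}\Delta_t=(\Phi-c)^{\chi+1}\Gamma_{\chi+1}$ (trivial for $\chi=-1$) then yields $\Delta_t=\binom{\chi+1}{\chi+1-t}(-c)^t\Gamma_{\chi+1}$, whence $\Gamma_{\chi+1}=(-c)^{-(\chi+1)}\Lambda_{-1}$. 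Because every $\mathbf x$-logarithmic vector field has $h_{i,-1}=0$ for $i<n$, $\Lambda_{-1}$ (hence $\Gamma_{\chi+1}$) is supported only in $y\partial/\partial y$, say $\Gamma_{\chi+1}=\mu\,y\partial/\partial y$ with $\mu\ne 0$ (else $\zeta=0$); but then equation~(\ref{eq:coeficiented}) gives $\tilde\gamma_{n,\chi+1}=\mu\,\tilde b_n^n=\mu d\ne 0$, contradicting the telescoping conclusion $\tilde\gamma_{n,\chi+1}=0$. Hence $\beta'\le\chi$.

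For the strict inequality under $\chi\ge 1$ and $d\ge 2$ I repeat the argument with threshold $\chi-1$ in place of $\chi$, arriving at $\zeta=y'^\chi R$ with $\deg_{y'}R\le\varrho-\chi$. Since $d\ge 2$ and $\chi\ge 1$ give $\varrho\le(\chi+1)/d\le(\chi+1)/2$, either $\varrho<\chi$ (so $R=0$ and $\zeta=0$, a contradiction) or the single boundary case $d=2,\,\chi=1=\varrho$ applies: there $R=\Gamma_1=\Delta_0$ is constant and $\Phi\Delta_0+\Delta_1=(\Phi-c)\Delta_0$ forces $\Delta_1=-c\Delta_0$, so $\Delta_0=-c^{-1}\Lambda_{-1}$ has only a $y\partial/\partial y$-component and $\tilde\gamma_{n,1}=2\mu\ne 0$ contradicts $\tilde\gamma_{n,1}=0$. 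The principal obstacle throughout is the case $d=1$: the crude degree bound $\deg_{y'}\zeta\le\varrho=\chi+1$ overshoots $\chi$ by exactly one, and closing the gap requires the structural fact that $\Lambda_{-1}$ is supported only in the $y$-direction together with $\tilde b_n^n=d$ from~(\ref{eq:coeficiented}).
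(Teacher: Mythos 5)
Your argument is correct and follows the same essential strategy as the paper: expand $\zeta$ as a polynomial in $y'$ in two ways, use the transformation law (\ref{eq:tildemu}), the fact $\tilde b_n^n=d$ from (\ref{eq:coeficiented}), and the structural observation that $\Lambda_{-1}$ is supported only in $y\,\partial/\partial y$ to force a contradiction in the boundary cases. The paper's proof is organized as an explicit case analysis (easy cases killed by $\varrho<\chi$ via the auxiliary index $\varsigma$; then $\chi=-1$, then $d=1,\chi\geq 0$, then $d=2,\chi=1$), working directly with the scalar $\zeta(y')$, whereas you set up a uniform telescoping identity $\alpha_{ns}=\tilde\gamma_{ns}+c\,\tilde\gamma_{n,s+1}$ by comparing the coefficients of $y'^s\partial/\partial y'$ in both expansions and run a single induction. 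This unifies the paper's three hard subcases into one computation and, as a bonus, you make explicit the otherwise tacit fact $\zeta\neq 0$. The content and the decisive ingredients are identical; what your version buys is a cleaner bookkeeping of the cross-terms coming from $z'_n=\Phi-c$, at the cost of a slightly heavier setup with the binomial expansion of the $\Gamma_k$.
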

\begin{proof} Looking at the equation \ref{eq:tildemu}, we see that $\zeta=\sum_{t=0}^{\varrho}({y'+c})^{\varrho-t}\Delta_t$
and
$$
\Delta_t=\sum_{j=1}^n\tilde\mu_{jt}z'_j\frac{\partial}{\partial
z'_j}+c \tilde\mu_{nt}\frac{\partial}{\partial y'};\quad
(\tilde\mu_{1t},\tilde\mu_{2t},\ldots,\tilde\mu_{nt})=
(\mu_{1t},\mu_{2t},\ldots,\mu_{nt})B^{-1}.
$$
Let $\varsigma=\max\{t;\; \Delta_t\ne
 0\}\leq \varrho$. Then
 $
\zeta=  \Phi^{\varrho-\varsigma}
\sum_{t=0}^{\varsigma}{{\Phi}}^{\varsigma-t}\Delta_t$. Recalling
that $\Phi=y'+c$, and dividing the above expression by
$\Phi^{\varrho-\varsigma}$, we obtain $\beta'\leq \varsigma$.
Remember that $ \varrho$ is the greatest integer bounded above by
$(\chi+1)/d$. Then, if $d\geq 2$ and $\chi\geq 2$, or $d\geq 3$ and
$\chi=1$ we obtain $\beta'\leq\varsigma\leq \varrho<\chi$. If
$\chi=0$ and $d\geq 2$ we have $\varrho=0$ and then $\beta'\leq 0$.
It remains to study the cases with $d=1$, the case $d=2$, $\chi=1$
and the case $\chi=-1$.

{\em The case $\chi=-1$.\/} In this case $\varrho=\varsigma=0$. In
particular $\zeta=\Delta_0$. Moreover
$\Delta_0=\Lambda_{-1}=\mu_{n0}y\partial/\partial y$. Recalling that
$\tilde b_n^n=d$ in view of equation \ref{eq:coeficiented}, we have
$$
\zeta=\mu_{n0}y\partial/\partial y=\mu_{n0} \sum_{j=1}^{n}\tilde
b^j_nz'_j\frac{\partial}{\partial
z'_j}+\mu_{n0}dc\frac{\partial}{\partial y'}.
$$
This implies that $\alpha_{n,-1}=\mu_{n0}dc\ne 0$ and thus
$\beta'=-1$.

{\em Cases with $d=1$, $\chi\geq 0$.\/} We reason by contradiction,
assuming that $\beta'\geq \chi+1$. This implies that
$\varsigma=\varrho=\chi+1$. In particular, we have
$\Delta_{\chi+1}\ne 0$ and $\Delta_{\chi+1}=\Lambda_{-1}$. Note that
$\Lambda_{-1}=\mu y\partial/\partial y$, where
$\mu=\mu_{n,\chi+1}=\lambda_{n,-1}$. Now, our contradiction
hypothesis $\beta'\geq\chi+1$ implies that $\zeta(y')$ is divisible
by $y'^{\chi+2}$.
 We have
\begin{eqnarray*}
\zeta(y')&=&\sum_{t=0}^{\chi+1}\Phi^{\chi+1-t}\Delta_t(y')=\\
&=&\Phi\left(\Phi^{\chi+1}\tilde\mu_{n0}+\Phi^{\chi}\tilde\mu_{n1}+
\Phi^{\chi-1}\tilde\mu_{n2}+\cdots+
\Phi\tilde\mu_{n\chi}+\tilde\mu_{n,\chi+1}\right).
\end{eqnarray*}
Recall that $\Phi=y'+c$, then we necessarily have that
$\zeta(y')=\tilde\mu_{n0}y'^{\chi+2}$, since the biggest possible
power of $y'$ in the above expression is $y'^{\chi+2}$ and its
coefficient is $\tilde\mu_{n0}$. Moreover we also have that
$\Phi=y'+c$ divides $\zeta'(y')$. The only possibility is that
$\zeta(y')=0$ and hence all the coefficients $\tilde\mu_{nt}$ are
zero, for $t=1,2,\ldots,\chi+1$. This is a contradiction, since $
\tilde\mu_{n,\chi+1}=\tilde b_n^{n}\mu=d\mu\ne 0$.

{\em Case $d=2$, $\chi=1$}. Let us reason by contradiction, assuming
that $\beta'\geq \chi$. Then $\varsigma=\varrho=\chi=1$. We have
$\zeta=\Phi\Delta_0+\Delta_1$ and $y'^2$ must divide $\zeta(y')$.
That is
$$
\zeta(y')=\Phi\left(\Phi\tilde\mu_{n0}+\tilde\mu_{n1}\right)
=y'^{2}\tilde\mu_ {n0}.
$$
We deduce as above that $\zeta(y')=0$ and thus
$\tilde\mu_{n1}=\tilde\mu_{n0}=0$. Note that $0\ne\Delta_1$, since
$\varsigma=1$. Moreover, in our case $\Delta_t=\Lambda_{1-2t}$ and
thus $\Delta_1=\Lambda_{-1}=\mu y\partial/\partial y\ne 0$. Now we
have $\tilde\mu_{n1}=2\mu$ and we obtain that $\tilde\mu_{n1}\ne0$
and $\tilde\mu_{n1}=0$ simultaneously, contradiction.
\end{proof}
\begin{lemma}
\label{lema:ocho}
 $\hbar({\mathcal L};{\mathcal A}')\leq \beta'$.
\end{lemma}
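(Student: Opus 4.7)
The plan is to show that the very same vector field $\xi \in {\mathcal L}_{\mathcal A}[\log {\mathbf x}]$ used in Lemma \ref{lema:siete}, re-expressed in the new coordinates, actually lies in ${\mathcal L}_{{\mathcal A}'}[\log {\mathbf x}']$ and has $\hbar(\xi;{\mathbf x}',y') = \beta'$. The subtle point will be the verification that $\xi$ is indeed an ${\mathbf x}'$-log derivation of ${\mathcal O}'$, since its coefficients implicitly involve $1/y$ (through $f_n = g/y$) and $y$ becomes a monomial in ${\mathbf x}'$ and $\Phi$ in the new coordinates; I will argue this by showing $\xi$ is ${\mathbf x}'$-log in the completion $\widehat{\mathcal O}'$ and using $K\cap\widehat{\mathcal O}' = {\mathcal O}'$.

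I use the decomposition $\xi = \mbox{In}(\xi;{\mathbf x},y) + \tilde\xi$ and express each piece in ${\mathbf z}'$-coordinates. Remark \ref{obsecuaciones} gives the monomial identity ${\mathbf x}^{{\mathbf q}(\chi)} y^\chi = {{\mathbf x}'}^{{\mathbf A}}\Phi^C$ with ${\mathbf A}\in{\mathbb Z}_{\geq 0}^{n-1}$ and $C\in{\mathbb Z}_{\geq 0}$, and combined with equation (\ref{eqtres}) this yields
$$
\mbox{In}(\xi;{\mathbf x},y) = {{\mathbf x}'}^{{\mathbf A}} U \zeta, \qquad U = \Phi^{C-\varrho}.
$$
Because $\nu(\Phi)=0$ and $\kappa_\nu=k$, the residue $c$ of $\Phi$ is a nonzero constant, so $\Phi=y'+c$ is a unit in $\widehat{\mathcal O}'$ and so is $U$, with $U(0) = c^{C-\varrho}\neq 0$. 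Since $\zeta \in \mbox{Der}_k\widehat{\mathcal O}'[\log{\mathbf x}']$ by construction and $\tilde\xi \in \mbox{Der}_k\widehat{\mathcal O}'[\log{\mathbf x}']$ with $\alpha(\tilde\xi;{\mathbf x}',y')>\delta$ by Lemma \ref{lema:seis}, we conclude $\xi \in \mbox{Der}_k\widehat{\mathcal O}'[\log{\mathbf x}']$. Combined with $\xi \in \mbox{Der}_k K$ and the identity $K\cap\widehat{\mathcal O}' = {\mathcal O}'$ (standard for Noetherian local rings), this gives $\xi \in {\mathcal L}_{{\mathcal A}'}[\log{\mathbf x}']$.

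Finally I read off the Newton-Puiseux data of $\xi$ in $({\mathbf x}',y')$. Since $\nu({{\mathbf x}'}^{{\mathbf A}}) = \nu({\mathbf x}^{{\mathbf q}(\chi)} y^\chi) = \delta$, and the $y'^{\beta'}$-coefficient of $U\zeta$ equals $u_0\vartheta_{\beta'}\neq 0$ (with $u_0 = U(0)\neq 0$), one component of the $y'^{\beta'}$-coefficient of ${{\mathbf x}'}^{{\mathbf A}} U \zeta$ has value exactly $\delta$. For $s<\beta'$, the $\mbox{In}$-part contributes zero (because $\zeta$ has no $y'^{s}$-terms below $y'^{\beta'}$) while $\tilde\xi$ contributes only components of value $>\delta$ by Lemma \ref{lema:seis}. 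Hence $\alpha(\xi;{\mathbf x}',y')=\delta$ and the minimal index achieving this abscissa is $\beta'$, giving $\hbar(\xi;{\mathbf x}',y')=\beta'$ and therefore $\hbar({\mathcal L};{\mathcal A}')\leq\beta'$.
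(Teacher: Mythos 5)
Your proof is correct and follows essentially the same route as the paper: you use the decomposition $\xi = \mbox{In}(\xi;{\mathbf x},y)+\tilde\xi$, invoke Remark~\ref{obsecuaciones} to rewrite $\mbox{In}(\xi;{\mathbf x},y)$ as a monomial in ${\mathbf x}'$ times a unit in $\Phi$ times $\zeta$, and control $\tilde\xi$ via Lemma~\ref{lema:seis} to read off $\alpha(\xi;{\mathbf x}',y')=\delta$ and $\hbar(\xi;{\mathbf x}',y')=\beta'$. The one place you go beyond the paper is the explicit check that $\xi$ actually belongs to ${\mathcal L}_{{\mathcal A}'}[\log{\mathbf x}']$ (via $K\cap\widehat{\mathcal O}'={\mathcal O}'$), a point the paper takes for granted; this is a welcome piece of added rigor but not a change of method.
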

\begin{proof} It is enough to show that $\beta'=\hbar(\xi;{\mathbf
x}',y')$. We have $\beta'=\hbar(\zeta;{\mathbf x}';y')$ and
$\alpha(\zeta;{\mathbf x}';y')=0$. Recall that
$\mbox{In}(\xi;{\mathbf x},y)={\mathbf x}^{{\mathbf
q}(\chi)}y^{\chi}\Phi^{-\varrho}\zeta$, where
$$
\nu({\mathbf x}^{{\mathbf
q}(\chi)}y^{\chi}\Phi^{-\varrho})=\nu({\mathbf x}^{{\mathbf
q}(\chi)}y^{\chi})=\delta.
$$
Moreover, in view of Remark \ref{obsecuaciones}, we have that
$$
{\mathbf x}^{{\mathbf q}(\chi)}y^{\chi}\Phi^{-\varrho}={{\mathbf
x}'}^{\mathbf q'}\Phi^{r'}, \quad \mbox{ and } \nu({{\mathbf
x}'}^{\mathbf q'})=\delta.
$$
Noting that $\mbox{In}(\xi;{\mathbf x},y)={{\mathbf x}'}^{\mathbf
q'}\Phi^{r'}\zeta$, we deduce that $\alpha(\mbox{In}(\xi;{\mathbf
x},y);{\mathbf x}',y')=\delta$ and $\hbar(\mbox{In}(\xi;{\mathbf
x},y);{\mathbf x}',y')=\hbar(\zeta;{\mathbf x}',y')=\beta'$.
Moreover, by Lemma \ref{lema:seis}, we have $$
\delta=\alpha(\mbox{In}(\xi;{\mathbf x},y);{\mathbf
x}',y')<\alpha(\tilde\xi;{\mathbf x}',y').
$$
Recalling that $\xi=\mbox{In}(\xi;{\mathbf x},y)+\tilde\xi$, we have
that $\alpha(\xi;{\mathbf x}',y')=\delta$ and $$ \hbar(\xi;{\mathbf
x}',y')=\hbar(\mbox{In}(\xi;{\mathbf x},y);{\mathbf x}',y')=\beta'.
$$
This ends the proof.
\end{proof}
\begin{remark}
Lemma \ref{lema:uno} follows from Lemma \ref{lema:siete}, in view of
Lemma \ref{lema:ocho}.
\end{remark}
Before giving a proof of Lemma \ref{lema:tres}, we explain the
effect of the blow-ups in the independent variables in the following
result.
\begin{lemma}
\label{lema:nueve} Given $\mathcal A$ and $\mathcal L$, after
performing finitely many coordinate blow-ups in the independent
variables with centers of codimension two, we can obtain ${\mathcal
A}'$ such that $\alpha({\mathcal L};{\mathcal A}')=0$. Moreover
$\hbar({\mathcal L};{\mathcal A}')\leq \hbar({\mathcal L};{\mathcal
A})$.
\end{lemma}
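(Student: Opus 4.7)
The plan is to adapt the Newton-polyhedron combinatorial game from the proof of Proposition~\ref{pro:cuatro} to the coefficients of an $\mathbf x$-logarithmic generator of ${\mathcal L}$. First, I fix $\xi \in {\mathcal L}_{\mathcal A}[\log \mathbf x]$ realizing $\hbar(\xi;\mathbf x,y) = \hbar({\mathcal L};{\mathcal A})$ and, after dividing out the largest common $\mathbf x$-monomial factor of its coefficients in ${\mathcal O}$ (which shifts all values uniformly and so preserves $\hbar$), assume $\xi$ has no common $\mathbf x$-monomial factor. Writing $\xi = \sum_{j=1}^{n-1} f_j x_j \partial/\partial x_j + g\,\partial/\partial y$ and expanding $f_j = \sum_s h_{js}(\mathbf x) y^s$, $g/y = \sum_s h_{ns}(\mathbf x) y^s$, I define the $\mathbf x$-Newton polyhedron
\[
{\mathcal N}(\xi;\mathbf x) \;=\; \mathrm{Conv}\Bigl(\bigcup_{j,s}\mathrm{Supp}(h_{js};\mathbf x)\Bigr) \;+\; \mathbb{R}^{n-1}_{\geq 0}.
\]
As in the Remark following Proposition~\ref{pro:cuatro}, the no-common-factor property of $\xi$ is equivalent to saying that the only $\mathbf v\in\mathbb R^{n-1}_{\geq 0}$ with ${\mathcal N}(\xi;\mathbf x)\subset \mathbf v+\mathbb R^{n-1}_{\geq 0}$ is $\mathbf v=\mathbf 0$. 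Hence if ${\mathcal N}(\xi;\mathbf x)$ has a single vertex, that vertex must be $\mathbf 0$, some $h_{js}$ has a nonzero constant term, and $\alpha({\mathcal L};{\mathcal A}) = 0$.

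If ${\mathcal N}(\xi;\mathbf x)$ has two distinct vertices $\mathbf a,\mathbf b$, I apply the move from the end of the proof of Proposition~\ref{pro:cuatro}: taking $\mathbf v = \mathbf a\wedge\mathbf b$ (componentwise minimum), $\tilde{\mathbf a}=\mathbf a-\mathbf v$, $\tilde{\mathbf b}=\mathbf b-\mathbf v$, I choose indices $1\le i,\ell\le n-1$ with $\tilde a_i\,\tilde b_\ell\neq 0$. The $(i,\ell)$-blow-up is combinatorial since $\nu(x_i)\neq\nu(x_\ell)$ and involves only the independent variables. Assuming WLOG $\nu(x_i)<\nu(x_\ell)$, a direct log-derivation computation (in the spirit of Section~\ref{secciontres}) yields $f'_i = f_i$, $f'_\ell = f_\ell - f_i$, $f'_l = f_l$ for $l\notin\{i,\ell\}$, and $g'=g$ as elements of $K$. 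Extracting the resulting common $\mathbf x'$-monomial gcd $u = {\mathbf x}'^{\mathbf w}$ produces a reduced generator $\xi''\in{\mathcal L}_{\mathcal A'}[\log \mathbf x']$ whose Newton polyhedron is $\sigma^i_{i\ell}({\mathcal N}(\xi;\mathbf x)) - \mathbf w$. By the vertex-counting argument of Proposition~\ref{pro:cuatro}, either the number of vertices strictly decreases or it is unchanged while the combinatorial measure $\tilde a_i+\tilde b_\ell$ strictly decreases; after finitely many iterations we reach a single-vertex polyhedron at the origin, giving $\alpha({\mathcal L};{\mathcal A}')=0$.

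For $\hbar({\mathcal L};{\mathcal A}')\le\hbar({\mathcal L};{\mathcal A})$ I show each step preserves $\hbar$. Since only $f_\ell$ is altered, at every $y$-degree $s$ one has $\nu(h'_{l,s})=\nu(h_{l,s})$ for $l\neq\ell$ and $\nu(h'_{\ell,s})\ge\min(\nu(h_{\ell,s}),\nu(h_{i,s}))$. At $s_0:=\hbar(\xi;\mathbf x,y)$: either some coefficient $h_{l,s_0}$ with $l\neq\ell$ and value $\alpha$ survives unchanged, or every coefficient at $s_0$ of value $\alpha$ has $l=\ell$, whence $\nu(h_{i,s_0})>\alpha$ and $\nu(h'_{\ell,s_0})=\alpha$ (two terms of different values cannot cancel). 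For $s<s_0$, all $\nu(h_{l,s})>\alpha$ forces $\nu(h'_{l,s})>\alpha$. Hence $\hbar(\xi';\mathbf x',y)=s_0$, and division by $u$ shifts all values uniformly by $-\nu(u)$, leaving $\hbar$ invariant. Iterating gives $\hbar(\xi'';\mathbf x',y)\le s_0$ at termination, as required. The main obstacle is coupling the combinatorial termination from Proposition~\ref{pro:cuatro} to the step-by-step preservation of $\hbar$ through both the basis change (where a single coefficient is modified by a potentially cancellable subtraction) and the iterated common-factor extractions.
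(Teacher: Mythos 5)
Your proof is correct and follows the same overall strategy as the paper: reduce to the combinatorial Newton-polyhedron game of Proposition~\ref{pro:cuatro} applied to the coefficients of an $\mathbf x$-logarithmic generator. The difference is in packaging. The paper observes that for each $y$-level $s$ the ideal $I_s\subset k[[\mathbf x]]$ generated by $\{h_{js}\}_j$ transforms as $I'_s = I_s k[[\mathbf x']]$ under a blow-up in the independent variables (because the new coefficients $h'_{js}$ generate the same ideal); this at once gives $\alpha(\eta_s;\mathbf x)=\alpha(\eta_s;\mathbf x')$ for every $s$, hence that $\hbar$ is preserved, and it also lets one run the monomialization of the total ideal $I=\sum_s I_s$ without further bookkeeping. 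You instead carry out the explicit coefficient transformation $f'_\ell = f_\ell - f_i$ and prove the $\hbar$-preservation by a case analysis on which coefficient attains the minimum value at level $s_0$, using that terms of unequal value cannot cancel. Your argument works and is a legitimate hands-on substitute, but the paper's ideal-theoretic remark absorbs your entire last paragraph in one line; it also justifies, without needing the cancellation discussion, your assertion that the Newton polyhedron (which you should read as the Newton polyhedron of the coefficient ideal rather than of a particular generating set) transforms exactly under $\sigma^i_{i\ell}$. One small clean-up: your claim that the transformed polyhedron equals $\sigma^i_{i\ell}({\mathcal N}(\xi;\mathbf x))-\mathbf w$ is literally about the convex hull of supports of a particular generating set, where cancellation in $h_{\ell,s}-h_{i,s}$ could a priori shrink it further; it is harmless for termination, but the equality is only automatic once you note that this polyhedron coincides with the Newton polyhedron of the coefficient ideal.
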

\begin{proof}Write $\xi=\sum_{s=-1}^\infty y^s\eta_s$ with $
\eta_s=\sum_{j=1}^nh_{js}({\mathbf x})z_j{\partial}/{\partial z_j}$.
Let us do a  blow-up in the independent variables and let ${\mathbf
x}',y$ be the obtained variables. Then the same decomposition as
above acts in this new set of variables, that is
$\xi=\sum_{s=-1}^\infty y^s\eta_s$ where we can write
$$
\eta_s=\sum_{j=1}^nh'_{js}z'_j\frac{\partial}{\partial z'_j};\quad
h'_{js}\in k[[{\mathbf x}']].
$$
Moreover the ideal $I'_s\subset k[[{\mathbf x}']]$ generated by
$\{h'_{js}\}_{j=1}^n$ is $ I'_s=I_sk[[{\mathbf x}]]$, where $I_s $
is the ideal of $k[[{\mathbf x}]]$ generated by
$\{h_{js}\}_{j=1}^n$. This already implies that
$$
\alpha(\eta_s;{\mathbf x})=\alpha(\eta_s;{\mathbf x}');\quad
s=-1,0,1,\ldots .
$$
In particular we have that $\hbar(\xi;{\mathbf
x},y)=\hbar(\xi;{\mathbf x}',y)$.

Moreover, the ideal $I'=\sum_{s=-1}^\infty I'_s\subset k[[{\mathbf
x}']]$ is also given by  $I'=Ik[[{\mathbf x}']]$, where
$I=\sum_{s=-1}^\infty I_s\subset k[[{\mathbf x}]]$. Thus, we can
apply classical results of reduction of singularities under
combinatorial blow-ups, that can be proved as in Proposition
\ref{pro:cuatro} (see also \cite{Can-R-S}) to assure that after a
finite number of blow-ups in the independent variables with centers
of codimension two, the ideal $I$ is generated by a single monomial,
say ${{\mathbf x}'}^{\mathbf q'}$. We obtain an ${\mathbf
x}'$-vector field $\xi'={{\mathbf x}'}^{-\mathbf q'}\xi\in {\mathcal
L}_{{\mathcal A}'}[\log {\mathbf x}']$ such that
$\alpha(\xi';{\mathbf x}',y)=0$.
\end{proof}
\begin{remark} In the above lemma we have
$\hbar({\mathcal L};{\mathcal A}')=\hbar({\mathcal L};{\mathcal
A}')$. Anyway, we do not need to use this fact.
\end{remark}

 We obtain an immediate proof of Lemma \ref{lema:tres}. By
 Lemma \ref{lema:nueve}, we may suppose that there is
 $\xi\in {\mathcal L}_{\mathcal A}[\log {\mathbf x}]$ such that
 $\hbar(\xi;{\mathbf x},y)=\hbar({\mathcal L};{\mathcal A})$ and
 $\alpha(\xi;{\mathbf x},y)=0$. Now,
 it is evident that
 \begin{enumerate}
 \item If $\hbar(\xi;{\mathbf x},y)=-1$, then $\xi$ is non-singular.
 \item If $\hbar(\xi;{\mathbf x},y)=0$, then $\xi$ is elementary (or non-singular).
 \end{enumerate}

\subsection{Getting a formal hypersurface of transversal maximal
contact}
 Let us give a proof of Lemma \ref{lema:cinco}. In view of Lemma
 \ref{lema:nueve}, after performing finitely many blow-ups in the independent variables,
 we can assume that there is
 $\xi\in {\mathcal L}_{\mathcal A}[\log {\mathbf x}]$ such that
 $\hbar(\xi;{\mathbf x},y)=\hbar({\mathcal L};{\mathcal A})$ and
 $\alpha(\xi;{\mathbf x},y)=0$. Moreover, we also have that
 $$
\chi(\xi;{\mathbf x},y)=\hbar(\xi;{\mathbf x},y),
 $$
since otherwise, an application of Lemma \ref{lema:dos} allows us to
decrease $\hbar({\mathcal L};{\mathcal A})$. Moreover, in view of
our hypothesis, we have $d({\mathcal A})=1$ and $\hbar({\mathcal
L};{\mathcal A})\geq 1$.

\begin{lemma}
\label{lema:diez} Let $\Phi=y/{\mathbf x}^{\mathbf p}$ be the
contact rational function. We have ${\mathbf p}\in{\mathbb Z}_{\geq
0}^{n-1}$.
\end{lemma}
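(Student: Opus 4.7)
The plan is to reduce to the asserted normal form by exploiting the freedom granted by the standing hypothesis of Lemma \ref{lema:cinco}: the ramification index stays equal to $1$ and the main height $\hbar(\mathcal{L};\mathcal{A})$ is preserved under any finite sequence of coordinate blow-ups in the independent variables, Puiseux packages and dependent coordinate changes. Since $d(\mathcal{A})=1$, the contact function already has the form $\Phi=y/\mathbf{x}^{\mathbf{p}}$ with $\mathbf{p}\in\mathbb{Z}^{n-1}$, determined by $\nu(y)=\sum_i p_i\nu(x_i)$, and the goal is only to force $\mathbf{p}\geq 0$.

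First I would observe that $\nu(y)>0$ and $\nu(x_i)>0$ for each $i$, so whenever some $p_i<0$ one must have some $p_s>0$; in particular there exist indices $i,s\in\{1,\dots,n-1\}$ with $p_ip_s<0$. This is exactly the configuration that begins the proof of Proposition \ref{prop:dos}. I would then mimic that argument and perform the combinatorial $(i,s)$-blow-up in the two independent variables $(x_i,x_s)$. A direct value computation shows that, after such a blow-up, $\nu(y)$ is still an integer combination of $\nu(x'_1),\dots,\nu(x'_{n-1})$ (with $\gcd=1$), so the ramification index automatically remains $1$ and the new contact function is $\Phi'=y/\mathbf{x}'^{\mathbf{p}'}$ where $\mathbf{p}'$ is obtained from $\mathbf{p}$ by replacing either $p_i$ or $p_s$ with $p_i+p_s$ (depending on whether $\nu(x_i)<\nu(x_s)$ or not) and leaving the remaining entries unchanged. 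In particular $|p_i|+|p_s|$ drops strictly, while $|p_l|$ is unchanged for $l\notin\{i,s\}$.

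Iterating, the non-negative integer $\sum_l|p_l|$ strictly decreases at every step, so after finitely many such $(i,s)$-blow-ups in the independent variables we reach a parameterized regular local model $\mathcal{A}''$ whose contact function has exponent vector $\mathbf{p}''\in\mathbb{Z}_{\geq 0}^{n-1}$. By the hypothesis of Lemma \ref{lema:cinco}, the passage $\mathcal{A}\rightsquigarrow\mathcal{A}''$ preserves $d(\mathcal{A}'')=1$ and $\hbar(\mathcal{L};\mathcal{A}'')=\hbar(\mathcal{L};\mathcal{A})$, together with the other properties fixed at the start of the proof of Lemma \ref{lema:cinco} (via Lemma \ref{lema:nueve} and Lemma \ref{lema:dos}). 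Relabelling $\mathcal{A}''$ as $\mathcal{A}$ yields the claim.

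The only delicate point — and what I expect to be the main obstacle — is checking that an $(i,s)$-blow-up in a pair of independent variables genuinely keeps us inside the \emph{same} normal form $\Phi=y/\mathbf{x}'^{\mathbf{p}'}$ rather than producing a genuine power $y^{d'}$ with $d'>1$ in the numerator; this boils down to the (routine but essential) verification that the $\mathbb{Z}$-lattice generated by the values of the independent variables is unchanged by the blow-up, so that the $\gcd$-condition $\gcd(d;p_1,\dots,p_{n-1})=1$ defining the ramification index is preserved with $d=1$. Once this is in hand, the termination of the procedure and the invariance of the relevant quantities under the hypothesis of Lemma \ref{lema:cinco} conclude the proof.
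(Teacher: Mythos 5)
Your proposal takes a genuinely different route from the paper's, and unfortunately it has real gaps. The paper proves $\mathbf{p}\geq 0$ \emph{without any further transformations}, by exploiting the preparation that precedes the lemma. Recall that at this point one has fixed a generator $\xi$ with $\alpha(\xi;\mathbf{x},y)=0$, $\chi(\xi;\mathbf{x},y)=\hbar(\xi;\mathbf{x},y)\geq 1$ and $d=1$. Writing $\mathrm{In}(\xi;\mathbf{x},y)=\sum_{s=-1}^{\chi}y^s\mathbf{x}^{\mathbf{q}(s)}\Lambda_s$, the hypotheses $\alpha=0$ and $\chi=\hbar$ force $\mathbf{q}(\chi)=0$, so that for any $s$ with $\Lambda_s\neq 0$ the value computation gives $(\chi-s)\mathbf{p}=\mathbf{q}(s)\in\mathbb{Z}_{\geq 0}^{n-1}$. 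It therefore suffices to find one index $s<\chi$ with $\Lambda_s\neq 0$, and this is obtained by contradiction: otherwise $\mathrm{In}(\xi)=y^\chi\Lambda_\chi$, and after a Puiseux package the main height would drop to $\leq 0$, contradicting the standing hypothesis of Lemma \ref{lema:cinco}. Your argument ignores exactly this structure, and that is the heart of the lemma.

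There are two concrete problems with the proposal as written. First, the termination claim is false as stated: after an $(i,s)$-blow-up with, say, $\nu(x_i)<\nu(x_s)$, one has $p_i'=p_i+p_s$, $p_s'=p_s$, and $\sum_l|p_l'|<\sum_l|p_l|$ may fail once there are at least three independent variables (take $p_i>0$, $p_s<0$, $|p_s|>2p_i$ and another index $p_l\gg 0$ so that $\nu(y)>0$; the sum strictly increases). The decrease happens to hold when $n-1=2$, but the section works with arbitrary $n$ and you would at least need to choose the pair $(i,s)$ cleverly and justify termination, rather than assert monotonicity of $\sum_l|p_l|$. Second, and more importantly, the lemma really must give $\mathbf{p}\geq 0$ \emph{in the current model}: it is immediately used to perform the coordinate change $y'=y-c\mathbf{x}^{\mathbf{p}}$, and the subsequent sequence $y^{(j)}=y^{(j-1)}-c_j\mathbf{x}^{\mathbf{p}(j)}$ lives entirely in a fixed $\mathcal{M}\setminus\mathcal{M}^2$ so that $\hat f=\lim_j y^{(j)}$ makes sense in $\widehat{\mathcal{O}}$. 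If you blow up first, $\mathcal{O}$ changes; you would then also have to re-establish the main-vertex preparation $\alpha=0$, $\chi=\hbar$ (via Lemma \ref{lema:nueve}, which does yet more independent-variable blow-ups that alter $\mathbf{p}$), and it is not justified that these two preparations can be interleaved to stabilize simultaneously. The paper avoids all of this by showing that the preparation you already have forces $\mathbf{p}\geq 0$.
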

\begin{proof} Let us keep the notations of
subsection \ref{subsectioninitialparts}.  Recall
that
$$
\mbox{In}(\xi;{\mathbf x},y)=\sum_{s=-1}^\chi y^s{\mathbf
x}^{{\mathbf q}(s)}\Lambda_s.
$$
Since $\alpha(\xi;{\mathbf x},y)=0$ and $\chi(\xi;{\mathbf
x},y)=\hbar(\xi;{\mathbf x},y)$, we have  ${\mathbf q}(\chi)=0$.
Thus, for any $s$ such that $\Lambda_s\ne 0$ we have
$$
\nu({\mathbf x}^{{\mathbf q}(s)})=(\chi-s)\nu(y)
$$
and hence $(\chi-s){\mathbf p}={{\mathbf q}(s)}$. Noting that
${\mathbf q}(s)\in {\mathbb Z}_{\geq 0}^{n-1}$, it is enough to show
that there is at least an index $s<\chi$ such that $\Lambda_s\ne 0$.
 Assume the contrary. Then
$$
\mbox{In}(\xi)=y^{\chi}\Lambda_{\chi},
$$
where $\chi=\hbar(\xi;{\mathbf x};y)\geq 1$. Let us do a Puiseux
package, taking the notations of the proof of Lemma
\ref{lema:siete}, we obtain $\varsigma=0$ and hence
$\chi'\leq\beta'\leq \varsigma=0$. Contradiction.
\end{proof}

In this situation, we have  $\nu(y-c{\mathbf x}^{\mathbf
p})>\nu(y)$. Let us do the coordinate change $ y'=y-c{\mathbf
x}^{\mathbf p}$.  The situation repeats. In this way we can produce
a sequence of elements $y^{(j)}\in {\mathcal M}\setminus {\mathcal
M}^2$, such that $y^{(0)}=y$ and
$$ y^{(j)}=y^{(j-1)}-c_{j}{\mathbf
x}^{{\mathbf p}(j)};\quad
\nu(y^{(j)})>\nu({y^{(j-1)}}),\,j=1,2,\ldots .$$
 Taking $\hat
f=\lim_j y^{(j)}$, we obtain the desired formal hypersurface.

\subsection{The case of dimension two}
The statement of Lemma \ref{lema:cuatro} is a consequence of
Seidenberg's reduction of singularities in dimension two \cite{Sei}.
Let us see this. Assuming that we do not get non-singular points,
after finitely Puiseux packages, we obtain a ``simple singularity''
in the sense of Seidenberg. It is given by an $x$-vector field of
the form
$$
\xi=(\lambda+a(x,y))x\frac{\partial}{\partial x}+ (\alpha x+\mu
y+\tilde b(x,y))\frac{\partial}{\partial x};\quad a(0,0)=0,\;\tilde
b(x,y)\in {\mathcal M}^2
$$
where, $(\lambda,\mu)\ne (0,0)$ and if $\lambda\ne 0$ then
$\mu/\lambda\notin {\mathbb Q}_{>0}$. Such singularity has exactly
two formal invariant curves: $x=0$ and $\hat f=0$, where $\hat
f=y-\hat\phi(x)$. They are non-singular and transversal one to the
other. After doing one more blow-up, the exceptional divisor is
invariant and we obtain exactly two simple singularities, one of
them corresponds to the strict transform of $x=0$, it is a {\em
corner}, and the other one is in the strict transform of $\hat f=0$.
This shows that blowing-up a corner produces only corners as
singularities, thus, since the valuation has rational rank one and
we have nontrivial Puiseux packages, we necessarily do blow-ups
outside the corners. Hence we follow the infinitely near points of
$\hat f=0$. ``A fortiori'', we obtain that $\hat f$ is non-algebraic
(otherwise the value of $\hat f$ would be infinite) and has maximal
contact with $\nu$.
\section{Etale Puiseux packages}
\subsection{Review on etale neighborhoods}
 Let us recall the definition of a
local etale morphism as one can see in \cite{Art}. Let us fix the
local ring ${\mathcal O}={\mathcal O}_{M,P}$ of a projective model
$M$ of $K$ at the center $P$ in $M$ of the $k$-valuation $\nu$ of
$K$ and assume that $P$ is a regular point of $M$. Here we assume
that $\nu$ is a real valuation with $\kappa_\nu=k$.

Consider a morphism ${\mathcal O}\rightarrow \widetilde{\mathcal O}$
of local rings. We say that ${\mathcal O}\rightarrow
\widetilde{\mathcal O}$ is {\em local-etale} or that
$\widetilde{\mathcal O}$ is a {\em local-etale extension } of
${\mathcal O}$ if we have the following properties:
\begin{enumerate}
\item The local rings ${\mathcal O}$ and $\widetilde{\mathcal O}$ have the
same residual field.
\item $\widetilde{\mathcal O}$ is the localization at a prime ideal of an
etale ${\mathcal O}$-algebra.
\end{enumerate}
An etale $\mathcal O$-algebra is an $\mathcal O$-algebra of the type
$B={\mathcal O}[t_1,t_2,\ldots,t_n]/(f_1,f_2,\ldots,f_n)$, where the
Jacobian matrix of the $f_i$ is invertible in $B$. This is
equivalent to say that $B$ is a finitely generated $A$-flat algebra
and $\Omega^1_AB=0$. Note that $\widetilde{\mathcal O}$ is also a
regular local ring and its fraction field $\widetilde K$ is a
finitely generated algebraic extension of $K$. Recall also that
$\widetilde{\mathcal O}\subset{\mathcal O}^h\subset\widehat{\mathcal
O}$, where ${\mathcal O}^h$ is the henselian closure of ${\mathcal
O}$.

We say that the pair $(\widetilde{\mathcal O},\tilde\nu)$ is a {\em
local etale extension of $({\mathcal O},\nu)$} if
$\widetilde{\mathcal O}$ is a local-etale extension of ${\mathcal
O}$ and $\tilde\nu$ is a $k$-valuation of $\widetilde K$ centered at
$\widetilde{\mathcal O}$ such that $\tilde\nu\vert_K=\nu$.
 Note that  $\tilde \nu$ is a real $k$-valuation and
$\kappa_{\tilde\nu}=k$.

In the following proposition, we summarize the properties that allow
us to work
 ``up to local-etale extensions''.
\begin{proposition}
\label{prop:seis}
 Consider a foliation ${\mathcal
L}\subset\mbox{Der}_kK$ and a real $k$-valuation $\nu$ of $K$ such
that $\kappa_\nu=k$. Let  $(\widetilde{\mathcal O},\tilde\nu)$ is a
be a local etale extension of $({\mathcal O},\nu)$ and denote
 $\widetilde{\mathcal L}=\widetilde K{\mathcal
L}\subset \mbox{Der}_k\widetilde K$  the induced foliation on
$\widetilde K$. Assume that we respectively have:
\begin{enumerate}
\item The foliation  $\widetilde{\mathcal L}$ is log-elementary at
$\widetilde{\mathcal O}$.
\item There is a formal $\hat f\in \widehat{\mathcal O}$ with
transversal maximal contact relatively to $\widetilde{\mathcal O}$.
\end{enumerate}
Then, up to perform a finite sequence of local blow-ups of $\mathcal
O$ we respectively have:
\begin{enumerate}
\item The foliation  ${\mathcal L}$ is log-elementary at
${\mathcal O}$.
\item There is a formal $\hat f\in \widehat{\mathcal O}$ with
transversal maximal contact relatively to ${\mathcal O}$.
\end{enumerate}
\end{proposition}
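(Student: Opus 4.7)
The plan is to exploit the fact that a local-etale extension $\mathcal O \hookrightarrow \widetilde{\mathcal O}$ with identical residue field induces an isomorphism of completions $\widehat{\mathcal O} \cong \widehat{\widetilde{\mathcal O}}$. Consequently, any structure---regular systems of parameters, vector fields in specified form, formal hypersurfaces---that lives in $\widetilde{\mathcal O}$ already lives in the common completion $\widehat{\mathcal O}$; my task is to descend it back to $\mathcal O$, possibly after finitely many local blow-ups of $\mathcal O$. A blow-up of $\mathcal O$ centered at a nonsingular subvariety through the center of $\nu$ lifts canonically to a blow-up of $\widetilde{\mathcal O}$ at the center of $\tilde\nu$, and the hypothesized etale-local structure is preserved, so the two rings remain compatible throughout the process.

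For part (2), I would start with witnessing data in $\widetilde{\mathcal O}$: elements $\tilde x_1, \ldots, \tilde x_r \in \widetilde{\mathcal O}$ forming part of an r.s.p.\ with $\mathbb Z$-independent values, completing to a formal r.s.p.\ of $\widehat{\mathcal O}$ together with $\hat f$; and a Krull-convergent sequence $\tilde f_i \in \widetilde{\mathcal O}$ with strictly increasing $\tilde\nu$-values. Since the natural map $\mathcal M^a/\mathcal M^{a+1} \to \widetilde{\mathcal M}^a/\widetilde{\mathcal M}^{a+1}$ is an isomorphism for every $a$, each $\tilde x_j$ and each $\tilde f_i$ can be approximated modulo arbitrarily high powers of $\widetilde{\mathcal M}$ by an element of $\mathcal O$. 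Because $\nu$ has rank one, $\nu(\widetilde{\mathcal M}^k) \to \infty$ with $k$, so sufficiently close approximations preserve $\nu$-values below any prescribed bound. I would thereby obtain $x_j \in \mathcal O$ with $\nu(x_j) = \tilde\nu(\tilde x_j)$ and forming part of an r.s.p.\ of $\mathcal O$, and $f_i' \in \mathcal O$ Krull-converging to some $\hat g \in \widehat{\mathcal O}$; transversality of $x_1, \ldots, x_r, \hat g$ then follows from the stability of linear independence modulo $\widehat{\mathcal M}^2$ under sufficiently close perturbations.

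For part (1), I would begin with the log-elementary datum in $\widetilde{\mathcal O}$: a r.s.p.\ $\tilde z_1, \ldots, \tilde z_n$ and a generator $\tilde\xi$ of $\widetilde{\mathcal L}_{\widetilde{\mathcal O}}$ of the prescribed form. Since any $\mathcal O$-generator $\xi$ of $\mathcal L_{\mathcal O}$ agrees with $\tilde\xi$ up to a unit of $\widetilde{\mathcal O}$, the log-elementary expression transfers to $\widehat{\mathcal O}$ using formal coordinates. I would then apply Artin approximation---available because $\mathcal O$ is an excellent local ring, after passing through the henselization that contains $\widetilde{\mathcal O}$, and because the log-elementary condition is captured by a finite system of polynomial equations together with open conditions (non-vanishing of a Jacobian, non-membership of some $a_j$ in $\mathcal M^2$)---to produce algebraic $z_i \in \mathcal O$ and $a_i \in \mathcal O$ satisfying the defining equations exactly. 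I expect the main obstacle to be precisely this application of Artin approximation in the present mixed algebraic/formal setting, and, when direct approximation fails, the need to perform local blow-ups of $\mathcal O$ to improve the compatibility between $\mathcal O$ and $\widetilde{\mathcal O}$; one must verify that each such blow-up preserves the log-elementary hypothesis on the etale side so that the approximation can be iterated until an algebraic solution is reached.
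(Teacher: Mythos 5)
Your approximation-theoretic reading is appealing (indeed the isomorphism $\widehat{\mathcal O}\cong\widehat{\widetilde{\mathcal O}}$ is the right starting point, and your argument for part~(2) is broadly workable because being part of a regular system of parameters is a condition on $\mathcal M/\mathcal M^2$, hence open, and the rank-one hypothesis forces $\nu(\widehat{\mathcal M}^k)\to\infty$). But the plan for part~(1) has a genuine gap, and it is precisely the gap that the paper's proof is designed to fill. Artin approximation, even over an excellent local ring, produces solutions in the \emph{henselization} $\mathcal O^h$, not in $\mathcal O$ itself. Since $\widetilde{\mathcal O}\subset\mathcal O^h$, you already \emph{have} a solution in $\mathcal O^h$; the whole content of the proposition is to descend from $\mathcal O^h$ to $\mathcal O$, and approximation alone cannot do that. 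You anticipate this (``when direct approximation fails, \dots perform local blow-ups of $\mathcal O$ to improve compatibility''), but you give no criterion for choosing the centers and no argument that finitely many blow-ups suffice, so the proposal does not actually close the gap.

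What the paper does instead is concrete and different. The log-elementary form requires the first $e$ coordinate hypersurfaces $\tilde z_i=0$ to be invariant for the generator of $\widetilde{\mathcal L}$; this is not a small-perturbation-stable (``open'') condition on the $\tilde z_i$, and those hypersurfaces need not be definable over $\mathcal O$. The paper's device is to take $\tilde h=\prod_{i=1}^n\tilde x_i$, trace it down to the principal ideal $h\mathcal O=\mathcal O\cap \tilde h\widetilde{\mathcal O}$, and then \emph{monomialize} $h$ by a finite sequence of local blow-ups with centers that are permissible in the adapted (logarithmic) sense of \cite{Can}, so that the log-elementary property is preserved on the etale side at each step. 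Once $h$ is a monomial in a regular system of parameters of the blown-up $\mathcal O$, the invariant hypersurfaces coming from $\widetilde{\mathcal O}$ are aligned with the coordinate hyperplanes of $\mathcal O$, and the log-elementary expression for $\widetilde{\mathcal L}$ descends immediately to one for $\mathcal L$. This principalization/monomialization of the trace divisor is the key mechanism missing from your proposal, and without it part~(1) does not go through.
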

\begin{proof} Let $\tilde x_1,\tilde x_2, \ldots, \tilde x_n$ be a regular system of parameters of
$\widetilde{\mathcal O}$. Consider $\tilde h=\prod_{i=1}^n\tilde
x_i$. The ideal $\tilde h\widetilde{\mathcal O}$ gives a principal
ideal $h{\mathcal O}={\mathcal O}\cap \tilde h\widetilde{\mathcal
O}$. We can do the local uniformization of $h$ by using centers that
respect the fact that $\widetilde{\mathcal L}$ is log elementary
(relatively to $\tilde{\mathbf x}$) (see \cite{Can} to the
definition of permissible centers and the needed properties).
Finally we get that $h$ is a monomial and we are done.
\end{proof}
\subsection{Etale Puiseux packages}

We introduce here an etale version of Puiseux packages for the case
$r=1$. It has the same effect over a foliation as the Puiseux
packages introduced in Section \ref{seccion:uno}, but it will allow
us to do an accurate control of the foliation. Indeed, the study of
the case $n=3$, $r=1$ will be done under the use of etale Puiseux
packages.

We assume that $\nu$ is a valuation with rational rank $r=1$ and
$\kappa_\nu=k$.

Let ${\mathcal A}=({\mathcal O},{\mathbf z}=({x},{\mathbf y}))$ be a
regular parameterized model. Consider a dependent variable $y_j$.
Let $\Phi=y_j^d/{ x}^{p}$ be the contact rational function and $c\in
k$ such that $\nu(\Phi-c)>0$. Recall that $d$ is the {\em
$y_j$-ramification index of} ${\mathcal A}$.

\begin{remark}
In the case that $d=1$,
 all
the blow-ups in a Puiseux package are ``in the first chart'' in the
sense that we always have $\nu(y_j)\geq \nu(x)$.
\end{remark}

Let us consider the ring  ${\mathcal O}^\natural={\mathcal
O}[T]/(T^d-x)={\mathcal O}[t]$, where $T$ is an indeterminate and
let $\widetilde K$ be the fraction field of ${\mathcal O}^\natural$.
We know \cite{Vaq} that there are $k$-valuations $\tilde\nu$ of
$\widetilde K$ such that $R_{\tilde \nu}\cap K=R_\nu$. Note that all
the $\tilde\nu$ have the same group of values. Let us choose one of
them, say $\tilde\nu$. The ring ${\mathcal O}^\natural$ is a regular
local ring that supports a parameterized regular local model
$$
{\mathcal A}^\natural=({\mathcal O}^\natural,{\mathbf
z}^\natural=(t,{\mathbf y}))
$$
relative to $\tilde K$ and $\tilde \nu$. We have $k\subset {\mathcal
O}\subset {\mathcal O}^\natural$ and ${\mathcal
M}^\natural\cap{\mathcal O}={\mathcal M}$. Moreover,
$k=\kappa_{\tilde \nu}$ and $t^d=x$. Let us note that
$\tilde\nu(y/t^{\mathbf p})=0$. In particular $d(\widetilde{\mathcal
A})=1$. Let $\tilde c\in k$
 be such that $\tilde\nu(y/t^{\mathbf p}-\tilde c)>0$, we see that $\tilde
 c^d=c$.
 \begin{definition} We say that $(\widetilde{\mathcal A},\tilde \nu)$ has been
 obtained from $({\mathcal A},\nu)$ by an {\em etale $j$-Puiseux package}
 if and only if  $\widetilde{\mathcal A}$ has been obtained from
 ${\mathcal A}^\natural$
by a $j$-Puiseux package. \end{definition}

\begin{proposition}
\label{prop:siete} Assume that  $\widetilde{\mathcal
A}=(\widetilde{\mathcal O},\tilde{\mathbf z}=(t,\tilde{\mathbf y}))$
has been
 obtained from ${\mathcal A}$ by an {etale $j$-Puiseux package}.
 There is ${\mathcal A}'=({\mathcal
O}',{\mathbf z}'=(x',{\mathbf y}'))$ obtained from $\mathcal A$ by a
 $j$-Puiseux package such that $(\widetilde{\mathcal O},\tilde\nu)$ is
  a local-etale extension of
 $
 ({\mathcal
 O}',\nu)$.
\end{proposition}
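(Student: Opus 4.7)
My plan is to choose a $j$-Puiseux package $\mathcal{A} \to \mathcal{A}'$ and to exhibit $\widetilde{\mathcal{O}}$ as the localization at an appropriate prime of the etale $\mathcal{O}'$-algebra $B := \mathcal{O}'[T]/(T^d - (y_j' + c))$. The bridge is the element $\Psi := y_j/t^p \in \widetilde{\mathcal{O}}$: it is a unit with residual value $\tilde c \neq 0$, it satisfies $\Psi^d = y_j^d/x^p = \Phi = y_j' + c$, and $\tilde y_j = \Psi - \tilde c$.

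First I would pin down the form of $\mathcal{A}'$. Fix any $j$-Puiseux package of $\mathcal{A}$; by the remark following the definition of Puiseux package, the concluding translation yields $y_j' = \Phi - c$. Remark~\ref{obsecuaciones} provides nonnegative integers $a, b, a', b'$ with $x = {x'}^{a} \Phi^{b}$ and $y_j = {x'}^{a'} \Phi^{b'}$; comparing logarithmic valuations and using $\Phi = y_j^d/x^p$ with $\gcd(d,p)=1$ forces $a = d$, $a' = p$, and $d b' - p b = 1$ (the matrix has determinant $+1$, the sign being fixed by $\nu(x')>0$). Inverting gives $x' = x^{b'} y_j^{-b}$. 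Substituting $x = t^d$ and $y_j = t^p \Psi$ yields
\[
x' = t^{d b' - p b}\,\Psi^{-b} = t\,\Psi^{-b}, \qquad y_j' + c = \Psi^d,
\]
so $x', y_j' \in \widetilde{\mathcal{O}}$, whence $\mathcal{O}' \subseteq \widetilde{\mathcal{O}}$; the compatibility $\tilde\nu|_{\mathcal{O}'} = \nu$ is automatic from the tower.

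Next, I would check the etale structure of $B$. Since $y_j' + c$ reduces to $c \neq 0$ modulo $\mathcal{M}_{\mathcal{O}'}$ it is a unit of $\mathcal{O}'$, so the class of $T$ in $B$ is a unit, and the Jacobian $d T^{d-1}$ is a unit too (using characteristic zero). Thus $B$ is etale over $\mathcal{O}'$. The homomorphism $\varphi\colon B \to \widetilde{\mathcal{O}}$, $T \mapsto \Psi$, is local with $\mathfrak p := \varphi^{-1}(\widetilde{\mathcal{M}})$ and induces $k \to k$ on residue fields (since $T$ hits $\tilde c \in k$). The image of $B_{\mathfrak p} \to \widetilde{\mathcal{O}}$ contains $\mathcal{O}'$, $\Psi$, hence $\tilde y_j = \Psi - \tilde c$ and $t = x'\Psi^{b}$, together with the untouched $y_i$ for $i \neq j$; these generate $\widetilde{\mathcal{O}}$ as a local ring, so $\varphi_{\mathfrak p}$ is surjective. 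As $B_{\mathfrak p}$ and $\widetilde{\mathcal{O}}$ are regular local integral domains of the same dimension $n$, the surjection has zero kernel and is an isomorphism, identifying $\widetilde{\mathcal{O}}$ as the desired local-etale extension of $\mathcal{O}'$.

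The main obstacle is the first step: reading off carefully from Remark~\ref{obsecuaciones} the exponents $a = d$, $a' = p$ and the Bezout identity $d b' - p b = 1$, which is precisely what allows $t = x'\Psi^{b}$ to be recovered inside $B_{\mathfrak p}$. Once that is in place, the etale check and the dimension-based surjectivity argument are routine.
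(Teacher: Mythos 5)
Your argument is correct and follows the same route as the paper's: you make explicit that $\tilde y_j$ (or equivalently $\Psi=\tilde y_j+\tilde c$) is a simple root over $\mathcal{O}'$ of a degree-$d$ polynomial coming from $y'_j=\Phi-c$, that $t$ is generated by $x'$ and this root, and you then fill in the surjectivity-plus-dimension step that the paper compresses into ``this is enough to obtain the conclusion.'' The only substantive addition is your careful extraction of the identities $a=d$, $a'=p$, $db'-pb=1$ from Remark~\ref{obsecuaciones}, which the paper uses implicitly when it writes $t=x'P(\tilde y_j)$ with $P(0)\ne 0$.
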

\begin{proof} Consider the $j$-Puiseux package
${\mathcal A}^\natural\mapsto \widetilde{\mathcal A}$.  Put
$\Phi=y_j^d/x^p$ and $\tilde\Phi=y_j/t^p$, the respective contact
rational functions for ${\mathcal A}$ and ${\mathcal A}^\natural$.
Note that $\tilde\Phi^d=\Phi$. Let $c,\tilde c\in k$ be such that
$\tilde\nu(\tilde\Phi-\tilde c)>0$ and $\tilde c^d=c$. We have
$$
\tilde y_j=\tilde\Phi -\tilde c;\quad y'_j=\Phi-c.
$$
Moreover $\tilde y_j$ is a simple root of a polynomial over
${\mathcal O}'$ as the following relation shows
$$
y'_j=(\tilde y_j+\tilde c)^d-c.
$$
Now $\tilde t$ is of the form $\tilde t=x'P(\tilde y_j)$, where
$P(0)\ne 0$. This is enough to obtain the conclusion.
\end{proof}
\begin{remark} If
$\widetilde{\mathcal A}=(\widetilde{\mathcal O},\tilde{\mathbf
z}=(t,\tilde{\mathbf y}))$ has been obtained from ${\mathcal
A}=({\mathcal O}, {\mathbf z}=(x,{\mathbf y}))$ by an etale Puiseux
package, then ${\mathcal O}\subset\widetilde{\mathcal O}$ and
$t^d=x$.
\end{remark}
\begin{definition} We say that
$({\mathcal A},\nu)\mapsto (\widetilde{\mathcal A},\tilde\nu)$ is an
{\em etale standard transformation} if and only if
$(\widetilde{\mathcal A},\tilde\nu)$ has been obtained from
$({\mathcal A},\nu)$ by an etale Puiseux Package or by a coordinate
change in the dependent variables.
\end{definition}

\section{Rational rank one}
We end here the proof of Theorem \ref{teo:tres}. To do this we
consider the following proposition
\begin{proposition}
\label{prop:ocho} Let ${\mathcal L}\subset\mbox{Der}_kK$ be a
foliation over $K$, where $n=3$. Take a $k$-valuation $\nu$ of $K$
of rational rank one and such that $\kappa_\nu=k$. Assume that
${\mathcal A}$ is a parameterized regular local model for $K$ and
$\nu$. Then, there is a finite sequence of etale standard
transformations
$$
({\mathcal A},\nu)=({\mathcal A}_0,\nu_0)\mapsto ({\mathcal
A}_1,\nu_1)\mapsto \cdots\mapsto({\mathcal A}_N,\nu_N)=({\mathcal
A}',\nu')
$$
such that either the transformed foliation ${\mathcal L}'$ is
log-elementary in ${\mathcal A}'$ or there is $\hat f\in {\mathcal
O}'$ having transversal maximal contact.
\end{proposition}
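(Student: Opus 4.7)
The plan is to adapt the Newton-Puiseux polygon machinery of Section \ref{secciontres} to the present setting, where $r=1$ and $n=3$ gives two dependent variables $y_2,y_3$. The role of ordinary Puiseux packages in the co-rank one analysis will be played by the etale Puiseux packages introduced above, since by the construction preceding Proposition \ref{prop:siete} we have $d(\widetilde{\mathcal A})=1$ after an etale package. This allows a cleaner induction than would be possible with ordinary packages, where the ramification index can be arbitrary and blocks the key step of Lemma \ref{lema:dos} from giving a strict decrease.

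First I would fix a distinguished dependent variable, say $y_3$, and view an $\mathbf{x}$-logarithmic vector field $\xi\in\mathcal{L}_{\mathcal A}[\log x]$ as a formal expansion in powers of $y_3$ whose coefficients involve $x$ and $y_2$. This yields analogues of the heights $\hbar$ and $\chi$ attached to $y_3$. Coordinate changes in the dependent variables should preserve $\hbar$ (analogue of Lemma \ref{lema:uno}), and the effect of an etale $y_3$-Puiseux package can be computed exactly as in Subsection \ref{subsectionpuiseuxeffect}: since $d=1$ at the etale stage, the analogue of Lemma \ref{lema:dos} will force $\hbar$ to strictly decrease whenever $\chi\geq 1$. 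Iterating, one of two things happens. Either $\hbar$ descends to $-1$ or $0$, in which case blow-ups in the independent variable give log-elementarity as in Lemma \ref{lema:tres}. Or $\hbar$ stabilizes at a value $\geq 1$ with $\chi=\hbar$ throughout, and the approximation argument of Lemma \ref{lema:cinco} produces a Krull-limit $\hat f$ of successive $y_3^{(j)}$ with strictly increasing values, yielding transversal maximal contact. The conclusions obtained over $\widetilde{\mathcal O}$ are transferred back to $\mathcal O$ by combining Proposition \ref{prop:siete} with Proposition \ref{prop:seis}.

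The main obstacle is the second dependent variable $y_2$. In the co-rank one setting the coefficients of the expansion in $y$ lay in a power-series ring in the independent variables, where the $\mathbb{Z}$-independence of the values $\nu(x_i)$ singled out a unique leading monomial; in particular, the initial forms used in Subsection \ref{subsectioninitialparts} were well defined. Here the coefficients involve $y_2$, whose value is rationally commensurable with $\nu(x)$, so a single element of the coefficient ring can present many monomials of the same value and the leading-term analysis must be recast. One must therefore either first bring $y_2$ into an advantageous form by an auxiliary sequence of etale $y_2$-Puiseux packages and coordinate changes (possibly producing a formal hypersurface of transversal maximal contact in $y_2$, which already concludes the proof), or carry a joint invariant that controls $y_2$ and $y_3$ simultaneously and is not spoiled by passing from a $y_2$-package to a $y_3$-package and back. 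Controlling this alternation — and in particular preventing a $y_3$-package from raising the invariants attached to $y_2$ — is where I expect the bulk of the technical work to lie, and is also the point where the non-discreteness of $\nu$ genuinely enters the argument.
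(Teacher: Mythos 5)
The proposal has the right coarse outline but contains a concrete misunderstanding of the mechanism for height decrease, and the identified ``main obstacle'' is left entirely unaddressed, whereas it is where essentially all of the paper's work lies.

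First, the role of $d=1$ is backwards. You write that ``since $d=1$ at the etale stage, the analogue of Lemma \ref{lema:dos} will force $\hbar$ to strictly decrease whenever $\chi\geq 1$.'' But Lemma \ref{lema:dos} gives strict decrease $\hbar'<\chi\leq\hbar$ precisely when $\chi\geq 1$ \emph{and} $d\geq 2$; when $d=1$ the lemma yields only $\hbar'\leq\chi\leq\hbar$, which stalls exactly when $\chi=\hbar$. So the etale packages do not buy you the strict decrease you claim. The point of passing to $\mathcal{O}^\natural$ is quite different: it makes the ``abscissa'' invariants $\alpha,\lambda,\Lambda$ behave integrally under $w$-Puiseux packages (Lemma \ref{lema:once}, Corollary \ref{cor:desplazamiento}), which is what permits a well-posed preparation theory in the $(x,w)$-plane. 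The strict decrease in $\hbar$ in the paper comes instead from the Tchirnhausen translation applied to the critical polynomial (Remark \ref{rk:poltchirnhausen}, Propositions \ref{prop:dominantstability}, \ref{prop:recessivestability}, \ref{pro:Tchirnahussituation}); nothing in your proposal plays that role.

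Second, you correctly flag that the second dependent variable ($w$ in the paper's notation) is the difficulty, and you offer two abstract escape routes (``prepare $y_2$ first'' or ``carry a joint invariant''), but you do not sketch either, and the paper's actual argument is considerably more structured than both. It proceeds by: $x$-preparation of the independent coefficient $\xi(x)/x$; a two-variable $(x,w)$-Newton-Puiseux theory with the dominant/recessive dichotomy and the notion of strongly $(x,w)$-prepared levels, stable under $w$-Puiseux packages; the resulting notion of ``completely prepared'' with dominant or recessive critical segment; the critical polynomial $P_\xi(x,y)$ and the Tchirnhausen condition with its three cases A, B, C; and, for the recessive case B, a separate ``recessive preparation step'' algorithm $(\xi;\mathcal{A};\epsilon,\gamma_0,h,p)$ whose termination analysis either yields a final recessive step (forcing case C) or directly produces the formal maximal-contact hypersurface. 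The non-discreteness of $\nu$ enters through the estimates that control the $\epsilon$, $\gamma_0$, $p$ parameters and through Definition \ref{def:recessivesigma} of ``recessive for $\varsigma$,'' none of which your proposal anticipates. In short, the broad two-alternative shape of the conclusion is right, but both the mechanism you propose for descent and the missing machinery for the $w$-variable need to be replaced by the paper's.
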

 Proposition \ref{prop:ocho} gives the end of the
 proof of Proposition \ref{pro:tres} and hence it completes the proof of Theorem
\ref{teo:tres}. Indeed,   by propositions  \ref{pro:cuatro} and
\ref{pro:cinco} we obtain Proposition \ref{pro:tres} for rational
rank $r=2,3$. For the case of rational rank $r=1$ and $n=3$, we
obtain Proposition \ref{pro:tres} from Proposition \ref{prop:ocho}
in view of propositions \ref{prop:seis} and \ref{prop:siete}.

This section is devoted to the proof of Proposition \ref{prop:ocho}.
In all this section we assume implicitly that we do not get a formal
transversal maximal contact.

Recall that we in this section we have $n=3$, the rational rank of
$\nu$ is equal to one and $\kappa_\nu=k$. We start with a
parameterized regular local model $ {\mathcal A}=({\mathcal O};
{\mathbf z}=(x,w,y))$ and a foliation ${\mathcal
L}\subset\mbox{Der}_kK$.
\subsection{The independent coefficient}
\label{equispreparacion} Let $\xi$ be an $\mathcal O$-generator of
${\mathcal L}_{\mathcal A}[\log x]$. Let us put $H=\xi(x)/x\in
{\mathcal O}$. Consider an etale standard transformation $({\mathcal
A},\nu)\mapsto ({\mathcal A},\nu')$ where
$${\mathcal A}'=({\mathcal O}', {\mathbf z}'=(t, w',y')).$$
Recall that $t^d=x$, for $d\geq 1$. We know that ${\mathcal
L}'_{{\mathcal A}'}[\log t]$ is generated by a germ of vector field
of the form $ \xi'=t^{q}\xi$ where $ q\in{\mathbb Z}$. Moreover,  we
have that
$$
\xi(t)/t=\lambda' \xi(x)/x,
$$
where $\lambda'=1/d\in {\mathbb Q}_{>0}$. This implies that
\begin{equation}
\label{equno} H'=\xi'(t)/t=\lambda' t^{q}H\in {\mathcal O}'.
\end{equation}
In particular, the coefficient $H$ is transformed essentially ``as a
function'' under the etale standard transformations. This allows us
to obtain the following result
\begin{proposition}
\label{pro:nueve} After finitely many etale standard transformations
we can chose an $\mathcal O$-generator $\xi$ of ${\mathcal
L}_{\mathcal A}[\log x]$
 such that
 $
\xi(x)/x=\lambda x^m$,
 where $\lambda\in {\mathbb Q}_{>0}$.
\end{proposition}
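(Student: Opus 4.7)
The plan is to reduce Proposition \ref{pro:nueve} to a monomialization problem for the single function $H = \xi(x)/x \in \mathcal{O}$ along the valuation $\nu$. Equation \eqref{equno} shows that under an etale standard transformation the coefficient $H$ transforms multiplicatively as $H' = \lambda' t^q H$ with $\lambda' = 1/d \in \mathbb{Q}_{>0}$ and $q \in \mathbb{Z}$. Moreover, since ${\mathcal L}_{\mathcal A}[\log x]$ is a free rank-one $\mathcal{O}$-module, any two of its generators differ by a unit of $\mathcal{O}$, so replacing $\xi$ by $\xi/U$ with $U \in \mathcal{O}^\times$ divides $H$ by $U$. Thus it suffices to produce, by finitely many etale standard transformations, a parameterized local model $\mathcal{A}'$ in which $H$ factors as $H = U \cdot c \cdot t^m$ with $U \in (\mathcal{O}')^\times$, $c \in k^\times$, and $m \in \mathbb{Z}_{\geq 0}$; after dividing $\xi$ by $U$ and rescaling by a suitable constant in $k^\times$, one obtains $\xi(t)/t = \lambda t^m$ with $\lambda \in \mathbb{Q}_{>0}$.

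To achieve this monomialization of $H$, I expand $H = \sum_{s \geq 0} H_s(x,w)\,y^s$ and apply the scalar version of the Newton--Puiseux polygon analysis of Section \ref{secciontres}: the notions of Newton--Puiseux support, main and critical heights, and contact function are purely combinatorial and apply to any element of $\mathcal{O}$, not only to vector fields. The scalar analogues of Lemmas \ref{lema:dos}--\ref{lema:ocho} then show that iterated etale $y$-Puiseux packages, together with coordinate changes in $y$ playing the role of translations, reduce $H$ --- up to a unit factor --- to an element of $k[[t,w]] \cap \mathcal{O}'$. A second identical sweep using etale $w$-Puiseux packages then further reduces $H$, up to a unit, to an element $c\, t^m \in k[[t]]$, as required.

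The main obstacle is the termination of this monomialization by etale standard transformations alone. This is the scalar analogue of the analysis leading to Proposition \ref{pro:cinco}, and termination rests on the running hypothesis of Section 5 that no formal transversal maximal contact with $\nu$ has been obtained: if the monomialization of $H$ did not terminate, the scalar analogue of Lemma \ref{lema:cinco} applied to $H$ would produce a Krull limit $\hat f \in \widehat{\mathcal{O}}'$ of successive translates, yielding a formal hypersurface of transversal maximal contact with $\nu$ and contradicting the hypothesis. The scalar case is in fact simpler than the foliation case of Proposition \ref{pro:cinco}, because a single function is more rigid than a vector field and the combinatorial bookkeeping of its Newton--Puiseux polygon is cleaner.
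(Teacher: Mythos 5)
Your reduction is exactly the paper's: you observe, via Equation \eqref{equno}, that $H=\xi(x)/x$ transforms multiplicatively under etale standard transformations, that $\mathcal{O}$-generators of $\mathcal{L}_{\mathcal{A}}[\log x]$ differ by a unit, and that it therefore suffices to monomialize $H$ and divide out the unit. The paper's own proof stops there, citing ``the usual local uniformization for functions'' as a black box; you go further and sketch a proof of that monomialization.

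Your sketch, however, has an internal mismatch worth flagging. You propose to expand $H=\sum_s H_s(x,w)\,y^s$ and apply ``the scalar version of the Newton--Puiseux polygon analysis of Section \ref{secciontres}.'' But Section \ref{secciontres} treats the case $r=n-1$: there is a single dependent variable and every coefficient lives in $k[[\mathbf{x}]]$ where the independent variables have $\mathbb{Z}$-linearly independent values, so that $\nu$-initial forms are single monomials and Lemmas \ref{lema:dos}--\ref{lema:ocho} go through cleanly. Here $r=1$, $n=3$: the coefficients $H_s(x,w)$ involve the dependent variable $w$, whose value is $\mathbb{Q}$-dependent on $\nu(x)$, so that analysis does not transfer as stated. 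The relevant machinery is the two-variable (nested) Newton--Puiseux polygon of Section 6, and the implied strategy is not a clean ``first a $y$-sweep, then a $w$-sweep'': after the $y$-sweep lowers the main height to $0$, the level-$0$ coefficient $H_0(x,w)$ need not factor out of $H$ as a unit multiple, so one must first prepare the $w$-expansion of the levels and typically interleave $w$- and $y$-Puiseux packages. Relatedly, the comparison ``simpler than Proposition \ref{pro:cinco}'' conflates cases: Proposition \ref{pro:cinco} is $r=n-1$, whereas the present $r=1$ situation is the harder one, scalar or not. None of this invalidates the conclusion --- local uniformization of a single element of $\mathcal{O}$ along $\nu$ via the etale standard transformations is indeed the result being invoked --- but if you want to actually prove it rather than cite it, the sketch needs to be reorganized around the Section 6 invariants $\alpha(\cdot;x,w)$, $\lambda(\cdot;x,w)$ and their behavior under etale $w$- and $y$-Puiseux packages, with termination secured (as you correctly anticipate) by the maximal-contact dichotomy.
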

\begin{proof} We apply to $H$ the usual local
uniformization for functions. We obtain that $H=x^mU$, where $U$ is
a unit. Now we divide $\xi$ by $U$.
\end{proof}
Moreover,  the above form of $H$ is persistent under etale standard
transformations. This justifies the next definition.
\begin{definition}
We say that ${\mathcal L}$ is {\em $x$-prepared relatively to
$\mathcal A$} if there is an $\mathcal O$-generator $\xi$ of
${\mathcal L}_{\mathcal A}[\log x]$ such that $\xi(x)/x=\lambda
x^q$, for $0\ne\lambda\in{\mathbb Q}$. Such generators $\xi$ will be
called {\em $x$-privileged generators}.
\end{definition}
In view of Proposition \ref{pro:nueve} we can obtain that ${\mathcal
L}$ is $x$-prepared after a finite number of etale-standard
transformations and this property is persistent under new
etale-standard transformations.
\subsection{Invariants from the Newton Puiseux Polygon}

Take $f\in w^{-1}k[[x,w]]$ that we write $f=\sum_{t=-1}^\infty
w^tf_t(x)$. We put
 \begin{equation}
\label{eq:invariantes}
 \lambda(f;x,w)=\min_t\{\nu(f_t(x))+t\nu(w)\};\quad
 \alpha(f;x,w)=\min_t\{\nu(f_t(x))\}.\end{equation}
Consider  a vector field $\eta$ of the form
\begin{equation}
\label{eq:xwvectorfields}
 \eta=
a(x,w)x\frac{\partial}{\partial x}+b(x,w)\frac{\partial}{\partial
w}+c(x,w)y\frac{\partial}{\partial y}.
\end{equation}
We denote
\begin{eqnarray}
\lambda(\eta;x,w)&=&\min\{\lambda(a;x,w),\lambda(b/w;x,w),\lambda(c;x,w)\}\\
\alpha(\eta;x,w)&=&\min\{\alpha(a;x,w),\alpha(b/w;x,w),\alpha(c;x,w)\}
\end{eqnarray}
Let us note that $\alpha(b/w;x,w)=\alpha(b;x,w)$. We also write
\begin{equation}
\Lambda(\eta;x,w)=\lambda(\eta;x,w)-\alpha(\eta;x,w).
\end{equation}
Note that $\Lambda(\eta;x,w)\geq -\nu(w)$.

 \begin{remark}
 \label{rk:diez}
We can draw a Newton-Puiseux polygon ${\mathcal N}$ for $f$, or for
$\eta$, by considering the support
$\{(\nu(f_t(x),t))\}\subset\Gamma\times{\mathbb Z}_{\geq -1}\subset
{\mathbb R}\times{\mathbb Z}_{\geq -1}$. Then $\alpha$ is the
abscissa of the highest vertex and $\lambda$ corresponds to the
smallest value  $a+\nu(w^b)$, where $(a,b)$ is in the support. In
particular, we have that $\Lambda=-\nu(w)$ if and only if $\mathcal
N$ has the single vertex $(\alpha,-1)$.
\end{remark}
Consider a vector field $\xi=\sum_{s=-1}^\infty y^s\eta_s\in
\mbox{Der}_k{\mathcal O}[\log x]$, where
\begin{equation}
\label{eq:etasubese} \eta_s= a_s(x,w)x\frac{\partial}{\partial
x}+b_s(x,w)\frac{\partial}{\partial
w}+c_s(x,w)y\frac{\partial}{\partial y}.
\end{equation}
 We denote
$\alpha(\xi;{\mathcal
A})=\alpha(\xi;x,w,y)=\min_{s=-1}^\infty\{\alpha(\eta_s;x,w)\}$. Let
us note that $\alpha(\xi;{\mathcal A})=0$ when $\xi$ is a generator
of ${\mathcal L}_{\mathcal A}[\log x]$, since $x$ is not a common
factor of the coefficients. The {\em main height
$\hbar(\xi;{\mathcal A})$} is the minimum of the $s$ such that
$\alpha(\eta_s;x,w)=\alpha(\xi;{\mathcal A})$. When $\xi$ is a
generator of $\mathcal L$, we put $\hbar({\mathcal L}, {\mathcal
A})=\hbar(\xi,{\mathcal A})$.

Denote
 $\delta(\xi;{\mathcal
A})=\min_{s=-1}^\infty \{\alpha(\eta_s;x,w)+s\nu(y)\}$. We say that
$s$ {\em belongs to the critical segment ${\mathcal
C}({\xi};{\mathcal A})$} if
$\alpha(\eta_s;x,w)+s\nu(y)=\delta(\xi;{\mathcal A})$. The {\em
critical height $\chi(\xi;{\mathcal A})$} is the greatest $s\in
{\mathcal C}({\xi};{\mathcal A})$. Note that $\chi(\xi;{\mathcal
A})\leq \hbar(\xi;{\mathcal A})$.

\begin{remark}
\label{rk:once} We can draw a Newton-Puiseux polygon ${\mathcal
N}(\xi;{\mathcal A})\subset {\mathbb R}\times {\mathbb Z}_{\geq -1}$
by taking as support the set
$$\{(\alpha(\eta_s;x,w),s);\; s=-1,0,1,\ldots\}.$$
Then $(\alpha(\xi;x,w,y),\hbar(\xi;x,w,y))$ is the main vertex of
the Newton-Puiseux polygon. We also have that $\delta(\xi;x,w,y)$ is
the smallest value $a+\nu(y^b)$, where $(a,b)$ is in the support.
Nevertheless, this Newton-Puiseux polygon needs to be prepared by
performing preliminary transformations in the variables $x,w$ in
order to be a useful tool in the control of the transformations in
the variables $x,y$.
\end{remark}
The invariants in three variables make sense also for
$f(x,w,y)=\sum_{s}y^sf_s(x,w)$. Thus, we write
\begin{equation}
\alpha(f;x,w,)=\min_s\{\nu(f_s;x,w)\};\quad
\delta(f,x,w,y)=\min_s\{\nu(f_s;x,w)+s\nu(y)\}.
\end{equation}
\subsection{Prepared situations in two variables} Take a vector field $\eta$ as in
equation (\ref{eq:xwvectorfields}).  We say that $\eta$  is {\em
$(x,w)$-prepared } if there is $q\in{\mathbb Z}_{\geq 0}$ such that
$$(a,b,c)=x^q(\tilde a(x,w),\tilde b(x,w),\tilde
c(x,w));\quad (\tilde b(0,0),\tilde c(0,0))\ne
 (0,0). $$ We say that $\eta$ is {\em dominant} if $\tilde b_s(0,0)\ne
 0$ and {\em recessive} if $\tilde b(0,0)=0,\tilde c (0,0)\ne 0$.

 \begin{remark}
\label{rkdos}
   The condition $\Lambda(\eta;x,w)=-\nu(w)$ is
 equivalent to say that $\eta$ is prepared-dominant. If $\eta$ is
 prepared-recessive, then $0\geq \Lambda(\eta;x,w)>\nu(w)$.
\end{remark}
\begin{definition}
\label{defstrongprepared} Take $\eta$ as in equation
(\ref{eq:xwvectorfields}).
 We
 say that $\eta$ is {\em strongly $(x,w)$-prepared} if there is a
 decomposition
\begin{equation}
 \eta=
 x^\rho U(x,w)\theta+x^\tau V(x,w)y\frac{\partial}{\partial y};\quad
 \theta=xh(x,w)x\frac{\partial}{\partial x}+\frac{\partial}{\partial w}
\end{equation}
satisfying the following properties
\begin{enumerate}
\item $\rho,\tau\in {\mathbb Z}\cup\{+\infty\}$, with $\rho\ne \tau$.
Here $\rho=+\infty$, respectively or $\tau=+\infty$, indicates that
$U(x,w)$, respectively $V(x,w)$, is identically zero.
\item We can write $U=\lambda+xf(x,w)$ and $V=\mu+xg(x,w)$, where $\lambda,\mu\in
k$. Moreover, if $\rho\ne +\infty$ then $\lambda\ne 0$ and if
$\tau\ne +\infty$ then $\mu\ne 0$.
\end{enumerate}
\end{definition}
 Let us note that ``strongly prepared'' implies
``prepared''. The dominant case corresponds to $r<t$ and the
recessive case to $r>t$.
\subsection{Effect of etale $w$-Puiseux packages} Let us perform an etale
$w$-Puiseux package and let $(t,w',y)$ be the obtained coordinates.
Recall that $t^d=x$ and $\nu(w/t^p)=0$, where $p,d$ are without
common factor. Moreover, we have $w'=w/t^p-c$, with $c\ne 0$, and
hence
\begin{equation} \label{explosincampo}
x\frac{\partial}{\partial x}= \frac{1}{d}\left\{
t\frac{\partial'}{\partial t}- p(w'+c) \frac{\partial'}{\partial w'}
\right\} ; \quad \frac{\partial}{\partial
w}=\frac{1}{t^p}\frac{\partial'}{\partial w'};\quad
\frac{\partial}{\partial y}= \frac{\partial'}{\partial y}.
\end{equation}
Consider  $\eta$ as in equation (\ref{eq:xwvectorfields})  and write
$$
\eta= a'(t,w')t\frac{\partial'}{\partial t}+
b'(t,w')\frac{\partial'}{\partial w'}+ c'(t,w')
y\frac{\partial'}{\partial y}
$$
in the coordinates $t,w',y$. Then we have
\begin{equation}
\begin{array}{ccccl}
\label{eqtransformacionnivel}  a'&=&\eta(t)/t&=&({1}/{d})a \\
b'&=&\eta(w')&=&(w'+c)\{b/w-(p/d)a\}=t^{-p}\{b-(p/d)t^p(w'+c)a\}\\
 c'&=&\eta(y)/y&=&c.
\end{array}
\end{equation}
From these considerations, we obtain the following results:
\begin{lemma}
 \label{lema:once} Consider
$f=\sum_{\ell=-1}^{\infty}w^\ell f_\ell(x)\in w^{-1}k[[x,w]]$. We
have
$$\alpha(f;t,w')=\lambda(f;x,w).$$ As a consequence, we also have
that $\alpha(\eta;t,w')=\lambda(\eta;x,w)$.
\end{lemma}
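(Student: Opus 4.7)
The plan is to substitute $x = t^d$ and $w = t^p(w'+c_0)$ (writing $c_0$ for the translation constant of the package, to avoid clash with the vector-field coefficient $c$) into $f = \sum_{\ell \geq -1} w^\ell f_\ell(x)$ and re-expand in powers of $w'$, using the binomial expansion of $(w'+c_0)^\ell$ for $\ell \geq 0$ and the geometric expansion of $(w'+c_0)^{-1}$. The result is $f = \sum_{k \geq 0}(w')^k g_k(t)$, and each summand of $g_k(t)$ coming from a fixed $\ell$ is a non-zero scalar times $t^{p\ell}f_\ell(t^d)$ (for $\ell \geq 0$), or $t^{-p}f_{-1}(t^d)$ (for $\ell = -1$). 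Each such summand has $\nu$-value exactly $\ell\nu(w) + \nu(f_\ell(x))$, which is $\geq M := \lambda(f;x,w)$ by definition of $\lambda$. The triangle inequality immediately gives $\nu(g_k(t)) \geq M$ for every $k$, hence $\alpha(f;t,w') \geq M$.

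For the reverse inequality I isolate the leading $t$-terms. Put $S = \{\ell \geq -1 : \ell\nu(w) + \nu(f_\ell) = M\}$ and, for $\ell \in S$, write $f_\ell = \gamma_\ell x^{n_\ell} + (\text{higher order in } x)$ with $\gamma_\ell \in k \setminus \{0\}$. All contributions to $g_k(t)$ coming from $\ell \in S$ share the single leading $t$-exponent $M/\nu(t)$, and the leading $t$-coefficient of $g_k(t)$ is exactly the coefficient of $(w')^k$ in $\sum_{\ell \in S}\gamma_\ell (w'+c_0)^\ell$. Under the change of variable $u = w'+c_0$ this becomes the Laurent polynomial $\sum_{\ell \in S}\gamma_\ell u^\ell$, which is non-zero because distinct monomials $u^\ell$ are linearly independent over $k$. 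Therefore at least one $g_k(t)$ has valuation exactly $M$, proving $\alpha(f;t,w') \leq M$.

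For the vector-field consequence I apply the scalar identity to the coefficients $a$, $b/w$ and $c$ of $\eta$ and use the formulas of (\ref{eqtransformacionnivel}): the new components are $a' = a/d$, $c' = c$, and $b' = (w'+c_0)(b/w - (p/d)a)$. Multiplication by the unit $(w'+c_0)$ preserves $\alpha$, so $\alpha(b';t,w') = \alpha(b/w - (p/d)a;t,w')$. A short case analysis, distinguishing whether or not $\lambda(a;x,w) = \lambda(b/w;x,w)$, shows that any cancellation in $b/w - (p/d)a$ occurs only when $\lambda(a) = \lambda(b/w)$, and in that case $\alpha(a')$ already realises the minimum; in all other cases $\alpha(b')$ realises $\min(\lambda(a), \lambda(b/w))$. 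Either way $\alpha(\eta;t,w') = \lambda(\eta;x,w)$.

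The main obstacle is the upper bound in the scalar argument, and in particular the non-vanishing of the formal power series $\sum_{\ell \in S}\gamma_\ell (w'+c_0)^\ell$ in $w'$. The key trick reducing this to a triviality is the change of variable $u = w'+c_0$, after which it becomes the linear independence of the monomials $u^\ell$ for distinct integers $\ell \geq -1$. Everything else is routine accounting of $\nu$-values on monomials.
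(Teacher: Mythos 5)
Your argument is correct and follows essentially the same route as the paper's: both reduce the lower bound $\alpha\ge\lambda$ to the ultrametric behaviour of the substitution on monomials, and both obtain the upper bound by isolating the leading part (your $\sum_{\ell\in S}\gamma_\ell x^{n_\ell}w^\ell$, the paper's $\mathrm L(f)$) and observing that it transforms into $t^{m}$ times a nonzero Laurent polynomial in the unit $w'+c$, hence survives re-expansion in $w'$. The only cosmetic difference is that you organise the bookkeeping level-by-level in $\ell$ whereas the paper does it monomial-by-monomial, and your treatment of the vector-field consequence makes explicit the ultrametric case split that the paper leaves implicit.
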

\begin{proof} Take a monomial
$w^ax^b$.  Note that $w^ax^b=t^{ap+bd}(w'+c)^{a}$ where $(w'+c)^{a}$
is a unit, hence
\begin{eqnarray*}
\lambda(w^ax^b;x,w)&=&\nu(w^ax^b)=\\
&=&\nu(t^{ap+bd}(w'+c)^{a})=\nu(t^{ap+bd})=\alpha(w^ax^b;t,w').
\end{eqnarray*}
Note that both $\lambda (_-;x,w)$ and $\alpha(_-;t,w')$ have the
usual valuative properties. This gives in particular that
$\alpha(f;t,w')\geq \lambda(f;x,w)$, as a consequence of the above
property for monomials. Put $\lambda=\lambda(f;x,w)$ and let us
decompose $f=\mbox{L}(f)+f^*$, where $\lambda(f^*;x,w)>\lambda$ and
$\mbox{L}(f)$ is of the form
$$
\mbox{L}(f)= \sum_{\nu(w^ax^b)=\lambda}\mu_{ab}w^ax^b.
$$
Now it is enough to prove that $\alpha(\mbox{L}(f);x',w')=\lambda$.
We know that if $\nu(w^ax^b)=\lambda$ then $m=ad+bp$ is independent
of $(a,b)$, since $\nu(t^{m})=\nu(x^aw^b)$.
 We have
$$
\mbox{L}(f)=t^{m}\sum \mu_{ab}(w'+c)^{a}\ne 0.
$$
Then $\alpha(\mbox{L};t,w')=\lambda$. The last statement comes from
the above arguments and the equations \ref{eqtransformacionnivel}.
\end{proof}
\begin{corollary}
\label{cor:desplazamiento}
 Consider $\eta$ as in equation
(\ref{eq:xwvectorfields}). We have $$\alpha(\eta;t,w')\geq
\alpha(\eta;x,w)-\nu(w),$$ and the equality holds exactly when
$\eta$ is $(x,w)$-prepared and dominant.
\end{corollary}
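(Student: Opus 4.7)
The plan is to read the corollary as a translation of Lemma \ref{lema:once} through the definition of $\Lambda(\eta;x,w)$ and the characterization in Remark \ref{rkdos}. First I would invoke Lemma \ref{lema:once} to replace the left-hand side: it gives $\alpha(\eta;t,w') = \lambda(\eta;x,w)$, so the inequality to prove becomes $\lambda(\eta;x,w) \geq \alpha(\eta;x,w) - \nu(w)$, i.e. $\Lambda(\eta;x,w) \geq -\nu(w)$. But this is precisely the observation already recorded right after the definition of $\Lambda$: the only way a coefficient can contribute a value strictly below $\alpha(\eta;x,w)$ in $\lambda$ is through the $w^{-1}$-term of $b/w$, and this lowers the value by exactly $\nu(w)$. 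This settles the inequality.

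For the equality case, I would trace through the same computation and read off which term in $\lambda(\eta;x,w) = \min\{\lambda(a;x,w),\lambda(b/w;x,w),\lambda(c;x,w)\}$ can realize the value $\alpha(\eta;x,w)-\nu(w)$. Since $a,c \in k[[x,w]]$ satisfy $\lambda(a;x,w) \geq \alpha(a;x,w) \geq \alpha(\eta;x,w)$ and similarly for $c$, the minimum forcing the equality must come from $\lambda(b/w;x,w) = \lambda(b;x,w) - \nu(w)$. This forces both $\lambda(b;x,w) = \alpha(b;x,w)$ (so the $w$-free part $b_0(x)$ of $b$ already attains the $\alpha$-value) and $\alpha(b;x,w) = \alpha(\eta;x,w) \leq \alpha(a;x,w),\alpha(c;x,w)$. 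Writing $\nu(b_0(x)) = q\nu(x)$ and factoring $x^q$ out of $a,b,c$ then yields $\eta = x^q(\tilde a\,x\partial_x+\tilde b\,\partial_w+\tilde c\,y\partial_y)$ with $\tilde b(0,0)\neq 0$; this is exactly the definition of prepared-dominant. The converse direction, prepared-dominant implies equality, is the direct computation: the support of $\eta$ in $(x,w)$ has a vertex at $(q\nu(x),0)$ coming from $\tilde b(0,0)$, which after the substitution $w = t^p(w'+c)$ becomes a unit times $t^{qd}$, so $\alpha(\eta;t,w') = q\nu(x)-\nu(w)$. Equivalently, this equality is the content of Remark \ref{rkdos}: $\Lambda(\eta;x,w)=-\nu(w)$ is synonymous with prepared-dominant.

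There is no real obstacle: once Lemma \ref{lema:once} is available, the corollary is purely bookkeeping on the Newton--Puiseux polygon of $\eta$, and the only mild subtlety is remembering that the $b\partial_w$-term contributes to $\lambda$ via $b/w$ rather than $b$, which is what produces the shift by $-\nu(w)$.
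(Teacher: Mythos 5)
Your proof is correct and follows the same route as the paper: invoke Lemma \ref{lema:once} to reduce to the purely $(x,w)$-level statement $\Lambda(\eta;x,w)\geq-\nu(w)$, then identify the equality case with prepared-dominant. Where the paper simply cites Remark \ref{rkdos} for this last equivalence, you unwind it explicitly by isolating the $w^{-1}$-term of $b/w$ as the sole source of the drop; this is exactly the content of that remark, so the two arguments coincide.
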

\begin{proof} We know that
$\alpha(\eta;t,w')=\lambda(\eta;x,w)$ by Lemma \ref{lema:once}. Now,
it is a direct consequence of the definitions that $
\lambda(\eta;x,w)\geq \alpha(\eta;x,w)-\nu(w)$, and the equality
holds exactly when $\eta$ is $(x,w)$-prepared and dominant, in view
of Remark \ref{rkdos}.
\end{proof}
\begin{lemma}
\label{lemaochoBIS} Consider $\eta$ as in equation
(\ref{eq:xwvectorfields}). We have
\begin{enumerate}
\item If $\Lambda (\eta;x,w)<0$, then
$\Lambda (\eta;t,w')=-\nu(w)$ and hence  $\eta$ is $(t,w')$-prepared
and dominant.
\item If $\eta$ is $(x,w)$-prepared and dominant, then $\eta$ is
also $(t,w')$-prepared and dominant.
\item If $\Lambda (\eta;x,w)=0$, then
$\Lambda (\eta;t,w')\leq 0$.
\item If $\eta$ is $(x,w)$-prepared and recessive, then $\eta$ is
$(t,w')$-prepared.
\end{enumerate}

\end{lemma}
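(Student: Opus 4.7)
My plan is to treat all four parts by combining the transformation formulas of equations (\ref{eqtransformacionnivel})---$a'=a/d$, $b'=(w'+c)(b/w-(p/d)a)$, $c'=c$---with Lemma \ref{lema:once} (giving $\alpha(\eta;t,w')=\lambda(\eta;x,w)$) and Remark \ref{rkdos} (so that $\Lambda(\eta;t,w')=-\nu(w')$ characterizes $(t,w')$-prepared dominance; I read the ``$-\nu(w)$'' in the statement of (1) as a typo for ``$-\nu(w')$''). The central calculation I would perform is an evaluation of the ${w'}^{0}$-coefficient of $b'$, namely
\[
b'|_{w'=0}\;=\;\frac{b(t^{d},ct^{p})}{t^{p}}-\frac{cp}{d}\,a(t^{d},ct^{p})\ \in\ k[[t]].
\]
Using monomial expansions and $\nu(x^{i}w^{j})=(di+pj)\nu(t)$, I would establish $\nu(a(t^{d},ct^{p}))\ge\lambda(a;x,w)$ and $\nu(b(t^{d},ct^{p})/t^{p})\ge\lambda(b/w;x,w)$, with equality provided no cancellation occurs among monomials of equal valuation. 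Since $b'|_{w'=0}$ appears as the ${w'}^{-1}$-coefficient of $b'/w'$, this yields
\[
\lambda(\eta;t,w')\ \le\ \lambda(b'/w';t,w')\ \le\ \nu(b'|_{w'=0})-\nu(w').
\]

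For part (1), I would argue as follows. The hypothesis $\Lambda(\eta;x,w)<0$ forces $\lambda(\eta;x,w)<\alpha(\eta;x,w)$, which can only be realized through the $w^{-1}$-level of the $b/w$ coefficient (since $\lambda\ge\alpha$ trivially for $a$ and $c$, whose expansions have $\ell\ge 0$). Thus $\lambda(b/w;x,w)=\lambda(\eta;x,w)$ is strictly less than $\lambda(a;x,w)$, so the two terms in $b'|_{w'=0}$ have distinct valuations, no cancellation occurs, and $\nu(b'|_{w'=0})=\lambda(\eta;x,w)$. Combining with Lemma \ref{lema:once} then gives $\lambda(\eta;t,w')\le\alpha(\eta;t,w')-\nu(w')$; together with the reverse inequality from Remark \ref{rkdos} I obtain $\Lambda(\eta;t,w')=-\nu(w')$, i.e., $\eta$ is $(t,w')$-prepared dominant. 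Part (2) is then immediate, since $(x,w)$-prepared dominant is equivalent by Remark \ref{rkdos} to $\Lambda(\eta;x,w)=-\nu(w)<0$, bringing it under the hypothesis of (1).

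For part (3), the hypothesis $\Lambda(\eta;x,w)=0$ gives $\alpha(\eta;t,w')=\alpha(\eta;x,w)$, and the above bound yields $\nu(b'|_{w'=0})\ge\alpha(\eta;x,w)$. Generically this already produces $\lambda(\eta;t,w')\le\alpha(\eta;x,w)-\nu(w')<\alpha(\eta;t,w')$; in the degenerate case of complete cancellation in $b'|_{w'=0}$, I would examine the next $w'$-level of $b'$ or the ${w'}^{0}$-terms of $a'$ and $c'$, all of which inherit valuation at most $\alpha(\eta;x,w)$, yielding $\Lambda(\eta;t,w')\le 0$ in every subcase. For part (4), writing $(a,b,c)=x^{q}(\tilde a,\tilde b,\tilde c)$ with $\tilde c(0,0)\ne 0$ and using $x^{q}=t^{qd}$, I obtain $(a',b',c')=t^{qd}(\bar a',\bar b',\bar c')$ with $\bar c'(0,0)=\tilde c(0,0)\ne 0$, so the $(t,w')$-preparation persists. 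The hard part will be the cancellation analysis in (3): I must verify that cancellations in the leading $b'$-contribution push valuation into controllable subleading terms without ever raising $\Lambda$ above zero.
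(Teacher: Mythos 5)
Your arguments for parts (1) and (2) are sound and run parallel to the paper's: both reduce to the observation that when $\Lambda(\eta;x,w)<0$ the level $\ell=-1$ of $b/w$ realizes $\lambda(\eta;x,w)$ uniquely (all levels of $a$, $c$, and the $\ell\ge 0$ levels of $b/w$ have value $\ge\alpha=0$), so the contribution $\overline{b}_{-1}(t^d)t^{-p}$ to the $\partial/\partial w'$-coefficient cannot be cancelled. The paper writes $\eta=\overline{b}_{-1}(x)\partial/\partial w+\eta^*$ and traces the first summand; you trace $b'|_{w'=0}$. Both work, and your reading of ``$-\nu(w)$'' as a typo for ``$-\nu(w')$'' is correct.

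Parts (3) and (4) have genuine gaps. In (3) your fallback claim --- that ``the next $w'$-level of $b'$ or the $w'^0$-terms of $a'$ and $c'$ \dots\ all inherit valuation at most $\alpha(\eta;x,w)$'' --- is false: the $w'^0$-coefficient $a'|_{w'=0}=a(t^d,ct^p)/d$ satisfies only $\nu(a'|_{w'=0})\ge\lambda(a;x,w)\ge 0$, and that inequality is typically strict (take $a=x$). The invariant you should be bounding with is the full component value $\nu(a')=\nu(a)=\lambda(a;x,w)$, not the value of its $w'^0$-coefficient. The clean route, which is essentially what the paper does via $\eta=L+\eta^{**}$, is: normalize $\alpha(\eta;x,w)=0$, note that $\lambda(\eta;x,w)=0$ forces $\min\{\nu(a),\nu(b/w),\nu(c)\}=0$; if $\nu(a)=0$ or $\nu(c)=0$ conclude directly from $\lambda(\eta;t,w')\le\nu(a')$ (resp.\ $\le\nu(c')$); otherwise $\nu(b/w)=0<\nu(a)$, so there is no cancellation in $b/w-(p/d)a$ and $\nu(b'/w')=-\nu(w')<0$. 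In (4) the asserted factorization $(a',b',c')=t^{qd}(\bar a',\bar b',\bar c')$ with power-series entries need not hold: one has $\bar b'=\tilde b(t^d,t^p(w'+c))/t^p-(p/d)(w'+c)\tilde a$, and when $d<p$ with $\tilde b$ having a nonzero linear $x$-term, the first summand has a pole of order $p-d$ in $t$. One must then factor out a smaller power $t^{q'}$, in which case $\bar c'(0,0)=0$ but $\bar b'(0,0)\ne 0$, so $\eta$ is $(t,w')$-prepared \emph{dominant}. The conclusion of (4) still holds, but the missing case has to be handled; the paper's proof does so implicitly by reading off dominance or recessivity from the coefficients $d\mu_2-p\mu_1$ and $\mu_3$ of the transformed linear part.
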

\begin{proof}
 Write $\eta=x^{m}\eta'$, where
$\nu(x^m)=\alpha(\eta;x,w)$. If we substitute $\eta$ by $\eta'$ we
can assume without loss of generality that $\alpha(\eta;x,w)=0$ and
thus $\Lambda(\eta;x,w)=\lambda(\eta;x,w)$. Let us put $\overline
b=b/w=\sum_{\ell=-1}^\infty \overline b_\ell(x)w^\ell$. Consider
first the case that $\lambda(\eta;x,w)<0$. This implies that
\begin{equation}
\label{eq:ocho} \eta=\overline{b}_{-1}(x)\frac{\partial}{\partial
w}+\eta^*=x^rU(x)\frac{\partial}{\partial w}+\eta^*,\, U(0)\ne 0,
\end{equation}
where $0\leq \nu(b_{-1}(x))=\nu(x^r)=\nu(t^{rd})<\nu(w)=\nu(t^p)$
and $\eta^*$ has the form
$$
\eta^*=\sum_{\ell=0}^\infty
w^\ell\left\{a_{\ell}(x)x\frac{\partial}{\partial x}+
\overline{b}_{\ell}(x)w\frac{\partial}{\partial w}+
c_{\ell}(x)y\frac{\partial}{\partial y}
 \right\}
$$
 By equations
(\ref{explosincampo}) we see that $\alpha(\eta^*;t,w')\geq 0$ and
$$
x^rU(x)\frac{\partial}{\partial w}=
{t^{rd-p}}U(t^d)\frac{\partial}{\partial w'}.
$$
Note that $rd-p<0$. We obtain
$$\Lambda(\eta;t,w')=-\nu(w')$$ and thus
 $\eta$ is
$(t,w')$-prepared and dominant. This proves statement 1. Now,
statement 2  is a direct consequence of statement 1.

Assume that $\Lambda(\eta;x,w)=0$. Write $\eta$ as in Equation
(\ref{eq:ocho}), where $\nu(x^r)\geq \nu(w)$ (we accept the case
$r=+\infty$ to denote that $w$ divides $\eta(w)$). If
$\nu(x^r)=\nu(w)$, by the same argument as above  we obtain that
$\Lambda(\eta;t,w')=-\nu(w')$, hence $\eta$ is $(t,w')$-prepared and
dominant. If  $\nu(x^r)>\nu(w)$,  we have $\lambda(\eta^*;x,w)=0$.
Thus, we can write
$$
\eta= \left(\mu_1x\frac{\partial}{\partial
x}+\mu_2w\frac{\partial}{\partial w}+\mu_3y\frac{\partial}{\partial
y}\right)+\eta^{**},
$$
where $(\mu_1,\mu_2,\mu_3)\ne (0,0,0)$ and
$\lambda(\eta^{**};x,w)>0$. By equations
(\ref{eqtransformacionnivel}) we have
$$
\eta-\eta^{**}= \frac{\mu_1}{d}x'\frac{\partial'}{\partial x'}+
\frac{d\mu_2-p\mu_1}{d}(w'+c) \frac{\partial'}{\partial w'}+
\mu_3y\frac{\partial}{\partial y},
$$
where $\alpha(\eta^{**};t,w')=\lambda(\eta^{**};t,w')>0$. We obtain
\begin{eqnarray}
0= \alpha(\eta-\eta^{**};t,w')= \alpha(\eta;t,w')\\
0\geq \lambda(\eta-\eta^{**};t,w')=\lambda(\eta;t,w').
\end{eqnarray}
This ends the proof of statement 3. Note that if $d\mu_2-p\mu_1\ne
0$ then $\eta$ is $(t,w')$-prepared and dominant. If
$d\mu_2-p\mu_1=0$ and $\mu_3\ne 0$, we have that $\eta$ is
$(t,w')$-prepared and recessive. Now, if $\eta$ where
$(x,y)$-prepared and recessive, then $\mu_3\ne 0$. This proves
statement 4.
\end{proof}

\begin{proposition}
\label{pro:diez} Consider $\eta$ as in equation
(\ref{eq:xwvectorfields}). After performing finitely many etale
$w$-Puiseux packages, either we get transversal formal maximal
contact or we obtain one of the following properties:
\begin{enumerate}
\item[a)] The vector field $\eta$ is strongly $(x,w)$-prepared dominant and
 this property persists under new etale $w$-Puiseux packages.
\item[b)] The vector field  $\eta$ is strongly $(x,w)$-prepared recessive and
 this property persists under new etale $w$-Puiseux packages.
\end{enumerate}
\end{proposition}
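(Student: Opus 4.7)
The plan is to monitor the Newton--Puiseux polygon of $\eta$ in the $(x,w)$-variables via the invariant $\Lambda(\eta;x,w)\geq -\nu(w)$ introduced in Subsection 5.2, combined with a two-variable maximal-contact reduction in the spirit of Lemma \ref{lema:cuatro}. I would proceed in two phases: first obtain $(x,w)$-preparation, then upgrade it to strong preparation.

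\emph{Phase 1 (reaching preparation).} If $\Lambda(\eta;x,w)<0$ and $\Lambda\neq -\nu(w)$, Lemma \ref{lemaochoBIS}(1) produces $\Lambda=-\nu(w')$ (so $\eta$ is prepared dominant) after a single etale $w$-Puiseux package. The substantive case is $\Lambda(\eta;x,w)\geq 0$. Here I would treat the $(x,w)$-projection $a(x,w)\,x\partial/\partial x+b(x,w)\,\partial/\partial w$ as a two-dimensional object and replay the Seidenberg-style descent of Lemma \ref{lema:cuatro}: either we produce $\hat\phi(x)\in k[[x]]$ such that $\hat f = w-\hat\phi(x)\in\widehat{\mathcal O}$ has transversal maximal contact with $\nu$ (one of the admitted outcomes of the proposition), or the main vertex of the polygon migrates down to ordinate $-1$ after finitely many etale $w$-Puiseux packages, dropping $\Lambda$ below zero and returning us to the previous case. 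If, on the other hand, the critical behaviour is taken over by the $y\partial/\partial y$-coefficient $c$, that is, $\alpha(c;x,w)<\alpha(b/w;x,w)$ from some stage on, we have landed in the recessive prepared regime. Persistence in each regime is given by Lemma \ref{lemaochoBIS}(2) and (4), respectively.

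\emph{Phase 2 (upgrading to strong preparation).} Given a prepared decomposition $(a,b,c)=x^q(\tilde a,\tilde b,\tilde c)$ with $(\tilde b(0,0),\tilde c(0,0))\neq (0,0)$, I would exploit the $x$-preparedness of ${\mathcal L}$ granted by Proposition \ref{pro:nueve} (giving $\xi(x)/x=\lambda x^m$, hence $a$ is a pure monomial in $x$ up to a unit) together with Weierstrass preparation along $w$ applied to the leading $x$-jets of $\tilde b$ and $\tilde c$, to normalize $b$ and $c$ to the forms $x^\rho(\lambda+xf)$ and $x^\tau(\mu+xg)$ demanded by Definition \ref{defstrongprepared}. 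The inequality $\rho\neq\tau$ is forced by the dichotomy between the dominant regime ($\tilde b(0,0)\neq 0$, which gives $\rho<\tau$) and the recessive regime ($\tilde b(0,0)=0$, $\tilde c(0,0)\neq 0$, which gives $\rho>\tau$).

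\emph{Main obstacle.} The delicate step is Phase 1 in the range $\Lambda(\eta;x,w)\geq 0$, where both coefficients $b$ and $c$ may contribute to the leading weight and the two-variable Newton--Puiseux polygon does not by itself distinguish the dominant from the recessive regime. One has to combine the Seidenberg descent applied to the pair $(a,b)$ with a separate tracking of $c$ under the etale transformation rules (\ref{eqtransformacionnivel}), and argue that, in the absence of a transversal formal maximal contact, finitely many etale $w$-Puiseux packages suffice to land in exactly one of the two prescribed prepared configurations.
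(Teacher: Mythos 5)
Your overall strategy — reduce to a two-variable problem in $(x,w)$, lean on Seidenberg, and distinguish dominant versus recessive behaviour — is the right instinct, and it matches the paper's in spirit. However, there are several concrete gaps that would have to be repaired.

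\textbf{The missing structural decomposition.} The paper's proof hinges on first writing
$$\eta = f(x,w)\,\theta + g(x,w)\,y\frac{\partial}{\partial y},\qquad \theta = xh(x,w)\,x\frac{\partial}{\partial x}+\frac{\partial}{\partial w},$$
with $\theta$ non-singular. This comes from applying two-dimensional reduction of singularities (and Lemma \ref{lema:cuatro}) to the projection $a\,x\partial/\partial x + b\,\partial/\partial w$. The key virtue of this form is that $\theta$ transforms as $\theta = t^{-d}W(t,w')\theta'$ with $W$ a unit with $W|_{t=0}=1$, so the \emph{form} persists and only $f$ and $g$ need to be tracked. Your Phase 1 replays the Seidenberg descent on $(a,b)$ but never records this decomposition, and without it you have no mechanism to control the $a$-coefficient: your test ``$\alpha(c;x,w)<\alpha(b/w;x,w)$ implies recessive'' ignores the possibility that $a$ itself dominates, which is precisely what the $\theta$-decomposition excludes (it forces $a=bxh$, so $\alpha(a)>\alpha(b)$ automatically).

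\textbf{Phase 2 invokes the wrong tools.} Proposition \ref{pro:nueve} is a statement about the privileged generator $\xi$ of ${\mathcal L}_{\mathcal A}[\log x]$, not about an arbitrary $\eta$ of the form (\ref{eq:xwvectorfields}); Proposition \ref{pro:diez} makes no hypothesis relating $\eta$ to such a $\xi$, so you cannot assume $a$ is already a monomial. Likewise, Weierstrass preparation along $w$ would only give a distinguished polynomial in $w$ times a unit, not the monomial form $x^\rho U$ with $U=\lambda + x(\cdots)$. What the paper actually uses to normalize $f$ and $g$ is the standard two-dimensional principalization of functions by further etale $w$-Puiseux packages; the required shape $U=\lambda + x(\cdots)$ then falls out automatically because one package sends $U(x,w)\mapsto U(t^d, t^p(w'+c))$, whose restriction to $t=0$ is the constant $U(0,0)$.

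\textbf{Persistence and the dichotomy are not established.} You cite Lemma \ref{lemaochoBIS}(4) for persistence of the recessive regime, but that lemma only guarantees ``prepared'', not ``prepared recessive''; in fact Remark \ref{rk:estabilidadbienpreparadodom} explicitly warns that recessive can flip to dominant under a further package. The argument that separates the two stable outcomes is arithmetic: once $f=x^\rho U$ and $g=x^\tau V$, the quantity $\nu(x^\tau)-\nu(x^\rho)$ increases by a fixed positive amount after each etale $w$-Puiseux package, so either it eventually becomes nonnegative (landing in the dominant sink, case a) or it stays negative forever (case b). Your final paragraph acknowledges the termination question as ``the delicate step'' but does not supply this argument, so the proposal as written has a genuine hole precisely where the proof has to close.
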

\begin{proof}
 By the two dimensional
desingularization for vector fields \cite{Sei} and since we do not
get maximal contact, we can obtain  $\eta$ written down as
$$
\eta=f(x,w)\theta+g(x,w)y\frac{\partial}{\partial y};\quad
\theta=xh(x,w)x\frac{\partial}{\partial x}+\frac{\partial}{\partial
w}.
$$
Under new etale $w$-Puiseux packages, this form persist. Let us see
it. First, we know that
$$
\theta=\frac{1}{d}\left\{t^{d}h'(t,w')t\frac{\partial'}{\partial
t}+\left(\frac{d}{t^d}- {p(w'+c)t^{d}h'(t,w')}\right)
\frac{\partial'}{\partial w'}\right\}
$$
where $h'(t,w')=h(t^d,t^p(w'+c))$. This allows us to write
$\theta=t^{-d}W(t,w')\theta'$, where
$$
W(t,w')=1-t^{2d}(p/d)(w'+c)h'(t,w')
$$
is a unit and $\theta'$ has the same form as $\theta$. Note that
$W(t,w')-1$ is divisible by $t$. Now, we write
$$
\eta=f'(t,w')t^{-d}W(t,w')\theta'+g'(t,w')y\frac{\partial}{\partial
y},
$$
where $f'(t,w')=f(t^d,t^p(w'+c))$ and $g'(t,w')=g(t^d,t^p(w'+c))$.
By the standard desingularization of functions, we can perform new
etale $w$-Puiseux packages to obtain that
$$
f=x^{\rho}U(x,w);\; g=x^{\tau}V(x,w),
$$
where $U,V$  and $\rho,\tau$ satisfy to the properties in Definition
\ref{defstrongprepared} (note that it is possible that $\rho$ or
$\tau$ are negative; to recover a non-meromorphic vector field we
can multiply by a suitable power of $x$). By performing new etale
$w$-Puiseux packages, the difference $\nu(x^\tau)-\nu(x^\rho)$
increases the positive amount $\nu(w)$. If this difference is
positive, we are in case a), if it is always negative, we obtain
case b). If it is zero, in the next step it is positive.
\end{proof}
\begin{remark}
\label{rk:estabilidadbienpreparadodom}
 The above proof also shows that if $\eta$ is strongly
$(x,w)$-prepared and dominant,  it is so with respect to $(t,w')$.
Nevertheless, it is not always true that if $\eta$ is strongly
$(x,w)$-prepared and recessive the same holds with respect to
$(t,w')$. We start with $\rho>\tau$, by it can happen that
$\rho'\leq \tau'$ and in this case, after a new etale $w$-Puiseux
package we would obtain a dominant situation.
\end{remark}
\begin{remark}
\label{rk:desplazamientoabscisa} Assume that we are in one of the
situations a) or b) described in Proposition \ref{pro:diez}. Let us
perform an etale $w$-Puiseux package. Then we have
$$
\alpha(\eta;t,w')=\left\{\begin{array}{lr}\alpha(\eta;x,w)-\nu(w)&\mbox{
dominant case a)}\\
\alpha(\eta;x,w)&\mbox{ recessive case b)}
\end{array}\right.
$$
To see this, the only difficulty is the recessive case. Note that
since the recessive situation is stable under any finite sequence of
etale  $w$-Puiseux packages, we have $\nu(x^{\rho-\tau})>\nu(w)$,
and this is enough to assure the above formula.
\end{remark}

\subsection{Preparations in three variables.}
Consider a vector field $\xi\in \mbox{Der}_k{\mathcal O}[\log x]$,
that we write $\xi=\sum_{s=-1}^\infty y^s\eta_s$ where the $\eta_s$
are like in equation (\ref{eq:etasubese}).
\begin{definition}
\label{def:wellpreparation} Let $h=\hbar(\xi;{\mathcal A})$ be the
main height. We say that $\xi$ is {\em main-vertex prepared} with
respect to $\mathcal A$ when $\eta_h$ is $(x,w)$-prepared and
dominant. If in addition $\eta_h$ is strongly $(x,w)$-prepared, we
say that  $\xi$ is {\em strongly main-vertex prepared}. We say that
${\mathcal L}$ is {\em well prepared } with respect to $\mathcal A$
if there is   $\xi\in{\mathcal L}_{\mathcal A}[\log x]$  that is
$x$-prepared and strongly main-vertex prepared.
\end{definition}
\begin{remark}
\label{rktres}
 If $\xi$ is main-vertex prepared, we have
$\hbar(\xi;{\mathcal A})\geq 0$. Moreover, if $\alpha(\xi;{\mathcal
A})=0$ and $\xi$ is main-vertex prepared with
 $\hbar(\xi;{\mathcal A})=0$, then $\xi$ is a non-singular vector field.
\end{remark}

\begin{proposition} Assume that $\xi$ is $x$-prepared and strongly main-vertex prepared
 with respect to $\mathcal A$. Let us perform an etale $w$-Puiseux
package to obtain ${\mathcal A}'=({\mathcal O}', (t,w',y))$. Then
$\xi$ is $t$-prepared, strongly main-vertex prepared with respect to
${\mathcal A}'$ and the main height does not vary, that is
$\hbar(\xi;{\mathcal A}')=\hbar(\xi;{\mathcal A})$.
\end{proposition}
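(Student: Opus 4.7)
The plan is to verify three things in turn: (i) the vector field $\xi$ remains $t$-prepared in $\mathcal{A}'$, (ii) the main height is preserved, and (iii) the strong main-vertex preparedness transfers. Each follows from material already developed in this section, so the work is mostly bookkeeping on Newton--Puiseux data.

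For (i), by hypothesis $\xi(x)/x = \lambda x^q$ with $\lambda \in \mathbb{Q}_{>0}$. Since $x=t^d$ where $d$ is the $w$-ramification index, a direct computation gives $\xi(t)/t = (1/d)\xi(x)/x = (\lambda/d)\,t^{qd}$, again of the privileged form. This is exactly the transformation rule (\ref{equno}) for the independent coefficient. If one must scale $\xi$ by a power of $t$ to obtain an honest $\mathcal{O}'$-generator of ${\mathcal L}_{\mathcal{A}'}[\log t]$, the expression for $\xi(t)/t$ is merely multiplied by the same power of $t$ and stays in the required form.

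For (ii), expand $\xi = \sum_{s\geq -1} y^s \eta_s$. Since $y$ is unchanged by an etale $w$-Puiseux package, the decomposition persists, with each $\eta_s$ rewritten in $(t,w')$ via the formulas (\ref{explosincampo})--(\ref{eqtransformacionnivel}). Let $h = \hbar(\xi;\mathcal{A})$. By Lemma \ref{lema:once}, $\alpha(\eta_s;t,w') = \lambda(\eta_s;x,w)$ for every $s$, and Corollary \ref{cor:desplazamiento} gives
$$
\alpha(\eta_s;t,w') \;\geq\; \alpha(\eta_s;x,w)-\nu(w),
$$
with equality precisely when $\eta_s$ is $(x,w)$-prepared and dominant. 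Since $\eta_h$ is strongly $(x,w)$-prepared-dominant by assumption, this equality holds at $s=h$, so $\alpha(\eta_h;t,w')=\alpha(\eta_h;x,w)-\nu(w)$. For any other $s$, combining the above inequality with $\alpha(\eta_s;x,w)\geq\alpha(\eta_h;x,w)$ yields $\alpha(\eta_s;t,w')\geq\alpha(\eta_h;t,w')$, and the inequality is \emph{strict} whenever $s<h$ because then $\alpha(\eta_s;x,w)>\alpha(\eta_h;x,w)$ strictly. Hence the global minimum $\alpha(\xi;\mathcal{A}')$ is attained at $s=h$ and at no smaller index, so $\hbar(\xi;\mathcal{A}')=h$.

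For (iii), the $y^h$-coefficient $\eta_h$ in $\mathcal{A}'$ is literally the same derivation, merely expressed in new coordinates. By Remark \ref{rk:estabilidadbienpreparadodom} (extracted from the proof of Proposition \ref{pro:diez}), the property of being strongly $(x,w)$-prepared and dominant transfers to being strongly $(t,w')$-prepared and dominant under an etale $w$-Puiseux package. Thus $\eta_h$ is strongly $(t,w')$-prepared-dominant, and so $\xi$ is strongly main-vertex prepared in $\mathcal{A}'$. The delicate point is the directionality in step (ii): the uniform shift by $-\nu(w)$ from Corollary \ref{cor:desplazamiento} is the best possible, so the strict inequality $\alpha(\eta_s;x,w)>\alpha(\eta_h;x,w)$ for $s<h$ cannot be overturned in the new coordinates, and the main vertex cannot migrate downwards. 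This is really what makes the preservation of $\hbar$ work.
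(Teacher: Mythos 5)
Your proof is correct and follows essentially the same route as the paper's: $t$-preparedness is exactly the persistence observation from subsection \ref{equispreparacion} (equation \ref{equno}); the strong main-vertex preparedness of $\eta_h$ in the new coordinates comes from Remark \ref{rk:estabilidadbienpreparadodom}; and the preservation of $\hbar$ comes from Corollary \ref{cor:desplazamiento}, using that $\eta_h$ is dominant (hence shifts exactly by $-\nu(w)$) while all other levels shift by at least $-\nu(w)$. You spell out the strict inequality for $s<h$ more explicitly than the paper does, but the argument is the same.
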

\begin{proof} The fact that $\xi$ is $t$-prepared has been proved in subsection
\ref{equispreparacion}. The decomposition $\xi=\sum_{s=-1}^\infty
y^s\eta_s$ is the same one with respect to $x,w,y$ and with respect
to $t,w',y$. Let us put $h=\hbar({\mathcal L};{\mathcal A})$. By
hypothesis $\eta_h$ is strongly $(x,w)$-prepared and dominant and
hence it is also strongly $(t,w')$-prepared and dominant, in view of
 Remark \ref{rk:estabilidadbienpreparadodom}.
 Now we have only to show that $h$
is also the main height  $\hbar(\xi;t,w',y)$ relatively to
${\mathcal A}'$. By Corollary \ref{cor:desplazamiento} we have
$$
\alpha(\eta_h;t,w')=\alpha(\eta_h;x,w)-\nu(w),
$$
since $\eta_h$ is $(x,w)$-prepared and dominant. For any other index
$s$ we have
$$
\alpha(\eta_s;t,w')\geq \alpha(\eta_s;x,w)-\nu(w),
$$
and this is enough to see that $\eta_h$ gives the main height for
$\xi$ with respect to ${\mathcal A}'$.
\end{proof}
\begin{proposition}
\label{prop:doce} Assume that $\xi=\sum_{s=-1}^\infty y^s\eta_s$ is
$x$-prepared and strongly main-vertex prepared with respect to
$\mathcal A$. Let us put $h=\hbar(\xi;x,w,y)$.  By performing
finitely many etale $w$-Puiseux packages we have the following
properties:
\begin{enumerate}
\item
 For any $s<h$, the vector field $\eta_s$ is
strongly $(x,w)$-prepared and this is stable, with the same
character dominant or recessive, under any new finite sequence of
etale $w$-Puiseux packages.
\item
The critical segment ${\mathcal C}(\xi;{\mathcal A})$ does not vary
under any new finite sequence of etale $w$-Puiseux packages, and all
the levels $s\in {\mathcal C}(\xi;{\mathcal A})$ have the same
character dominant or recessive.
\end{enumerate}
\end{proposition}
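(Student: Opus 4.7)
The plan is to apply Proposition~\ref{pro:diez} coefficient by coefficient to the finitely many $\eta_s$ with $s \leq h$, and then to track the evolution of the critical segment using the displacement formulas in Remark~\ref{rk:desplazamientoabscisa}. The key structural observation is that the index set $\{s : -1 \leq s \leq h\}$ is finite, so a bounded number of etale $w$-Puiseux packages suffices for a simultaneous preparation of all relevant coefficients.

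For statement~(1), note that $\eta_h$ is already strongly $(x,w)$-prepared and dominant by the main-vertex preparation hypothesis. For each of the finitely many $s$ with $-1 \leq s < h$, Proposition~\ref{pro:diez} provides a finite number of etale $w$-Puiseux packages after which $\eta_s$ becomes strongly $(x,w)$-prepared. Since etale $w$-Puiseux packages act simultaneously on all coefficients of $\xi$, performing the maximum number required yields strong preparation of every $\eta_s$ with $s \leq h$. Stability of the character then follows from Remark~\ref{rk:estabilidadbienpreparadodom} together with the monotonicity in the proof of Proposition~\ref{pro:diez}: the difference $\nu(x^\tau) - \nu(x^\rho)$ strictly grows by $\nu(w)$ at each package, so at most one recessive-to-dominant flip can occur per level, after which the character is definitively fixed.

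For statement~(2), I would combine Remark~\ref{rk:desplazamientoabscisa} (saying that under one etale $w$-Puiseux package the quantity $\alpha(\eta_s;x,w) + s\nu(y)$ decreases by $\nu(w)$ for dominant $s$ and is unchanged for recessive $s$, while $\nu(y)$ is unchanged) with a case analysis on the characters of the levels in $\mathcal{C}(\xi;\mathcal{A})$. If $\mathcal{C}$ is entirely dominant, every level of $\mathcal{C}$ shifts by $-\nu(w)$, yielding $\delta' = \delta - \nu(w)$ and preserving $\mathcal{C}$ (other levels keep or increase their positive gap). If $\mathcal{C}$ has mixed character, the next critical segment is exactly the dominant subset of the old one, a strict reduction. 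If $\mathcal{C}$ is entirely recessive, then necessarily $h \notin \mathcal{C}$ (otherwise $\eta_h$ would be recessive, contradicting the hypothesis), and the gap $c_h := \alpha(\eta_h) + h\nu(y) - \delta > 0$ shrinks by $\nu(w)$ per package, so after finitely many steps $h$ enters $\mathcal{C}$, producing a mixed case reducible to the previous one. Iterating, the system reaches the stable all-dominant configuration after finitely many packages, establishing~(2). The main obstacle will be the termination argument in the all-recessive subcase: one must verify that the cumulative drop $\sum_i \nu(w_i)$ over consecutive packages exhausts the fixed positive gap $c_h \in \Gamma \subset \mathbb{R}$. This rests on the archimedean character of $\nu$ together with the standing assumption (in force throughout this section) that no transversal formal maximal contact exists; otherwise the sum $\sum_i \nu(w_i)$ would converge, yielding such a maximal contact in the limit.
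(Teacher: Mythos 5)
Your treatment of statement (1) matches the paper's: both invoke Proposition \ref{pro:diez} level by level for the finitely many $s\leq h$, and the stability of character follows from the monotone growth of $\nu(x^\tau)-\nu(x^\rho)$ in Proposition \ref{pro:diez} together with Remark \ref{rk:estabilidadbienpreparadodom}.

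For statement (2) there is a genuine gap in the all-recessive subcase. You try to \emph{eliminate} the possibility that the critical segment stays recessive forever, by arguing that the gap $c_h=\alpha(\eta_h)+h\nu(y)-\delta$ must eventually be exhausted, appealing to the archimedean character of $\nu$ and the absence of transversal maximal contact. But the archimedean property says nothing about whether $\sum_i\nu(w_i)$ diverges: the $\nu(w_i)$ are positive reals that can tend to zero fast enough that the sum stays below $c_h$ for every $N$, and this is exactly the ``recessive for $\varsigma$'' situation (Definition \ref{def:recessivesigma}) that the paper carefully keeps alive and handles later via the recessive Tchirnhausen preparation in the case-B machinery. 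Nor does boundedness of $\sum_i\nu(w_i)$ yield transversal maximal contact: maximal contact is produced in this paper through coordinate changes in the dependent variable $y$ (as in Lemma \ref{lema:diez} and the preparation steps), not through $w$-Puiseux packages, so the two convergence phenomena are not interchangeable. Thus the conclusion ``the system reaches the stable all-dominant configuration after finitely many packages'' does not follow.

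The point is that you do not need to eliminate the all-recessive case, because the proposition explicitly allows it. The paper's argument is shorter and correct: if at some stage the critical segment contains a dominant level, one more $w$-Puiseux package drops the recessive levels out and you land in the all-dominant case, which is stable; otherwise, if no dominant level ever appears in the critical segment under any finite sequence of $w$-Puiseux packages, then the critical segment is unchanged throughout, hence already stable and all-recessive. Both terminal configurations satisfy the conclusion of the proposition. Replacing your divergence argument by this dichotomy closes the gap.
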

\begin{proof} The first statement is a corollary of Proposition \ref{pro:diez}.
Let us prove the second statement, assuming that statement 1 holds.
Let us perform an etale $w$-Puiseux package. In view of Remark
\ref{rk:desplazamientoabscisa} we have
$$
\alpha(\eta_s;t,w')=\left\{\begin{array}{lr}
 \alpha(\eta_s;x,w)-\nu(w)&\mbox{ dominant case}\\
 \alpha(\eta_s;x,w)& \mbox{ recessive case}
\end{array} \right.
$$
Thus, the critical segment thus not vary if $\eta_s$ is dominant for
all $s\in {\mathcal C}(\xi;{\mathcal A})$. If there is an $s_0\in
{\mathcal C}(\xi;{\mathcal A})$ such that  $\eta_{s_0}$ is dominant,
then all the recessive $s$ in the critical segment disappear under a
new $w$-Puiseux package and we are in the first case. Finally, if
under any finite sequence of etale $w$-Puiseux packages there is no
dominant $\eta_s$ that appears in the critical segment, the elements
in the critical segment are also stable.
\end{proof}
\begin{definition}
We say that $\xi=\sum_{s=-1}^\infty y^s\eta_s$ is {\em completely
prepared} with respect to $\mathcal A$ if it is $x$-prepared,
strongly main-vertex prepared and the properties 1 and 2 of
Proposition \ref{prop:doce} hold. We have
 two possible situations:
\begin{enumerate}
\item [a)] {\em Dominant critical segment}. The $\eta_s$
corresponding to $s$ in the critical segment are strongly
$(x,w)$-prepared and dominant.
\item [b)] {\em Recessive critical segment}. The $\eta_s$
corresponding to $s$ in the critical segment are strongly
$(x,w)$-prepared and recessive.
\end{enumerate}
\end{definition}
\begin{remark}
\label{rk:alturaprincipalycritica}
  Assume that $\xi=\sum_{s=-1}^\infty y^s\eta_s$ is
completely prepared with respect to $\mathcal A$. Let
$\chi=\chi(\xi;{\mathcal A})$ be the critical height and
$h=\hbar(\xi;{\mathcal A})$ the main height. We have $\chi\leq h$.
Moreover, since $\eta_h$ is strongly $(x,w)$-prepared and dominant,
in the case of a recessive  critical segment we have $\chi\leq h-1$.
\end{remark}
%%%%%%%%%%%%%%
%%%%%%%%%%%%%%
\subsection{Critical initial part and critical polynomial} Let us consider a
vector field $\xi=\sum_{s=-1}^\infty
y^s\eta_s\in\mbox{Der}_k({\mathcal O})[\log x]$ and write it as
$$\xi=\sum_{s=-1}^\infty \sum_j y^sx^j\left\{ a_{sj}(w)x\frac{\partial}{\partial
x}+b_{sj}(w)\frac{\partial}{\partial
w}+c_{sj}(w)y\frac{\partial}{\partial y}\right\}.$$ We know that
$$
\delta(\xi;x,w,y)=\min\left\{\nu(x^jy^s);\quad
(a_{sj}(w),b_{sj}(w),c_{sj}(w))\ne (0,0,0)\right\}.
$$
Put $\delta= \delta(\xi;x,w,y)$. We define the {\em critical initial
part } of $\xi$ by
\begin{equation}
\mbox{Crit}(\xi;x,w,y)=\sum_{\nu(x^jy^s)=\delta} y^sx^j\left\{
a_{sj}(w)x\frac{\partial}{\partial
x}+b_{sj}(w)\frac{\partial}{\partial
w}+c_{sj}(w)y\frac{\partial}{\partial y}\right\}.
\end{equation}
Obviously, if we put $\xi^*=\xi-\mbox{Crit}(\xi;x,w,y)$ we have
$\delta(\xi^*;x,w,y)>\delta(\xi;x,w,y)$.

%%%%%%%%%%%%%
%%%%%%%%%%%%%

\begin{definition}
Take $\delta\in\Gamma$.
 A monic polynomial $P(x,y)\in k[x,y]$ given by
 $$
 P(x,y)=y^m+\lambda_{m-1}x^{n_1}y^{m-1}+\lambda_{m-2}x^{n_2}y^{m-2}+\cdots+\lambda_0x^{n_m}
 $$
is called {\em $\nu$-homogeneous or degree $\delta$} if and only if
$\nu(x^{n_j}y^{m-j})=\nu(y^m)=\delta$ for any $j$ such that
$\lambda_j\ne 0$. It is called a {\em Tchirnhausen polynomial} if
$\lambda_{m-1}=0$.
\end{definition}
\begin{remark}
\label{rk:poltchirnhausen} Let us perform an etale $y$-Puiseux
package, to obtain coordinates $t,w,y'$ such that $t^d=x$ and
$y'=y/t^p-c$. Consider a monic $\nu$-homogeneous polynomial
$P=P(x,y)$ or degree $\delta$. Then
$$
P=P(t^d,t^p(y'+c))=t^{q'} P(1,y'+c),
$$
where $\nu(t^{q'})=\delta$ and $P(1,y'+c)$ is a monic polynomial of
degree $m$ in the variable $y'$. Write $ P(1,y'+c)=y'^{h'} Q (y')$,
where $Q(0)\ne 0$.  We have $h'\leq m$. Moreover, the only
possibility to have $m=h'$ is that $ P(x,y)=(y-cx^{n_1})^m$. This
cannot occur when $P(x,y)$ is a Tchirnhausen polynomial. Hence if
$P(x,y)$ is a Tchirnhausen polynomial we have $h'<m$. This argument
is crucial in most of the procedures of reduction of singularities
in characteristic zero.
\end{remark}
\begin{lemma}
\label{lema:criticalinicialpolynomial} Assume that
$\xi=\sum_{s=-1}^\infty y^s\eta_s$ is completely prepared relatively
to $\mathcal A$. Then, the critical initial part
$\xi_0=\mbox{Crit}(\xi;x,w,y)$  satisfies that
\begin{enumerate}
\item $\xi_0(x)=\xi_0(y)=0$, in the case of dominant critical
segment. \item $\xi_0(x)=\xi_0(w)=0$, in the case of recessive
critical segment.
\end{enumerate}
More precisely, the critical initial part $\xi_0$ takes one of the
following forms
\begin{eqnarray}
\label{ecuacioncasoa} \xi_0= \lambda x^q\sum_{s=0}^\chi\lambda_s y^s
x^{q_s} \frac{\partial}{\partial
w};\quad\mbox{ dominant critical segment case }\\
\label{ecuacioncasob} \xi_0= \lambda x^q\sum_{s=-1}^\chi
\lambda_sy^s x^{q_s} y\frac{\partial}{\partial
y};\quad\mbox{recessive critical segment case }
\end{eqnarray}
where $\lambda\ne 0$, $\lambda_\chi=1$ and
$\nu(y^sx^{q_s})=\delta-\nu(x^q)$ for each $s$ with $\lambda_s\ne
0$.
\end{lemma}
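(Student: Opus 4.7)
The plan is to compute $\xi_0 = \mathrm{Crit}(\xi;x,w,y)$ level by level in $s$, exploiting complete preparation to pin down each $\eta_s$ on the critical segment $\mathcal{C}=\mathcal{C}(\xi;\mathcal{A})$ and to rule out contributions from levels off $\mathcal{C}$. Write $\eta_s = \sum_j x^j\bigl[a_{sj}(w)\,x\tfrac{\partial}{\partial x}+b_{sj}(w)\,\tfrac{\partial}{\partial w}+c_{sj}(w)\,y\tfrac{\partial}{\partial y}\bigr]$.

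First I dispose of $s \notin \mathcal{C}$: by definition $\alpha(\eta_s;x,w)+s\nu(y)>\delta$, and for any exponent $j$ supporting a nonzero coefficient of $\eta_s$ we have $j\nu(x)\geq\alpha(\eta_s;x,w)$, hence $\nu(x^j y^s)>\delta$ and no such monomial enters $\xi_0$. For $s\in\mathcal{C}$ the condition $\nu(x^j y^s)=\delta$ uniquely forces $j=\alpha(\eta_s;x,w)/\nu(x)$.

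Second, for $s\in\mathcal{C}$, complete preparation (main-vertex preparation supplies strong $(x,w)$-preparation of $\eta_h$, property~1 of Proposition~\ref{prop:doce} does the same for $s<h$, and $s\leq h$ holds on $\mathcal{C}$) gives a decomposition
$$\eta_s = x^{\rho_s} U_s(x,w)\,\theta_s + x^{\tau_s} V_s(x,w)\,y\tfrac{\partial}{\partial y},\quad \theta_s = xh_s(x,w)\,x\tfrac{\partial}{\partial x}+\tfrac{\partial}{\partial w},$$
with $U_s=\lambda_s+xf_s$, $V_s=\mu_s+xg_s$, $\rho_s\neq\tau_s$, and all $\eta_s$ sharing the same character by property~2 of Proposition~\ref{prop:doce}. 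Hence $a_s=x^{\rho_s+1}U_sh_s$, $b_s=x^{\rho_s}U_s$, $c_s=x^{\tau_s}V_s$. The three key facts are: $a_{sj}\equiv 0$ for $j\leq\rho_s$ (the $\partial/\partial x$ component has $x$-order $\geq\rho_s+1$); the constants $b_{s,\rho_s}(w)=\lambda_s$ and $c_{s,\tau_s}(w)=\mu_s$ are nonzero whenever $\rho_s$, resp.\ $\tau_s$, is finite; $b_{sj}$ (resp.\ $c_{sj}$) vanishes for $j<\rho_s$ (resp.\ $j<\tau_s$).

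In the dominant case $\rho_s<\tau_s$, so $\alpha(\eta_s;x,w)=\rho_s\nu(x)$ and the forced $j$ is $j=\rho_s$; at this $j$ only $b_{s,\rho_s}=\lambda_s$ survives, yielding $\lambda_s\,y^s x^{\rho_s}\tfrac{\partial}{\partial w}$. The index $s=-1$ is automatically excluded since $\eta_{-1}=c_{-1}(x,w)\,y\tfrac{\partial}{\partial y}$ has no $\partial/\partial w$-component and thus cannot be dominant. Summing over $s\in\mathcal{C}$ and factoring $\lambda_\chi x^{\rho_\chi}$ (note $\rho_\chi=\min_{s\in\mathcal{C}}\rho_s$ because $\chi=\max\mathcal{C}$ and $\rho_s\nu(x)=\delta-s\nu(y)$ is strictly decreasing in $s$) produces case~(a), with $q=\rho_\chi$, $q_s=\rho_s-\rho_\chi\geq 0$, and $\lambda_\chi=1$ after renormalization. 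The recessive case is entirely symmetric: $\tau_s<\rho_s$ forces $j=\tau_s$, only $c_{s,\tau_s}=\mu_s$ contributes $\mu_s\,y^s x^{\tau_s}\,y\tfrac{\partial}{\partial y}$, and $s=-1$ is now admissible because $\eta_{-1}$ can be recessive with $\rho_{-1}=+\infty$; factoring $\mu_\chi x^{\tau_\chi}$ gives case~(b). The equalities $\xi_0(x)=\xi_0(y)=0$, resp.\ $\xi_0(x)=\xi_0(w)=0$, follow at once. The delicate point is the non-contribution of the $\partial/\partial x$-component, which relies on the extra factor of $x$ present in $xh_s(x,w)$ inside $\theta_s$.
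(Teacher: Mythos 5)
Your proof is correct and follows essentially the same route as the paper: both invoke strong $(x,w)$-preparation of every $\eta_s$ with $s\leq h$ (supplied by complete preparation together with property~1 of Proposition~\ref{prop:doce}), extract the coefficient at the uniquely forced power $j=\alpha(\eta_s;x,w)/\nu(x)$ for each $s$ in the critical segment, and use property~2 of Proposition~\ref{prop:doce} to ensure a single character (dominant or recessive) across $\mathcal{C}$. Your version is somewhat more explicit (for instance in spelling out why $s=-1$ cannot appear in a dominant critical segment and in justifying $q_s=\rho_s-\rho_\chi\geq 0$), but there is no real difference in method.
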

\begin{proof} Put $h=\hbar(\xi;x,w,y)$. Recall that for any $s\leq
h$ the vector field
$$
\eta_s= \sum_j x^j\left\{ a_{sj}(w)x\frac{\partial}{\partial
x}+b_{sj}(w)\frac{\partial}{\partial
w}+c_{sj}(w)y\frac{\partial}{\partial y}\right\}= \sum_j
x^j\eta_{sj}
$$
is $(x,w)$-strongly prepared. Put $\alpha_s=\alpha(\eta_s;x,w)$ and
let us take $r_s$  such that $\nu(x^{r_s})=\alpha_s$. Write $
\eta_s=x^{r_s}\tilde\eta_s$. In view of definition
\ref{defstrongprepared}, we have that
$$
\tilde\eta_s=\left\{\begin{array}{cc}\mu_s\partial/\partial
w+x\overline\eta_s
&\mbox{(dominant case)}\\
\mu_sy\partial/\partial y+x\overline\eta_s&\mbox{(recessive case)}
\end{array}
\right.
$$
We end by putting $\lambda_s=\mu_s/\lambda$ if $s$ is in the
critical segment and $\lambda_s=0$ otherwise.
\end{proof}
\begin{definition} In the situation of Lemma
\ref{lema:criticalinicialpolynomial}, we define the {\em critical
polynomial} $P_\xi(x,y)$ of $\xi$ with respect to $x,w,y$ to be
$$
P_\xi(x,y)=\left\{\begin{array}{cccc} \xi_0(w)/\lambda
x^q&=&\sum_{s=0}^\chi\lambda_sy^sx^{q_s}&\mbox{(dominant critical segment)}\\
 \xi_0(y)/\lambda
x^q&=&\sum_{s=-1}^\chi\lambda_sy^{s+1}x^{q_s}&\mbox{(recessive
critical segment)}
\end{array}
\right.
$$
{\rm(It is a $\nu$-homogeneous monic polynomial of $\nu$-degree
$\chi \nu(y)$, respectively $(\chi+1)\nu(y)$, in the case of a
dominant, respectively recessive critical segment.)}
\end{definition}
\begin{remark}
\label{rk:formulainitialpart}
 The critical initial part is obtained from the critical polynomial by
the formula
$$
\mbox{Crit}(\xi;x,w,y)=\left\{\begin{array}{cc}\lambda
x^qP_\xi(x,y)\partial/\partial w
&\mbox{(dominant critical segment)}\\
\lambda x^qP_\xi(x,y)\partial/\partial y&\mbox{(recessive critical
segment)}
\end{array}\right.
$$
\end{remark}

\subsection{Stability of the main height. Dominant critical segment}
\label{seccionestabilidaddominante} In this subsection we start to
study the effect of an etale $y$-Puiseux package on the main height.
Let us consider $\xi\in \mbox{Der}_k{\mathcal O}[\log x]$ and denote
$$
h=\hbar(\xi;x,w,y);\quad \chi=\chi(\xi;x,w,y);\quad
\delta=\delta(\xi;x,w,y);\quad \xi_0=\mbox{Crit}(\xi;x,w,y).
$$
Let us perform an etale $y$-Puiseux package,  to obtain $t,w,y'$
such that $t^d=x$ and $y'=y/t^p-c$. We recall that
\begin{equation}
\label{eq:paqueteney} x\frac{\partial}{\partial
x}=\frac{1}{d}\left\{t\frac{\partial'}{\partial
t}-p(y'+c)\frac{\partial'}{\partial y'} \right\};\;
\frac{\partial}{\partial w}=\frac{\partial'}{\partial w};\;
y\frac{\partial}{\partial y}= (y'+c)\frac{\partial'}{\partial y'}.
\end{equation}
\begin{lemma}
\label{lema:resto} $ \alpha(\xi;t,w,y')\geq
\delta=\delta(\xi;x,w,y)$.
\end{lemma}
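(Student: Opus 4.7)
My plan is a direct monomial-by-monomial computation in the new coordinates. First, I expand $\xi=\sum_{s\geq -1}y^{s}\eta_{s}$ with $\eta_{s}=a_{s}(x,w)x\partial/\partial x+b_{s}(x,w)\partial/\partial w+c_{s}(x,w)y\partial/\partial y$, and further expand each $a_{s},b_{s},c_{s}\in k[[x,w]]$ into monomials $x^{\mu}w^{j}$. By definition of $\delta$, for every $\lambda x^{\mu}w^{j}y^{s}$ that occurs with nonzero coefficient in one of the basis derivations, we have
$$
\mu\nu(x)+s\nu(y)\;\geq\;\alpha(\eta_{s};x,w)+s\nu(y)\;\geq\;\delta.
$$

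Next, I substitute $x=t^{d}$, $y=t^{p}(y'+c)$, so that $x^{\mu}y^{s}=t^{d\mu+sp}(y'+c)^{s}$, and apply (\ref{eq:paqueteney}) to rewrite the basis derivations. Expanding $(y'+c)^{s}$ binomially, each atomic term of $\xi$ becomes a finite $k$-linear combination of expressions of the form $\lambda' t^{d\mu+sp}w^{j}y'^{k}\cdot D'$, where $D'$ ranges over $t\partial/\partial t$, $\partial/\partial w$, $y'\partial/\partial y'$ and $\partial/\partial y'$ (the last coming both from $y\partial/\partial y=(y'+c)\partial/\partial y'$ and from the $-\frac{p}{d}(y'+c)\partial/\partial y'$ piece of $x\partial/\partial x$). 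In every case the $t$-part of the coefficient is a single monomial $\lambda' t^{d\mu+sp}$, whose valuation equals
$$
(d\mu+sp)\nu(t)=\mu\nu(x)+s\nu(y)\;\geq\;\delta.
$$

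Finally, I re-collect the whole expansion as $\xi=\sum_{s'\geq -1}y'^{s'}\eta'_{s'}$ with $\eta'_{s'}=a'_{s'}t\partial/\partial t+b'_{s'}\partial/\partial w+c'_{s'}y'\partial/\partial y'$. The $\partial/\partial y'$-contributions are absorbed into the $y'\partial/\partial y'$-component at the shifted index $s'-1$, so each of $a'_{s'},b'_{s'},c'_{s'}$ is a finite $k$-linear combination of $(t,w)$-monomials each of $\alpha(\,\cdot\,;t,w)$-value $\geq\delta$. Hence $\alpha(\eta'_{s'};t,w)\geq\delta$ for every $s'$, and therefore $\alpha(\xi;t,w,y')=\min_{s'}\alpha(\eta'_{s'};t,w)\geq\delta$. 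The main subtlety is to handle the two sources of $\partial/\partial y'$-terms and the translation by $c$ in $(y'+c)$; both affect only the coefficient structure (via binomial expansion and index shifting) but leave the $t$-valuations of the monomials invariant, so the lower bound $\delta$ is preserved.
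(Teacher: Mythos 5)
Your proof is correct and follows essentially the same route as the paper's: both arguments reduce to a monomial-by-monomial verification, observe that a monomial $x^{\mu}w^{j}y^{s}$ becomes $t^{d\mu+sp}(y'+c)^{s}w^{j}$ with $\nu(t^{d\mu+sp})=\mu\nu(x)+s\nu(y)\geq\delta$, and conclude by tracking the three basis derivations under the substitution (\ref{eq:paqueteney}). The only difference is presentational — you spell out the binomial expansion of $(y'+c)^{s}$ and the index shift for the $\partial/\partial y'$ contributions, which the paper leaves implicit in its phrase ``we have $\xi=t^{sp+dm}\xi^{*}$ with $\alpha(\xi^{*};t,w,y')\geq 0$.''
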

\begin{proof} In view of the valuative behavior of the invariant
$\alpha(_{-};x,w)$ and because of the ``monomial'' definition of
$\delta(_{-};x,w)$, it is enough to verify the case that $\xi$ is of
one of the following  monomial types
$$
\xi= y^sx^mw^n x\frac{\partial}{\partial x};\; \xi= y^sx^mw^n
\frac{\partial}{\partial w}; \xi= y^sx^mw^n
y\frac{\partial}{\partial y},
$$
where $\nu(y^sx^m)\geq \delta$. Note that
$$
y^sx^mw^n =x'^{sp+dm}(y'+c)w^n,
$$
where $\nu(x'^{sp+dm})=\nu(y^sx^m)\geq \delta$. Now, in view of the
equations \ref{eq:paqueteney} we have that $\xi=x'^{sp+dm}\xi^{*}$,
where $\alpha(\xi^{*};t,w,y')\geq 0$ and we are done.
\end{proof}
\begin{proposition}
\label{prop:dominantstability} Assume that $\xi$ is completely
prepared with a dominant critical segment and $h\geq 1$. Let us
perform an etale $y$-Puiseux package. After performing finitely many
subsequent etale $w$-Puiseux packages, we obtain ${\mathcal A}'$
such that $\xi$ is completely prepared with respect to ${\mathcal
A}'$ and $h'=h(\xi;{\mathcal A}')\leq \chi$. Moreover, if the
critical polynomial $P_\xi(x,y)$ is a Tchirnhausen polynomial, we
have $h'<\chi\leq h$.
\end{proposition}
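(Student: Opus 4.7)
The plan is to decompose $\xi = \xi_0 + \xi^*$, where $\xi_0 = \mbox{Crit}(\xi;x,w,y)$ and, setting $\delta = \delta(\xi;x,w,y)$, we have $\delta(\xi^*;x,w,y) > \delta$, then track each summand through the etale $y$-Puiseux package. By Lemma \ref{lema:resto} applied to $\xi^*$, the remainder satisfies $\alpha(\xi^*;t,w,y') > \delta$; since this invariant is a minimum over levels, each piece $\eta^{**}_{s'}$ of the $y'$-expansion of $\xi^*$ has $\alpha(\eta^{**}_{s'};t,w) > \delta$. For $\xi_0$ I would use Lemma \ref{lema:criticalinicialpolynomial} in the dominant case: $\xi_0 = \lambda x^q P_\xi(x,y)\,\partial/\partial w$ with $P_\xi$ monic $\nu$-homogeneous of $\nu$-degree $\chi\nu(y)$. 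Substituting $x = t^d$, $y = t^p(y'+c)$ produces
$$
\xi_0 \;=\; \lambda\, t^{qd+q'}\, P_\xi(1,y'+c)\,\frac{\partial}{\partial w}, \qquad \nu(t^{qd+q'}) = \delta .
$$

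Next, I would write $P_\xi(1,y'+c) = y'^{h'} Q(y')$ with $Q(0) \neq 0$. By Remark \ref{rk:poltchirnhausen} we have $h' \leq \chi$, with strict inequality when $P_\xi$ is a Tchirnhausen polynomial. Expanding $\xi = \sum_{s'\geq -1} y'^{s'}\eta'_{s'}$, the contribution of $\xi_0$ vanishes for $s' < h'$, while at $s' = h'$ it yields $\lambda Q(0)\, t^{qd+q'}\,\partial/\partial w$, a $(t,w)$-prepared dominant term of abscissa exactly $\delta$. Combined with the strict bound on $\xi^*$ at every level, this gives $\alpha(\eta'_{s'};t,w) > \delta$ for $s' < h'$, $\alpha(\eta'_{h'};t,w) = \delta$, and $\eta'_{h'}$ $(t,w)$-prepared dominant. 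Hence immediately after the etale $y$-Puiseux package, $\hbar(\xi;t,w,y') = h' \leq \chi$.

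To reach a completely prepared model $\mathcal{A}'$, I would apply Proposition \ref{pro:diez} to perform finitely many subsequent etale $w$-Puiseux packages that promote $\eta'_{h'}$ from $(t,w)$-prepared dominant to strongly $(t,w)$-prepared dominant; dominance is preserved by Remark \ref{rk:estabilidadbienpreparadodom}. During this step, Corollary \ref{cor:desplazamiento} shows that $\alpha(\eta'_{h'})$ shifts by exactly $-\nu(w)$ at each $w$-package, while for $s' < h'$ the shift is at most $-\nu(w)$; the strict inequality $\alpha(\eta'_{s'}) > \alpha(\eta'_{h'})$ is therefore preserved, and the main height stays at $h'$. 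A final application of Proposition \ref{prop:doce} then supplies additional $w$-packages that complete the preparation of the remaining $\eta'_{s'}$ without disturbing the main height, producing the desired $\mathcal{A}'$ with $\hbar(\xi;\mathcal{A}') = h' \leq \chi$, and $h' < \chi$ whenever $P_\xi$ is Tchirnhausen.

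The main obstacle is the bookkeeping in the last step: one must verify that no finite cascade of $w$-packages needed to achieve complete preparation can either allow a level $s' < h'$ to catch up with the main-vertex abscissa or destroy the dominance of $\eta'_{h'}$. This is controlled precisely by the dichotomy in Lemma \ref{lemaochoBIS}, the shift formula in Corollary \ref{cor:desplazamiento}, and the stability content of Propositions \ref{pro:diez} and \ref{prop:doce}; the remaining work is simply to assemble these ingredients consistently.
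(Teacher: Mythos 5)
Your proposal reproduces the paper's own proof almost line by line: the same decomposition $\xi=\xi_0+\xi^*$ with $\xi_0=\mbox{Crit}(\xi;x,w,y)$, the same use of Lemma \ref{lema:resto} to push $\alpha(\xi^*;t,w,y')>\delta$, the same monomial form of $\xi_0$ from Lemma \ref{lema:criticalinicialpolynomial} and the factorization $P_\xi(1,y'+c)=y'^{h'}Q(y')$, the same appeal to Remark \ref{rk:poltchirnhausen} for $h'\leq\chi$ (strict in the Tchirnhausen case), and the same concluding step of completing the preparation via $w$-Puiseux packages without disturbing the main height. The only difference is that you spell out, via Corollary \ref{cor:desplazamiento} and Lemma \ref{lemaochoBIS}, why the abscissa gap between level $h'$ and lower levels cannot close during the subsequent $w$-packages, which the paper leaves implicit; that is a correct and helpful elaboration rather than a different route.
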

\begin{proof}
 Denote
$\xi=\xi_0+\xi^*$. We know that $\delta(\xi^*;x,w,y)>\delta$ and
hence, by Lemma \ref{lema:resto} we have
$\alpha(\xi^*;t,w,y)>\delta$. On the other hand $ \xi_0=\lambda
x^{q}P_\xi(x,y)\frac{\partial}{\partial w}$. After performing the
etale $y$-Puiseux package, we obtain
$$
\xi_0= \lambda x'^{q'}P_\xi(1,y'+c)\frac{\partial'}{\partial w}
$$
where $\nu(t^{q'})=\delta$. If ${\xi^*}'=\lambda^{-1}t^{-q'}\xi^*$,
we have $\alpha({\xi^*}';t,w,y')>0$. Write
\begin{equation}
\label{eq:ypaquetedominante} \xi'=\lambda^{-1}t^{-q'}\xi=
P_\xi(1,y'+c)\frac{\partial'}{\partial w}+{\xi^*}',
\end{equation}
Then $\alpha(\xi';t,w,y')=0$. Let $h'\leq \chi$ be such that
$$
P_\xi(1,y'+c)=y'^{h'}\sum_{s=h'}^\chi \lambda'_sy'^{s-h'};\;
\lambda'_{h'}\ne 0.
$$
It is obvious that $h'\leq\chi\leq h$ and, in view of Remark
\ref{rk:poltchirnhausen}, we have that $h'<\chi\leq h$ in the case
that $P_\xi(x,y)$ is a Tchirnhausen polynomial. Moreover, we see
that $h'=\hbar(\xi';{\mathcal A}')=\hbar(\xi;{\mathcal A}')$. Write
$ \xi=\sum_{s=-1}^\infty y'^s\eta'_s$, as usual, with
$$
\eta'_s=a'_s(t,w)x'\frac{\partial}{\partial'
t}+b'_s(t,w)\frac{\partial'}{\partial
w}+c'_s(t,w)y'\frac{\partial'}{\partial y'}.
$$
Then $\eta'_{h'}$ is $(t,w)$-prepared and dominant in view of
Equation \ref{eq:ypaquetedominante}. By performing new etale
$w$-Puiseux packages to obtain a completely prepared $\xi$, the main
height $h'$ is not modified and we are done.
\end{proof}
\subsection{Stability of the main height. Recessive critical segment}
Take here the situation and notations of the previous Subsection
\ref{seccionestabilidaddominante}.

Let us assume that $\xi$ is completely prepared with respect to
${\mathcal A}$ with a recessive critical segment and $h\geq 1$.
Recall that $\chi\leq h-1$ in view of Remark
\ref{rk:alturaprincipalycritica}. We also have $\xi=\xi_0+\xi^*$
where
$$
\xi_0=\lambda x^{q}{P_\xi(x,y)}\frac{\partial}{\partial y}=\lambda
x^{q}\frac{P_\xi(x,y)}{y}y\frac{\partial}{\partial y},
$$
where
$P_\xi(x,y)=y^{\chi+1}+\sum_{s=-1}^{\chi-1}\lambda_sy^{s+1}x^{q_s}$
is the critical polynomial. After performing an etale $y$-Puiseux
package, we have
$$
\xi_0=\lambda t^{q'}P_\xi(1,y'+c)\frac{\partial'}{\partial y'}
$$
where $\nu(t^{q'})=\delta$. Write
${\xi^*}'=\lambda^{-1}t^{-q'}\xi^*$, as in the proof of Proposition
\ref{prop:dominantstability}.  We have $\alpha({\xi^*}';t,w,y')>0$
and
\begin{equation}
\label{eq:xirecessive}
 \xi'=\frac{1}{\lambda t^{q'}}\xi=
P_\xi(1,y'+c)\frac{\partial}{\partial y'}+{\xi^*}'=\xi'_0+{\xi^*}'.
\end{equation}
Let $-1\leq h'\leq \chi$ be such that
$$
P_\xi(1,y'+c)=y'^{h'+1}\sum_{s=h'}^{\chi}\lambda'_sy'^{s-h'}=y'^{h'+1}Q(y');\quad
Q(0)\ne 0.
$$
It is obvious that $h'\leq\chi\leq h-1$. Moreover, if $P_\xi(x,y)$
is a Tchirnhausen polynomial  we have $h'<\chi\leq h-1$, in view of
Remark \ref{rk:poltchirnhausen}.
\begin{remark}
We have that $h'=\hbar(\xi';t,w,y')$, but the main vertex is not
dominant. For this reason, we will do a coordinate change in the
dependent variables of the type $w''=w+y'$.
\end{remark}
Let us do a coordinate change $w''=w+y'$ to obtain ${\mathcal
A}''=({\mathcal O}',{\mathbf z}''=(t,w'',y'))$. We have
\begin{equation}
\label{eq:cambiorecesivo} \xi'_0=
P_\xi(1,y'+c)\frac{\partial'}{\partial
y'}=y'^{h'+1}Q(y')\left\{\frac{\partial''}{\partial
w''}+\frac{\partial''}{\partial y'}\right\}.
\end{equation}
Let us write $ \xi'=\sum_{s=-1}^\infty y'^s \eta_s''$, where
$$
\eta''_s=a''_s(x',w'')x'\frac{\partial ''}{\partial x'}+
b''_s(x',w'')\frac{\partial ''}{\partial
w''}+c''_s(x',w'')y'\frac{\partial ''}{\partial y'}.
$$
Recalling that $\xi'=\xi'_0+{\xi^*}'$ and
$\alpha({\xi^*}';x',w'',y')>0$, we see from Equations
\ref{eq:cambiorecesivo} that $\alpha(\eta''_{h'+1};x',w'',y')=0$ and
$\eta''_{h'+1}$ is dominant and prepared with respect to
$(t,w'',y')$. In particular $\hbar(\xi';t,w'',y')\leq h'+1$.

As a consequence, by performing new etale $w''$-Puiseux packages, we
obtain $\tilde{\mathcal A}$ such that $\xi'$ is completely prepared
and $h(\xi';\tilde{\mathcal A})\leq h'+1$. Now, recalling that
$h'\leq \chi\leq h-1$ and in the case of a Tchirnhausen critical
polynomial we have $h'< \chi\leq h-1$, we have proved the following
statement:
\begin{proposition}
\label{prop:recessivestability}
 Let $\xi$ be completely prepared with
 a recessive critical segment and assume $h=\hbar(\xi;{\mathcal A})\geq 1$.
 Let us perform an etale $y$-Puiseux package.
After performing a coordinate change in the dependent variables and
finitely many subsequent etale $w$-Puiseux packages, we obtain
$\tilde{\mathcal A}$ such that $\xi$ is completely prepared with
respect to  $\tilde {\mathcal A}$  and $\tilde h=\tilde
\hbar(\xi;\tilde{\mathcal A})\leq \chi+1\leq h$. Moreover, if the
critical polynomial $P_\xi(x,y)$ is a Tchirnhausen polynomial, we
have $\tilde h<\chi+1\leq h$.
\end{proposition}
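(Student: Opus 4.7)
The plan is to imitate the argument of Proposition \ref{prop:dominantstability}, compensating for the fact that in the recessive critical segment case the direction carried by the critical initial part is $\partial/\partial y$ rather than $\partial/\partial w$. First I decompose $\xi=\xi_0+\xi^*$, with $\delta(\xi^*;x,w,y)>\delta$, so that by Lemma \ref{lema:resto} we also have $\alpha(\xi^*;t,w,y')>\delta$ after the etale $y$-Puiseux package. The form given by Lemma \ref{lema:criticalinicialpolynomial} in the recessive case, together with the substitutions $x=t^d$ and $y=t^p(y'+c)$, turns $\xi_0=\lambda x^q P_\xi(x,y)\,\partial/\partial y$ into $\lambda t^{q'} P_\xi(1,y'+c)\,\partial'/\partial y'$ with $\nu(t^{q'})=\delta$. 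Writing $P_\xi(1,y'+c)=y'^{h'+1}Q(y')$ with $Q(0)\ne 0$, Remark \ref{rk:poltchirnhausen} gives $h'\leq\chi$, and strictly $h'<\chi$ when $P_\xi(x,y)$ is a Tchirnhausen polynomial.

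At this stage $\xi'=(\lambda t^{q'})^{-1}\xi$ has $\alpha(\xi';t,w,y')=0$ with main vertex at height $h'+1$, but that vertex is recessive: its only nonzero linear coefficient lies in the $\partial/\partial y'$ direction, so the completely prepared hypothesis fails. This is the main obstacle, and the decisive trick is the coordinate change $w''=w+y'$: under it $\partial'/\partial y'$ splits as $\partial''/\partial w''+\partial''/\partial y'$, which injects a nonzero constant term in the $\partial''/\partial w''$ direction into the coefficient $\eta''_{h'+1}$ of $y'^{h'+1}$ in $\xi'=\sum_s y'^s\eta''_s$. Hence $\eta''_{h'+1}$ is $(t,w'')$-prepared and dominant, whence $\hbar(\xi';t,w'',y')\leq h'+1$.

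To conclude I apply Proposition \ref{pro:diez} to each of the lower coefficients $\eta''_s$ and perform finitely many etale $w''$-Puiseux packages. Using Corollary \ref{cor:desplazamiento} together with Remark \ref{rk:estabilidadbienpreparadodom}, the dominant main vertex already produced at height $h'+1$ is shifted by $-\nu(w'')$ just like any other dominant vertex, so it persists and no strictly lower main vertex can appear, exactly as in the final part of the proof of Proposition \ref{prop:dominantstability}. This delivers a $\tilde{\mathcal A}$ relative to which $\xi$ is completely prepared with $\tilde h\leq h'+1\leq \chi+1\leq h$, with strict inequality $\tilde h<\chi+1$ when $P_\xi(x,y)$ is Tchirnhausen. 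The only really delicate point in the plan is verifying that the $w''$-Puiseux packages do not destroy the dominant main vertex created by the coordinate change; this reduces to the same $\alpha$-shift bookkeeping already carried out in the dominant case.
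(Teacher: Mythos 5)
Your argument follows the paper's own proof in essentially every respect: the decomposition $\xi=\xi_0+\xi^*$, the use of Lemma \ref{lema:resto} to control the error term, the factorization of $P_\xi(1,y'+c)$ with the bounds $h'\leq\chi$ (strict in the Tchirnhausen case) via Remark \ref{rk:poltchirnhausen}, the coordinate change $w''=w+y'$ to make the vertex dominant, and the final sweep of etale $w''$-Puiseux packages invoking Proposition \ref{pro:diez}, Corollary \ref{cor:desplazamiento} and Remark \ref{rk:estabilidadbienpreparadodom} to reach complete preparation with $\tilde h\leq h'+1$. The one small slip is that before the coordinate change the main height of $\xi'$ is $h'$, not $h'+1$: since $y'^{h'+1}Q(y')\,\partial/\partial y'=y'^{h'}Q(y')\cdot y'\partial/\partial y'$, the lowest level carrying a unit in the $c$-coefficient of the decomposition $\xi'=\sum_s y'^s\eta'_s$ is $s=h'$. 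This does not propagate into your argument because, after $w''=w+y'$, the constant in the $\partial''/\partial w''$ direction does land at level $h'+1$ (the $b''_{h'+1}$ coefficient), which is exactly the level you correctly identify as dominant and prepared, giving $\hbar(\xi';t,w'',y')\leq h'+1$ as needed.
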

\subsection{The condition of Tchirnhaus}
\label{subsect:tchirnhaus}
 Let $\xi\in\mbox{Der}_k{\mathcal O}[\log x]$ be completely prepared with respect
  to ${\mathcal A}$. Put $h=h(\xi;{\mathcal A})$, $\chi=\chi(\xi;{\mathcal A})$  and
assume $h\geq 1$.  We have the following possible cases:
\begin{enumerate}
\item[A)] The critical polynomial is not Tchirnhausen and the
critical segment is dominant with $\chi=h$.
\item[B)] The critical polynomial is not Tchirnhausen and the critical
segment is recessive with $\chi=h-1$.
\item[C)] We have one of the following properties:
\begin{enumerate}
\item The critical polynomial is Tchirnhausen. \item The critical segment
is recessive and  $\chi<h-1$. \item The critical segment is dominant
and $\chi<h$.
\end{enumerate}
\end{enumerate}
The last case C corresponds to a winning situation in the sense of
the following proposition
\begin{proposition}
\label{pro:Tchirnahussituation} Assume we are in case C above. Let
us perform an etale $y$-Puiseux package. By performing a subsequent
coordinate change in the dependent variables (if it is necessary)
and finitely many etale $w$-Puiseux packages, we obtain $\tilde
{\mathcal A}$ such that $\xi$ is completely prepared with respect to
$\tilde{\mathcal A}$ and  $\tilde h=h(\tilde {\mathcal
L};\tilde{\mathcal A})< h$.
\end{proposition}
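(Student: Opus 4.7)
The plan is to deduce Proposition \ref{pro:Tchirnahussituation} directly from Propositions \ref{prop:dominantstability} and \ref{prop:recessivestability} by a case-by-case inspection, since the three subcases defining case C are precisely those in which the conclusions of those two propositions yield a strict drop in the main height.

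First I would split the argument according to whether the critical segment of $\xi$ (with respect to $\mathcal{A}$) is dominant or recessive. In the dominant case, Proposition \ref{prop:dominantstability} asserts that after an etale $y$-Puiseux package and finitely many subsequent etale $w$-Puiseux packages we obtain $\tilde{\mathcal{A}}$ such that $\xi$ is completely prepared with respect to $\tilde{\mathcal{A}}$ and $\tilde h\leq\chi$; moreover if $P_\xi(x,y)$ is Tchirnhausen, then $\tilde h<\chi\leq h$. Thus if subcase C(c) applies (dominant with $\chi<h$) we have $\tilde h\leq\chi<h$, and if subcase C(a) applies in the dominant flavor (the critical polynomial is Tchirnhausen) we have $\tilde h<\chi\leq h$. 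In either situation $\tilde h<h$, as required.

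In the recessive case, Remark \ref{rk:alturaprincipalycritica} gives $\chi\leq h-1$, and Proposition \ref{prop:recessivestability} furnishes $\tilde{\mathcal{A}}$ obtained from $\mathcal{A}$ by an etale $y$-Puiseux package, a coordinate change in the dependent variables, and finitely many etale $w$-Puiseux packages, such that $\xi$ is completely prepared and $\tilde h\leq\chi+1\leq h$; if furthermore $P_\xi(x,y)$ is Tchirnhausen then $\tilde h<\chi+1\leq h$. In subcase C(b) the hypothesis $\chi<h-1$ yields $\tilde h\leq\chi+1\leq h-1<h$, and in subcase C(a) (recessive flavor) the Tchirnhausen property gives $\tilde h<\chi+1\leq h$, so again $\tilde h<h$.

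Since case C is the disjunction of these three subcases and each has been handled, the proof is complete. There is no real obstacle here: the content has already been packaged into Propositions \ref{prop:dominantstability} and \ref{prop:recessivestability}, and the only task is to observe that the case distinction A/B/C in Subsection \ref{subsect:tchirnhaus} was tailored so that precisely the ``C'' situations produce strict inequality in the bounds $\tilde h\leq\chi$ (dominant) and $\tilde h\leq\chi+1$ (recessive). The harder work, of course, was done in proving those two stability results, which rely on the behaviour of Tchirnhausen polynomials under $y$-Puiseux packages recorded in Remark \ref{rk:poltchirnhausen}.
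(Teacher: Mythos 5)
Your proof is correct and follows exactly the route the paper takes, which simply records the proposition as a "direct consequence" of Propositions \ref{prop:dominantstability} and \ref{prop:recessivestability}. You have merely spelled out the short case-by-case verification — dominant versus recessive, with the Tchirnhausen subcase handled inside each — that the paper leaves implicit.
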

\begin{proof} Direct consequence of Propositions
\ref{prop:dominantstability} and \ref{prop:recessivestability}.
\end{proof}
Next subsections are devoted to the study of situations B and A.

\subsection{Tchirnhausen preparation. Recessive case} In this
subsection we introduce a {\em recessive Tchirnhausen preparation
algorithm} in order to deal with the case B of the preceding
subsection. This algorithm is based on the following two
definitions.
\begin{definition}
\label{def:recessivesigma}
 Let $\varsigma>0$ be a positive element of the
value group $\Gamma\subset {\mathbb R}$. Consider ${\mathcal
A}=({\mathcal O}, (x,w,y))$. We say that $(x,w)$ is {\em recessive} for $\varsigma$
if and only if we have $ \varsigma>\sum_{i=0}^N\nu(w_i)$ for any
finite sequence of etale $w$-Puiseux Packages
$${\mathcal A}={\mathcal A}_0\mapsto {\mathcal A}_1\mapsto \cdots{\mathcal A}_N,$$  where $(x_i,w_i,y)$ is the
coordinate system in ${\mathcal A}_i$, $i=0,1,\ldots, N$.
\end{definition}
 An example of this situation is obtained if we are in the
 case B of Subsection \ref{subsect:tchirnhaus}. More generally, let
 $\xi$ be completely prepared with recessive critical segment and
 put
 $h=\hbar(\xi;x,w,y)$,  $\chi=\chi(\xi;x,w,y)$. Let
  $(\alpha,h)$ be the main vertex and $(\beta,\chi)$ the ``critical''
  vertex. Then $(x,w)$ is  recessive for
 $\varsigma=(h-\chi)\nu(y)-\beta+\alpha$.

 \begin{remark} Assume that $\xi$ and $\mathcal A$ are in the situation of
 case B of Subsection \ref{subsect:tchirnhaus}. Then, there
is an integer number $p\in{\mathbb Z}_{>0}$ such that
$\nu(y)=\nu(x^p)$. Indeed, this is always true when we have a
$\nu$-homogeneous Tchirnhausen polynomial, that we write
$$
P(x,y)=y^m+\lambda_1x^{n_1}y_{m-1}+\cdots+\lambda_mx^{n_m}
$$
since $\lambda_1\ne0$ implies that $\nu(y)=\nu(x^{n_1})$.
\end{remark}

 \begin{definition}
 \label{def:step}
Consider a vector field ${\xi}\in \mbox{Der}_k({\mathcal O})[\log
x]$ and ${\mathcal A}$ with coordinates $(x,w,y)$. Write
$\xi=x^{\tilde q}\xi'$, where $\alpha(\xi';x,w,y)=0$. Consider two
elements $\epsilon$, $\gamma_0$ of the value group $\Gamma$ and take
$h\in {\mathbb Z}_{\geq 1}$. Let us do the decomposition
$\xi'=\sum_{s=-1}^\infty y^s\eta'_s$ associated to $(x,w,y)$. We say
that $(\xi;{\mathcal A};\epsilon, \gamma_0,h,p)$ is a {\em recessive
preparation step of order $p\in{\mathbb Z}_{\geq 0}$} if the
following properties hold
\begin{enumerate}
\item $h=\hbar(\xi;x,w,y)$ and $\gamma_0\leq \nu(y)$.
\item There is $q\in {\mathbb Z}_{\geq 1}$ such that $\epsilon=\nu(x^q)$ and $(x,w)$ is
recessive for  $\gamma_0-\epsilon$ .
\item $\gamma_0\leq \nu(x^{ p})$
\item There are units $U(x,w), V(x,w)$ such that the
levels $\eta'_h$, $\eta'_{h-1}$, and $\eta'_{h-2}$ take the forms
\begin{equation}
\label{eq:nivelesrecesiveprepstep}
\begin{array}{lcl}
\eta'_h&=& U(x,w)\{{\partial}/{\partial w}+x c_h(x,w)
y{\partial}/{\partial y}\},
\\
\eta'_{h-1}  &= & x^{q}V(x,w)\{ xa_{h-1}(x,w)x{\partial}/{\partial
x}+ xb_{h-1}(x,w){\partial}/{\partial w} + y{\partial}/{\partial
y}\},
\\
\eta'_{h-2}&=&a_{h-2}(x,w)x{\partial}/{\partial x}+
b_{h-2}(x,w){\partial}/{\partial w} +x^{q+p}c_{h-2}(x,w)
y{\partial}/{\partial y},
\end{array}
\end{equation}
\end{enumerate}
We say that $(\xi;{\mathcal A};\epsilon, \gamma_0,h,p)$ is a {\em
final recessive step} if in addition we have that
$\nu(y)< \nu(x^p)$.
 \end{definition}

 \begin{remark} Assume that $\xi$ and $\mathcal A$ are in the situation of
 case B of Subsection \ref{subsect:tchirnhaus}. Take $p\in{\mathbb Z}_{>0}$ such that
$\nu(y)=\nu(x^p)$ and $\gamma_0=\nu(y)$. Let $(\alpha,h)$ be the
main vertex and $(\beta,h-1)$ the critical vertex and put
$\epsilon=\beta-\alpha$. Then $(\xi;{\mathcal
A};\epsilon,\gamma_0,h,p)$ is a (non-final) recessive preparation
step or order $p$.
\end{remark}

 \begin{proposition}
\label{prop:recessivepreparationstep}
  Assume that $(\xi,{\mathcal A};\epsilon,
\gamma_0,h,p)$ is a recessive preparation step of order $p$. There
is a coordinate change $y^*=y-x^pg(x,w)$ such that $(\xi,{\mathcal
A}^*;\epsilon,\gamma_0,h,p^*)$ is a recessive preparation step of
order $p^*>p$.
\end{proposition}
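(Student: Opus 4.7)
The proof should proceed by a Tchirnhausen-style coordinate change in $y$, choosing $g$ to kill the leading $x$-monomial in the $y\partial/\partial y$-coefficient at level $h-2$.

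First, I would rewrite $\xi'$ in the flat basis as $\xi' = A\,x\tfrac{\partial}{\partial x} + B\,\tfrac{\partial}{\partial w} + \widetilde D\,\tfrac{\partial}{\partial y}$, where $A, B, \widetilde D$ are formal series in $y$ with coefficients in $k[[x,w]]\cap\mathcal{O}$; the coefficient of $y^{t}$ in $\widetilde D$ is $\gamma_{t-1}$, so in particular
\begin{equation*}
[y^{h-1}]\widetilde D = \gamma_{h-2} = x^{q+p}\tilde c_{h-2},\quad [y^{h}]\widetilde D = \gamma_{h-1}=x^{q}V,\quad [y^{h+1}]\widetilde D = \gamma_h = Ux\tilde c_h.
\end{equation*}
Putting $\mu = x^{p}g(x,w)$ and $y = y^{*}+\mu$, the chain rule gives the transformation
\begin{equation*}
\xi' = A\,x\tfrac{\partial}{\partial x} + B\,\tfrac{\partial}{\partial w} + \bigl[\widetilde D - A(px^{p}g+x^{p+1}g_x) - B\,x^{p}g_w\bigr]\tfrac{\partial}{\partial y^{*}},
\end{equation*}
where on the right $A, B, \widetilde D$ are now evaluated at $y=y^{*}+\mu$.

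Next I would extract the coefficient of $y^{*(h-1)}$ in the bracketed $\partial/\partial y^{*}$-component — this is, up to a factor $y^{*}$, the new $y^{*}\partial/\partial y^{*}$-coefficient $\gamma^{*}_{h-2}$. A binomial expansion combined with the given shapes of $\gamma_{h-2},\gamma_{h-1},\gamma_h$ and of the $a^{*}_{h-1}, b^{*}_{h-1}$ pieces (whose leading $x$-order is $q+1$, coming from the $x^{q+1}V\tilde a_{h-1}$, $x^{q+1}V\tilde b_{h-1}$ parts of $\eta'_{h-1}$) yields
\begin{equation*}
\gamma^{*}_{h-2} = x^{q+p}\bigl(\tilde c_{h-2} + hVg\bigr) + R,
\end{equation*}
with $R$ a sum of terms whose $\nu$-value is $\geq (q+p+1)\nu(x)$. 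This estimate uses the hypothesis $\gamma_0\leq \nu(x^{p})$ combined with the positivity $\gamma_0-\epsilon>0$ (built into the definition of ``recessive for $\gamma_0-\epsilon$'') to deduce $q<p$, which is what makes the $\mu^{2}\gamma_h$ term and the derivative-correction term have strictly larger value than $\epsilon+p\nu(x)$.

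Then I would set $g := -\tilde c_{h-2}/(hV)\in k[[x,w]]\cap \mathcal O$; this is legitimate because $V$ is a unit and $h\geq 1$ is invertible in characteristic zero. The leading term in $\gamma^{*}_{h-2}$ cancels, so $\gamma^{*}_{h-2}$ is divisible by $x^{q+p+1}$, and we may write $\gamma^{*}_{h-2} = x^{q+p^{*}}c^{*}_{h-2}$ for some integer $p^{*}\geq p+1$. The remaining verifications are then that the new data $({\xi},\mathcal A^{*};\epsilon,\gamma_0,h,p^{*})$ still satisfies all four conditions of Definition~\ref{def:step}: $\epsilon$ and $\gamma_0$ are unchanged because only $y$ is modified, the recessive property of $(x,w)$ for $\gamma_0-\epsilon$ is preserved for the same reason, $\gamma_0\leq \nu(x^{p})<\nu(x^{p^{*}})$ is automatic, and the main height $h$ is preserved because the leading $\partial/\partial w$-piece of $\eta'_h$ (unit $U$) and the leading $x^{q}V\,y\partial/\partial y$-piece of $\eta'_{h-1}$ are only perturbed by terms of strictly larger $x$-value, which can be absorbed into new units $U^{*}, V^{*}$ and new subleading pieces $\tilde a^{*}, \tilde b^{*}, \tilde c^{*}$.

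The main technical obstacle is this last book-keeping step: checking that $\eta^{*}_{h}$ and $\eta^{*}_{h-1}$ retain exactly the prescribed shapes of Equation~(\ref{eq:nivelesrecesiveprepstep}). The delicate points are (i) showing that the induced $x\partial/\partial x$-component of $\eta^{*}_h$ (which arises from the $(h+1)\alpha_{h+1}\mu$ and higher binomial terms) has $x$-value high enough to be absorbed rather than violating the $\alpha_h=0$ shape, and (ii) tracking how the correction $\tilde D \mapsto \tilde D - A(px^pg+x^{p+1}g_x)-Bx^pg_w$ modifies $\gamma^{*}_{h-1}$ so as to preserve the shape $x^{q}V^{*}$ up to a unit. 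Both amount to invoking the recessive inequality $q<p$ and the given $x^{q+1}$-factors in $\alpha_{h-1}, \beta_{h-1}$ to route every perturbation into the ``$R$'' part of the level in question.
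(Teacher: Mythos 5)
Your proposal follows the paper's argument essentially verbatim: the same Tchirnhausen change $y^* = y - x^p g(x,w)$, the same use of $q < p$ (derived from $\epsilon < \gamma_0 \leq \nu(x^p)$), the same binomial-expansion bookkeeping isolating the contributions of levels $h-2, h-1, h$ to the new level-$(h-2)$ coefficient, and the same choice of $g$ (up to a harmless factor of $1/h$, which the paper's printed formula $g=-f_0/V$ actually appears to omit) to kill the offending $x^{q+p}$-term. The one small discrepancy is your claim that $g$ can be taken in $k[[x,w]]\cap\mathcal O$; the paper more carefully allows $g$ in the henselization of $\mathcal O$, which is what the quotient $c_{h-2}/V$ naturally lives in.
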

\begin{proof}
Take $g(x,w)$ in the Hensel closure of $\mathcal O$ and let us write
$y^*=y-x^pg(x,w)$. Note that $\gamma_0\leq \nu(y^*)$ since
$\gamma_0\leq \nu(y)$ and $\gamma_0\leq \nu(x^pg(x,y))$. The
property that $(x,w)$ is recessive for $\gamma_0-\epsilon$ does not
depend on $y^*$. We have
\begin{eqnarray}
\label{eq:recessprep1} x\frac{\partial}{\partial x}&=&x
\frac{\partial^*}{\partial x}+x^p\left(pg(x,w)+x\frac{\partial
g(x,w)}{\partial
x}\right)\frac{\partial^*}{\partial y^*}\\
\label{eq:recessprep2} \frac{\partial}{\partial w}&=&
\frac{\partial^*}{\partial w}+x^p\frac{\partial g(x,w)}{\partial
w}\frac{\partial^*}{\partial
y^*}\\
\label{eq:recessprep3} y\frac{\partial}{\partial y}&=&
y^*\frac{\partial^*}{\partial y^*}+x^p\frac{\partial^*}{\partial
y^*}
\end{eqnarray}
Let us decompose $\xi'=\sum_{s=-1}^\infty {y^*}^s{\eta'}_s^*$ as
usual with respect to $(x,w,y^*)$. Noting that $q<p$ since
$\epsilon=\nu(x^q)<\gamma_0\leq\nu(x^p)$, it is a straightforward
computation from equations \ref{eq:recessprep1},
\ref{eq:recessprep2} and  \ref{eq:recessprep1} that
$\hbar(\xi';x,w,y^*)=h$ and  ${\eta'}_h^*$, ${\eta'}_{h-1}^*$ and
${\eta'}_{h-2}^*$ take the forms
\begin{equation*}
\begin{array}{lcl}
{\eta'}^*_h&=& U^*(x,w)\{{\partial}/{\partial w}+x c^*_h(x,w)
y{\partial}/{\partial y}\},
\\
{\eta'}^*_{h-1}  &= & x^{q}V^*(x,w)\{
xa^*_{h-1}(x,w)x{\partial}/{\partial x}+
xb^*_{h-1}(x,w){\partial}/{\partial w} + y{\partial}/{\partial y}\},
\\
{\eta'}^*_{h-2}&=&a^*_{h-2}(x,w)x{\partial}/{\partial x}+
b^*_{h-2}(x,w){\partial}/{\partial w} +x^{q+p}c^*_{h-2}(x,w)
y{\partial}/{\partial y}
\end{array}
\end{equation*}
where $U^*(0,0)\ne 0$ and $V^*(0,0)\ne 0$.  In order to end our
proof it is enough to show that $g(x,w)$ may be chosen in such a way
that  $x$ divides $c^*_{h-2}$.Let us put
\begin{equation}
\begin{array}{ccccccc}
F&=&\xi'(y)&=& \sum_{s=0}^\infty y^sF_s(x,w)&=& \sum_{s=0}^\infty
{y^*}^sF^*_s(x,w)\\
H&=&\xi'(x^pg(x,w))&=& \sum_{s=0}^\infty y^sH_s(x,w)&=&
\sum_{s=0}^\infty
{y^*}^sH^*_s(x,w)\\
G&=&\xi'(y^*)&=&F-H&=& \sum_{s=0}^\infty {y^*}^sG^*_s(x,w)
\end{array}
\end{equation}
We have to prove that $G^*_{h-1}(x,w)$ is divisible by $x^{q+p+1}$
after a suitable choice of $g(x,w)$. Let us decompose
\begin{equation}
\begin{array}{cccc}
F&=&\tilde F + \overline F&; \tilde F= y^{h-1}F_{h-1}+ y^{h}F_h\\
H&=&\tilde H + \overline H&; \tilde H= y^{h-1}H_{h-1}+ y^{h}H_h
\end{array}
\end{equation}
We have that ${\overline F}^*_{h-1}$ and ${\overline H}^*_{h-1}$ are
divisible by $x^{q+p+1}$, since they are is divisible by $x^{2p}$
and $2p\geq p+q+1$. Note also that
$$
\tilde H= J+K;\quad J=y^{h-1}\eta'_{h-1}(x^pg(x,w)),\;  K=
y^{h}\eta'_{h}(x^pg(x,w)).
$$
Moreover, $J$ is divisible by $x^{q+p+1}$ in view of form of
$\eta'_{h-1}$ in Definition  \ref{def:step}. We also have that
$x^{2p}$ divides $K^*_{h-1}$ and $2p>p+q+1$.

 Thus,
we have only to prove that after a suitable choice of $g(x,w)$ we
can obtain that $\tilde F^*_{h-1}$ is divisible by $x^{q+p+1}$.
Recall that $\tilde F=y^{h-1}(yF_{h}+F_{h-1})$ where
\begin{equation}
\begin{array}{c}
yF_h=\eta'_{h-1}(y)=yV(x,w)x^q\\
F_{h-1}=\eta'_{h-2}(y)/y=x^{q+p}c_{h-2}(x,w).
\end{array}
\end{equation}
Now, write $c_{h-1}(x,w)=f_0(w)+xf_1(x,w)$. If we put
$g(x,w)=-f_0(w)/V(x,w)$ we are done.
\end{proof}

 Let us show how to obtain a final recessive step.
 We start with $\xi$ completely prepared with respect to $\mathcal A$ in the case B of
 Subsection \ref{subsect:tchirnhaus}.  Thus we have a recessive
 preparation step $(\xi,{\mathcal A};\epsilon,\gamma_0,h,p)$ of
 order $p$, where $\gamma_0=\nu(y)=\nu(x^p)$. Since
 $\nu(x^p)=\nu(y)$, it is not a final recessive step. We do a
 coordinate change $y_1=y-x^pg_1(x,w)$ as
in Proposition \ref{prop:recessivepreparationstep} to obtain a new
recessive preparation step $(\xi,{\mathcal
A}_1;\epsilon,\gamma_0,h,p_1)$ with $p_1>p$. We repeat to obtain
 $$
y_{j+1}=y_j-x^{p_j}g_j(x,w),
 $$
 where $(x,w,y_j)$ are the coordinates for  a recessive preparation step
$(\xi;{\mathcal A}_j;\epsilon,\gamma_0,h,p_j)$
  of order $p_j$
 and $p_{j+1}>p_j$. There are two possibilities:
 \begin{enumerate}
 \item
  We have $\nu(y_j)\geq
 \nu(x^{p_j})$ for all $j$. In this case we obtain a transversal formal maximal
 contact element $\hat f\in\hat{\mathcal O}$ as the limit of the $y_j$.
 \item There is an index $j_0$ such that
 $\nu(y_{j_0})<\nu(x^{p_{j_0}})$. In this case we obtain a
  final recessive step $(\xi;{\mathcal A}_{j_0};\epsilon,\gamma_0,h,p_{j_0})$.
 \end{enumerate}
\begin{proposition}
\label{prop:caseB}
 Assume that we have a final recessive step
$(\xi;{\mathcal A};\epsilon,\gamma_0,h,p)$.
 After performing finitely many
$w$-Puiseux packages we obtain ${\mathcal A}'$ such that $\xi$ is
completely prepared with respect to ${\mathcal A}'$ and  we are in
the winning situation C of Subsection \ref{subsect:tchirnhaus}.
\end{proposition}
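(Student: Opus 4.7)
The plan is first to invoke Propositions \ref{pro:diez} and \ref{prop:doce}: applying finitely many etale $w$-Puiseux packages to $\mathcal A$, we reach a model $\mathcal A'$ with respect to which $\xi$ is completely prepared, with $\hbar(\xi;\mathcal A')=h$ preserved. The remaining task is to show that in $\mathcal A'$ we are in the winning case C of Subsection \ref{subsect:tchirnhaus}.

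The crucial arithmetic comes from the structure of a final recessive step. In $\mathcal A$, the $y\partial/\partial y$-coefficient of $\eta_{h-2}$ equals $x^{q+p}c_{h-2}(x,w)$, so its intrinsic $\nu$-value is at least $\nu(x^{q+p})=\epsilon+\nu(x^p)$; by Lemma \ref{lema:once}, this value coincides with the abscissa of that coefficient in any coordinate system produced by a $w$-Puiseux package. At the same time, the $y\partial/\partial y$-coefficient of $\eta_{h-1}$ is $x^qV(x,w)$ with $V$ a unit, which has value exactly $\epsilon$, so the abscissa $\alpha(\eta_{h-1};\mathcal A')$ is at most $\epsilon$ regardless of whether $\eta_{h-1}$ remains recessive or becomes dominant during preparation. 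Combining these bounds with the defining inequality $\nu(y)<\nu(x^p)$ of a final recessive step, we obtain
$$
\alpha(\eta_{h-1};\mathcal A')+\nu(y)\;\leq\;\epsilon+\nu(y)\;<\;\epsilon+\nu(x^p)\;\leq\;\alpha^{\ast},
$$
where $\alpha^{\ast}$ denotes the abscissa in $\mathcal A'$ of the $y\partial/\partial y$-component of $\eta_{h-2}$.

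The proof now concludes by a case split on $\chi=\chi(\xi;\mathcal A')$. Since $\eta_h$ is dominant with abscissa $0$ and $\eta_{h-1}$ is recessive with abscissa $\leq\epsilon<\nu(y)$, level $h$ is never in the critical segment, so $\chi\leq h-1$. When $\chi<h-1$, the uniform-character clause of Proposition \ref{prop:doce} together with $\chi<h$ puts us directly into case C. When $\chi=h-1$, the critical segment is recessive and the coefficient of $y^{h-1}$ in the critical polynomial $P_\xi$ is the leading part of the $y\partial/\partial y$-component of $\eta_{h-2}$ at abscissa $\alpha(\eta_{h-1};\mathcal A')+\nu(y)$; the displayed inequality forces this contribution to vanish, so $P_\xi$ is a Tchirnhausen polynomial, again case C. The main obstacle I anticipate is the bookkeeping: one must track precisely how the abscissae of the individual components of $\eta_{h-1}$ and $\eta_{h-2}$ evolve under each etale $w$-Puiseux package, and confirm that Lemma \ref{lema:once} applied to the coefficient $x^{q+p}c_{h-2}$ really does yield the intrinsic lower bound $\alpha^{\ast}\geq\nu(x^{q+p})$ in the new model; the recessivity hypothesis on $(x,w)$ for $\gamma_0-\epsilon$ is what guarantees that the sequence of $w$-Puiseux packages terminates and that the main height is preserved along the way.
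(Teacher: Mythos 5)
Your plan is recognizably the same shape as the paper's: get a completely prepared $\xi$ in $\mathcal{A}'$ with $h$ unchanged, then deduce from the structure of a final recessive step that level $h-2$ cannot contribute to the critical polynomial, so we land in case~C. But there is an inconsistency in the abscissa bookkeeping which you need to fix.

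Recall that the main height, the critical segment, $\delta$, and the critical polynomial are all computed from the levels $\eta_s$ of a normalized generator of ${\mathcal L}_{{\mathcal A}'}[\log t]$ (one with $\alpha(\xi';{\mathcal A}')=0$). Under each $w$-Puiseux package, because $\eta_h$ is dominant, the un-normalized abscissa of level $h$ drops by $\nu(w_i)$, while $\eta_{h-1}$ being recessive has its un-normalized abscissa unchanged; renormalizing the generator then shifts both abscissae up by $\nu(w_i)$. Consequently the abscissa of level $h-1$ in ${\mathcal A}'$, computed for the ${\mathcal A}'$-generator, is $\epsilon_N=\epsilon+\sum_i\nu(w_i)$, which is \emph{strictly larger} than $\epsilon$ --- this is exactly the paper's ``$\epsilon$ is transformed into $\epsilon_1=\epsilon+\nu(w)$'' in the proof. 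So your two claims ``$\alpha(\eta_h;{\mathcal A}')=0$'' and ``$\alpha(\eta_{h-1};{\mathcal A}')\leq\epsilon$'' cannot both refer to the same normalization: the first holds for the ${\mathcal A}'$-generator, the second only for the raw levels of the old $\xi$ re-expressed in the new coordinates (where $\alpha(\eta_h;{\mathcal A}')<0$). The paper avoids this by tracking the final recessive step as a persistent structure and using that the recessivity of $(x,w)$ for $\gamma_0-\epsilon$ keeps $\epsilon_N<\gamma_0\leq\nu(y)$, so the bound $\alpha_{h-1}<\nu(y)$ giving $\chi<h$ comes from recessivity, not from the original $\epsilon$.

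The good news is that your key comparison is a \emph{difference} of abscissae and is therefore invariant under the renormalizations: whichever normalization one fixes,
$$
\alpha^{\ast}-\alpha(\eta_{h-1};{\mathcal A}')\;\geq\;\bigl(\epsilon+\nu(x^p)\bigr)-\epsilon\;=\;\nu(x^p)\;>\;\nu(y),
$$
and this is what rules out level $h-2$ from the (recessive) critical segment and gives a Tchirnhausen critical polynomial. So your conclusion is right, but to make the argument rigorous you should either (i) work consistently with the un-normalized levels of the fixed $\xi$ and point out explicitly that membership in the critical segment and the vanishing of $\lambda_{h-2}$ depend only on differences $\alpha(\eta_s)+s\nu(y)-\delta$, which do not see renormalization; or (ii) do as the paper does and show that the final-recessive-step form persists with parameters $\epsilon\to\epsilon+\nu(w)$, $p\to pd$, with recessivity controlling $\epsilon_N<\nu(y)$. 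A second, smaller point: before invoking Proposition~\ref{prop:doce} you need $\xi$ strongly main-vertex prepared, and the final recessive step only gives $\eta'_h$ dominant-prepared, so one more preliminary application of Proposition~\ref{pro:diez} (with a check that $h$ is preserved) is required; the paper sidesteps this by verifying directly that the system of equations~\eqref{eq:nivelesrecesiveprepstep} persists after a package.
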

\begin{proof} Note that if $U(x,w,y)$ is a unit, then
 $(U(x,w,y)\xi;{\mathcal A};\epsilon,\gamma_0,h,p)$
is still a final recessive step.

Let us perform an etale $w$-Puiseux package to obtain ${\mathcal
A}_1$ whose coordinates are $(t,w_1,y)$, where $t^{d}=x$ and
$w_1=w/t^{\tilde p}-c$. In view of Equations
\ref{eq:nivelesrecesiveprepstep} we see that $\xi$ is main-vertex
prepared with respect to $(x,w,y)$ and hence the main height $h$ is
not changed under the etale $w$-Puiseux package. The form of
Equations \ref{eq:nivelesrecesiveprepstep} persists, with the
following observations:
\begin{enumerate}
\item The parameter $\epsilon$ is transformed into $\epsilon_1=\epsilon
+\nu(w)$. Anyway, we still have that $(t,w_1)$ is recessive for
$\gamma_0-\epsilon_1$ (see Definition \ref{def:recessivesigma}).
\item The order $p$ is transformed into $p_1=pd$.
\end{enumerate}
In particular $(\xi;{\mathcal A}_1;\epsilon_1,\gamma_0,h,p_1)$ is a
final recessive preparation step.

Thus, we can multiply by a unit $\xi$ an do successive etale
$w$-Puiseux packages in order to obtain that in addition $\xi$ is
completely prepared with respect to ${\mathcal A}$. Let us look at
Equations \ref{eq:nivelesrecesiveprepstep}. Let us put
$$
\alpha_{h-1}=\alpha(\eta'_{h-1};x,w)\;\quad
\alpha_{h-2}=\alpha(\eta'_{h-2};x,w).
$$
From the form of $\eta'_{h-1}$ in Equations
\ref{eq:nivelesrecesiveprepstep} we have that
$\alpha_{h-1}=\nu(x^q)=\epsilon$. Since
$\epsilon<\gamma_0\leq\nu(y)$ we see that $\chi<h$. In particular,
if we are in the case of a dominant critical segment, we are in one
of the winning situations C. Assume that $\chi=h-1$ and we have a
recessive critical segment. Recall that
$$
\eta'_{h-2}=a_{h-2}(x,w)x{\partial}/{\partial x}+
b_{h-2}(x,w){\partial}/{\partial w} +x^{q+p}c_{h-2}(x,w)
y{\partial}/{\partial y}
$$
and we are assuming moreover that $\eta'_{h-2}$ is prepared. If this
level $h-2$ is dominant we are in a winning situation C, since it
cannot be in the critical segment and thus the critical polynomial
is a Tchirnhausen polynomial.  If the level $h-2$ is recessive, from
the above form of $\eta'_{h-1}$ we deduce that
$\alpha_{h-2}=\nu(x^{q+p})=\epsilon+\nu(x^p)$. But we know that
$\nu(x^p)>\nu(y)$ and thus the level $h-2$ cannot be in the critical
segment. This ends the proof.
\end{proof}
\subsection{Tchirnhausen preparation. Dominant case}
\label{subsec:thchirndominant} In this Subsection we assume we are
in case A of Subsection \ref{subsect:tchirnhaus}.  That is, we have
$\xi$ completely prepared with respect to ${\mathcal A}$, the
critical segment is dominant with $\chi=h$ and the critical
polynomial is not Tchirnhausen. We also assume that $h\geq 2$ since
the cases $h\leq 1$ correspond to log-elementary singularities.
\begin{proposition} We can perform a coordinate change
$y^*=y-x^pg(x,w)$, with $\nu(x^p)=\nu(y)$ to obtain ${\mathcal A}^*$
is such a way that after performing finitely many etale $w$-Puiseux
packages, we get ${\mathcal A}'$ such that $\xi$ is completely
prepared with respect to ${\mathcal A}'$ with $h=\hbar(\xi;{\mathcal
A}')$ and we are in one of the situations B or C of
\ref{subsect:tchirnhaus}.
\end{proposition}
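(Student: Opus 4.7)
The plan is to perform a Tchirnhausen-style coordinate change that kills the $y^{h-1}$ coefficient of the critical polynomial. In Case A the critical polynomial has the form
$$
P_\xi(x,y) = y^h + \lambda_{h-1} x^{q_{h-1}} y^{h-1} + \cdots + \lambda_0 x^{q_0}
$$
with $\lambda_{h-1}\ne 0$, and $\nu$-homogeneity forces $\nu(x^{q_{h-1}})=\nu(y)$. Setting $p := q_{h-1}$ and $g(x,w) := -\lambda_{h-1}/h \in k$, I take $y^* := y - x^p g(x,w)$ and ${\mathcal A}^* := ({\mathcal O},(x,w,y^*))$. By Remark \ref{rk:formulainitialpart} the critical initial part in the original coordinates is $\mbox{Crit}(\xi;x,w,y) = \lambda x^q P_\xi(x,y)\,\partial/\partial w$, and the substitution $y = y^* + (\lambda_{h-1}/h) x^p$ converts this into $\lambda x^q P^*(x,y^*)\,\partial/\partial w$ where $P^*$ is a $\nu$-homogeneous monic polynomial of the same $\nu$-degree $h\nu(y)$ in $y^*$ whose coefficient of $y^{*,h-1}$ vanishes by construction; that is, $P^*$ is a Tchirnhausen polynomial.

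Next I would apply Proposition \ref{pro:diez} level-by-level to the decomposition $\xi = \sum_{s\ge -1} y^{*,s}\eta_s^*$, performing finitely many etale $w$-Puiseux packages to reach ${\mathcal A}'$ with respect to which $\xi$ is completely prepared. To verify that $\hbar(\xi;{\mathcal A}') = h$, observe that $y^* = y - x^p g$ is upper-triangular in $y$-degree, so no $(y^*)^s$ with $s>h$ is introduced; the level $s=h$ survives as a candidate main vertex because $P^*$ is still monic of degree $h$ in $y^*$. An analog of Lemma \ref{lema:resto} applied in the coordinates $(x,w,y^*)$ shows that the remainder $\xi - \mbox{Crit}(\xi;x,w,y)$ contributes only to strata with $\delta$-value strictly greater than $\delta(\xi;x,w,y)$, so the new main vertex sits at height exactly $h$ with $\alpha$-value equal to $\nu(\lambda x^q)$, and the subsequent $w$-Puiseux re-preparation acts only on the $(x,w)$-expansion of each $\eta_s^*$ without altering the Newton-Puiseux heights.

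Finally, I would run through the four possible outcomes of the preparation at ${\mathcal A}'$: if the critical segment is dominant with $\chi<h$ we fall directly into case C; if it is dominant with $\chi=h$ then the new critical polynomial equals a nonzero scalar multiple of $P^*$ (reassembled from the level-$h$ contribution together with admissible lower-$y$ contributions whose $\nu$-degree lies on the critical line), hence is Tchirnhausen, placing us in case C; if the critical segment is recessive with $\chi<h-1$ we are again in case C; and if recessive with $\chi=h-1$ we land in case C or B according to whether the recessive critical polynomial is Tchirnhausen. In no case does the preparation return us to A. The principal obstacle is showing rigorously that the re-preparation by $w$-Puiseux packages does not recreate a nonzero $y^{*,h-1}$ coefficient on the critical line in the dominant $\chi=h$ sub-case: one must trace that every contribution to the critical stratum in the new coordinates arises from $\mbox{Crit}(\xi;x,w,y)$ itself, using $\nu(x^p)=\nu(y)$ to rule out cross-terms from the remainder, parallel to the $\nu$-homogeneous bookkeeping performed in Lemma \ref{lema:criticalinicialpolynomial} and in the proof of Proposition \ref{prop:dominantstability}.
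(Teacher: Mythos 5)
The key difference between your argument and the paper's is that you take $g(x,w)$ to be a \emph{constant} scalar $-\lambda_{h-1}/h$, designed only to kill the single critical-line monomial at level $h-1$, whereas the paper chooses $g(x,w)$ in the Hensel closure of ${\mathcal O}$ (by the same implicit-function mechanism as in Proposition \ref{prop:recessivepreparationstep}) so as to kill the \emph{entire} coefficient $F^*_{h-1}$ of $y^{*\,h-1}$ in $F^*=\xi(w)$. This is not a cosmetic difference; it is precisely what resolves the obstacle you flag at the end without resolving. With $F^*_{h-1}\equiv 0$, the $\partial/\partial w$-component of $\eta^*_{h-1}$ vanishes identically, and this vanishing is stable under $w$-Puiseux packages (because $\xi(w')$ is an ${\mathcal O}$-combination of $\xi(w)$ and $\xi(x)$, and $\xi(x)$ has no $y$-dependence after $x$-preparation). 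Consequently level $h-1$ can never become dominant, so a dominant critical segment with $\chi=h$ necessarily yields a Tchirnhausen critical polynomial; one lands in B or C, period.

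Your constant $g$ leaves a residual $F^*_{h-1}\neq 0$ with valuation strictly above $\nu(x^p)=\nu(y)$. The gap is that $\nu(y^*)$ need not equal $\nu(y)$: if $-\lambda_{h-1}/h$ happens to coincide with the residue of $y/x^p$, then $\nu(y^*)>\nu(y)$, $\delta$ increases to $h\nu(y^*)$, and a monomial of $F^*_{h-1}$ with $\alpha$-value exactly $\nu(y^*)$ can reappear on the critical line with a dominant $\partial/\partial w$-contribution. That puts you back in case A — precisely the scenario the proposition is supposed to exclude. Your claim that ``the new critical polynomial equals a nonzero scalar multiple of $P^*$'' is therefore unjustified; the re-prepared critical polynomial may acquire a fresh nonzero $y^{*\,h-1}$ coefficient from these cross-terms. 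You correctly identify this as ``the principal obstacle,'' but naming the obstacle is not closing it. The closure requires replacing your scalar $g$ with the Hensel solution of the equation $F^*_{h-1}=0$, which is what the paper does.
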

\begin{proof} Since the critical polynomial is not Tchirnhaus, we
have that $\nu(y)=\nu(x^p)$ for some $p\in{\mathbb Z}_{\geq 0}$. Up
to multiply $\xi$ by a power of $x$, let us assume without loss of
generality that $\alpha(\xi;x,w,y)=0$. Denote
$F=\xi(w)=\sum_{s=0}^\infty y^{s} F_s(x,w)$. We know that
$F_{h}(0,0)\ne 0$. Moreover, in view of our hypothesis, we have
$F_{h-1}(x,w)=x^pG_{h-1}(x,w)$, where $G_{h-1}(0,0)\ne 0$. By an
argument like in Proposition \ref{prop:recessivepreparationstep}, we
can find a coordinate change of the form $y^*=y-x^pg(x,w)$ such that
$$
F^*(x,w,y^*)=F(x,w,y^*+x^pg(x,w))=\sum_{s=0}^\infty{y^*}^s
F^*_s(x,w)
$$
satisfies that $F^*_{h-1}=0$. This condition eliminates the level
$h-1$ from the critical segment if we persist in the situation A
after subsequent etale $w$-Puiseux packages.
\end{proof}
\section{Maximal contact}
In this section we prove Theorem \ref{teo:cuatro}. Recall that we
consider the case when $n=3$ and $\nu$ is a valuation of arquimedean
rank one with $\kappa_\nu=k$. We have a projective model $M_0$ of
$K$, where  $P_0$ is the center of $\nu$ at $M_0$. We assume that
$P_0$ is a regular point of $M_0$ and there is $\hat
f\in\widehat{\mathcal O}_{M_0,P_0}$ that has transversal maximal
contact with $\nu$. The rational rank $r$ can be supposed to be
$r=1$ or $r=2$, since if $r=3$ the definition of transversal maximal
contact makes no-sense (see the Introduction).

The computations in this section are essentially contained in the
paper \cite{Can-M-R}, but we include them for the sake of
completeness.

\subsection{Maximal contact with rational rank two.}
\label{subsec:maxconrangodos}
 Take a regular system of parameters
$(x_1,x_2,y)$ of ${\mathcal O}_{M_0,P_0}$ such that
$\nu(x_1),\nu(x_2)$ are ${\mathbb Z}$-linearly independent and
$$
\hat f=y+\sum_{i,j}\lambda_{ij}x_1^ix_2^j.
$$
Since $\nu$ is arquimedian, we may write $\hat f$ as the Krull limit
$\hat f=\lim_{\mu\rightarrow \infty} { f}_{\mu} $, where
$$ { f}_{\mu}=y+\sum_{i\nu(x_1)+j\nu(x_2)\leq
\mu}\lambda_{ij}x_1^ix_2^j\in{\mathcal O}_{M_0,P_0}.
$$
Note that $\nu(f_\mu)>\mu$ and, more precisely we have
$$\nu({f}_{\mu})=\min\{\nu(x_1^ix_2^j);\lambda_{ij}\ne 0, \,
\nu(x_1^ix_2^j)>\mu \}.$$
  In this paragraph we denote
$Y_0=\{P_0\}$, $Y_1=\{x_1={\hat f}=0\}$ and $Y_2=\{x_2={\hat
f}=0\}$.

The next Lemma \ref{lema:catorce} may be proved by standard
computations in
 terms of blow-ups and valuations and we leave the verification to the reader:
\begin{lemma}
\label{lema:catorce} Let $\pi:M'\rightarrow M_0$ be the blow-up of
$M_0$ with one of the centers $Y_0$, $Y_1$ or $Y_2$ and assume that
if we use $Y_1$, respectively $Y_2$, as a center, then ${\hat
f}(0,x_2,y)\in{\mathcal O}_{M_0,P_0}$, respectively  ${\hat
f}(x_1,0,y)\in{\mathcal O}_{M_0,P_0}$. Let $P'\in M'$ be the center
of $\nu$ at $M'$. Then $P'$ belongs to the strict transform of
${\hat f}=0$. More precisely , we have
 the following cases:
 \begin{description}
\item[T-01] The center is $Y_0$ and $\mu=\nu(x_1)<\nu(x_2)$. In this case $P_1$ is
in the strict transform of the formal curve $x_2={\hat f}=0$ and
there is a regular system of parameters $(x'_1,x'_2,{y^*})$ at
${\mathcal O}_{M',P'}$ such that $x'_1=x_1$, $x'_2=x_2/x_1$,
${y^*}={\hat f}_\mu/x_1$. Moreover
$${\hat f}'={\hat f}/x_1={y^*}+\sum_{i\nu(x_1)+j\nu(x_2)>
\mu}\lambda_{ij}{x'}_1^{i+j-1}{x'}_2^j\in{\mathcal O}_{M',P'}$$ has
transversal maximal contact with $\nu$.
\item[T-02] The center is $Y_0$ and $\mu=\nu(x_2)<\nu(x_1)$. Similar
to T-01.
\item[T-1] The center is $Y_1$, where $\mu=\nu(x_1)$. In this case $P_1$ is
 the only point over $P_0$ in the strict transform of  ${\hat f}=0$ and there is a regular system of parameters
$(x'_1,x'_2,y^*)$ at ${\mathcal O}_{M',P'}$ such that $x'_1=x_1$,
$x'_2=x_2$, $y^*=({f}_\mu+ {\hat h}_\mu(0,x_2))/x_1$, where ${\hat
h}_\mu(x_1,x_2)={\hat f}-f_\mu$. Moreover
$${\hat f}'={\hat f}/x_1=y^*+\sum_{\nu(x_1^ix_2^j)>
\mu;\, i\geq 1}\lambda_{ij}{x'}_1^{i-1}{x'}_2^j\in{\mathcal
O}_{M',P'}$$ has transversal maximal contact with $\nu$.
\item[T-2] The center is $Y_2$, where $\mu=\nu(x_2)$. Similar to
T-1.
\end{description}
\end{lemma}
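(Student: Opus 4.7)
The plan is to handle the four cases by the same three-step recipe: (1) identify the affine chart of the blow-up containing $P'$ and verify that $P'$ lies on the strict transform of $\{\hat f = 0\}$; (2) write down a regular system of parameters $(x'_1, x'_2, y^*)$ at $P'$ matching the stated formulae; (3) perform the substitution $\hat f = x_1 \hat f'$ (respectively $\hat f = x_2 \hat f'$) and verify that $\hat f'$ has transversal maximal contact with $\nu$ at $P'$.

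The mechanism behind step (1) is that $\hat f$ is the Krull limit of $f_\mu \in \mathcal O_{M_0, P_0}$ with $\nu(f_\mu) > \mu \to \infty$. For any blow-up along an algebraic center on which $\hat f$ formally vanishes, $\hat f$ is divisible in $\widehat{\mathcal O}_{M', P'}$ by the local equation of the exceptional divisor, and the resulting quotient $\hat f'$ is still a limit of elements whose $\nu$-values tend to infinity, so $\hat f'$ lies in $\widehat{\mathcal M}_{M', P'}$; thus $P'$ is on the strict transform. This argument also fixes the chart: in T-01, T-02 it selects the combinatorial chart determined by the smaller of $\nu(x_1), \nu(x_2)$; in T-1, T-2 the algebraicity hypothesis on $\hat f(0, x_2, y)$ (resp.\ $\hat f(x_1, 0, y)$) makes the center an honest smooth algebraic curve, and the relevant chart is the combinatorial one with $\tilde y := \hat f(0, x_2, y)$ (resp.\ $\hat f(x_1, 0, y)$) playing the role of the second exceptional equation.

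For step (2), in cases T-01, T-02 the translation constant needed to normalize $y/x_1$ (resp.\ $y/x_2$) is forced by the cancellation condition $\nu(f_\mu) > \mu$ to be exactly the coefficient $\lambda_{10}$ (resp.\ $\lambda_{01}$) of the lowest-value nontrivial term of $\hat f$, and one checks directly that this translation gives $y^* = f_\mu/x_1$ (resp.\ $f_\mu/x_2$). In cases T-1, T-2 the analogous computation with $\tilde y$ replacing $y$ gives $y^* = \tilde y/x_1 + \lambda_{10} = (f_\mu + \hat h_\mu(0, x_2))/x_1$, using the identity $f_\mu + \hat h_\mu(0, x_2) = \tilde y + \lambda_{10} x_1$ at $\mu = \nu(x_1)$. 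Step (3) is then a mechanical substitution that yields the stated series form, the exponent count ($i + j - 1 \geq 0$ in T-01, $i - 1 \geq 0$ in T-1) ensuring each term lies in $\mathcal O_{M', P'}$. Transversal maximal contact then follows because the monomials of $\hat f' - y^*$ have $\nu$-values $i\nu(x_1) + j\nu(x_2) - \mu > 0$, so truncating the series by value produces a Cauchy sequence in $\mathcal O_{M', P'}$ with strictly increasing values converging to $\hat f'$ in the Krull topology; $\mathbb Z$-independence of $\nu(x'_1), \nu(x'_2)$ is routine from the invertibility of the exponent change.

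The only mildly delicate point is in T-1, T-2: one must verify that $\nu(\tilde y) \geq \nu(x_1)$ so that $P'$ is in the combinatorial chart, and that the auxiliary term $\hat h_\mu(0, x_2)$ is exactly what is needed to turn the a priori merely rational $f_\mu/x_1$ into an element of $\mathcal O_{M', P'}$ that vanishes at $P'$. Both points fall out of the formal identity $\hat f = \tilde y + x_1 \sum_{i \geq 1} \lambda_{ij} x_1^{i-1} x_2^j$ and the ``infinite'' $\nu$-value of $\hat f$; after that, the rest is bookkeeping.
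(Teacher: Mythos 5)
The paper does not actually supply a proof of this lemma: the text preceding it explicitly says it ``may be proved by standard computations in terms of blow-ups and valuations and we leave the verification to the reader.'' So there is nothing to compare against directly; what you have done is spell out the ``standard computations'' the authors invoke, and your plan is essentially the intended one.

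Your execution is sound. The key structural observations are all correct: the value inequality $\nu(y)\geq\nu(x_1)=\mu$ (with equality iff $\lambda_{10}\neq 0$) forced by $\nu(f_\mu)>\mu$ determines the chart in T-01/T-02; the algebraicity hypothesis on $\hat f(0,x_2,y)$ promotes $Y_1$ to a genuine smooth curve with local equations $x_1=\tilde y=0$ where $\tilde y=\hat f(0,x_2,y)$, so that the $x_1$-chart contains $P'$; the identity $f_\mu+\hat h_\mu(0,x_2)=\tilde y+\lambda_{10}x_1$ (at $\mu=\nu(x_1)$) delivers the stated $y^*$; and the exponent shift $i+j-1\geq 0$, resp.\ $i-1\geq 0$ with $i\geq 1$, keeps the transformed series regular. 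The verification of $\mathbb Z$-independence of $\nu(x'_1),\nu(x'_2)$ by unimodularity of the monomial substitution is also fine.

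One phrase in your step (3) should be tightened. You say the truncations of $\hat f'=y^*+\sum_{>\mu}\lambda_{ij}{x'}_1^{i+j-1}{x'}_2^j$ have strictly increasing $\nu$-values ``because the monomials of $\hat f'-y^*$ have $\nu$-values $>0$.'' Positivity of the tail exponents is not, by itself, what forces the truncation values to grow---a priori they could plateau at $\nu(y^*)$ if no cancellation occurred. The correct reason, which you have already set up in your step (1), is that the truncations coincide with $f_\mu/x_1$ (after the algebraic correction in T-1, T-2), and $\nu(f_\mu/x_1)=\nu(f_\mu)-\nu(x_1)>\mu-\nu(x_1)\to\infty$ because $\hat f$ itself has ``infinite value'' in the sense of the definition; so the cancellation between $y^*$ and the tail is inherited from $\hat f$. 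Replacing the stated justification by this observation closes the only soft spot; everything else is correct.
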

Take a generator $\xi_0$ of ${\mathcal L}_{M_0,P_0}[\log x_1x_2]$.
Define the formal vector field $\hat\xi$ to be $\hat\xi=\xi_0$ if
$\hat f$ divides $\xi_0(\hat f)$ (this corresponds to saying that
$\hat f=0$ defines a formal invariant hypersurface) and
$\hat\xi=\hat f\xi_0$ if $\hat f$ does not divide $\xi_0(\hat f)$.
Let us write
$$
\hat\xi=\hat a_1x_1\frac{\partial}{\partial x_1}+ \hat
a_2x_2\frac{\partial}{\partial x_2}+\hat b\hat
f\frac{\partial}{\partial \hat f}.
$$
Note that $\hat a_1,\hat a_2, \hat b$ have no common factors. The
{\em adapted (or logarithmic) order \/} of ${\mathcal L}$ at $P_0$
with respect to $x_1x_2\hat f$ is
$$
\mbox{LogOrd}({\mathcal L}, {\mathcal O}_{M_0,P_0}; x_1x_2\hat
f)=\mbox{ord}_{\widehat{\mathcal M}_{M_0,P_0}}(\hat a_1,\hat
a_2,\hat b) \in {\mathbb Z}_{\geq 0},
$$
where $\mbox{ord}_{\widehat{\mathcal M}_{M_0,P_0}}(-)$ means the
$\widehat{\mathcal M}_{M_0,P_0}$-adic order (see also \cite{Can}).

Put $\zeta=\mbox{LogOrd}({\mathcal L}, {\mathcal O}_{M_0,P_0};
x_1x_2\hat f)$. We say that $Y_1$ is {\em permissible\/} for
${\mathcal L}$ adapted to $x_1x_2$ if the two following properties
hold:
\begin{enumerate}
\item ${\hat f}(0,x_2,y)\in{\mathcal
O}_{M_0,P_0}$. (Hence $Y_1$ is a subvariety of $M_0$)
\item $\mbox{ord}_{(x_1,\hat f)}(\hat a_1,\hat
a_2,{\hat b})= \zeta$.
\end{enumerate}
We give a symmetric definition for  $Y_2$ being permissible. By
definition $Y_0$ is always permissible.

\begin{remark}
\label{observacioncinco}
 If
 $\zeta\geq 2$, the condition 2 above implies condition 1, since in this case, the curve
$Y_1$ must be contained in the locus
$$\xi_0(x_1)/x_1=\xi_0(x_1)/x_2=\xi_0(y)=0.$$
If $\zeta=1$ and $\hat \xi=\xi_0$, the same argument holds.
\end{remark}
\begin{lemma}
\label{lemaquince} Let $\pi:M'\rightarrow M_0$ be the blow-up of
$M_0$ with a permissible center $Y_0$, $Y_1$ or $Y_2$. Let $P'\in
M'$ be the center of $\nu$ at $M'$. Then
$$
\mbox{\rm LogOrd}({\mathcal L}, {\mathcal O}_{M_0,P_0}; x_1x_2\hat
f)\geq \mbox{\rm LogOrd}({\mathcal L}, {\mathcal O}_{M',P'};
x'_1x'_2{\hat f}').
$$
\end{lemma}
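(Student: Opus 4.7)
The plan is to handle the three permissibility cases from Lemma \ref{lema:catorce} (T-01/T-02, T-1/T-2) separately but by a common recipe: write $\hat\xi$ in the new formal regular system of parameters, check that the transition is \emph{unimodular} in the logarithmic basis, then strip off the appropriate power of the new exceptional divisor and apply a classical Hironaka-style upper bound on the order of the resulting transformed ideal.

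First I would compute the transition formulas. In case T-01, where $x_1'=x_1$, $x_2'=x_2/x_1$, $\hat f'=\hat f/x_1$, the chain rule yields
\[
x_1\partial/\partial x_1 = x_1'\partial/\partial x_1' - x_2'\partial/\partial x_2' - \hat f'\partial/\partial \hat f',\qquad x_2\partial/\partial x_2 = x_2'\partial/\partial x_2',\qquad \hat f\partial/\partial\hat f = \hat f'\partial/\partial\hat f',
\]
so the new coefficient triple is $(\hat a_1,\hat a_2-\hat a_1,\hat b-\hat a_1)$, a unimodular combination of $(\hat a_1,\hat a_2,\hat b)$. The analogous computation in case T-1, where $x_1'=x_1$, $x_2'=x_2$ and $\hat f'=\hat f/x_1$, gives the triple $(\hat a_1,\hat a_2,\hat b-\hat a_1)$, again unimodular. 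Cases T-02 and T-2 are symmetric. Consequently in every case the ideal $I'=(\hat a_1',\hat a_2',\hat b')\widehat{\mathcal O}_{M',P'}$ coincides with the extension $I\widehat{\mathcal O}_{M',P'}$ of $I=(\hat a_1,\hat a_2,\hat b)$.

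Next I would factor the exceptional divisor. In a point blow-up (T-01/T-02) the maximal ideal $\widehat{\mathcal M}_{M_0,P_0}$ extends to the ideal $(x_1')\widehat{\mathcal O}_{M',P'}$, so any element of $\widehat{\mathcal M}_{M_0,P_0}^\zeta$ lies in $x_1'^\zeta\widehat{\mathcal O}_{M',P'}$. In a codimension-two blow-up (T-1/T-2), the permissibility hypothesis $\mathrm{ord}_{(x_1,\hat f)}(\hat a_1,\hat a_2,\hat b)=\zeta$ says that each coefficient lies in $(x_1,\hat f)^\zeta$, and since $(x_1,\hat f)$ extends to $x_1'\widehat{\mathcal O}_{M',P'}$ (because $\hat f=x_1'\hat f'$), we again get $I\subset x_1'^\zeta\widehat{\mathcal O}_{M',P'}$. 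Writing $\hat a_i=x_1'^\zeta\tilde a_i$ and $\hat b=x_1'^\zeta\tilde b$, the generator of $\mathcal L_{M',P'}[\log x_1'x_2'\hat f']$ with coprime coefficients is obtained by dividing $\hat\xi$ by $x_1'^\zeta$ (followed, if the definition of $\hat\xi$ changes between $\xi_0$ and $\hat f\xi_0$ at $P'$, by multiplying or dividing by $\hat f'$, which does not affect the $\widehat{\mathcal M}_{M',P'}$-order of the coefficient triple modulo the obvious cancellations).

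It then remains to bound the new log-order
$\mathrm{ord}_{\widehat{\mathcal M}_{M',P'}}(\tilde a_1,\tilde a_2,\tilde b)$. This is the content of the classical Hironaka inequality: if $I$ is an ideal of order $\zeta$ at $P_0$ and the center $Y$ is permissible, meaning the order of $I$ along $Y$ equals $\zeta$, then the \emph{strict} transform $\tilde I=I/I_E^\zeta$ (with $E$ the new exceptional divisor) has order $\le\zeta$ at every point of the exceptional divisor. The main obstacle I expect is not the calculation itself but the bookkeeping at two points: (i) verifying that the extraction of $x_1'^\zeta$ is tight --- that is, that no spurious common factor in $\widehat{\mathcal M}_{M',P'}$ remains among $(\tilde a_1,\tilde a_2,\tilde b)$ that would force us to define $\hat\xi'$ differently --- which is where Remark \ref{observacioncinco} and the coprimality of $(\hat a_1,\hat a_2,\hat b)$ in $\widehat{\mathcal O}_{M_0,P_0}$ must be used; and (ii) handling cleanly the two possible normalizations $\hat\xi=\xi_0$ versus $\hat\xi=\hat f\xi_0$ on $M_0$ and on $M'$, since the divisibility of $\hat\xi(\hat f)$ by $\hat f$ may switch under the blow-up, changing by $1$ which of the two is the correct normalization.
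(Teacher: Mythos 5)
Your proposal is correct and follows essentially the same route as the paper: compute the unimodular transition of the logarithmic coefficient triple, extract the exceptional power $x_1'^\zeta$ via permissibility, and conclude by the classical Hironaka order inequality for equimultiple blow-ups. The two bookkeeping worries you raise are in fact non-issues: invariance of $\hat f=0$ is preserved by the blow-up (one checks $\hat f'\mid\xi_0(\hat f')$ if and only if $\hat f\mid\xi_0(\hat f)$, using $x_1\mid\xi_0(x_1)$), so the normalization $\hat\xi=\xi_0$ versus $\hat\xi=\hat f\xi_0$ never switches, and removing any spurious common factor from the new coefficient triple can only decrease its order, which does not threaten the desired inequality.
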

\begin{proof} We may assume that either the center of
the blow-up is $Y_1$ or it is $Y_0$ and $\nu(x_1)<\nu(x_2)$ (the
other cases follow from these by interchanging the roles of
$x_1,x_2$). Then we have ${\hat f}'={\hat f}/x_1$ and
$\hat\xi'=x_1^{-\zeta}\hat\xi$ where
$$
\hat a'_1=x_1^{-\zeta}\hat a_1;\; {\hat b}'=x_1^{-\zeta}({\hat
b}-\hat a'_1),
$$
and $\hat a'_2=x_1^{-\zeta}(\hat a_2-\hat a_1)$ if $Y_0$, $\hat
a'_2=x_1^{-\zeta}\hat a_2$ if $Y_1$. The rest of the proof is given
by the standard results on the blow-up of equimultiple centers.
\end{proof}
 We proceed by induction on $\zeta$.  First, consider
   the case $\zeta\geq 2$.

We now define Hironaka's characteristic polygons (see for instance
\cite{Cos-G-O}).  Take an element $\hat g=\sum_s{\hat
f}^sg_{ijs}x_1^ix_2^j\in\widehat {\mathcal
O}_{M_0,P_0}={k}[[x_1,x_2,\hat f]]$ and an integer $\eta\in {\mathbb
Z}_{> 0}$. The Hironaka's {\em characteristic polygon\/}
$\Delta(\hat g; x_1,x_2,\hat f;\eta)$ is the positive convex hull in
${\mathbb R}_{\geq 0}^2$ of the points of the form
$(i/(\eta-s),j/(\eta-s))$, where $g_{ijs}\ne 0$ and $s<\eta$. Given
a list $\{\hat g_l\}$ we define $\Delta(\{\hat g_l\}; x_1,x_2,\hat
f;\eta)$ to be the convex hull of the union of the $\Delta(\hat g_l;
x_1,x_2,\hat f;\eta)$. Now, we define
$$
\Delta({\mathcal L}; x_1,x_2,\hat f;\eta)=\Delta(\{\hat a_1,\hat
a_2,\hat b\}; x_1,x_2,\hat f;\eta).
$$
Let us list the properties of $\Delta_\eta=\Delta({\mathcal L};
x_1,x_2,\hat f;\eta)$, similar to those used by Hironaka in his
Bowdoin College Memoir \cite{Cos-G-O}:
\begin{enumerate}
\item $\Delta_\eta\ne
\emptyset$. Otherwise $\hat a_1,\hat a_2, \hat b$ would be divisible
by $\hat f$.
\item $\Delta_\eta\subset\{(u,v);\; u+v\geq 1\}$
iff  $\zeta\geq \eta$.
\item $\Delta_\zeta\subset\{(u,v);\; u\geq
1\}$ iff condition 2 of permissibility holds for $Y_1$.
\item $\Delta_\zeta\subset\{(u,v);\; v\geq
1\}$ iff condition 2 of permissibility holds for $Y_2$.
\end{enumerate}
The characteristic polygon behaves under blow-up as in the classical
case of varieties, as we show in the next Lemma \ref{lema:dieciseis}.
To see this, let us introduce the linear mappings
$\sigma_{01},\sigma_{02},\sigma_1,\sigma_2$ defined as follows
$$\begin{array}{cc}
\sigma_{01}(u,v)=(u+v-1,v),&
\sigma_1(u,v)=(u-1,v),\\
\sigma_{02}(u,v)=(u,u+v-1),& \sigma_2(u,v)=(u,v-1).
\end{array}
$$
\begin{lemma}
\label{lema:dieciseis} Keep notations as in Lemma
\ref{lema:catorce}. Let $\pi:M'\rightarrow M_0$ be the blow-up of
$M_0$ with a permissible center $Y_0$, $Y_1$ or $Y_2$. Let $P'\in
M'$ be the center of $\nu$ at $M'$. Put $\Delta=\Delta({\mathcal L};
(x_1,x_2,{\hat f});\zeta)$. Then the characteristic polygon
$\Delta({\mathcal L}; (x'_1,x'_2,{\hat f}');\zeta)$ is the positive
convex hull of
$$\sigma_{01}(\Delta),\sigma_{02}(\Delta),\sigma_1(\Delta),\sigma_2(\Delta)$$
if we are respectively in the cases \textbf{T-01}, \textbf{T-02}, \textbf{T-1} and \textbf{T-2} of
Lemma \ref{lema:catorce}.
\end{lemma}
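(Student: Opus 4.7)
The plan is to reduce the claim to direct bookkeeping of how each monomial $\hat f^s x_1^i x_2^j$ (with $s<\zeta$) occurring in the coefficients $\hat a_1$, $\hat a_2$, $\hat b$ is transported to the completed local ring $\widehat{\mathcal O}_{M',P'}$. Two ingredients are needed. The first is the explicit substitution of Lemma \ref{lema:catorce}: in cases \textbf{T-01} and \textbf{T-1} we read off $x_1=x'_1$ and $\hat f=x'_1\hat f'$, together with $x_2=x'_1x'_2$ in \textbf{T-01} and $x_2=x'_2$ in \textbf{T-1}; cases \textbf{T-02} and \textbf{T-2} are symmetric, interchanging the roles of $x_1$ and $x_2$. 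The second is the identity $\hat\xi'=x_1^{-\zeta}\hat\xi$ from Lemma \ref{lemaquince}, which expresses the new triple $(\hat a'_1,\hat a'_2,\hat b')$ as an $x_1^{-\zeta}$-rescaled linear combination of $(\hat a_1,\hat a_2,\hat b)$.

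Taking case \textbf{T-1} as a model, a monomial $\hat f^sx_1^ix_2^j$ pulls back to $(x'_1)^{i+s}(x'_2)^j(\hat f')^s$, and after division by $(x'_1)^\zeta$ contributes $(x'_1)^{i+s-\zeta}(x'_2)^j(\hat f')^s$ to the corresponding new coefficient. The associated polygon point is
$$
\left(\frac{i+s-\zeta}{\zeta-s},\frac{j}{\zeta-s}\right)=(u-1,v)=\sigma_1(u,v),
$$
where $(u,v)=(i/(\zeta-s),j/(\zeta-s))$ was the original polygon point. The analogous computation in case \textbf{T-01} yields $(x'_1)^{i+j+s-\zeta}(x'_2)^j(\hat f')^s$ and hence the polygon point $(u+v-1,v)=\sigma_{01}(u,v)$; cases \textbf{T-02} and \textbf{T-2} follow by symmetry. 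Permissibility of the center guarantees that all these exponents remain non-negative on the support, so the image really sits in ${\mathbb R}^2_{\geq 0}$ and the inclusion of $\Delta({\mathcal L};(x'_1,x'_2,\hat f');\zeta)$ inside the positive convex hull of $\sigma_\bullet(\Delta)$ is immediate.

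The main — and essentially only — obstacle is a mild technicality hidden in the formula $\hat b'=x_1^{-\zeta}(\hat b-\hat a_1)$ and its symmetric variants: some monomials could cancel and disappear from $\hat b'$, a priori shrinking the polygon. I would handle this by the following remark: any monomial actually killed by such a subtraction is shared between $\hat b$ and $\hat a_1$, so its $\sigma_\bullet$-image is still contributed to the new polygon through $\hat a'_1=x_1^{-\zeta}\hat a_1$; thus the positive convex hull is not diminished and the reverse inclusion holds as well. Combining the monomial substitution computation with this cancellation observation produces the claimed equality in each of the four cases.
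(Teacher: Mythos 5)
Your proof is correct and follows essentially the same route as the paper. The paper expresses the key step as the identity of ideals $I'=x_1^{-\zeta}I$ (with $I=(\hat a_1,\hat a_2,\hat b)$) and then defers to Hironaka's classical remarks on the transform of characteristic polygons under equimultiple blow-up; your explicit monomial bookkeeping, together with the observation that any monomial cancelled in $\hat b-\hat a_1$ (respectively $\hat a_2-\hat a_1$ in the $Y_0$ cases) still contributes through $\hat a_1'=x_1^{-\zeta}\hat a_1$, is precisely the content absorbed by that citation.
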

\begin{proof} Let $I\subset \widehat{\mathcal O}_{M_0,P_0}$ be the
ideal generated by $\hat a_1,\hat a_2,\hat f$. Then the ideal
$I'\subset \widehat{\mathcal O}'_{M_0,P_0}$ generated by $\hat
a_1',\hat a_2',\hat f'$ is $I'=x_1^{-\zeta}I$, respectively
$I'=y_1^{-\zeta}I$ if we are in the cases $(01), (1)$, respectively
$(02), (2)$. Now we apply the classical remarks of Hironaka in his
Bowdoin College seminar \cite{Cos-G-O}.
\end{proof}
Now, we choose the following strategy to blow up. We select the
blow-up center  $Y_0$ until the characteristic polygon has only one
vertex, this occurs after finitely many steps. Then, since we are in
the case $\zeta\geq 2$, at least one of the centers $Y_1,Y_2$ is
permissible, since it is equimultiple. Blow-up this curve. After
finitely many operations the characteristic polygon intersects
$\{(u,v);\; u+v<1\}$ and hence the logarithmic order drops. We
arrive in this way to the case $\zeta\leq 1$.

Assume now that $\zeta\leq 1$. If $\zeta=0$ and $\hat\xi=\xi_0$, we
get an elementary singularity and if $\hat\xi=\hat f\xi_0 $ the
foliation is in fact non-singular. Assume that $\zeta=1$. By Remark
\ref{observacioncinco}, the case $\hat\xi=\xi_0$ can be handled as
before. So we consider only the case $\hat\xi=\hat f\xi_0$.
Blowing-up the origin (that is, we take the center $Y_0$ each time),
we get as above that the characteristic polygon has exactly one
vertex of integer coordinates, say $(\alpha,\beta)\in{\mathbb
Z}_{\geq 0}^2$, where $\alpha+\beta\geq 1$. Assume that
$\alpha+\beta\geq 2$; since $\zeta=1$, we have
 either
$\mbox{ord}_{x_1,x_2,{\hat f}}(\xi_0(x_1),\xi_0(x_2))=0$  or $\hat
b(0,0,\hat f)={\hat f}U(\hat f)$, with $U(0)\ne 0$. In both cases
$\xi_0$ is non-nilpotent and we obtain an elementary singularity. It
remains to study the case $\alpha+\beta=1$. We have two
possibilities $(\alpha,\beta)=(1,0)$ and $(\alpha,\beta)=(0,1)$,
that can be treated in a similar way. Consider, for instance, the
case $(\alpha,\beta)=(1,0)$, if $\nu(x_1)>\nu(x_2)$, we are done by
blowing-up the origin, since we get $\zeta=0$; if
$\nu(x_1)<\nu(x_2)$ the situation repeats itself, but this cannot
occur infinitely many times, since we are dealing with an
arquimedian valuation $\nu$.

This ends the proof of Theorem \ref{teo:cuatro} in the case of
rational rank $r=2$.
\subsection{Maximal contact with rational rank one}
 Take a regular system of
parameters $(x,w,y)$ of ${\mathcal O}_{M_0,P_0}$ such that
$$
\hat f=y+\sum_{i,j}\lambda_{ij}x^iw^j.
$$
In this paragraph we denote $Y_0=\{P_0\}$ and $Y_1=\{x_1={\hat
f}=0\}$. The next Lemma \ref{lema:diecisiete}  may be proved by
standard computations in terms of blow-ups and valuations and we
left the verification to the reader:
\begin{lemma}
\label{lema:diecisiete} Let $\pi:M'\rightarrow M_0$ be the blow-up
of $M_0$ with one of the centers $Y_0$ or $Y_1$
 and assume that if we use $Y_1$ as a center, then ${\hat f}(0,w,y)\in{\mathcal
O}_{M_0,P_0}$. Then, the center $P'\in M'$  of $\nu$ at $M'$ belongs
to the strict transform of ${\hat f}=0$. More precisely,  we have
the following cases:
\begin{description}
\item[$T_{01}$] The center is $Y_0$ and $\mu=\nu(x)<\nu(w)$. In this case $P_1$ is
in the strict transform of the formal curve $w={\hat f}=0$ and there
is a regular system of parameters $(x',w',{y^*})$ of ${\mathcal
O}_{M',P'}$ such that $x'=x$, $w'=w/x$, ${\hat f}'={\hat f}/x$ has
transversal maximal contact with $\nu$ and has the form
$$
{\hat f}'=y^*+\sum_{i,j}\lambda'_{ij}x'^iw'^j.
$$
\item[$T_{02}$] The center is $Y_0$ and $\mu=\nu(w)<\nu(x)$. Similar to
the previous case, by the roles of $x,w$ interchanged.
\item[$T_{01},c$] The center is $Y_0$ and $\mu=\nu(x)=\nu(w)$. Take a parameter $c\in k$
such that $\nu(w-cx)>\nu(x)$. We do the coordinate change $w^*=w-cx$
and we proceed as in the case $(01)$.
\item[$T_1$] The center is $Y_1$. In this case $P_1$ is
in the strict transform of ${\hat f}=0$ and there is a regular
system of parameters $(x',w',{y^*})$ of ${\mathcal O}_{M',P'}$ such
that $x'=x$, $w'=w$, ${\hat f}'={\hat f}/x$ has transversal maximal
contact with $\nu$ and it is written as
$$
{\hat f}'=y^*+\sum_{i,j}\lambda'_{ij}x'^iw'^j.
$$
\end{description}
\end{lemma}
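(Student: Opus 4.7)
The plan is to treat the four cases by explicit local computation, following the template already developed for the rational rank two situation in Lemma~\ref{lema:catorce}. The main input will be the Krull limit expression $\hat f=\lim_{\mu\to\infty}f_\mu$, where
$$f_\mu=y+\sum_{i\nu(x)+j\nu(w)\leq\mu}\lambda_{ij}x^iw^j\in\mathcal{O}_{M_0,P_0}$$
satisfies $\nu(f_\mu)>\mu$ and $\nu(f_\mu)\to\infty$. In each case I will first locate the center $P'$ of $\nu$ in the blow-up by comparing the values $\nu(x),\nu(w)$ and noting that $\nu(\hat f)$ is effectively $+\infty$, then display a regular system of parameters $(x',w',y^*)$ of $\mathcal{O}_{M',P'}$, and finally read off $\hat f'=\hat f/x$ together with its transversal maximal contact.

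For case $T_{01}$, with $\nu(x)<\nu(w)$, the center sits in the chart $x'=x$, $w'=w/x$, and I will set $y^*:=f_\mu/x$ for $\mu$ large enough that $\nu(f_\mu)>\nu(x)$. Then $(x',w',y^*)$ is a regular system of parameters at $P'$, and expanding $\hat f/x=y^*+(\hat f-f_\mu)/x$ produces the claimed formula. Case $T_{02}$ follows by symmetry in $(x,w)$. Case $T_{01,c}$ is reduced to $T_{01}$ by the preliminary translation $w^*=w-cx$, whose existence rests on $\kappa_\nu=k$.

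For case $T_1$, the hypothesis $\hat f(0,w,y)\in\mathcal{O}_{M_0,P_0}$ will be used to see that $g:=y+\sum_j\lambda_{0j}w^j$ lies in the local ring, so $Y_1=\{x=g=0\}$ is a genuine (non-formal) smooth curve. Since $\nu(\hat f-g)\geq\nu(x)$ and $\nu(\hat f)$ is effectively infinite, one has $\nu(g)\geq\nu(x)$, so the center $P'$ lies in the chart $x'=x$, $w'=w$, $g'=g/x$. Setting $y^*:=g/x$ gives $\hat f/x=y^*+\sum_{i\geq 1,j}\lambda_{ij}x'^{i-1}w'^j$, which is the stated form.

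The main obstacle will be verifying the transversal-maximal-contact property of $\hat f'$ at $P'$: I must exhibit $\hat f'$ as the Krull limit of a sequence of regular elements of $\mathcal{O}_{M',P'}$ with strictly increasing values, and check the transversality condition, namely that $x'$ retains a $\mathbb{Z}$-independent value and together with $\hat f'$ extends to a regular system of parameters of $\widehat{\mathcal{O}}_{M',P'}$. The candidate sequence $f'_\mu:=f_\mu/x$ (post-translation, in case $T_{01,c}$) does the job, but controlling the growth of $\nu(f'_\mu)$ under division by $x$ and ensuring that the resulting $y^*$ is a genuine coordinate (not absorbed into the residue field) will require choosing $\mu$ large enough to dominate the contribution from the exceptional divisor; this is the real computational content of the lemma.
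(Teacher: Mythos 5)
Your plan correctly mirrors the explicit computation of Lemma~\ref{lema:catorce}, and it is sound for $T_{01}$, $T_{02}$ and $T_{01,c}$; however, the case $T_1$ has two genuine gaps. First, $y^*:=g/x$ is not in general a regular parameter at $P'$. From $\hat f/x=g/x+\sum_{i\geq1,j}\lambda_{ij}x^{i-1}w^j$ and the requirement that $\hat f/x\in\widehat{\mathcal M}_{M',P'}$, the residue of $g/x$ in $k=\kappa_\nu$ must be $-\lambda_{10}$; so whenever $\lambda_{10}\neq0$ one has $\nu(g)=\nu(x)$ and $g/x$ is a unit in $\mathcal O_{M',P'}$. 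Equivalently, your formula for $\hat f'$ carries the nonzero constant term $\lambda'_{00}=\lambda_{10}$, which is incompatible with $\hat f'$ being part of a regular system of parameters. The correct coordinate is $y^*=g/x+\lambda_{10}$, which is exactly what the case T-1 formula $y^*=(f_\mu+\hat h_\mu(0,x_2))/x_1$ of Lemma~\ref{lema:catorce} produces; your proposed remedy of "choosing $\mu$ large" does not apply here, since $g$ is a fixed element and carries no parameter $\mu$ to tune.

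Second, the approximating sequence $f'_\mu:=f_\mu/x$ is not regular on $M'$ in case $T_1$: the center is now the curve $Y_1=\{x=g=0\}$, and $f_\mu$ does not vanish along $Y_1$ (its restriction there is $-\sum_{\nu(w^j)>\mu}\lambda_{0j}w^j$, generically nonzero), so $f_\mu\notin(x,g)$ and $f_\mu/x\notin\mathcal O_{M',P'}$. Following Lemma~\ref{lema:catorce} one should take $f'_\mu:=(f_\mu+\hat h_\mu(0,w))/x$ with $\hat h_\mu=\hat f-f_\mu$; since $f_\mu+\hat h_\mu(0,w)=g+\sum_{\nu\leq\mu,\,i\geq1}\lambda_{ij}x^iw^j$ lies in $(x,g)$, this sequence does lie in $\mathcal O_{M',P'}$, satisfies $\hat f'-f'_\mu=\sum_{\nu>\mu,\,i\geq1}\lambda_{ij}x^{i-1}w^j\to0$ in the Krull topology, and has $\nu(f'_\mu)=\nu(f_\mu+\hat h_\mu(0,w))-\nu(x)\to\infty$, which is what transversal maximal contact requires.
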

We define an $\{x,w,y, {\hat f}\}$-{\em formal Puiseux package} to
be a sequence of blow-ups
$$
M_0\leftarrow M_1\leftarrow\cdots\leftarrow M_N= M'
$$
such that: \begin{enumerate}
\item Each blow-up has center at the
center $P_i\in M_i$ of the valuation in the projective model $M_i$.
\item We get $(x_i,w_i,y_i,{\hat f}_i)$ at each $P_i$, obtained as in
Lemma \ref{lema:diecisiete}, starting  from $(x_0,w_0,y_0, {\hat
f}_0)=(x,w,y, {\hat f})$.
\item Each blow-up is given by $T_{01}$ or $T_{02}$, except  the
last blow-up that is given by $(T_{01},c)$, with $c\ne0$.
\end{enumerate}
A $\{x,w,y, {\hat f}\}$-{formal Puiseux package}  exists and is
unique. More precisely, if we put $\nu(w^d/x^p)=0$ and $c$ is such
that $\nu(w^d/x^p-c)>0$, the sequence of blow-ups is the reduction
of singularities of the formal curve $w^d-cx^p=\hat f=0$.

Now, let us consider a generator $\xi_0$ of ${\mathcal
L}_{M_0,P_0}[\log x]$, that we write a follows:
$$
\xi_0=a(x,w,\hat f)x\frac{\partial}{\partial x}+ b(x,w,\hat
f)\frac{\partial}{\partial w}+\hat h(x,w,\hat
f)\frac{\partial}{\partial \hat f},
$$
where $a=\xi_0(x)/x$, $b=\xi_0(w)$, $\hat h={\xi_0}(\hat f)$. Note
that $a,b$ and $\hat h$ have no common factors. There are two cases
that we will consider separately
\begin{enumerate}
\item The formal hypersurface $\hat f=0$ is invariant by $\xi_0$.
Then $\hat f$ divides $\hat h$ and we can put $\hat h={\hat g}\hat
f$.
\item The formal hypersurface $\hat f=0$ is not invariant by $\xi_0$.
Then $\hat f$ does not divide $\hat h$ and thus  ${\hat f}a, {\hat
f}b,\hat h$ have no common factors.
\end{enumerate}
Let us put $\hat \xi_0=\hat \xi_0$ if $\hat f=0$ is invariant and
$\hat \xi_0=\hat f \xi_0$ if $\hat f$ is not invariant. In both
cases we denote
$$
\hat a_0={\hat \xi}_0(x)/x;\; \hat b_0={\hat \xi}_0(w); \hat
g_0={\hat \xi}_0(\hat f)/\hat f.
$$
Then $\hat a_0,\hat b_0,\hat g_0$ have no common factors. Define the
{\em logarithmic order} as $$ \mbox{LogOrd}({\mathcal L};x{\hat
f})=\mbox{ord}_{\widehat {\mathcal M}_0}(\hat a_0,\hat b_0,\hat
g_0).$$
\begin{lemma}
\label{dieciocho} Let $\pi: M'\rightarrow M_0$ be given by the
$\{x,w,y,\hat f\}$-formal Puiseux package and let $(x', w', y',{\hat
f}')$ be the resulting list at the center $ P'$ of $\nu$ in $M'$.
Then
$$
\mbox{\rm LogOrd}({\mathcal L}; x'{\hat f}')\leq\mbox{\rm
LogOrd}({\mathcal L};x{\hat f}).
$$
\end{lemma}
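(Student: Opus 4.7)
The plan is to mimic the strategy of Lemma \ref{lemaquince}, reducing to a single blow-up and then invoking standard results on the transformation of ideals at a permissible (here zero-dimensional) center. Since the formal Puiseux package $M_0 \leftarrow M_1 \leftarrow \cdots \leftarrow M_N = M'$ is a composition of blow-ups each centered at the center $P_i$ of the valuation, it will suffice to show that LogOrd does not increase in each elementary step; an induction on $N$ then yields the full assertion. I would first observe that the invariance status of $\hat f_i = 0$ under $\xi_0$ (which determines whether $\hat\xi_0 = \xi_0$ or $\hat\xi_0 = \hat f \xi_0$) persists from $P_0$ to all $P_i$, since blow-ups respect the strict transform of $\hat f = 0$; this keeps the definition of $\hat\xi_i$ consistent throughout the package.

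For a single blow-up of type $T_{01}$ (with $T_{02}$ symmetric), I would use the explicit coordinate formulas $x = x'$, $w = x'w'$, $\hat f = x'\hat f'$ from Lemma \ref{lema:diecisiete} to compute the pushed-forward derivation directly:
$$
\hat\xi_0(x') = \hat a_0 x', \qquad \hat\xi_0(\hat f') = (\hat g_0 - \hat a_0)\hat f', \qquad \hat\xi_0(w') = \hat b_0/x' - \hat a_0 w'.
$$
Assuming $\zeta := \mbox{LogOrd}({\mathcal L}; x\hat f) \geq 1$ (otherwise there is nothing to prove), the three coefficients $\hat a_0, \hat b_0, \hat g_0$ all belong to $\widehat{\mathcal M}_{M_0,P_0}$, which in the new chart equals $(x')\widehat{\mathcal O}_{M',P'}$; in particular $\hat b_0/x'$ is integral at $P'$. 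The log form of $\hat\xi_0$ at $P'$ relative to $x'$ and $\hat f'$ is therefore well defined, and the ideal generated by its log coefficients is
$$
I' = (\hat a_0,\, \hat b_0/x',\, \hat g_0),
$$
after absorbing the $\hat a_0 w'$ term. For the last step of type $(T_{01},c)$, the preliminary coordinate change $w^* = w - cx$ sends $\hat b_0$ to $\hat b_0 - cx\hat a_0$, leaving the ideal $(\hat a_0, \hat b_0, \hat g_0)$ unchanged and reducing that case to a plain $T_{01}$.

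To conclude, I would divide $\hat\xi_0$ (viewed at $P'$) by the gcd of its log coefficients in $\widehat{\mathcal O}_{M',P'}$ in order to recover the generator $\hat\xi'_0$ whose coefficients $\hat a'_0, \hat b'_0, \hat g'_0$ by definition have no common factor, and then bound the order of the resulting ideal. For this I would invoke the same standard equimultiple blow-up results cited in Lemma \ref{lemaquince}: the weak transform $\widetilde J$ at $P'$ of the ideal $J = (\hat a_0,\hat b_0,\hat g_0)$ of order $\zeta$ has order at most $\zeta$. Tracking the successive divisions by $x'$ that relate $I'$ to $\widetilde J$ then gives $\mbox{LogOrd}({\mathcal L}; x'\hat f') \leq \zeta$.

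The main obstacle, which I expect to require the most care, is precisely this final matching between $I' = (\hat a_0,\hat b_0/x',\hat g_0)$ and the weak transform $\widetilde J = (\hat a_0/x'^\zeta,\hat b_0/x'^\zeta,\hat g_0/x'^\zeta)$: the log form introduces $\hat b_0/x'$ rather than $\hat b_0/x'^\zeta$, so the powers of $x'$ absorbed when forming $\hat\xi'_0$ must be reconciled against the powers absorbed by $\widetilde J$. In the non-invariant case $\hat\xi_0 = \hat f \xi_0$, the uniform factor $\hat f = x'\hat f'$ contributes one additional $x'$ to every log coefficient, which must also be accounted for. This is the analogue of the characteristic polygon bookkeeping used in Section \ref{subsec:maxconrangodos}, but simpler here because every blow-up of the Puiseux package has a zero-dimensional center.
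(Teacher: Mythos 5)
Your overall plan — reduce to a single blow-up, push the log derivation forward, and control the coefficient ideal via weak transforms — is sound, and your coordinate computations for the $T_{01}$ chart are correct, as is the reduction of $(T_{01},c)$ to a plain $T_{01}$ by a preliminary change $w^* = w - cx$. The paper's own proof is a one-sentence deferral to the ``vertical stability of the adapted order'' in \cite{Can}, so your attempt to make the single-blow-up step explicit via the weak transform is a genuinely different and more hands-on route than the paper takes.

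The gap is exactly where you flag it, and it is not resolved by what you write. The naive weak-transform bound does \emph{not} give $\mbox{LogOrd}' \leq \zeta$. Writing $\hat a_0 = x'^\zeta \hat a_0^\sharp$, $\hat b_0 = x'^\zeta \hat b_0^\sharp$, $\hat g_0 = x'^\zeta \hat g_0^\sharp$, your ideal $I' = (\hat a_0, \hat b_0/x', \hat g_0)$ equals $x'^{\zeta-1}\bigl(x'\hat a_0^\sharp,\, \hat b_0^\sharp,\, x'\hat g_0^\sharp\bigr)$; dividing by the obvious $x'^{\zeta-1}$ and invoking $\operatorname{ord}(\hat a_0^\sharp, \hat b_0^\sharp, \hat g_0^\sharp)\leq\zeta$ only yields $\mbox{LogOrd}' \leq \zeta+1$, because the minimum might be realized by $\hat a_0^\sharp$ or $\hat g_0^\sharp$, each of which carries an extra $x'$ in $I'$. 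Closing this needs an additional observation you do not make: if $\operatorname{ord}_{P_0}(\hat b_0) > \zeta$ then $x' \mid \hat b_0^\sharp$, so one can in fact extract $x'^\zeta$ (not just $x'^{\zeta-1}$) and land on $(\hat a_0^\sharp, \hat b_0^\sharp/x', \hat g_0^\sharp)$, which has order $\leq\zeta$; while if $\operatorname{ord}_{P_0}(\hat b_0) = \zeta$ then its degree-$\zeta$ initial form survives and forces $\operatorname{ord}_{P'}(\hat b_0^\sharp)\leq\zeta$ directly. Without this dichotomy the ``tracking the divisions'' step is not a computation but a hope. Also, your final claim that this bookkeeping is \emph{simpler} than the rank-two case of Lemma \ref{lemaquince} has it backwards: in rank two all three log coefficients transform homogeneously by $x_1^{-\zeta}$, whereas here $\hat b_0$ follows the asymmetric law $\hat b_0\mapsto \hat b_0/x'$, which is precisely the source of the off-by-one difficulty.
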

\begin{proof} The result is true under each of the blow-ups of the
sequence given by the $\{x,w,y,\hat f\}$-formal Puiseux package.
This is a  standard verification which is also a part of the proof
of the vertical stability of the adapted order given in
\cite{Can}.\end{proof}
 Consider an expansion $\hat \xi=\sum_{s\geq 0} {\hat f}^s \hat \eta_s(x,w)
 $, where
 $$\hat \eta_s(x,w)=\hat a_s(x,w)x\frac{\partial}{\partial x}+
 \hat b_s(x,w)\frac{\partial}{\partial w}+
 \hat g_s(x,w)\hat f\frac{\partial}{\partial \hat f}.
 $$
We say that $\hat\eta_s$ is {\em formally strongly prepared\/} if we
can write
\begin{equation}
 \hat\eta_s=
 x^\rho \hat U(x,w)\theta+x^\tau \hat V(x,w)\hat f\frac{\partial}{\partial \hat f};\quad
 \theta=x\hat h(x,w)x\frac{\partial}{\partial x}+\frac{\partial}{\partial w}
\end{equation}
satisfying the same properties as in Definition
\ref{defstrongprepared}, that is
\begin{enumerate}
\item $\rho,\tau\in {\mathbb Z}\cup\{+\infty\}$, with $\rho\ne \tau$.
\item $\hat U=\lambda+x(\cdots)$ and $\hat V=\mu+x(\cdots)$, where
$\lambda,\mu\in k\setminus\{0\}$. (Except if $\rho=+\infty$ or
$\tau=\infty$, that indicates that $\hat U$, respectively $\hat V$
is identically zero)
\end{enumerate}
By the same proof as in \ref{pro:diez}, we have
\begin{proposition}
\label{pro:formalstrongpreparation} Assume that $\hat \eta_s\ne 0$,
then after finitely many formal Puiseux packages we obtain
$\hat\eta_s$ that is formally strongly prepared.
\end{proposition}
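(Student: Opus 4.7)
The plan is to transcribe the proof of Proposition \ref{pro:diez} into the formal setting. The coefficients $\hat a_s,\hat b_s,\hat g_s$ lie in $k[[x,w]]$ and the vector field $\hat f\partial/\partial\hat f$ plays the role of $y\partial/\partial y$ in equation (\ref{eq:xwvectorfields}); a formal Puiseux package acts on $(x,w)$ in the same way as an etale $w$-Puiseux package in the body of the paper, since both are produced by the reduction of singularities of the formal curve $w^d - cx^p = 0$. Thus the transformation laws (\ref{explosincampo}) and (\ref{eqtransformacionnivel}) hold verbatim with $y$ replaced by $\hat f$.

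First I would apply Seidenberg's reduction \cite{Sei} to the two-dimensional formal vector field determined by the ideal $(\hat a_s,\hat b_s,\hat g_s)\subset k[[x,w]]$. Since we are allowed to iterate formal Puiseux packages, and each such package performs a sequence of blow-ups and coordinate changes in $(x,w)$ adapted to $\nu$, Seidenberg's algorithm yields a decomposition
\begin{equation*}
\hat\eta_s \;=\; f(x,w)\,\theta \;+\; g(x,w)\,\hat f\frac{\partial}{\partial \hat f}, \qquad \theta \;=\; x h(x,w)\, x\frac{\partial}{\partial x} + \frac{\partial}{\partial w}.
\end{equation*}
Next I would verify that this shape is preserved by any further formal Puiseux package. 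Exactly as in the proof of Proposition \ref{pro:diez}, one computes
\begin{equation*}
\theta \;=\; t^{-d} W(t,w')\,\theta', \qquad W(t,w') = 1 - t^{2d}(p/d)(w'+c)\,h(t^d,t^p(w'+c)),
\end{equation*}
where $W$ is a unit with $W - 1$ divisible by $t$, and $\theta'$ has the same form as $\theta$ in the new coordinates; the coefficients $f,g$ are simply pulled back via $(x,w)\mapsto (t^d, t^p(w'+c))$.

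Having achieved a decomposition of this shape, the last step is to monomialize $f$ and $g$. By standard combinatorial reduction of functions under formal Puiseux packages (the same argument that appears in Proposition \ref{pro:cuatro} and is used in Lemma \ref{lema:nueve} in the two variables $x,w$), after finitely many further packages I can arrange
\begin{equation*}
f = x^{\rho} U(x,w), \qquad g = x^{\tau} V(x,w), \qquad U(0,0) \ne 0 \ne V(0,0)
\end{equation*}
(allowing $\rho$ or $\tau$ to equal $+\infty$ to cover the case $f\equiv 0$ or $g\equiv 0$). If $\rho\ne\tau$ we are finished; otherwise, performing one more formal Puiseux package strictly increases the difference $|\nu(x^\tau)-\nu(x^\rho)|$ by the positive amount $\nu(w)$ (cf.\ Remark \ref{rk:desplazamientoabscisa}), so after finitely many iterations $\rho\ne\tau$ must occur and the decomposition meets the two conditions of Definition \ref{defstrongprepared} adapted to $\hat f$.

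The main obstacle, as noted above, is the verification that a formal $\{x,w,y,\hat f\}$-Puiseux package acts on the $(x,w)$-part exactly as an etale $w$-Puiseux package does on its $(x,w)$-part; once this identification is made the transformation formulas (\ref{explosincampo})--(\ref{eqtransformacionnivel}) and the stability arguments of Lemma \ref{lemaochoBIS} and Remark \ref{rk:estabilidadbienpreparadodom} carry over without change, because all the computations involve only $x$, $w$ and the diagonal direction (here $\hat f\partial/\partial \hat f$ instead of $y\partial/\partial y$), which is untouched by the Puiseux package.
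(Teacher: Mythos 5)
Your proposal is essentially a fleshed-out version of the paper's own proof, which is the one-line statement ``By the same proof as in Proposition \ref{pro:diez}.'' You correctly identify the dictionary $\hat f\partial/\partial\hat f \leftrightarrow y\partial/\partial y$, the Seidenberg reduction to the shape $f\theta + g\,\hat f\partial/\partial\hat f$, the persistence of that shape under further packages, the monomialization of $f,g$, and the final iteration on the difference $\nu(x^\tau)-\nu(x^\rho)$; this is exactly what the paper intends by its pointer to Proposition~\ref{pro:diez}.

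One small inaccuracy in your justification deserves a remark. You claim the formal $\{x,w,y,\hat f\}$-Puiseux package and the etale $w$-Puiseux package coincide ``since both are produced by the reduction of singularities of the formal curve $w^d-cx^p=0$.'' That is not quite right: the etale package first passes to the ring extension ${\mathcal O}[t]/(t^d-x)$, where the contact function $w/t^p$ has ramification index $1$, so the package there desingularizes a \emph{smooth} curve; the formal package desingularizes the genuinely singular curve $w^d-cx^p=0$ directly, with no extension. The correct way to make the identification is via Proposition~\ref{prop:siete}: after an etale $w$-Puiseux package one obtains a local-etale extension of the ring produced by the ordinary (= formal) Puiseux package, with $t = x'\cdot(\text{unit})$. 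Hence the transformation formulas \eqref{explosincampo}--\eqref{eqtransformacionnivel} agree with the formal ones up to the unit rescaling $t\leftrightarrow x'$, which is what makes the computations in Proposition~\ref{pro:diez}, Lemma~\ref{lemaochoBIS} and Remark~\ref{rk:estabilidadbienpreparadodom} transfer verbatim. The argument is sound; only the stated reason for the equivalence is off.
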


 Let us work by induction on
$\varrho=\mbox{LogOrd}({\mathcal L};x{\hat
 f})$. If $\varrho\leq 1$ we have a log-elementary singularity.
 Assume that $\varrho\geq 2$. By Proposition \ref{pro:formalstrongpreparation}, after finitely many
 formal Puiseux packages, the vector field  $\hat
 \xi$ can be written as
$ \hat \xi=\sum_{0\leq s} {\hat f}^s \hat \eta'_s(x,w)
 $, where
 $$\hat\eta_s=
 x^{\rho_s} \hat U_s(x,w)\theta_s+x^{\tau_s} \hat V_s(x,w)\hat f\frac{\partial}{\partial \hat f};\quad
 \theta_s=x\hat h_s(x,w)x\frac{\partial}{\partial x}+\frac{\partial}{\partial
 w}$$
is formally strongly prepared for any $s\leq \varrho$. Let us put
$m_s=\min\{\rho_s,\tau_s\}$.  let us also define
$$
\delta=\min \left\{\frac{m_s}{\varrho-s};\; s<\varrho\right\}.
$$
It is clear that $1\leq \delta<\infty$, since the adapted order is
$\varrho$.

 Consider the ideal $(x,\hat f)$.  Since
$\varrho\geq 2$, this ideal gives a curve in the singular locus of
$\mathcal L$. Thus we can blow-up it. After blowing-up, we get that
$\varrho'\leq \varrho$ and $\delta'=\delta-1$ if $\varrho'=\varrho$.
This ends the proof of Theorem \ref{teo:cuatro}.
\part{Higher rank and higher dimensional valuations}
In this part we complete the proof of Theorem \ref{teouno} by
considering valuations of higher arquimedean rank or of dimension
bigger than zero. In fact these cases correspond to situations
simpler than in Part 1, since they are ``essentially'' of ambient
dimension two.
\section{Higher rank valuations}
In this section we assume that $n=3$ and  $\kappa_\nu=k$ but $\nu$
has rank bigger than one, that is, the value group $\Gamma$ is not
arquimedean. If the rational rank $r=3$, there is no difference with
the computations in the case of an arquimedean valuation done in
Section \ref{secciondos}. The only remaining situation is $r=2$. Let
us consider this situation.

We can work in terms of parameterized regular local models
${\mathcal A}=({\mathcal O},{\mathbf z}=({\mathbf x},y))$ as in the
case of a real valuation of rational rank two (Sections 3-4). Let us
consider the following statement
\begin{quote}
{\bf TRI: Trivial ramification index assumption: } After performing
any finite sequence of $y$-Puiseux packages, coordinate blow-ups in
the independent variables and coordinate changes in the dependent
variable, we obtain ${\mathcal A}=({\mathcal O},{\mathbf
z}=({\mathbf x},y))$ such that the ramification index is equal to
one. That is $\nu(y)=\nu(x_1^{p_1}x_2^{ p_2})$ for
$(p_1,p_2)\in{\mathbb Z}^2$.
\end{quote}
Following the same arguments as in Sections 3-4 we obtain
\begin{proposition} Assume that the {\em Trivial Ramification
Index Assumption does not hold} after performing any finite sequence
of  $y$-Puiseux packages, coordinate blow-ups in the independent
variables and coordinate changes in the dependent variable. Then we
can obtain a log-elementary ${\mathcal L}_{\mathcal A}$ after
performing such a finite sequence of transformations.
\end{proposition}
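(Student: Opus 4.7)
My plan is to mirror the proof of Proposition \ref{pro:cinco} in the higher rank setting, inducting on the main height $\hbar({\mathcal L};{\mathcal A})$. The key observation is that the Newton--Puiseux polygon machinery of Section \ref{secciontres} is defined purely in terms of the value group $\Gamma$ and the expansion $\xi=\sum_{s\geq -1}y^s\eta_s$ of an $\mathbf x$-logarithmic vector field, and therefore carries over to the case where $\Gamma$ is merely ordered (not necessarily arquimedean). In particular, the invariants $\alpha,\hbar,\chi$ and the analogues of Lemmas \ref{lema:uno}, \ref{lema:dos}, \ref{lema:tres} and \ref{lema:nueve} continue to hold with the same proofs.

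First I would reduce to the situation $\alpha({\mathcal L};{\mathcal A})=0$ by performing finitely many coordinate blow-ups in the independent variables (Lemma \ref{lema:nueve}), which also does not increase $\hbar$. Then I proceed by induction on $h=\hbar({\mathcal L};{\mathcal A})\in\{-1,0,1,\ldots\}$. If $h\leq 0$, Lemma \ref{lema:tres} gives, after finitely many independent-variable blow-ups, an ${\mathcal A}'$ for which ${\mathcal L}_{{\mathcal A}'}$ is non-singular or elementary, both of which are log-elementary.

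Now assume $h\geq 1$. By the failure of \textbf{TRI} applied at the current $\mathcal A$, there exists a finite sequence of coordinate changes in $y$, independent-variable blow-ups and $y$-Puiseux packages producing ${\mathcal A}^*$ with $d({\mathcal A}^*)\geq 2$. By Lemmas \ref{lema:uno}, \ref{lema:dos} and \ref{lema:nueve} none of these transformations increases the main height, so $\hbar({\mathcal L};{\mathcal A}^*)\leq h$. If this inequality is strict we have already decreased $h$. Otherwise $\hbar({\mathcal L};{\mathcal A}^*)=h\geq 1$ and, recalling that $\chi({\mathcal L};{\mathcal A}^*)\leq h$, we apply Lemma \ref{lema:dos}: if $\chi({\mathcal L};{\mathcal A}^*)<h$, a single $y$-Puiseux package yields $\hbar({\mathcal L};{\mathcal A}')\leq \chi({\mathcal L};{\mathcal A}^*)<h$; if $\chi({\mathcal L};{\mathcal A}^*)=h\geq 1$, the strong form of Lemma \ref{lema:dos}, available precisely because $d({\mathcal A}^*)\geq 2$, gives $\hbar({\mathcal L};{\mathcal A}')<\chi({\mathcal L};{\mathcal A}^*)=h$. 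In either subcase $\hbar$ strictly decreases, completing the induction.

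The only subtlety I anticipate---and what I expect to be the main obstacle---is verifying that Lemmas \ref{lema:uno}--\ref{lema:tres} and \ref{lema:nueve} really do transcribe to the non-arquimedean setting. Their proofs rely on two inputs: that the values $\nu(x_1),\nu(x_2)$ are ${\mathbb Z}$-linearly independent (so that $\gamma$-initial forms in \S\ref{subsectioninitialparts} are well-defined monomials and the Newton--Puiseux support lies in $\Gamma\times{\mathbb Z}_{\geq -1}$), and that $\kappa_\nu=k$ (so that translations in Puiseux packages are possible). Both hypotheses persist here, so a careful but essentially mechanical reinspection of those arguments should suffice; no new combinatorial input is needed because once $\hbar$ is decreased to $\{-1,0\}$ the desired log-elementary form is immediate.
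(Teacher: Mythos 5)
Your proposal is correct and reconstructs exactly what the paper compresses into the phrase ``Following the same arguments as in Sections 3--4'': you induct on the main height $\hbar$, use the failure of \textbf{TRI} to reach a model with $d\geq 2$ without increasing $\hbar$, and then invoke the strong form of Lemma~\ref{lema:dos} (available since $d\geq 2$ and $\chi\geq 1$) to force a strict drop, terminating at Lemma~\ref{lema:tres}. Your closing caveat about transcribing Lemmas~\ref{lema:uno}--\ref{lema:tres}, \ref{lema:nueve} to the non-arquimedean value group is the right thing to check, and your diagnosis is accurate: those arguments rest only on the total ordering of $\Gamma$, the $\mathbb{Z}$-independence of $\nu(x_1),\nu(x_2)$, and $\kappa_\nu=k$, none of which require rank one.
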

Thus, we assume that TRI holds. We can work by induction on the main
height $h=\hbar(\xi;{\mathbf x},y)$ of a generator of ${\mathcal
L}$. By the same arguments in Sections 3-4, if the critical
polynomial is not Thchirnhaus, we can win. So, we find an element at
the level $h-1$ corresponding to the critical polynomial. This means
that $(p_1,p_2)\in{\mathbf Z}_{\geq 0}^2$, since this point of the
support is associated to a monomial $x_1^{p_1}x_2^{p_2}$ appearing
in the coefficients of $\xi$. Now, we can do the coordinate change
$$
y_1=y-c_1x_1^{p_1}x_2^{p_2};\quad \nu(y_1)>\nu(y).
$$
The situation repeats. We obtain a formal element $\hat f=y-\sum
c_ix_1^{p_{i1}}x_2^{p_{i2}}$. Now we can apply to $\hat f$ the same
arguments as in Subsection \ref{subsec:maxconrangodos}.
\section{Higher dimensional valuations}
In this section we assume that $n=3$ and  $\kappa_\nu\ne k$. We look
for
 a projective model $M$ of $K$ and a birational morphism
$M\rightarrow M_0$ such that the center $Y$ of $\nu$ at $M$ has
dimension $\geq 1$ and a generic point of $Y$ is a regular point of
$M$ which is log-elementary for $\mathcal L$. Since $k$ is
algebraically closed, the assumption $\kappa_\nu\ne k$ implies that
$\dim\nu\geq 1$, where $\dim\nu$ is the transcendence degree of
$\kappa_\nu/k$. Applying Hironaka's reduction of singularities to
$M_0$, we may assume that all the points in $M_0$ are nonsingular.
Also by classical results on reduction of singularities, we obtain
the following statement:
\begin{lemma}
There is a birational morphism $M\rightarrow M_0$ such that the
center $Y$ of $\nu$ at $M$ has dimension equal to $\dim\nu$.
\end{lemma}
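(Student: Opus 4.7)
The plan is to exhibit a model on which there exist enough globally defined functions whose residues at the center of $\nu$ witness a transcendence basis of $\kappa_\nu/k$. Concretely, set $d=\dim\nu=\mathrm{tr.deg}_k\kappa_\nu$ and choose elements $f_1,\ldots,f_d\in R_\nu$ whose residues $\bar f_1,\ldots,\bar f_d\in\kappa_\nu$ form a transcendence basis of $\kappa_\nu$ over $k$. Such a choice exists because $R_\nu$ surjects onto $\kappa_\nu$.

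Next I would use the classical ``resolve the indeterminacy and then desingularize'' trick to upgrade $M_0$ so that each $f_i$ becomes a regular function in a neighborhood of the center of $\nu$. Consider the rational maps $\phi_i\colon M_0\dashrightarrow\mathbb{P}^1$ given by $f_i$, take the closure of the graph of $(\phi_1,\ldots,\phi_d)$ inside $M_0\times(\mathbb{P}^1)^d$, and project back to $M_0$; this gives a birational morphism on which every $f_i$ is regular away from a codimension $\geq 2$ locus. Since $\nu$ is centered at a unique point $P$ of any projective model and $f_i\in R_\nu$, we can moreover arrange (by blowing up the ideals of indeterminacy of each $f_i$) that $f_i\in\mathcal O_{M',P'}$ for $i=1,\ldots,d$, where $P'$ is the center of $\nu$ at the new model $M'$. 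Finally, apply Hironaka's reduction of singularities \cite{Hir} to $M'$ to obtain a non-singular $M\to M_0$ in which the $f_i$ still lie in $\mathcal O_{M,P}$, with $P$ the center of $\nu$ in $M$.

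Now I would check that $\dim Y=d$ for $Y=\overline{\{P\}}$. One direction is Abhyankar's (or Zariski's) inequality: the local ring $\mathcal O_{M,P}$ is dominated by $R_\nu$, so its residue field $k(P)$ embeds in $\kappa_\nu$; therefore $\dim Y=\mathrm{tr.deg}_k k(P)\leq\mathrm{tr.deg}_k\kappa_\nu=d$. For the reverse inequality, the residues of $f_1,\ldots,f_d$ in $k(P)$ map to $\bar f_1,\ldots,\bar f_d$ under the inclusion $k(P)\hookrightarrow\kappa_\nu$, and since the $\bar f_i$ are algebraically independent over $k$, so are their preimages in $k(P)$; hence $\mathrm{tr.deg}_k k(P)\geq d$, giving $\dim Y\geq d$. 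Combining, $\dim Y=d=\dim\nu$.

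The only technical point that deserves real care is the construction in the second paragraph: guaranteeing that the chosen representatives $f_1,\ldots,f_d$ become simultaneously regular at the specific point $P$ which is the center of $\nu$. This is not automatic from the existence of a model where each $f_i$ individually becomes regular, but it follows once we blow up the product of the indeterminacy ideals of the $f_i$'s (or, more symmetrically, the graph closure as above), and then apply Hironaka to get smoothness. The rest is just the two-sided transcendence-degree estimate above.
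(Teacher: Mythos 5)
Your argument is correct and, unlike the paper (which simply refers to Vaqui\'{e}'s survey for this fact), gives a self-contained construction. The strategy of lifting a transcendence basis of $\kappa_\nu/k$ to elements $f_1,\dots,f_d\in R_\nu$, making them regular at the center, and then running the two-sided transcendence-degree estimate is exactly the standard way to realize $\dim\nu$ as the dimension of a center, so there is nothing to object to in substance.

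One step you wave at a little quickly is the passage from ``each $f_i$ defines a morphism $M'\to\mathbb P^1$'' to ``$f_i\in\mathcal O_{M',P'}$.'' This is true, but it needs one more sentence: since the $\bar f_i$ are a transcendence basis they are in particular nonzero, so $\nu(f_i)=0$; if $f_i$ were not regular at $P'$ then $1/f_i$ would lie in $\mathcal M_{M',P'}\subset\mathcal M_\nu$, giving $\nu(f_i)<0$, a contradiction. With that remark the graph-closure step is airtight, and the rest (Hironaka to get smoothness without changing which functions are regular at the center, and the inequality $\mathrm{tr.deg}_k k(P)\le\mathrm{tr.deg}_k\kappa_\nu$ from domination by $R_\nu$) is fine. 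So your proof fills in a proof the paper delegates to the literature, and both approaches end up establishing the same statement.
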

\begin{proof} See for instance Vaqui\'{e}'s paper \cite{Vaq}.
\end{proof}
Thus we may assume that $M_0$ is non-singular and the center $Y_0$
of $\nu$ at $M_0$ has dimension equal to $\dim\nu$. If $\dim\nu=2$,
then $Y_0$ is a hypersurface and a generic point of $Y_0$ is always
nonsingular for $\mathcal L$, since the singular locus of $\mathcal
L$ has codimension  at least $2$ in any nonsingular ambient space.

Consider the case  $\dim\nu=\dim Y_0=1$. We blow-up $M_0$ with
center $Y_0$ to get $M_1\rightarrow M_0$. The new center $Y_1$ of
$\nu$ at $M_1$ is a curve that applies surjectively over $Y_0$. We
repeat the procedure to get an infinite sequence
$$
M_0\leftarrow M_1\leftarrow M_2\leftarrow \cdots
$$
where the center $Y_i$ of $\nu$ at $M_i$ is a curve that applies
surjectively over $Y_{i-1}$. In this situation we can apply the
equireduction arguments in \cite{Can-M-R}, (see also \cite{San}) to
obtain an elementary $\mathcal L$ at a generic point of $Y_i$ for
$i>>0$. These arguments are actually of two-dimensional nature and
the invoked equireduction results are very similar to the original
Seidenberg's result in \cite{Sei}.
\part{Globalization}
In this Part 3 we prove the global result stated in Theorem
\ref{teodos}. To do this we will apply the axiomatic version of the
Zariski's Patching of Local Uniformizations \cite{Zar} that has been
developed by O. Piltant in \cite{Pil}.

Let us state the axiomatic version of the patching of local
uniformizations that we need to use. Fix a field of rational
functions $K/k$ of transcendence degree three over $k$. We take $k$
an algebraically closed field of characteristic zero, even if
Piltant's result is more general than that. Assume that we have an
assignation
$$
M\mapsto \mbox{RegP}(M)\subset M
$$
that chooses a nonempty Zariski open subset $\mbox{RegP}(M)\subset
M$  for each projective model $M$ of $K$. This map can be thought of
by saying that $\mbox{RegP}(M)$ is the {\em set of points of $M$
that satisfy the property ``P''}. Let us introduce now a list of
axioms for globalization.
\begin{quote}
{\em Axiom I}. For each projective model $M$ of $K$ the set
$\mbox{RegP}(M)$ is a nonempty Zariski open set  contained in the
set of regular points $\mbox{Reg}(M)$ of $M$. Moreover the
definition of $\mbox{RegP}(M)$ is local in the sense that given two
projective models  $M$ and $M'$,  two Zariski open sets $U\subset M$
and $U'\subset M'$ and an isomorphism $\phi:U\rightarrow U'$, then
$\phi(\mbox{RegP}(M)\cap U)=\mbox{RegP}(M')\cap U'$.
\end{quote}
The next axiom says that $\mbox{RegP}(M)$ has a good behavior under
blow-up.
\begin{quote}
{\em Axiom II}. Let $Y\subset M$ be an irreducible algebraic
subvariety of $M$ such that $Y\cap\mbox{RegP}(M)\ne \emptyset$. Let
$\pi: M'\rightarrow M$ be the blow-up with center $Y$. There is a
nonempty Zariski open subset $V_Y$ of $Y\cap \mbox{RegP}(M)$ defined
by the property that $\pi^{-1}(V_Y)\subset \mbox{RegP}(M')$.
\end{quote}
In what follows we take $V_Y$ to be the largest possible between the
subsets $V\subset Y\cap \mbox{RegP}(M)$ such that
$\pi^{-1}(V)\subset \mbox{RegP}(M')$. This determines $V_Y$
uniquely. As a consequence of Axiom II, if we blow-up a point $P\in
\mbox{RegP}(M)$ then we have $\pi^{-1}(P)\subset \mbox{RegP}(M')$.
The open set $V_Y$ is called {\em set of permissibility for $Y$}. We
say that $Y$ is {\em permissible if } $V_Y=Y$. We need also the
notion of {\em strong permissibility\/}. If $Y\subset M$ is a point
or a hypersurface that cuts $\mbox{RegP}(M)$, we define the open set
of strong permissibility $W_Y$ as $W_Y=V_Y$. Assume that $Y$ is an
irreducible curve and let $P\in V_Y$. We say that $Y$ is {\em
strongly permissible at $P$} iff the following property holds
\begin{quote}
Let $ M=M_0\leftarrow M_1\leftarrow M_2\leftarrow\cdots\leftarrow
M_N$ be a finite sequence of blow-ups centered at points $P_i\in
M_i$, such that $P_0=P$ and $P_i$ projects over $P_{i-1}$ and is in
the strict transform $Y_i$ of $Y$. Then $Y_N$ is permissible at
$P_N$ (that is $P_N\in V_{Y_N}$).
\end{quote}
We denote by $W_Y\subset V_Y$ the set of points where $Y$ is
strongly permissible and we say that $Y$ is {\em strongly
permissible }iff $W_Y=Y$.
\begin{quote}
{\em Axiom III}. Let $Y$ be a curve in $M$ such that
$Y\cap\mbox{RegP}(M)\ne \emptyset$. There is a finite sequence $$
M=M_0\leftarrow M_1\leftarrow M_2\leftarrow\cdots\leftarrow M_N=M'$$
of blow-ups with center in closed points such that the strict
transform $Y'$ of $Y$ is strongly permissible.
\end{quote}
In fact, the centers in Axiom III can be chosen in $Y-W_Y$ at each
step; this also shows that $W_Y$ is a nonempty open set of $Y$.

We also need another axiom (of principalization), that can be seen
as a result on {\em conditionated desingularization}
\begin{quote}
{\em Axiom IV [Principalization]}. Given a (normal) projective model
$M_0$ of $K$ and an ideal sheaf ${\mathcal I}\subset{\mathcal
O}_{M_0}$, there is a projective birational morphism
$\pi:M\rightarrow M_0$ such that
\begin{enumerate}
\item ${\mathcal I}{\mathcal O}_{M}$ is
locally principal in $\pi^{-1}(\mbox{RegP}(M_0))$.
\item ${\pi^{-1}(\mbox{RegP}(M_0))}\subset\mbox{RegP}(M)$.
\item The induced map
$\pi^{-1}(\mbox{RegP}(M_0)\cap U )\rightarrow \mbox{RegP}(M_0)\cap
U$ is an isomorphism where $U$ is the open set of the points $p$ of
$M_0$ such that ${\mathcal I}_p$ is principal.
\end{enumerate}
\end{quote}

The last axiom states the existence of Local Uniformization.
\begin{quote}
{\em Axiom V [Local Uniformization]}. Let $\nu$ be a $k$-valuation
of $K$. There is a projective model $M$ of $K$ such that the center
$Y$ of $\nu$ in $M$ cuts $\mbox{RegP}(M)$, that is  $Y\cap
\mbox{RegP}(M)\ne \emptyset$.
\end{quote}
With this axioms, it is possible to reproduce Zariski's arguments in
\cite{Zar2} for the patching of local uniformizations and we can
state the following result
\begin{theorem}[Piltant]
\label{teo:piltant}
 Assume that the assignation $M\mapsto \mbox{RegP}(M)$
satisfies to the axioms I,II,III, IV and V above. Consider a
projective model $M_0$ of $K$. Then there is a birational projective
morphism $M\rightarrow M_0$ such that $\mbox{RegP}(M)=M$.
\end{theorem}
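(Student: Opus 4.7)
The plan is to adapt Zariski's classical patching of local uniformizations from \cite{Zar2} to the present axiomatic framework. The central object is the Zariski--Riemann space $\mathrm{ZR}(K/k)$, the projective limit of all projective models of $K$, which is quasi-compact in the Zariski topology and admits, for each projective model $M$, a continuous surjection $\mathrm{ZR}(K/k)\to M$ sending a valuation to its center.

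First, I would produce a single model whose bad locus has no valuation centered on it. For each $\nu\in\mathrm{ZR}(K/k)$, Axiom V supplies a projective model $M_\nu$ whose center of $\nu$ meets $\mathrm{RegP}(M_\nu)$, and by Axiom I the preimage $U_\nu\subset\mathrm{ZR}(K/k)$ of $\mathrm{RegP}(M_\nu)$ is an open neighborhood of $\nu$. Quasi-compactness yields finitely many valuations $\nu_1,\dots,\nu_s$ with $\bigcup_iU_{\nu_i}=\mathrm{ZR}(K/k)$. Using Hironaka's theorem (or by iteratively applying Axiom~IV) I take a nonsingular projective model $M^{(1)}$ dominating $M_0$ and all the $M_{\nu_i}$. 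Axioms~I and~II guarantee that the pullback of each $\mathrm{RegP}(M_{\nu_i})$ lies in $\mathrm{RegP}(M^{(1)})$, so the bad locus $Z^{(1)}:=M^{(1)}\setminus\mathrm{RegP}(M^{(1)})$ is a proper closed subset disjoint from the center of every valuation of $K/k$. In particular every irreducible component of $Z^{(1)}$ has codimension $\geq 1$ and is dominated by some valuation whose center is strictly larger in some further model.

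Next I would argue by induction on the pair $(\dim Z^{(i)},\#\text{top-dimensional components})$ in lexicographic order. Let $W$ be an irreducible component of $Z^{(i)}$ of maximal dimension and let $\nu_W$ be a valuation whose center in $M^{(i)}$ is the generic point of $W$. By Axiom V, there is a projective model on which the center of $\nu_W$ is good; dominating $M^{(i)}$ by such a model and applying Axiom IV to principalize the ideal sheaf of $W$ gives a birational morphism $M^{(i+1)}\to M^{(i)}$ under which, by Axiom II, the preimage of $\mathrm{RegP}(M^{(i)})$ stays inside $\mathrm{RegP}(M^{(i+1)})$, while the generic point of $W$ is uniformized. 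The subtle case is when $W$ is a curve: a blow-up centered at $W$ only behaves well on the strongly permissible locus $W_Y$. Here Axiom III is exactly what is needed: after finitely many point blow-ups in $Y\setminus W_Y$ the strict transform $Y'$ of $W$ becomes strongly permissible, and then blowing it up removes $Y'$ from the bad locus without introducing new bad subvarieties of the same dimension.

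The main obstacle is precisely this patching step: a priori, blowing up one component of $Z^{(i)}$ might enlarge the bad locus along another component or create new bad components of equal dimension, so that the induction fails to terminate. The combination of Axiom III (strong permissibility, which forbids bad behavior under all subsequent point blow-ups centered on the strict transform) and Axiom IV (conditional principalization, which forbids a blow-up from destroying previously uniformized points) is tailored exactly to exclude this phenomenon. With these in hand the invariant $(\dim Z^{(i)},\#\text{top-dimensional components})$ strictly decreases at each stage, so after finitely many steps we reach a model $M$ with $Z=\emptyset$, i.e.\ $\mathrm{RegP}(M)=M$.
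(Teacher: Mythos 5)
The paper does not actually prove Theorem~\ref{teo:piltant}: it cites Piltant \cite{Pil} and remarks that his proof reproduces Zariski's patching argument from \cite{Zar2} in the axiomatic setting. So there is no in-paper proof to compare against. What can be assessed is whether your sketch is sound, and there is a genuine gap at its very first step.

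You assert that after choosing, via quasi-compactness of the Zariski--Riemann space, finitely many models $M_{\nu_1},\ldots,M_{\nu_s}$ with $\bigcup_i U_{\nu_i}=\mathrm{ZR}(K/k)$ and then taking a model $M^{(1)}$ dominating all of them, ``Axioms I and II guarantee that the pullback of each $\mathrm{RegP}(M_{\nu_i})$ lies in $\mathrm{RegP}(M^{(1)})$.'' Neither axiom says anything of the kind. Axiom~I is locality of the definition; Axiom~II concerns only blow-ups with permissible centers, and an arbitrary domination $M^{(1)}\to M_{\nu_i}$ need not factor through permissible blow-ups. Axiom~IV preserves $\mathrm{RegP}$ of the \emph{base} model one principalizes over, not of the several models one is simultaneously trying to dominate. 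Moreover, if the claim were true you would already be finished: the bad locus $Z^{(1)}$ is closed, so the generic point of any of its irreducible components is the center of some valuation; ``$Z^{(1)}$ disjoint from all valuation centers'' therefore forces $Z^{(1)}=\emptyset$, and the entire induction that follows would be unnecessary. Your own next sentence (``every irreducible component of $Z^{(1)}$ \dots\ is dominated by some valuation'') contradicts the disjointness you just asserted.

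This is not a minor omission: the fact that a center which is good in $M_{\nu_i}$ may become bad in a model dominating several $M_{\nu_j}$ is precisely the obstruction that makes the patching theorem difficult and that occupies most of \cite{Zar2} and \cite{Pil}. Compactness of $\mathrm{ZR}(K/k)$ gives a finite open cover, but turning that cover into a single model with empty bad locus requires the delicate interplay of Axioms~II--IV along a carefully chosen sequence of blow-ups whose termination is controlled by the dimension of the non-uniformized locus and, for curves, by the strong permissibility of Axiom~III. Your later paragraphs gesture at this machinery, but the claim that ``the invariant strictly decreases at each stage'' is exactly the content of the theorem and is asserted rather than argued. The sketch reproduces the shape of Zariski's strategy but elides the step where all the work lies.
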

This statement is slightly more restrictive that the result proved
by Piltant in \cite{Pil}. It is the result we need to get our global
statement for the case of foliations. Now, in order to prove Theorem
\ref{teodos}, we just need to prove the following statement
\begin{proposition}
\label{pro:axiomas}
 Let us consider a foliation
${\mathcal L}\subset\mbox{Der}_kK$. The assignation
$$
M\mapsto \mbox{RegLog}_{\mathcal L}(M)=\{P\in M; P \mbox{ is
log-elementary }\}
$$
satisfies to the axioms I,II,III,IV and V.
\end{proposition}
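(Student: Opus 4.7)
The plan is to verify each of the five axioms separately, using Theorem \ref{teouno} as the key input for Axiom V, Hironaka's principalization for Axiom IV, and local/equireduction arguments for the remaining three.

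For Axiom I, log-elementarity at a regular point $P$ of $M$ is governed by the existence of $\xi\in{\mathcal L}_{M,P}$ of the prescribed logarithmic form with at least one coefficient $a_j\notin{\mathcal M}^2_{M,P}$. Given any local $\mathcal O$-generator $\xi$ of the rank-one submodule ${\mathcal L}_{M,P}\subset\operatorname{Der}_k{\mathcal O}_{M,P}$ in a Zariski neighborhood $U$ of $P$, the set of points of $U$ where $\xi$ has log-elementary form (with appropriate $e$ and appropriate choice of a permuted regular system of parameters) is an intersection of open conditions. Unions over finitely many such forms yield the Zariski openness of $\mathrm{RegLog}_{\mathcal L}(M)\cap\mathrm{Reg}(M)$, and locality under isomorphisms is automatic from the intrinsic definition. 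Nonemptiness follows from the fact that on a non-singular $M$ the generic point $P$ of $M$ is a non-singular point of $\mathcal L$ (with $e=0$), hence log-elementary; alternatively we can choose an $M$ where $\mathcal L$ is non-singular on a dense open, using desingularization of the line bundle defining $\mathcal L$.

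Axiom II is a local computation in a parameterized regular local model. Let $P\in Y\cap\mathrm{RegLog}_{\mathcal L}(M)$ with generator $\xi=\sum_{i=1}^e a_iz_i\partial/\partial z_i+\sum_{i=e+1}^n a_i\partial/\partial z_i$ and let $\pi:M'\to M$ be the blow-up with center $Y$. If $Y$ is contained in the intersection of coordinate hypersurfaces that are already invariant (i.e.\ in the logarithmic part), the strict transform of $\xi$ remains of the same logarithmic shape; if $Y$ is transverse to the non-logarithmic part, the same verification applies. Openness of the non-nilpotency condition on the $a_j$ along the generic point of $Y$ determines the maximal open subset $V_Y\subset Y\cap\mathrm{RegLog}_{\mathcal L}(M)$ with $\pi^{-1}(V_Y)\subset\mathrm{RegLog}_{\mathcal L}(M')$, exactly as required.

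For Axiom III, strong permissibility of a curve $Y$ is a statement of two-dimensional equireduction transversal to $Y$. At a generic point $\eta$ of $Y$ the situation is that of a rank-one, dimension-one valuation with residue field of positive transcendence degree, which is precisely the setting handled in the last section of Part II (following \cite{Can-M-R} and \cite{Sei}): after finitely many point blow-ups on $M$, the strict transform of $Y$ supports transversally elementary (hence log-elementary) singularities along a dense open subset, and a further sequence of point blow-ups (as in Seidenberg's local two-dimensional reduction applied to points of $Y-W_Y$) yields strong permissibility. This is the step that will require the most care: one must check that each point blow-up inside $Y-W_Y$ strictly reduces a two-dimensional transverse invariant (order, or Newton-Puiseux height) while not destroying log-elementarity along $W_Y$, and that this invariant is well-defined along the generic point of $Y$. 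Axiom IV is Hironaka's principalization applied to ${\mathcal I}$ with centers chosen inside the open locus where the property is preserved; the needed compatibility with $\mathrm{RegLog}_{\mathcal L}$ is obtained by intersecting Hironaka's permissible centers with $\mathrm{RegLog}_{\mathcal L}(M_0)$ and invoking Axiom II inductively. Finally, Axiom V is exactly the content of Theorem \ref{teouno}. The main obstacle is Axiom III, namely producing the finite sequence of point blow-ups yielding strong permissibility of a curve; this reduces, by the transversal/equireduction picture at the generic point of $Y$, to an iterated application of the two-dimensional analysis already developed in Part II.
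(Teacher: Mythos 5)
Your overall plan (verify each axiom in turn, with Theorem \ref{teouno} supplying Axiom V) is exactly the structure of the paper's proof, and your Axiom I and Axiom V discussions match the paper's. The divergence is in Axioms II, III and IV, where the paper does not attempt self-contained arguments but instead invokes specific prior results: Axioms II and III are attributed to the theory of the \emph{adapted (logarithmic) order} developed in \cite{Can} --- Theorem 3.1.4 there gives the stability of the adapted order under permissible blow-up (which is what makes $V_Y$ a nonempty open subset of $Y$), and the stationary-sequence results of section 3.3 of \cite{Can} give the permissibilizing statement of Axiom III --- while Axiom IV is \emph{not} the restriction of Hironaka's principalization but a genuinely conditioned principalization statement, proved for $\mbox{RegLog}_{\mathcal L}(M)$ by Piltant in \cite{Pil}, Proposition 4.2.

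This matters because your sketches for II, III, IV leave real gaps. For Axiom II you argue by inspecting the shape of a strict transform, but the open set $V_Y$ is controlled by the upper-semicontinuity and stability of a numerical invariant (the adapted order $\leq 1$); without that invariant, the claim that ``the strict transform remains of the same logarithmic shape'' does not by itself produce a canonical open $V_Y$, and the case analysis (``transverse to the non-logarithmic part'') is not a proof. For Axiom III you correctly identify it as the hard step and frame it as a two-dimensional transverse equireduction, but you do not name the invariant that drops under point blow-ups in $Y\setminus W_Y$ nor verify finiteness; this is precisely what the stationary-sequence machinery of \cite{Can} is for. For Axiom IV, the proposal ``intersect Hironaka's permissible centers with $\mbox{RegLog}_{\mathcal L}(M_0)$ and invoke Axiom II inductively'' does not work as stated: the intersection of a Hironaka center with an open set is generally not a closed subvariety, and there is no reason for Hironaka's centers to satisfy the foliation-permissibility required by Axiom II. Making those two requirements (principalization of ${\mathcal I}$ and preservation of log-elementarity) compatible is the content of \cite{Pil}, Proposition 4.2, and cannot be dismissed as a routine intersection.
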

The Local Uniformization Axiom is given by Theorem \ref{teouno}. The
first axiom is evident from the local definition of log-elementary
points, let us just point that $\mbox{RegLog}_{\mathcal L}(M)$ is
non-empty since the non singular points of $\mathcal L$ in
$\mbox{Reg}(M)$ are in the complement of a closed subset of
codimension bigger or equal than two.

The axioms II and III come from the general computations done in
\cite{Can} concerning the definition and properties of permissible
centers in terms of the adapted multiplicity. More precisely, in
theorem 3.1.4. of \cite{Can} is proved the stability of the adapted
order under blow-up (the log-elementary singularities are defined to
have adapted order less or equal to one). The permissibilyzing and
permissibility properties come from the results on stationary
sequences in the section 3.3. of \cite{Can}.

Finally axiom IV of principalization has been explicitly proved for
the case $\mbox{RegP}(M)=\mbox{RegLog}_{\mathcal L}(M)$ by Piltant
in \cite{Pil}, Proposition 4.2.

Now, Theorem \ref{teodos} is a consequence of Proposition
\ref{pro:axiomas} and Theorem \ref{teo:piltant}.

%%%%%%%%%%%%%%
%%%%%%%%%%%%%%


\begin{thebibliography}{10}
\bibitem{Art} \obra{Artin, M.}{Th\'{e}or\`{e}mes de repr\'{e}sentabilit\'{e}
pour les espaces alg\'{e}briques}{S\'{e}minaires de
Math\'{e}matiques Sup\'{e}rieures, SMS. Les presses de
L'universit\'{e} de Montr\'{e}al. 1973}

\bibitem{Bie-M} \obra{Bierstone, R.; Milman, P.}{Canonical Desingularization in Characteristic Zero by blowing-up
the maximum strata of a local invariant}{Invent. Math. {\bf 128},
(1997), no. 2, 207-302}

\bibitem{Cam-C-S} \obra{Camacho, C.;
Cano, F.; Sad, P.}{Desingularization of Absolutely Isolated
Singularities of Vector Fields}{Inventiones Mathematicae, {\bf 98},
pp.  351-369, (1989)}

\bibitem{Can} \obra{
Cano, F.}{Desingularization Strategies for Three Dimensional Vector
Fields}{Lecture Notes in Mathematics, {\bf 1259}, Springer Verlag,
1987}

\bibitem{Can-M-R} \obra{
Cano, F.; Moussu, R.; Rolin, J.-P.}{Non-oscillating integral curves
and valuations}{J. Reine Angew. Math. {\bf 582} (2005), 107--141}

\bibitem{Can-M-S} \obra{
Cano, F.; Moussu, R.; Sanz, F.}{Oscillation, spiralement,
tourbillonement}{Comm. Math. Helv. {\bf 75}, (2000), 284-318}

\bibitem{Can-R-S} \obra{
Cano, F.; Roche, C.; Spivakovsky, M.}{Local Uniformization in
Cha\-racteristic Zero. The Arquimedean Case}{Rev. Sem. Iberoam. Mat.
{\bf 3}, V-VI, (2008), 49-64}


\bibitem{Cos-G-O} \obra{
Cossart, V.; Giraud, J.; Orbanz, O. (Appendix by Hironanka,
H.)}{Desingularization of Excellent Surfaces}{Lect. Notes in Math.,
{\bf 1101}, Springer-Verlag, 1984}


\bibitem{CanJ} \obra{
Cano, J.}{On the series defined by differential equations, with an
extension of the Puiseux Polygon construction to these equations}{
Analysis, {\bf 13}, (1993), 103-119}

\bibitem{Hir} \obra{
Hironaka, H.}{Resolution of singularities of an algebraic variety
over a field of characteristic zero}{Ann. of Math. {\bf 79}, (1964)
pp.109-326}

\bibitem{Pan} \obra{
Panazzolo, D.}{Resolution of singularities of real-analytic vector
fields in dimension three}{Acta Math. {\bf 197}, (2006) 167-289}

\bibitem{Pil} \obra{Piltant, O.}{An Axiomatic Version of Zariski's Pathching
Theorem}{Preprint, Universidad de Valladolid, 2008}

\bibitem{San} \obra{Sancho de Salas, F.}{Milnor number of a vector
 field along a
subscheme: applications in desingularization}{Adv. Math. {\bf 153},
no. 2, 299-324}

\bibitem{Sei} \obra{Seidenberg, A.}{Reduction of the singularities of
the differential equation $Ady=Bdx$}{Am. J. of Math., {\bf 90},
(1968), 248-269}

\bibitem{Spi} \obra{
Spivakovsky, M.}{A Solution to Hironaka's Polyhedra Game}{
Arithmetic and Geometry II, Birkhauser, (1983), 419-432}

\bibitem{Tei} \obra{Teissier, B.}{Valuations, deformations
and toric geometry}{Valuation Theory and its applications, vol. II,
(Saskatoon, SK, 1999), 361--459, Fields Inst. Commun. Amer. Math.
Soc., Providence, RI, 2003}

\bibitem{Vaq} \obra{Vaqui\'{e}, M.}{Valuations
and local uniformization}{Singularity theory and its applications,
477--527, Adv. Stud. Pure Math., {\bf 43}, Math. Soc. Japan, Tokyo,
2006}

\bibitem{Vil} \obra{Villamayor, O.}{Constructiveness
of Hironaka Resolution}{Ann. Sci. Ecole Norm. Sup., {\bf 22} (1989),
1-32}

\bibitem{Zar} \obra{Zariski, O.}{Local uniformization on algebraic
varieties}{Ann. of Math. {\bf 41}, (1940). 852--896}

\bibitem{Zar2} \obra{Zariski, O.}{Reduction
of the singularities of algebraic three dimensional varieties}{Ann.
of Math.  {\bf 45}, (1944). 472--542}
\end{thebibliography}
\end{document}